\begin{document}             
\bibpunct{[}{]}{,}{n}{,}{;}
\def\Pb{\mathbb{P}}
\def\Eb{\mathbb{E}}
\def\Rb{\mathbb{R}}
\def\Qb{\mathbb{Q}}
\def\Nb{\mathbb{N}}
\def\Zb{\mathbb{Z}}
\def\Tc{\mathcal{T}}
\def\Rc{\mathcal{R}}
\def\Sc{\mathcal{S}}
\def\Dc{\mathcal{D}}
\def\Jc{\mathcal{J}}
\def\Fc{\mathcal{F}}
\def\Lc{\mathcal{L}}
\def\Mc{\mathcal{M}}
\def\Hc{\mathcal{H}}
\def\Bc{\mathcal{B}}
\def\Yc{\mathcal{Y}}
\def\Pc{\mathcal{P}}
\def\Ic{\mathcal{I}}
\def\Kc{\mathcal{K}}
\def\Pr{\mathbf{P}}
\def\Er{\mathbf{E}} 
\def\Gk{\mathscr{G}}
\def\Ek{\mathscr{E}}
\def\Fs{\mathscr{F}}
\def\Pt{\mathit{P}}
\def\Et{\mathit{E}}
\def\d{\mathrm{d}}
\def\ind{\mathbf{1}}
\def\nin{n \rightarrow \infty}
\def\minf{m \rightarrow \infty}
\def\limn{\lim_{\nin}}
\def\limt{\lim_{t\rightarrow \infty}}
\def\limk{\lim_{k \rightarrow \infty}}
\def\limsn{\limsup_{\nin}}
\def\limin{\limsup_{\nin}}
\def\lime{\lim_{\varepsilon \rightarrow 0}}
\def\iid{\stackrel{\text{\tiny{i.i.d.}}}{\sim}}
\def\cd{\stackrel{\text{\tiny{d}}}{\rightarrow}}
\def\cl{\stackrel{\Lc}{\rightarrow}}
\def\ed{\stackrel{\text{\tiny{d}}}{=}}
\def\th{^{\text{th}}}
\def\qqquad{\qquad \quad}
\def\qqqquad{\qqquad \quad}
\def\qqqqquad{\qqqquad \quad}
\def\qqqqqquad{\qqqqquad \quad}
\def\qqqqqqquad{\qqqqqquad \quad}
\def\qqqqqqqquad{\qqqqqqquad \quad}
\def\qqqqqqqqquad{\qqqqqqqquad \quad}
\usetikzlibrary{arrows}
\newtheorem{dfn}{Definition}
\newtheorem{thm}{Theorem}
\newtheorem{lem}{Lemma}[section]
\newtheorem{prp}[lem]{Proposition}
\newtheorem{cly}[lem]{Corollary}
\newtheorem{conj}[lem]{Conjecture}
\newtheorem{fig}{Figure}
\pagenumbering{arabic} 
\singlespacing
\allowdisplaybreaks
\title{Escape regimes of biased random walks on Galton-Watson trees}
\author{Adam Bowditch, University of Warwick}
\date{}
\maketitle
\numberwithin{equation}{section}
\begin{abstract}We study biased random walk on subcritical and supercritical Galton-Watson trees conditioned to survive in the transient, sub-ballistic regime. By considering offspring laws with infinite variance, we extend previously known results for the walk on the supercritical tree and observe new trapping phenomena for the walk on the subcritical tree which, in this case, always yield sub-ballisticity. This is contrary to the walk on the supercritical tree which always has some ballistic phase.\end{abstract}

\let\thefootnote\relax\footnote{\textit{MSC2010 subject classifications:} Primary 60K37, 60F05; secondary 60E07, 60J80. \\ \textit{Keywords:} Random walk in random environment, Galton-Watson tree, infinite variance, infinitely divisible distributions, sub-ballistic.}

\section{Introduction}
In this paper, we investigate biased random walks on subcritical and supercritical Galton-Watson trees. These are a natural setting for studying trapping phenomena as dead-ends, caused by leaves in the trees, slow the walk. For supercritical GW-trees with leaves, it has been shown in \cite{lypepe} that, for a suitably large bias away from the root, the dead-ends in the environment create a sub-ballistic regime. In this case, it has further been observed in \cite{arfrgaha}, that the walker follows a polynomial escape regime but cannot be rescaled properly due to a certain lattice effect. Here we show that, when the offspring law has finite variance, the walk on the subcritical GW-tree conditioned to survive experiences similar trapping behaviour to the walk on the supercritical GW-tree shown in \cite{arfrgaha}. However, the main focus of the article concerns offspring laws belonging to the domain of attraction of some stable law with index $\alpha\in(1,2)$. In this setting, although the distribution of time spent in individual traps has polynomial tail decay in both cases, the exponent varies with $\alpha$ in the subcritical case and not in the supercritical case. This results in a polynomial escape of the walk which is always sub-ballistic in the subcritical case unlike the supercritical case which always has some ballistic phase. 

We now describe the model of a biased random walk on a subcritical GW-tree conditioned to survive which will be the main focus of the article. Let $f(s)=\sum_{k=0}^\infty p_ks^k$ denote the probability generating function of the offspring law of a GW-process with mean $\mu>0$ and variance $\sigma^2>0$ (possibly infinite) and let $Z_n$ denote the $n\th$ generation size of a process with this law started from a single individual, i.e.\ $Z_0=1$. Such a process gives rise to a random tree $\Tc$, where individuals in the process are represented by vertices and undirected edges connect individuals with their offspring. 

A $\beta$-biased random walk on a fixed, rooted tree $\Tc$ is a random walk $(X_n)_{n \geq 0}$ on $\Tc$ which is $\beta$-times more likely to make a transition to a given child of the current vertex than the parent (which are the only options). More specifically, let $\rho$ denote the root of the $\Tc$, $\overleftarrow{x}$ the parent of $x \in \Tc$ and $c(x)$ the set of children of $x$, then the random walk is the Markov chain started from $X_0=z$ defined by the transition probabilities 
\[\Pt^\Tc_z(X_{n+1}=y|X_n=x)=\begin{cases} \frac{1}{1+\beta d_x} & \text{if } y=\overleftarrow{x}, \\  \frac{\beta}{1+\beta d_x}, & \text{if } y \in c(x), \; x \neq \rho, \\ \frac{1}{
d_\rho}, & \text{if } y \in c(x), \; x =\rho, \\ 0, & \text{otherwise.} \\ \end{cases} \]
We use $\Pb_\rho(\cdot)=\int \Pt^\Tc_\rho(\cdot)\Pr(\text{d}\Tc)$ for the annealed law obtained by averaging the quenched law $\Pt^\Tc_\rho$ over a law $\Pr$ on random trees with a fixed root $\rho$. In general we will drop the superscript $\Tc$ and subscript $\rho$ when it is clear to which tree we are referring and we start the walk at the root. 

We will mainly be interested in trees $\Tc$ which survive, that is $\Hc(\Tc):=\sup\{n\geq 0: Z_n>0\}=\infty$. It is classical (e.g.\ \cite{atne}) that when $\mu>1$ there is some strictly positive probability $1-q$ that $\Hc(\Tc)=\infty$ whereas when $\mu\leq 1$ we have that $\Hc(\Tc)$ is almost surely finite. However, it has been shown in \cite{ke} that there is some well defined probability measure $\Pr$ over $f$-GW trees conditioned to survive for infinitely many generations which arises as a limit of probability measures over $f$-GW trees conditioned to survive at least $n$ generations. 

For $x \in \Tc$ let $|x|=d(\rho,x)$ denote the graph distance between $x$ and the root of the tree and write $\Tc_x$ to be the descendent tree of $x$. The main object of interest is $|X_n|$, that is, how the distance from the root changes over time. Due to the typical size of finite branches in the tree being small and the walk not backtracking too far we shall see that $|X_n|$ has a strong inverse relationship with the first hitting times $\Delta_n:=\inf\{m\geq 0:X_m \in \Yc, \; |X_m|=n\}$ of levels along the backbone $\Yc:=\{x \in \Tc:\Hc(\Tc_x)=\infty\}$ so for much of the paper we will consider this instead. It will be convenient to consider the walk as a trapping model. To this end we define the underlying walk $(Y_k)_{k\geq 0}$ defined by $Y_k=X_{\eta_k}$ where $\eta_0=0$ and $\eta_k=\inf\{m>\eta_{k-1}: \; X_m,X_{m-1} \in \Yc\}$ for $k\geq1$.

When $X_n$ is a walk on an $f$-GW tree conditioned to survive for $f$ supercritical ($\mu>1$), it has been shown in \cite{lypepe} that $\nu(\beta):=\lim_n|X_n|/n$ exists $\Pb$-a.s.\ and is positive if and only if $\mu^{-1}<\beta<f'(q)^{-1}$ in which case we call the walk ballistic. If $\beta\leq \mu^{-1}$ then the walk is recurrent because the average drift of $Y$ acts towards the root. When $\beta\geq f'(q)^{-1}$ the walker expects to spend an infinite amount of time in the finite trees which hang off $\Yc$ (see Figure \ref{suptreediag} in Section \ref{suptree}) thus causing a slowing effect which results in the walk being sub-ballistic. In this case, the correct scaling for some non-trivial limit is $n^{\gamma}$ where $\gamma$ will be defined later in (\ref{gamma}). In particular it has been shown in \cite{arfrgaha} that, when $\sigma^2<\infty$, the laws of $|X_n|n^{-\gamma}$ are tight and, although $|X_n|n^{-\gamma}$ doesn't converge in distribution, we have that $\Delta_nn^{-1/\gamma}$ converges in distribution under $\Pb$ along certain subsequences to some infinitely divisible law. In Section \ref{suptree} we prove several lemmas which extend this result by relaxing the condition that the offspring law has finite variance and instead requiring only that it belongs to the domain of attraction of some stable law of index $\alpha >1$. 

Recall that the offspring law of the process is given by $\Pr(\xi=k)=p_k$, then we define the size-biased distribution by the probabilities $\Pr(\xi^*=k)=kp_k\mu^{-1}$. It can be seen (e.g.\ \cite{ja}) that the subcritical ($\mu<1$) GW-tree conditioned to survive coincides with the following construction: Starting with a single special vertex, at each generation let every normal vertex give birth onto normal vertices according to independent copies of the original offspring distribution and every special vertex give birth onto vertices according to independent copies of the size-biased distribution, one of which is chosen uniformly at random to be special. Unlike the supercritical tree which has infinitely many infinite paths, the backbone of the subcritical tree conditioned to survive consists of a unique infinite path from the initial vertex $\rho$. We call the vertices not on $\Yc$ which are children of vertices on $\Yc$ buds and the finite trees rooted at the buds traps (see Figure \ref{treediag} in Section \ref{numtrp}).

Briefly, the phenomena that can occur in the subcritical case are as follows. When $\Er[\xi\log^+(\xi)]<\infty$ and $\mu<1$ there exists a limiting speed $\nu(\beta)$ such that $|X_n|/n$ converges almost surely to $\nu(\beta)$ under $\Pb$; moreover, the walk is ballistic ($\nu(\beta)>0$) if and only if $1<\beta<\mu^{-1}$ and $\sigma^2<\infty$. This essentially follows from the argument used in \cite{lypepe} (to show the corresponding result on the supercritical tree) with the fact that, by (\ref{expind}) and (\ref{expexc}), the conditions given are precisely the assumptions needed so that the expected time spent in a branch is finite (see \cite{bo}). The sub-ballistic regime has four distinct phases. When $\beta\leq1$ the walk is recurrent and we are not concerned with this case here. When $1<\beta<\mu^{-1}$ and $\sigma^2=\infty$ the expected time spent in a trap is finite and the slowing of the walk is due to the large number of buds. When $\beta\mu>1$ and $\sigma^2<\infty$, the expected time spent in a subcritical GW-tree forming a trap is infinite because the strong bias forces the walk deep into traps and long sequences of movements against the bias are required to escape. In the final case for the subcritical tree ($\beta\mu>1$, $\sigma^2=\infty$) slowing effects are caused by both strong bias and the large number of buds. 

Figure \ref{phase} is the phase diagram for the almost sure limit of $\log(|X_n|)/\log(n)$ (which is the first order scaling of $|X_n|$ relative to $\beta$ and $\mu$) where the offspring law has stability index $\alpha$, which is $2$ when $\sigma^2<\infty$, and we define 
\begin{flalign}\label{gamma}
\gamma=\begin{cases}\frac{\log(f'(q)^{-1})}{\log(\beta)}, & \mu> 1,\\ \frac{\log(\mu^{-1})}{\log(\beta)}, & \mu< 1, \end{cases}
\end{flalign}
where we note that $f'(q)$ and $\mu$ are the mean number of offspring from vertices in traps of the supercritical and subcritical trees respectively. Strictly, $f'(q)$ isn't a function of $\mu$ therefore the line $\beta=f'(q)^{-1}$ is not well defined; Figure \ref{phase} shows the particular case when the offspring distribution belongs to the geometric family. It is always the case that $f'(q)<1$ therefore some such region always exists however the parametrisation depends on the family of distributions.    

When the offspring law has finite variance, the limiting behaviour of $|X_n|$ on the supercritical and subcritical trees is very similar. Both have a regime with linear scaling (which is, in fact, almost sure convergence of $|X_n|/n$) and a regime with polynomial scaling caused by the same phenomenon of deep traps (which results in $|X_n|n^{-\gamma}$ not converging). When the offspring law has infinite variance, the bud distribution of the subcritical tree has infinite mean which causes an extra slowing effect which isn't seen by the supercritical tree. This equates for the different exponents observed in the two models as shown in Figure \ref{phase}. The walk on the critical ($\mu=1$) tree experiences a similar trapping mechanism to the subcritical tree; however, the slowing is more extreme and belongs to a different universality class which had been shown in \cite{crfrku} to yield a logarithmic escape rate.

\begin{figure}[H]
\centering
 \includegraphics[scale=0.3]{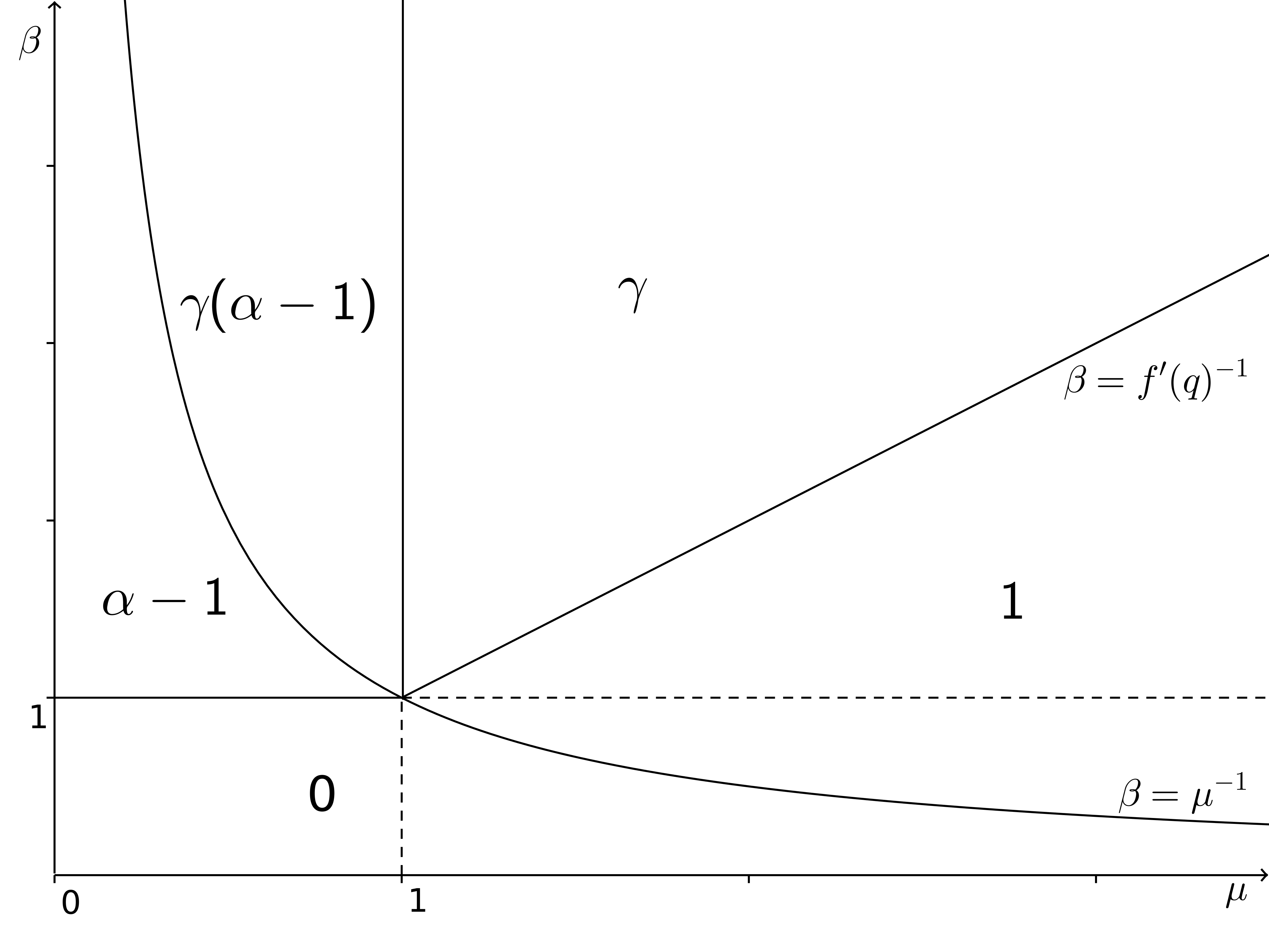} 
\caption{Escape regimes.}\label{phase}
\end{figure}

\section{Statement of main theorems and proof outline}\label{stmnt}
In this section we introduce the three sub-ballistic regimes in the subcritical case and the one further regime for the infinite variance supercritical case that we consider here. We then state the main theorems of the paper. 

The subcritical tree has bud distribution $\xi^*-1$ where $\Pr(\xi^*=k)=kp_k\mu^{-1}$ which yields the following important property relating the size biased and offspring distributions 
\begin{flalign}\label{expind}
 \Er[\varphi(\xi^*)]=\sum_{k=1}^\infty \varphi(k)\frac{kp_k}{\mu}=\Er[\varphi(\xi)\xi]\mu^{-1}.
\end{flalign}
In particular, choosing $\varphi$ to be the identity we have finite mean of the size-biased distribution if and only if the variance of the offspring distribution is finite. This causes a phase transition for the walk that isn't seen in the supercritical tree. The reason for this is that in the corresponding decomposition for the supercritical tree we have subcritical GW-trees as leaves but the number of buds is exponentially tilted and therefore maintains moment properties. 

If the offspring law belongs to the domain of attraction of some stable law of index $\alpha \in (1,2)$ then taking $\varphi(x)=x\ind_{\{x\leq t\}}$ shows that the size biased distribution belongs to the domain of attraction of some stable law with index $\alpha-1$ and allows us to attain properties of the scaling sequences (see for example \cite{fe} IX.8). 

The first case we consider is when $\beta\mu<1$ but $\sigma^2=\infty$; we refer to this as the infinite variance and finite excursion case: 
\begin{dfn}\label{finexc}(IVFE)
The offspring distribution has mean $\mu$ satisfying $1<\beta<\mu^{-1}$ and belongs to the domain of attraction of a stable law of index $\alpha \in (1,2)$. 
\end{dfn}
Under this assumption we let $L$ vary slowly at $\infty$ such that as $x\rightarrow \infty$ 
\begin{flalign}\label{trsm}
 \Eb[\xi^2\ind_{\{\xi\leq x\}}] & \sim x^{2-\alpha}L(x)
\end{flalign}
and choose $(a_n)_{n\geq 1}$ to be some scaling sequence for the size-biased law such that for any $x>0$, as $\nin$ we have $\Pr(\xi^*\geq xa_n) \sim x^{-(\alpha-1)}n^{-1}$. Moreover for some slowly varying function $\tilde{L}$ we have that $a_n=n^{\frac{1}{\alpha-1}}\tilde{L}(n)$. 

In this case we have that the heavy trapping is caused by the number of excursions in traps. Since $\beta$ is small we have that the expected time spent in a trap is finite but, because the size-biased law has infinite mean, the expected time spent in a branch is infinite. The main result for IVFE is Theorem \ref{finexcthm} which reflects that $\Delta_n$ scales similarly to the sum of independent copies of $\xi^*$.
\begin{thm}\label{finexcthm}
 For IVFE, the laws of the process \[\left(\frac{\Delta_{nt}}{a_n}\right)_{t\geq 0}\]
 converge weakly as $\nin$ under $\Pb$ with respect to the Skorohod $J_1$ topology on $D([0,\infty),\Rb)$ to the law of an $\alpha-1$ stable subordinator $R_t$ with Laplace transform \[\varphi_t(s)=\Eb[e^{-sR_t}] =e^{-C_{\alpha,\beta,\mu}ts^{\alpha-1}}\] where $C_{\alpha,\beta,\mu}$ is a constant which we shall determine during the proof (see (\ref{Cval})).
\end{thm}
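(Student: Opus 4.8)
The key structural observation is that, because the walk cannot pass through a trap, the process $(\Delta_n)_{n\geq0}$ is governed entirely by the backbone walk $(Y_k)_{k\geq0}$ together with the excursions $X$ makes into the traps, and in the subcritical case the transition probabilities of $Y$ do \emph{not} depend on the traps: conditioned on leaving a vertex $x\neq\rho$ by a move along the backbone, $X$ goes to the backbone child with probability $\beta/(1+\beta)$ and to the backbone parent with probability $1/(1+\beta)$, irrespective of how many buds hang off $x$ or of the trees rooted at them. Hence $Y$ is simply a $\beta/(1+\beta)$-biased nearest-neighbour walk on $\Zb_{\geq0}$ reflected at $0$, and it is independent of the i.i.d.\ family of traps along the backbone. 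Decomposing each increment $\eta_k-\eta_{k-1}$ of the clock into its final backbone step together with the trap excursions performed from $Y_{k-1}$ beforehand, and writing $\kappa_n=\inf\{k:Y_k=n\}$, one gets
\[\Delta_n=\kappa_n+\sum_{i=0}^{n-1}\chi_i,\]
where $\chi_i$ is the total time spent on trap excursions from the backbone vertex $x_i$ up to time $\Delta_n$. Since $\kappa_n$ grows linearly in $n$ while $a_n=n^{1/(\alpha-1)}\tilde L(n)$ with $1/(\alpha-1)>1$, the term $\kappa_n$ is $o(a_n)$ and may be discarded, so everything reduces to the behaviour of $\sum_{i<n}\chi_i$.

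The next step is to invoke the regeneration structure of $Y$, which is standard in this setting ($Y$ is a transient biased walk on $\Zb_{\geq0}$ and the backbone decomposition of the conditioned tree is i.i.d.\ across levels; see \cite{lypepe,bo}): there are regeneration levels $0<\ell_1<\ell_2<\cdots$ at which $Y$ attains a record it never falls below, with $(\ell_{k+1}-\ell_k)_{k\geq1}$ i.i.d.\ of mean $\bar\ell$ and exponential tails, and with block times $\tau_k:=\Delta_{\ell_{k+1}}-\Delta_{\ell_k}$ i.i.d.\ for $k\geq1$. Because the walk never descends below a regeneration level, each $\chi_i$ with $\ell_k\le i<\ell_{k+1}$ is entirely accounted for inside block $k$, so $\tau_k$ is exactly the time accrued while $Y$ traverses that block. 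By the renewal theorem $\#\{k:\ell_k\le m\}\sim m/\bar\ell$, and both the first (non-i.i.d.) block and the incomplete last one contribute only $o(a_n)$. Consequently the statement of the theorem follows from the classical functional stable limit theorem for partial sums of i.i.d.\ summands in the domain of attraction of an $(\alpha-1)$-stable law, once we establish that $\Pb(\tau_1>x)$ is regularly varying of index $-(\alpha-1)$ with the scaling sequence $a_n$; the constant $\bar\ell$ is then absorbed through $a_{n/\bar\ell}\sim\bar\ell^{-1/(\alpha-1)}a_n$, which produces $C_{\alpha,\beta,\mu}$ (determined explicitly during the tail computation).

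The tail of $\tau_1$ is where the IVFE hypotheses enter. Inside a block the walk visits each backbone vertex $x_i$ a light-tailed number $N_i$ of times (a functional of $Y$ only, hence independent of the traps), and at each visit it makes a geometrically distributed number of excursions, with mean $\tfrac{\beta b_i}{1+\beta}$, into uniformly chosen buds, where $b_i=\xi^*_i-1$ is the number of buds at $x_i$ and each excursion has the law of the escape time $\sigma_{\mathrm{trap}}$ from a single subcritical trap. The assumption $\beta\mu<1$ makes $\bar\sigma:=\Er[\sigma_{\mathrm{trap}}]$ finite (an electrical-network computation gives $\bar\sigma$ of order $\sum_{k\geq0}(\beta\mu)^k=(1-\beta\mu)^{-1}$), and in fact $\Pb(\sigma_{\mathrm{trap}}>x)$ is regularly varying of index $-\gamma$ with $\gamma=\log(\mu^{-1})/\log\beta$ as in (\ref{gamma}), which satisfies $\gamma>1$ precisely because $\beta\mu<1$; on the other hand, by (\ref{expind}) and Definition \ref{finexc}, $b_i$ lies in the domain of attraction of an $(\alpha-1)$-stable law with the very scaling sequence $a_n$. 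Since $\alpha-1<1<\gamma$, a single large bud count dominates: conditionally on $b_i=b$ with $b$ large, a law of large numbers over the $\Theta(b)$ excursions gives $\chi_i=\tfrac{\beta\bar\sigma}{1+\beta}\,b\,N_i(1+o(1))$, with the law-of-large-numbers fluctuations and the heaviest individual excursion contributing only a strictly lighter, $\gamma$-type remainder, and with $\tfrac1b\sum_{j\le b}\Er[\sigma_{\mathrm{trap}}\mid\text{bud }j]\to\bar\sigma$. Breiman's lemma, using that $N_i$ is independent of $b_i$ and has all moments, then yields $\Pb(\chi_i>x)\sim\Er[N_i^{\alpha-1}]\big(\tfrac{\beta\bar\sigma}{1+\beta}\big)^{\alpha-1}\Pb(b_i>x)$; summing over the (exponentially many in expectation) vertices of a block and ruling out, by a one-big-jump estimate, that a block becomes large through many moderate bud counts or through deep traps rather than through one dominant bud, gives the required regular variation of $\Pb(\tau_1>x)$ and the constant. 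The same one-big-jump structure upgrades finite-dimensional to $J_1$ convergence, since an atypically long block is caused by a single bud and produces one macroscopic increment of $n\mapsto\Delta_n$, matching the pure-jump limiting subordinator.

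I expect the principal difficulty to be exactly this tail estimate for $\chi_i$ — turning the heuristic $\chi_i\approx\tfrac{\beta\bar\sigma}{1+\beta}N_i b_i$ into a genuine asymptotic equivalence, uniformly enough to feed Breiman's lemma. Three error sources must be controlled together: the fluctuations, both in the number of excursions made per visit and in the sum of the i.i.d., mean-$\bar\sigma$ but $\gamma$-tailed, excursion durations; the possibility of an unusually deep individual trap, whose contribution has tail index $\gamma$ and so must be shown to be negligible against $x^{-(\alpha-1)}$ (here the inequality $\gamma>1>\alpha-1$ is the crux, together with the observation that whenever a single huge excursion is not rare one already has $b_i$ large enough to force $\chi_i>x$ on its own); and the possibility that a block is large through many moderately large bud counts rather than one, excluded by a standard truncation argument. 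The enabling technical input is the electrical-network identity for $\bar\sigma$ and the bound $\Pb(\sigma_{\mathrm{trap}}>x)\asymp x^{-\gamma}$ for one subcritical trap, with enough uniformity to be inserted into the above; with these in hand the remaining steps are routine heavy-tailed limit theory.
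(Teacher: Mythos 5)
Your strategy differs genuinely from the paper's: you propose regeneration blocks of the backbone walk $Y$ (then feed the i.i.d.\ block times into a classical functional stable limit theorem), whereas the paper works with a threshold $l_n^\varepsilon$ on bud counts, shows that time spent outside ``large'' branches is negligible, and applies the triangular-array infinitely-divisible theorem (Theorem~\ref{eqconv}) to $n$-dependent summands $\tilde\chi_n^i$.  Both routes reduce to a Breiman-type tail identification for the time spent at a single heavy site, and both are in principle viable.  However, there is a concrete error in your tail estimate at exactly the point you yourself flag as the main difficulty.  You claim that, conditionally on $b_i=b$ large, ``a law of large numbers over the $\Theta(b)$ excursions gives $\chi_i=\tfrac{\beta\bar\sigma}{1+\beta}\,b\,N_i(1+o(1))$.''  The LLN over the $\Theta(b)$ excursion \emph{durations} $T^k$ is fine: $\chi_i\approx\bar\sigma W$ where $W$ is the total number of trap excursions.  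But $W$ itself does \emph{not} concentrate around its conditional mean $\tfrac{\beta b}{1+\beta}N_i$.  By Lemma~\ref{numexc}, conditionally on $b$ buds the total number of excursions is geometric with success probability $\tfrac{\beta-1}{(b+1)\beta-1}\sim\tfrac{\beta-1}{\beta b}$; equivalently, in your visit-by-visit decomposition $W=\sum_{m=1}^{N_i}G_m$ with each $G_m\sim\mathrm{Geo}$ of mean $\tfrac{\beta b}{1+\beta}$, so each $G_m/b$ converges to an exponential and $W/b$ converges to $\tfrac{\beta}{1+\beta}\sum_{m=1}^{N_i}E_m$, a genuinely random exponential variable (the standard deviation of $W$ is of the same order $b$ as its mean).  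There is no lighter ``$\gamma$-type remainder'' absorbing this: the exponential factor is first-order.  Consequently the correct mixing variable for Breiman is $\bar\sigma\cdot\tfrac{\beta}{1+\beta}\sum_{m=1}^{N_i}E_m$, a single $\exp(\theta)$ law with $\theta=(\beta-1)(1-\beta\mu)/(2\beta)$, not the deterministic-per-visit quantity $\tfrac{\beta\bar\sigma}{1+\beta}N_i$; the paper isolates exactly this exponential as $Z^\infty$ in Corollary~\ref{feconv} and plugs it into Lemma~\ref{expsv}.  Your stated Breiman constant $\Er[N_i^{\alpha-1}]\bigl(\tfrac{\beta\bar\sigma}{1+\beta}\bigr)^{\alpha-1}$ should instead be $\theta^{-(\alpha-1)}\Gamma(\alpha)$; these agree only at $\alpha-1=1$, so your constant $C_{\alpha,\beta,\mu}$ would come out wrong for $\alpha\in(1,2)$.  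Once this is corrected, the rest of your proposal (regeneration structure, one-big-jump exclusion of competing scenarios, $J_1$ upgrade from a single macroscopic jump) is a reasonable alternative framework, though it still needs the quantitative truncation/supermartingale control of excursion times (cf.\ Lemmas~\ref{supmart}--\ref{cnv2}) that you only gesture at.
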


We refer to the second $(\sigma^2<\infty, \;\beta\mu>1)$ and third $(\sigma^2=\infty, \;\beta\mu>1)$ cases as the finite variance and infinite excursion and infinite variance and infinite excursion cases respectively.
\begin{dfn}\label{finvar}(FVIE)
 The offspring distribution has mean $\mu$ satisfying $1<\mu^{-1}<\beta$ and variance $\sigma^2<\infty$.
\end{dfn}

\begin{dfn}\label{infall}(IVIE)
 The offspring distribution has mean $\mu$ satisfying $1<\mu^{-1}<\beta$ and belongs to the domain of attraction of a stable law of index $\alpha \in (1,2)$. 
\end{dfn}
As for IVFE, in IVIE we let $L$ vary slowly at $\infty$ such that (\ref{trsm}) holds and $(a_n)_{n\geq 1}$ be some scaling sequence for the size-biased law such that for any $x>0$, as $\nin$ we have $\Pr(\xi^*\geq xa_n) \sim x^{-(\alpha-1)}n^{-1}$. It then follows that $a_n=n^{\frac{1}{\alpha-1}}\tilde{L}(n)$ for some slowly varying function $\tilde{L}$. In FVIE, $a_n=n$ will suffice.

In FVIE and IVIE the heavy trappings are caused by excursions in deep traps because the walk is required to make long sequences of movements against the bias in order to escape. The times spent in large traps tend to cluster around $(\mu\beta)^\Hc$ where $\Hc$ is the height of the branch. Since $\Hc$ is approximately geometric we have that, for $\beta$ large, $(\mu\beta)^\Hc$ won't belong to the domain of attraction of any stable law. For this reason, as in \cite{arfrgaha}, we only see convergence along specific increasing subsequences $n_k(t)=\lfloor t \mu^{-k}\rfloor$ for $t>0$ in FVIE and $n_k(t)$ such that $a_{n_k(t)} \sim t\mu^{-k}$ for IVIE. Such a sequence exists for any $t>0$ since by choosing $n_k(t):=\sup\{m\geq 0:a_m< t\mu^{-k}\}$ we have that $a_{n_k}< t\mu^{-k} \leq a_{n_k+1}$ and therefore 
\begin{flalign*}
 1 \geq \frac{a_{n_k}}{t\mu^{-k}}  \geq \frac{a_{n_k}}{a_{n_k+1}} \rightarrow 1.
\end{flalign*}
 
Recalling (\ref{gamma}), the main results for FVIE and IVIE are Theorems \ref{finvarthm} and \ref{infallthm}, which reflect heavy trappings due to deep excursions (and also the large number of traps in IVIE). 
\begin{thm}\label{finvarthm}
 In FVIE, for any $t>0$ we have that as $k\rightarrow \infty$  \[\frac{\Delta_{n_k(t)}}{n_k(t)^\frac{1}{\gamma}} \rightarrow R_t\] in distribution under $\Pb$, where $R_t$ is a random variable with an infinitely divisible law.  
\end{thm}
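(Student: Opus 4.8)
The plan is to write $\Delta_n$, up to a term negligible at the relevant scale, as a sum of independent trap contributions, and then to read off the limit from the theory of row sums of triangular arrays of i.i.d.\ heavy-tailed variables; the subsequence $n_k(t)=\lfloor t\mu^{-k}\rfloor$ is forced by a lattice effect in the tail of the time spent in a single trap. Write $\rho=v_0,v_1,v_2,\dots$ for the backbone $\Yc$ and decompose $\Delta_n=B_n+\sum_{i=0}^{n-1}\chi_i$, where $B_n$ is the number of steps taken along $\Yc$ before time $\Delta_n$ and $\chi_i$ is the total time the walk spends inside the traps rooted at the buds of $v_i$, accumulated over all visits to $v_i$ before $\Delta_n$. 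Since $\beta>1$ the backbone walk $(Y_k)_{k\ge0}$ is transient with positive speed, so $B_n=\Theta(n)$ with high probability; as $1<\mu^{-1}<\beta$ forces $\gamma\in(0,1)$ we have $n=o(n^{1/\gamma})$, so $B_n/n^{1/\gamma}\to 0$ and $B_n$ may be discarded. It thus suffices to find the limit of $b_n^{-1}\sum_{i<n}\chi_i$, with $b_n:=n^{1/\gamma}$, taken along $n=n_k(t)$.

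The traps rooted at distinct backbone vertices are i.i.d.\ — each is an unconditioned $f$-GW tree and the bud count at $v_i$ is an independent copy of $\xi^*-1$ — but the $\chi_i$ themselves are not independent, because the number $G_i$ of visits of $(Y_k)$ to $v_i$ before $\Delta_n$, hence the number of trap excursions launched from $v_i$, depends on the trajectory. Since $\beta>1$ and $\sigma^2<\infty$ (so that $\Eb[\xi^*]<\infty$, giving $O(1)$ buds per backbone vertex), the visit counts $(G_i)$ form a tight sequence which, away from the last $o(n)$ levels, is asymptotically independent, and conditionally on the environment and on $(G_i)$ each $\chi_i$ is a sum of $G_i$ independent trap-excursion times. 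Turning this into an actual comparison — controlling the joint law of $(G_i)$, and showing that the correlations they induce, the boundary effects near level $n$, the excursions still running at time $\Delta_n$, and the truncation of rare very deep traps all contribute terms of size $o(n^{1/\gamma})$ (the place where $\gamma<1$ is used most heavily) — is the technical heart of the argument; it follows the analogous reduction in \cite{arfrgaha}, using the trap-counting estimates established earlier in the paper together with \cite{bo}. The conclusion is that, along $n_k(t)$, $b_n^{-1}\sum_{i<n}\chi_i$ has the same weak limit as $b_n^{-1}\sum_{i=1}^{n}\widehat\chi_i$, where the $\widehat\chi_i$ are i.i.d.\ copies of the time $\chi$ spent by a biased excursion (repeated an independent, tight number of times) inside a generic trap.

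It then remains to identify the tail of $\chi$. A generic trap is a subcritical $f$-GW tree with mean $\mu$, and, since $\sigma^2<\infty$ supplies the $x\log x$ condition, its height $\Hc$ satisfies $\Pb(\Hc\ge h)\sim c\,\mu^h$ for some $c>0$. On $\{\Hc=h\}$ with $h$ large, to escape the trap the walk must climb $h$ levels against the bias, so the time it spends there is concentrated on the scale $\beta^h$ with an $O(1)$ multiplicative fluctuation possessing exponential moments; as the number of excursions is tight, it affects only constants. Summing over $h$ and using $\beta^{\gamma}=\mu^{-1}$ gives, writing $u=\beta^{m+s}$ with $m\in\Zb$ and $s\in[0,1)$,
\begin{flalign}\label{tailchi}
\Pb(\chi>u)=u^{-\gamma}\bigl(F(s)+o(1)\bigr),\qquad u\to\infty,
\end{flalign}
for a strictly positive, $1$-periodic function $F$. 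The oscillation of $F$, due to $\Hc$ being integer valued, is the lattice effect, and it is exactly why $b_n^{-1}\sum_{i\le n}\widehat\chi_i$ cannot converge along all of $\Nb$; identifying $F$ (hence the Lévy measure below) and proving the quoted concentration of the trap escape time are the remaining points that require care.

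Finally, take $n=n_k(t)=\lfloor t\mu^{-k}\rfloor$, so that $b_{n_k(t)}\sim t^{1/\gamma}\beta^k$ and, as $k\in\Zb$, the fractional part $\{\log_\beta b_{n_k(t)}\}=\{\gamma^{-1}\log_\beta t\}$ is independent of $k$. Hence (\ref{tailchi}) gives, for every $y>0$,
\begin{flalign*}
n_k(t)\,\Pb\!\bigl(\widehat\chi_1>y\,b_{n_k(t)}\bigr)\ \longrightarrow\ \Lambda_t\bigl((y,\infty)\bigr):=y^{-\gamma}F\!\bigl(\{\log_\beta y+\gamma^{-1}\log_\beta t\}\bigr)\qquad (k\to\infty),
\end{flalign*}
where $\Lambda_t((y,\infty))\asymp y^{-\gamma}$ both as $y\to0$ and as $y\to\infty$, so $\int_0^\infty(1\wedge x)\,\Lambda_t(\d x)<\infty$ precisely because $\gamma<1$. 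The classical limit theorem for row sums of a triangular array of i.i.d.\ non-negative variables (cf.\ \cite{fe}) — whose hypotheses here reduce to the display above together with $n_k(t)\,\Eb[(\widehat\chi_1/b_{n_k(t)})^2\ind_{\{\widehat\chi_1\le b_{n_k(t)}\}}]\to\int_0^1 x^2\,\Lambda_t(\d x)<\infty$, ruling out a Gaussian component, and with no centering needed since $\gamma<1$ — yields $b_{n_k(t)}^{-1}\sum_{i=1}^{n_k(t)}\widehat\chi_i\cd R_t$, where $R_t$ is the (necessarily infinitely divisible) random variable with $\Eb[e^{-sR_t}]=\exp(-\int_0^\infty(1-e^{-sx})\Lambda_t(\d x))$. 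Combining with the first two steps gives $\Delta_{n_k(t)}/n_k(t)^{1/\gamma}\cd R_t$, as claimed. The main obstacle is the i.i.d.\ reduction in the second step.
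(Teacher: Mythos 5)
Your proposal follows essentially the same route as the paper: split $\Delta_n$ into a negligible backbone contribution plus trapping times; reduce, via the no-backtracking and time-outside-large-branches estimates of Sections~4--5, to i.i.d.\ copies of the time spent in a single large branch; identify the tail of that quantity as $u^{-\gamma}$ times a $\log_\beta$-periodic factor coming from the geometric height of the trap; and feed this into the classical triangular-array theorem (Theorem~\ref{eqconv} here, Feller for you) along $n_k(t)=\lfloor t\mu^{-k}\rfloor$. Concretely the paper proves your tail asymptotic by conditioning on the trap height $H$, showing $\chi/\beta^H\cd Z^\infty$ together with a uniform-in-$H$ domination, and then writing the limiting L\'evy spectral function as the lattice sum $-(1-\beta^{-\gamma})\sum_{K\in\Zb}\beta^{K\gamma}\overline F_\infty(\lambda x\beta^K)$ (Theorem~\ref{10.1}); its apparent drift $d_\lambda$ is exactly $\int_0^\infty\tfrac{x}{1+x^2}\,\d\Lc_\lambda(x)$, so the limit is precisely the driftless subordinator you write down.

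One claim in your sketch is too strong and would not survive scrutiny as written: you assert that the multiplicative fluctuation $\chi/\beta^H$ ``possesses exponential moments''. Even in FVIE the paper only establishes a domination by a $Z_{\mathrm{sup}}$ with $\Er[Z_{\mathrm{sup}}^{\gamma+\delta}]<\infty$ for some small $\delta>0$ (property~\ref{Zsup} of Section~8.1), and this is essentially sharp: the fluctuation is built from $\Et[\Rc_\infty]=2\sum_{n\ge0}\beta^{-n}(1+\Lambda_n)$, and the subtrap weights $\Lambda_n$ in turn involve $\sum_k\beta^k Z_k$ for subcritical GW-trees, which (even with $\sigma^2<\infty$) have polynomial, not exponential, tails. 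Fortunately your conclusion only requires the weaker bound: in the decomposition $\Pb(\chi>u)=\sum_h\Pr(H=h)\,\Pb(W_h>u\beta^{-h})$, a uniform tail $\Pb(W_h>x)\le Cx^{-(\gamma+\delta)}$ already forces the contribution from $h<\log_\beta u-M$ to be $O(u^{-\gamma}\beta^{-M\delta})$, using $\mu\beta^{\gamma}=1$, which vanishes as $M\to\infty$; this is exactly what the $o(1)$ in your display needs. So the strategy stands, but the proof of the ``concentration on the scale $\beta^h$'' step must use the $(\gamma+\delta)$-moment bound, not exponential moments, and establishing that bound is a genuine piece of work (it is the content of property~\ref{Zsup}), not a throwaway.
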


\begin{thm}\label{infallthm}
 In IVIE, for any $t>0$ we have that as $k\rightarrow \infty$ \[\frac{\Delta_{n_k(t)}}{a_{n_k(t)}^\frac{1}{\gamma}} \rightarrow R_t\] in distribution under $\Pb$, where $R_t$ is a random variable with an infinitely divisible law.  
\end{thm}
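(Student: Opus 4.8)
The plan is to follow the route used for FVIE (Theorem \ref{finvarthm}) and, before it, for the supercritical tree in \cite{arfrgaha}, while accounting for the additional slowing produced by the infinite‑mean bud distribution. First I would reduce $\Delta_n$ to a sum of essentially independent single‑vertex contributions. In the conditioned subcritical tree the backbone $\Yc$ is a single ray carrying i.i.d.\ decorations: at the $i\th$ backbone vertex hang $\xi^*_i-1$ buds, each rooting an independent unconditioned $\mu$‑subcritical Galton--Watson tree that forms a trap. Since $\beta>1$ the walk's reduced motion on $\Yc$ is a nearest‑neighbour walk with strictly positive drift to the right, so (as in \cite{lypepe,bo}) one may introduce regeneration levels splitting the trajectory into i.i.d.\ blocks spanning $O(1)$ backbone levels with exponential tails, and with high probability the walk never revisits a level $i$ once it has reached level $i+O(\log n)$. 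Writing $\chi_i$ for the time spent in the traps attached to the $i\th$ backbone vertex up to time $\Delta_n$, this yields $\Delta_n=\sum_{i=0}^{n-1}\chi_i+E_n$, where $E_n$ is the time spent on $\Yc$ itself; $E_n$ is of order $n=o(a_n^{1/\gamma})$ since $\gamma(\alpha-1)<1$, and the $O(\log n)$ boundary traps near level $n$ also contribute $o(a_n^{1/\gamma})$. Hence it suffices to show $a_{n_k(t)}^{-1/\gamma}\sum_{i<n_k(t)}\chi_i\cd R_t$, and grouping into regeneration blocks turns this into a sum of order $n_k(t)$ i.i.d.\ terms.

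The heart of the argument is the tail and lattice structure of a single block, equivalently of $\chi_0$. Decomposing over excursions into the individual buds, $\chi_0$ equals, up to a light‑tailed (geometric) number‑of‑visits factor, a sum over the $\xi^*-1$ buds of the time spent in each trap; by the electrical‑network/Green's‑function estimates for finite Galton--Watson traps developed in the earlier sections, a trap of height $h$ retains the walk for a time of order $\beta^{h}$ times an $O(1)$ fluctuation, and since trap heights are essentially geometric with ratio $\mu$ one has $\Pr(\beta^{\Hc}>x)\asymp x^{-\gamma}$. Combining the random number $\xi^*-1$ of buds, which lies in the domain of attraction of a stable law of index $\alpha-1<1$, with the per‑trap tail of index $\gamma<1$ (so that a sum is dominated by its maximal term) gives $\Pb(\chi_0>x)=x^{-\gamma(\alpha-1)}\ell(x)$ with $\ell$ slowly varying; I would pin down $\ell$ by matching $\Pr(\xi^*>\mu^{-h})$ with $\Pb(\chi_0>c\beta^h)$, so that the scaling sequence of $\sum_{i<n}\chi_i$ is exactly $a_n^{1/\gamma}$. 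The essential point is that on $\{\chi_0>x\}$ the dominant term is $\beta^{\Hc_{\max}}$ for the (integer) largest trap height $\Hc_{\max}$ at the vertex, times a fluctuation whose mean is finite \emph{precisely because $\mu\beta>1$} --- the number of buds of near‑maximal height grows geometrically with ratio $\mu^{-1}$ and $\sum_j(\mu\beta)^{-j}<\infty$ --- so that, conditioned to be large, $\chi_0$ concentrates on the geometric lattice $\{c\beta^{j}:j\in\Zb\}$. This lattice effect is responsible both for the restriction to subsequences and for the limit being merely infinitely divisible rather than stable.

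It then remains to run the classical triangular‑array argument along the chosen subsequence. Because $a_{n_k(t)}^{1/\gamma}\sim t^{1/\gamma}\beta^{k}$ lies exactly on $\{\beta^{k}\}$, the rescaled lattice $\{c\beta^{j}/a_{n_k(t)}^{1/\gamma}\}$ is asymptotically independent of $k$, whereas for a generic sequence it would drift log‑periodically and prevent convergence. One verifies that $n_k(t)\,\Pb\big(\chi_0/a_{n_k(t)}^{1/\gamma}\in\cdot\,\big)$ converges vaguely on $(0,\infty]$ to a L\'evy measure $\Pi_t$ --- a superposition over $j\in\Zb$, with weights proportional to $t^{\alpha-1}\mu^{(\alpha-1)j}$, of the law of $\beta^{j}t^{-1/\gamma}$ times the finite‑mean fluctuation --- and that the truncated small jumps are negligible, which is automatic since $\gamma(\alpha-1)<1$ forces $n_k(t)\,a_{n_k(t)}^{-1/\gamma}\Eb[\chi_0\wedge\varepsilon a_{n_k(t)}^{1/\gamma}]$ to be of order $\varepsilon^{1-\gamma(\alpha-1)}\to0$. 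The classical convergence criterion then gives $a_{n_k(t)}^{-1/\gamma}\sum_{i<n_k(t)}\chi_i\cd R_t$, where $R_t$ is the pure‑jump infinitely divisible random variable with L\'evy measure $\Pi_t$; together with the first step this proves the theorem.

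The main obstacle is the second step: extracting the tail \emph{and} the lattice concentration of $\chi_0$ with sufficient precision. This needs sharp, rather than order‑of‑magnitude, electrical‑network estimates for the escape time from a finite Galton--Watson trap --- in particular the distribution of the $O(1)$ fluctuation around $\beta^{\Hc}$ and the finiteness of its mean, which is exactly where $\mu\beta>1$ is used --- together with a careful ``double heavy tail'' computation showing that the random sum $\sum_{b\le\xi^*-1}\beta^{\Hc_b}$ lies in the domain of attraction of a stable law of index $\gamma(\alpha-1)$ with the correct slowly varying normalisation while inheriting the geometric‑lattice concentration of the integer heights $\Hc_b$; neither the deep‑trap nor the many‑buds mechanism may be dropped. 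A secondary technical point is controlling the slowly varying corrections uniformly enough that the choice $a_{n_k(t)}\sim t\mu^{-k}$ genuinely synchronises the limit.
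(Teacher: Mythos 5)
Your plan matches the paper's overall architecture very closely: reduce $\Delta_n$ to a sum of essentially independent per-branch contributions (Sections 4--5 of the paper, via ``walk never backtracks from one large branch to the previous one'' rather than literal regeneration levels, but these are interchangeable), analyse the tail and lattice structure of a single branch's excursion time (Section 7), and then run a triangular-array infinitely-divisible convergence criterion (Section 8, via Theorem IV.6 of Petrov). Your tail exponent $x^{-\gamma(\alpha-1)}$ and the explanation of the lattice effect via the integer maximal trap height are correct, as is the point that the two slowing mechanisms (deep traps and many buds) both operate here.

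The gap you acknowledge at the end is real and is exactly where the paper does its hardest work, but there is one specific ingredient that your sketch leaves unnamed and that you cannot get around: the \emph{conditional} joint law of the number of buds $\xi^*-1$ given the maximal trap height $\overline{H}$. You write that ``the number of buds of near-maximal height grows geometrically with ratio $\mu^{-1}$'', which is intuitively right, but you never extract the actual scaling limit $(\xi^*-1)/(\mu^{-\overline{H}}/c_\mu) \Rightarrow \overline{\xi}$ under $\Pr(\cdot\mid\overline{H}=h_n^0+K)$ and identify $\overline{\xi}$; this is the paper's Lemma \ref{hgtnum}, proved by a somewhat delicate Bayesian computation using the tail asymptotics for both $\xi^*$ and $\Hc(\Tc^{*-})$. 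Without it you cannot identify the conditional limit of $\chi_0\beta^{-\overline{H}}$ --- the paper's $Z_\infty$ --- and hence cannot write down the L\'evy measure $\Pi_t$ or prove the dominated-convergence steps needed to sum over the lattice $K\in\Zb$. Relatedly, you assert the multiplicative fluctuation around $\beta^{\overline{H}}$ has \emph{finite mean}; the paper neither proves nor needs this --- via Lemma \ref{dominate} it only establishes moments of order $1-\epsilon$, and the triangular-array argument only requires moments slightly above $\gamma(\alpha-1)<1$. Finally, your ``sharp electrical-network estimates for the escape time'' are realised in the paper through the Geiger--Kersting spinal decomposition of a GW-tree conditioned on its height (Section 7), which is what lets one separate the time into a geometric number of excursions from the deepest point and show that everything else is negligible (Proposition \ref{thbck}).
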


We write $r_n$ to be $a_n$ in IVFE, $n^{1/\gamma}$ in FVIE and $a_n^{1/\gamma}$ in IVIE; then, letting $b_n:=\max\{m\geq 0:r_m\leq n\}$ we will also prove Theorem \ref{tghtthm}. This shows that, although the laws of $X_n/b_n$ don't converge in general (for FVIE and IVIE), the suitably scaled sequence is tight and we can determine the leading order polynomial exponent explicitly.
\begin{thm}\label{tghtthm}
 In IVFE, FVIE or IVIE we have that
 \begin{enumerate}
  \item\label{deltght} The laws of $(\Delta_n/r_n)_{n\geq 0}$ under $\Pb$ are tight on $(0,\infty)$;
  \item\label{xtght} The laws of $(|X_n|/b_n)_{n\geq 0}$ under $\Pb$ are tight on $(0,\infty)$.
 \end{enumerate}
Moreover, in IVFE, FVIE and IVIE respectively, we have that $\Pb$-a.s.\
\begin{flalign*}
 \limn \frac{\log|X_n|}{\log(n)} & =\alpha-1; \\
 \limn \frac{\log|X_n|}{\log(n)} & =\gamma; \\
 \limn \frac{\log|X_n|}{\log(n)} & =\gamma(\alpha-1).
\end{flalign*}
\end{thm}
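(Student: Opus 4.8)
Proof proposal. The plan is to prove part (\ref{deltght}) first and then to deduce part (\ref{xtght}) and the three almost-sure exponents from it, using the inverse relationship between $|X_n|$ and the backbone hitting times. Introduce $m_n:=\max\{j\ge 0:\Delta_j\le n\}$, so that $\{m_n\ge j\}=\{\Delta_j\le n\}$ exactly. Two elementary estimates will let me replace $|X_n|$ by $m_n$ up to an $O(\log n)$ error: (i) the height of a trap is the extinction generation of a subcritical GW-tree and so has a geometric tail, so a union bound shows that with summable exceptional probability (along a geometric subsequence of $n$) every trap hanging off the first $n^{10}$ backbone vertices has height $O(\log n)$, whence $|X_n|$ is never more than $O(\log n)$ deeper than the deepest backbone vertex visited; (ii) the backbone-observed walk, with trap excursions collapsed, is the nearest-neighbour walk on $\Zb_{\ge 0}$ that steps towards the root with probability $1/(1+\beta)$ — this ratio is independent of the number of buds — hence has positive speed and exponentially tight backtracking, so $|X_n|$ is never more than $O(\log n)$ below the deepest backbone vertex visited. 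Together these give $\bigl||X_n|-m_n\bigr|\le C\log n$ with summable exceptional probability; once $m_n$ is shown to grow polynomially, this additive error is negligible for every statement in the theorem.

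For part (\ref{deltght}): in IVFE this is immediate, since Theorem \ref{finexcthm} gives $\Delta_n/a_n=\Delta_n/r_n$ converging in law to $R_1$, which is supported on $(0,\infty)$, and a convergent-in-law sequence is tight on $(0,\infty)$. In FVIE and IVIE only subsequential convergence along $n_p:=n_p(1)$ is available (Theorems \ref{finvarthm}, \ref{infallthm}), so I will sandwich: for general $n$ pick $p$ with $n_p\le n<n_{p+1}$; then monotonicity of $j\mapsto\Delta_j$ and $j\mapsto r_j$ gives, $\Pb$-a.s.,
\[
\frac{r_{n_p}}{r_{n_{p+1}}}\cdot\frac{\Delta_{n_p}}{r_{n_p}}\;\le\;\frac{\Delta_n}{r_n}\;\le\;\frac{r_{n_{p+1}}}{r_{n_p}}\cdot\frac{\Delta_{n_{p+1}}}{r_{n_{p+1}}},
\]
where $r_{n_{p+1}}/r_{n_p}$ converges to a positive constant (regular variation of $r$ along the geometric sequence $n_p$) and the two extreme expressions converge in law to positive multiples of $R_1$. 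Hence $\Delta_n/r_n$ is, for all large $n$, sandwiched between tight families charging no mass at $0$, which is tightness on $(0,\infty)$.

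For part (\ref{xtght}) and the exponents: first note $b_n=\max\{m:r_m\le n\}$ is the asymptotic inverse of the regularly varying $r$, so $\log b_n/\log n\to L$ with $L=\alpha-1$ (IVFE), $L=\gamma$ (FVIE), $L=\gamma(\alpha-1)$ (IVIE) — exactly the claimed exponents — and $r_{b_n}\sim n$. Using $\{m_n\ge j\}=\{\Delta_j\le n\}$ and regular variation, $\Pb(m_n\ge Ab_n)=\Pb\bigl(\Delta_{\lceil Ab_n\rceil}/r_{\lceil Ab_n\rceil}\le n/r_{\lceil Ab_n\rceil}\bigr)$ with $n/r_{\lceil Ab_n\rceil}\to A^{-1/L}$, which is uniformly small for $A$ large by the "no mass at $0$" half of part (\ref{deltght}); symmetrically $\Pb(m_n\le ab_n)$ is uniformly small for $a$ small by the "no mass at $\infty$" half. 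Thus $m_n/b_n$, and hence by the reduction $|X_n|/b_n$, is tight on $(0,\infty)$, proving part (\ref{xtght}). For the exponent it remains to show $\log m_n/\log n\to L$ $\Pb$-a.s. The bound $\limsup\le L$ follows by showing $\Pb(\Delta_{\lceil n^{L+\varepsilon}\rceil}\le n)$ summable along $n=2^p$ and filling gaps by monotonicity of $m_n$: here $n/r_{\lceil n^{L+\varepsilon}\rceil}\to 0$ polynomially, and I will bound the small-ball probability $\Pb(\Delta_j\le\delta r_j)\le C\exp(-c\,\delta^{-\kappa})$ uniformly in $j$ via a Chernoff/Laplace-transform estimate applied to the representation of $\Delta_j$ as comparable to a sum of $j$ i.i.d.\ nonnegative excursion costs with polynomial tail of some index $\chi<1$. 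The bound $\liminf\ge L$ follows by showing $\Pb(\Delta_{\lceil n^{L-\varepsilon}\rceil}>n)$ summable along $n=2^p$; here $n/r_{\lceil n^{L-\varepsilon}\rceil}\to\infty$ polynomially, and the polynomial upper tail $\Pb(\Delta_j/r_j>\lambda)\le C\lambda^{-\chi}$ — from the same i.i.d.-sum representation — suffices. Borel--Cantelli plus monotone interpolation give $\log m_n/\log n\to L$ a.s., and Paragraph 1 transfers this to $|X_n|$.

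The main obstacle is upgrading the in-law statements of part (\ref{deltght}) to the two uniform-in-$j$ tail bounds on $\Delta_j/r_j$. Because the trap costs are heavy-tailed (this being the whole point of the model), the upper tail of $\Delta_j/r_j$ is only polynomial, so the $\liminf$ step cannot be run by a crude union bound over all $n$ and must be routed through a geometric subsequence together with monotonicity of $\Delta_\cdot$ and $m_\cdot$; conversely the small-ball bound has to be shown to decay faster than any polynomial in $\delta^{-1}$, which needs the Laplace-transform argument and, in particular, a clean i.i.d.-sum representation of $\Delta_j$ — the cost of the traps and buds hanging off a single backbone vertex being i.i.d.\ across backbone levels up to boundary effects. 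The remaining ingredients — the geometric height tail for subcritical traps, the $1/(1+\beta)$ backtracking probability of the collapsed backbone walk, and the regular-variation inversion giving $\log b_n/\log n\to L$ — are routine.
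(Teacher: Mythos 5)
Your treatment of parts (\ref{deltght}) and (\ref{xtght}) essentially reproduces the paper's argument: the monotone sandwich $\Delta_{n_p}/r_{n_{p+1}}\le\Delta_n/r_n\le\Delta_{n_{p+1}}/r_{n_p}$ together with $r_{n_{p+1}}/r_{n_p}\to\beta$ is exactly how the paper converts the subsequential limits into tightness on $(0,\infty)$; and the reduction of $|X_n|$ to the backbone hitting scale --- controlling (i) the depth of the current trap by the $O(\log n)$ maximum trap height via a summable union bound on the heights, and (ii) the backtrack below the running maximum by exponential tails of the collapsed backbone walk (which is indeed a simple $\beta$-biased walk with backward probability $1/(1+\beta)$, independent of the bud count) --- is what the paper does via $D'(n)$ and the regeneration-time gaps of $Y$.

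For the three a.s.\ exponent limits, however, you are taking a genuinely different and heavier route, and it has a gap as written. You propose Borel--Cantelli along $n=2^p$ driven by two uniform-in-$j$ tail bounds on $\Delta_j/r_j$: a stretched-exponential small-ball bound $\Pb(\Delta_j\le\delta r_j)\le C\exp(-c\delta^{-\kappa})$ and a polynomial upper-tail bound $\Pb(\Delta_j/r_j>\lambda)\le C\lambda^{-\chi}$. Neither of these is established in the paper, and neither follows from the subsequential convergence theorems: those control the law of $\Delta_{n_k}/r_{n_k}$ only along $(n_k)$ and, between sample points, the law drifts (this lattice drift is precisely why only subsequential convergence holds in FVIE/IVIE). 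Establishing the uniform bounds is plausible --- the small-ball bound should follow not from a Chernoff estimate on the sum but from the maximum of the (approximately i.i.d.) branch times $T_i$, since $\Pb(\max_{i\le j}T_i\le\delta r_j)\approx\exp(-j\Pb(T_1>\delta r_j))\approx\exp(-c\delta^{-\chi})$ when $j\Pb(T_1>r_j)\to\mathrm{const}$, and the upper tail from regular variation of a single $T_i$ --- but this requires (a) a precise two-sided i.i.d.\ comparison for $\Delta_j$ over \emph{all} branches, not just the ``large'' ones for which the paper sets up independence, (b) a matching uniform lower bound $\Pb(T_1>x)\ge cx^{-\chi}$, and (c) care with the slowly varying factors in $r_j$ in IVFE/IVIE, which shift the effective exponent in the exponent of the exponential. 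The sentence ``the representation of $\Delta_j$ as comparable to a sum of $j$ i.i.d.\ nonnegative excursion costs'' glosses over exactly the approximations the paper labours to control in Sections 4--5, and only for large branches. You flag this as ``the main obstacle'', but it is a real gap, not a deferred routine step.

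The paper's own argument is much lighter and worth contrasting. It deduces both directions of the a.s.\ exponent from the tightness of $(|X_n|/b_n)$ already established, plus two soft ingredients: (a) for $\liminf\log|X_n|/\log n\ge L$, Fatou's lemma applied to the tightness gives $\Pb(\liminf|X_n|/b_n\le t^{-1})\to0$ as $t\to\infty$, with no tail estimate on $\Delta_j$ at all; (b) for $\limsup\le L$, Borel--Cantelli on the event $D'(n)^c$ that some branch among the first $C_1n$ backbone vertices has height exceeding $4\log(a_n)/\log(\mu^{-1})$, which has probability $o(n^{-2})$ by the polynomial branch-height tail (\ref{numcrttrp}), combined with exponential moments of the regeneration-time gaps $\tau_i-\tau_{i-1}$ of $Y$ to bound $\sup_{k\le n}|X_k|-|X_{\kappa_n}|$, and then Fatou again. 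This sidesteps any uniform control of $\Delta_j/r_j$ between the sample points $n_k$. Before investing in your route, note that the paper's version gets exactly the same conclusion from ingredients you already have: you should use it unless you have an independent reason to want the uniform small-ball and upper-tail bounds (which are interesting in their own right but would constitute a separate lemma).
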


The final case we consider is an extension of a result of \cite{arfrgaha} for the walk on the supercritical tree which we put aside until Section \ref{suptree} since it only requires several technical lemmas and the argument is of a different structure to the subcritical tree. For the same reason as in FVIE, we only see convergence along specific subsequences $n_k(t)=\lfloor t f'(q)^{-k}\rfloor$ for $t>0$. 
\begin{thm}\label{suptreethm}(Infinite variance supercritical case)
Suppose the offspring law belongs to the domain of attraction of some stable law of index $\alpha \in (1,2)$, has mean $\mu>1$ and the derivative of the generating function at the extinction probability satisfies $\beta>f'(q)^{-1}$. Then, \[\frac{\Delta_{n_k(t)}}{n_k(t)^\frac{1}{\gamma}} \rightarrow R_t\] in distribution as $k\rightarrow \infty$ under $\Pb$, where $R_t$ is a random variable with an infinitely divisible law whose parameters are given in \cite{arfrgaha}. Moreover, the laws of $(\Delta_nn^{-\frac{1}{\gamma}})_{n\geq 0}$ and $(|X_n|n^{-\gamma})_{n\geq 0}$ under $\Pb$ are tight on $(0,\infty)$ and $\Pb$-a.s.\
\begin{flalign*}
 \limn \frac{\log|X_n|}{\log(n)} & =\gamma.
\end{flalign*}
\end{thm}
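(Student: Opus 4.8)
The plan is to follow the strategy of \cite{arfrgaha} for the finite variance case, locating precisely where the heavy tail of the offspring law intervenes and checking that it alters only multiplicative constants and not the scaling exponent $\gamma$. Write $\gamma=\log(f'(q)^{-1})/\log\beta$ and note that $\beta>f'(q)^{-1}$ forces $\gamma\in(0,1)$, so that $n^{1/\gamma}$ is superlinear. Using the inverse relationship between $|X_n|$ and $(\Delta_m)_{m\ge0}$ described in the introduction, together with a backtracking estimate bounding, uniformly in $n$, how far $X$ descends below its running maximum and how deep it enters any single finite bush (both are $O(\log n)$, since the walk is transient and the bushes are small), it suffices to prove the three corresponding statements for $\Delta_m$: weak convergence of $\Delta_{n_k(t)}/n_k(t)^{1/\gamma}$, tightness of $\Delta_mm^{-1/\gamma}$ on $(0,\infty)$, and $\log\Delta_m/\log m\to 1/\gamma$ $\Pb$-a.s.

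Decompose $\Delta_n=T_n^{\Yc}+\sum_{v}\chi_v$, where $T_n^{\Yc}$ counts the backbone-to-backbone steps made before level $n$ and $\chi_v$ is the total time the walk spends in excursions from a backbone vertex $v$ into the finite bushes rooted at its buds, the sum being over the backbone vertices visited before $\Delta_n$. The walk induced on $\Yc$ is precisely the $\beta$-biased walk on the reduced tree $\Yc$, a supercritical GW-tree without leaves; since $\beta>1$ this walk is ballistic (see \cite{lypepe}), so $T_n^{\Yc}=O(n)=o(n^{1/\gamma})$ with high probability and may be discarded. Next introduce regeneration levels for the backbone walk, which occur with positive density because $\Yc$ has finite offspring mean (this is where $\alpha>1$ is used for the backbone): across the $i$th inter-regeneration block the walk explores an i.i.d.\ portion of the environment, the time increments $\Theta_i$ of $\Delta$ over these blocks are i.i.d.\ for $i\ge 2$, the number $N_n$ of completed blocks before level $n$ satisfies $N_n/n\to\kappa$ $\Pb$-a.s.\ for an explicit $\kappa>0$, and $\Delta_n=\sum_{i\le N_n}\Theta_i+o(n^{1/\gamma})$. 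Everything thus reduces to the large-deviation behaviour of i.i.d.\ sums of copies of $\Theta$.

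The heart of the matter is the tail estimate $\Pb(\Theta>x)\asymp x^{-\gamma}$, with a prefactor bounded above and below and multiplicatively periodic in $\log x$ with period $\log\beta$. A block time is, up to bounded factors, a sum over the buds met in the block of the times spent in the attached bushes; each bush is a subcritical GW-tree with offspring generating function the exponential tilt $\hat f(s)=f(qs)/q$ of $f$, which has geometrically decaying tails and therefore all moments, regardless of the tail of $f$ itself. Consequently the Kolmogorov--Yaglom asymptotics (survival to height $h$ has probability $\sim c\,f'(q)^{h}$, and the conditioned generation sizes converge in law) hold verbatim, and an electrical-network computation of the quenched mean exit time of the $\beta$-biased walk from a bush, combined with the geometric number of returns to it, shows that one bush contributes time with tail $\sim C_0x^{-\gamma}$, the lattice $\{\beta^{h}\}_{h}$ of values of $\beta^{\mathrm{height}}$ being responsible for the oscillating prefactor. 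The sole role of the heavy tail of $f$ is that the number $K_v$ of buds at a backbone vertex inherits an $\alpha$-stable tail; since $\gamma<1<\alpha$, a single-big-jump estimate shows that a large value of $\chi_v$ is typically produced by one anomalously deep bush rather than by many moderate ones or by an anomalously large $K_v$, whence $\Pb(\chi_v>x)\sim\Eb[M_v]\,C_0x^{-\gamma}$, where $M_v$ is the (a.s.\ finite and integrable) number of bush excursions from $v$. Aggregating over a block gives the claimed tail for $\Theta$, with a constant into which $\Eb[K_v]$ now enters. I expect this step --- building the regeneration decomposition over a backbone with heavy-tailed offspring law, and controlling the interaction of the heavy bud count with the deep-bush excursion times finely enough to confirm that $\gamma$ is untouched while tracking the constant --- to be the main obstacle, and the reason several technical lemmas are needed.

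Given the tail of $\Theta$, the three conclusions follow as in \cite{arfrgaha}. Along $n_k(t)=\lfloor tf'(q)^{-k}\rfloor$ one has $n_k(t)^{1/\gamma}\sim t^{1/\gamma}\beta^{k}$ because $f'(q)^{-1/\gamma}=\beta$, so rescaling by $n_k(t)^{1/\gamma}$ shifts the argument of the prefactor by an integer number of its periods; hence the point process $\sum_i\delta_{\Theta_i/n_k(t)^{1/\gamma}}$ converges to a Poisson point process on $(0,\infty)$ with the ($\beta$-periodic, hence non-power) intensity identified in \cite{arfrgaha}, and summing the points gives $\Delta_{n_k(t)}/n_k(t)^{1/\gamma}\to R_t$, an infinitely divisible and (owing to that periodicity) non-stable law whose parameters are as there; the same periodicity is why convergence fails for general $n$. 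Upper tightness of $\Delta_mm^{-1/\gamma}$ on $(0,\infty)$ is a single-big-jump union bound using the upper bound on the prefactor, while lower tightness holds because, with $\asymp\kappa m$ i.i.d.\ blocks, $\Pb(\max_{i\le N_m}\Theta_i>\varepsilon m^{1/\gamma})\ge 1-\exp(-c\varepsilon^{-\gamma})\to 1$ as $\varepsilon\to 0$ by the lower bound on the prefactor. Finally $\Pb(\Delta_m<m^{1/\gamma-\varepsilon})\le\exp(-c\,m^{\varepsilon\gamma})$ and $\Pb(\Delta_{2^j}>2^{j(1/\gamma+\varepsilon)})\le C2^{-j\varepsilon\gamma}$ are both summable, so Borel--Cantelli together with the monotonicity of $m\mapsto\Delta_m$ gives $\log\Delta_m/\log m\to 1/\gamma$ $\Pb$-a.s., and this transfers through the inverse relationship to $\log|X_n|/\log n\to\gamma$ $\Pb$-a.s.; the tightness of $|X_n|n^{-\gamma}$ follows in the same way.
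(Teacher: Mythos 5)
Your route is correct in spirit but genuinely different from the paper's. The paper does not re-derive the machinery of \cite{arfrgaha}: it observes that the finite-variance proof carries over verbatim --- because the trap offspring law $f(qs)/q$ has exponential moments regardless of the tail of $f$, and the expected bud count is bounded by $\mu(1-q)^{-1}<\infty$ since $\alpha>1$ --- provided only three ingredients are re-proved under the weaker hypothesis: the branch-height tail $\Pr^*(\Hc(\Tc^{*-})>n)\sim C^*f'(q)^n$ (Lemma~\ref{6.1}), uniqueness with high probability of the deep trap inside a deep branch (Lemma~\ref{7.4}), and the $O(\log n)$ control on the tallest branch seen by level~$n$ (Lemma~\ref{maxhgt}). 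In \cite{arfrgaha} these used a second-order Taylor expansion of $f$ near~$1$, which fails here because $f''(1)=\infty$; the paper replaces it by the stable-law expansion of Lemma~\ref{genform}, whose correction term $(1-s)^\alpha\overline{L}((1-s)^{-1})$ is strictly lower order than the linear term precisely because $\alpha>1$. You, by contrast, re-sketch the entire regeneration decomposition, the tail estimate $\Pb(\Theta>x)\asymp x^{-\gamma}$ with $\log\beta$-periodic prefactor, the single-big-jump argument, and the point-process convergence from scratch; your physical intuitions (exponentially light traps, finite mean bud count, $\gamma<\alpha$ forcing one deep bush to dominate) are the right ones, and your assertion $\Pb(\chi_v>x)\sim\Eb[M_v]C_0x^{-\gamma}$ is essentially what Lemma~\ref{7.4} makes rigorous. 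But your plan commits you to far more work than the paper's, and the step you flag as the main obstacle is not, in the paper, building a new regeneration structure over a heavy-tailed backbone --- rather it is a delicate local analysis of the generating function near its fixed point, after which the whole existing decomposition of \cite{arfrgaha} is simply re-used. You should also make explicit that it is the \emph{expansion of $1-\Er[s^{K_v}]$ near $s=1$}, not the excursion-time computation inside a bush, where the heavy tail actually bites, since the bushes themselves are insensitive to~$\alpha$.
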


The proofs of Theorems \ref{finexcthm}, \ref{finvarthm} and \ref{infallthm} follow a similar structure to the corresponding proof of \cite{arfrgaha} which, for the walk on the supercritical tree, only considers the case in which the variance of the offspring distribution is finite. However, for the latter reason, the proofs of Theorems \ref{finexcthm} and \ref{infallthm} become more technical in some places, specifically with regards to the number of traps in a large branch. The proof can be broken down in to a sequence of stages which investigate different aspects of the walk and the tree. This is ideal for extending the result onto the supercritical tree because many of these behavioural properties will be very similar for the walk on the subcritical tree due to the similarity of the traps.

In all cases it will be important to decompose large branches. In Section \ref{numtrp} we show a decomposition of the number of deep traps in any deep branch. This is only important for FVIE and IVIE since the depth of the branch plays a key role in decomposing the time spent in large branches. In Section \ref{trpapr} we determine conditions for labelling a branch as large in each of the regimes so that large branches are sufficiently far apart so that, with high probability, the underlying walk won't backtrack from one large branch to the previous one. In Section \ref{spntlrg} we justify the choice of label by showing that time spent outside these large branches is negligible. From this we then have that $\Delta_n$ can be approximated by a sum of i.i.d.\ random variables whose distribution depends on $n$. In Section \ref{siid} we only consider IVFE and show that, under a suitable scaling, these variables converge in distribution which allows us to show the convergence of their sum. Similarly, in Section \ref{wlkindp} we show that the random variables, suitably scaled, converge in distribution for FVIE and IVIE. We then show convergence of their sum in Section \ref{convsub}. In Section \ref{tght} we prove Theorem \ref{tghtthm} which is standard following Theorems \ref{finexcthm}, \ref{finvarthm} and \ref{infallthm}. Finally, in Section \ref{suptree}, we prove three short lemmas which extend the main result of \cite{arfrgaha} to prove Theorem \ref{suptreethm}.

\section{Number of traps}\label{numtrp}
In the construction of the subcritical GW-tree conditioned to survive described in the introduction, the special vertices form the infinite backbone $\Yc$ consisting of all vertices with an infinite line of descent. For $i=0,1,...$ we denote the vertex in $\Yc$ in generation $i$ as $\rho_i$. Each vertex on the backbone is connected to buds $\rho_{i,j}$ for $j=1,...,\xi^*_{\rho_i}-1$ (which are the normal vertices that are offspring of special vertices in the construction). Each of these is then the root of an $f$-GW tree $\Tc^*_{\rho_{i,j}}$. We call each $\Tc^*_{\rho_{i,j}}$ a trap and the collection from a single backbone vertex (combined with the backbone vertex) $\Tc^{*-}_{\rho_i}$ a branch. Figure \ref{treediag} shows an example of the first five generations of a tree $\Tc^*$. The solid line represents the backbone and the two dotted ellipses identify a sample branch and trap. The dashed ellipse indicates the children of $\rho_1$ which, since $\rho_1$ is on the backbone, have quantity distributed according to the size-biased law.

\begin{figure}[H]
\centering
 \includegraphics[scale=0.8]{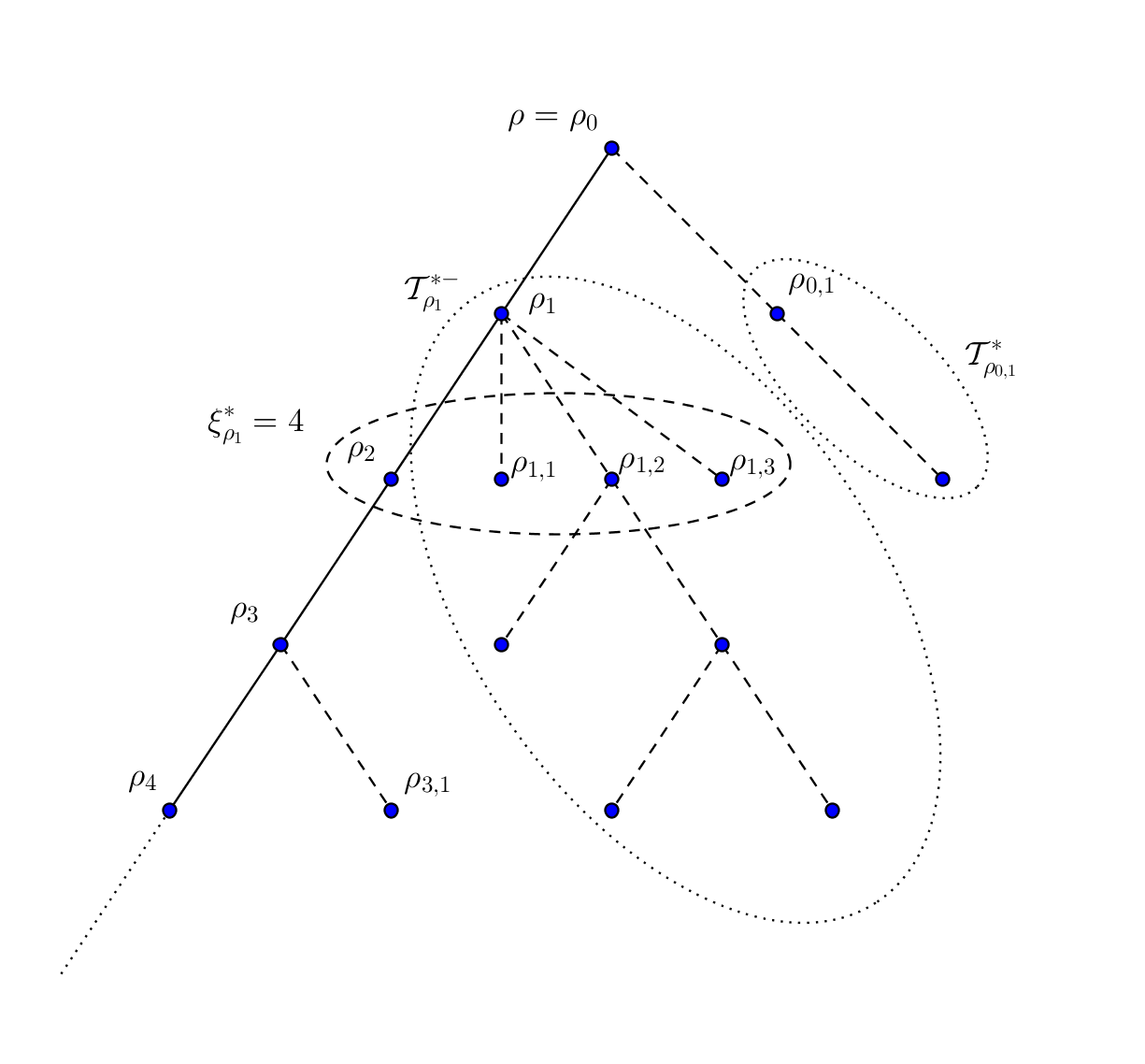} 
\caption{A sample subcritical tree.}\label{treediag}
\end{figure}

The structure of the large traps will have an important role in determining the convergence of the scaled process. In this section we determine the distribution over the number of deep traps rooted at backbone vertices with at least one deep trap. We will show that there is only a single deep trap at any backbone vertex when the offspring law has finite variance whereas, when the offspring law belongs to the domain of attraction of a stable law with index $\alpha <2$ we have that the number of deep traps converges in distribution to a certain heavy tailed law.

A fundamental result for branching processes (see, for example \cite{lypepeot}), is that for $\mu<1$ and $Z_n$ an $f$-GW process, the sequence $\Pr(Z_n>0)/\mu^n$ is decreasing; moreover, $\Er[\xi\log(\xi)]<\infty$ if and only if the limit of $\Pr(Z_n>0)\mu^{-n}$ as $\nin$ exists and is strictly positive. This assumption holds under any of the hypotheses thus for this paper we will always make this assumption and let $c_\mu$ be the constant such that 
\begin{flalign}\label{cmu}
\Pr(Z_n>0)\sim c_\mu\mu^n.
\end{flalign}

For an arbitrary rooted tree $T$ with root $\rho$ write $\Hc(T) = \sup_{x \in T}d(\rho,x)$ to be it's height. Let $(\Tc_i)_{i\geq 1}$ be independent $f$-GW trees then write $N(m)=\sum_{j=1}^{\xi^*-1}\ind_{\{\Hc(\Tc_i)\geq m\}}$ to have the distribution of the number of traps of size at least $m$ rooted at a single backbone vertex. Denote 
\begin{flalign}\label{sm}
s_m=\Pr(\Hc(\Tc_1)< m)=1-c_\mu\mu^m(1+o(1))
\end{flalign} 
the probability that a given trap is of height at most $m-1$ (although in general we shall write $s$ for convenience). We are interested in the limit as $m \rightarrow \infty$ of
\begin{equation}\label{frc}
\Pr(N(m)=l|N(m)\geq 1)=\frac{\Pr(N(m)=l)}{\Pr(N(m)\geq 1)}
\end{equation}
for $l \geq 1$. Recall that $f$ is the p.g.f.\ of the offspring distribution, then we have that
\begin{flalign}
 \Pr(N(m)=l) & = \sum_{k=1}^\infty \Pr(\xi^*=k)\Pr(N(m)=l|\xi^*=k) \notag\\
 & = \sum_{k=l+1}^\infty \frac{kp_k}{\mu}s^{k-1-l}(1-s)^l\binom{k-1}{l} \notag\\
 & = \frac{(1-s)^l}{l!\mu}f^{(l+1)}(s). \label{lcrttrp}
\end{flalign}
 In particular, we have that $\Pr(N(m)\geq 1) = 1-f'(s)/\mu$. Lemma \ref{sintrp} shows that, when $\sigma^2<\infty$, with high probability there will only be a single deep trap in any deep branch.
\begin{lem}\label{sintrp}
 When $\sigma^2<\infty$ \[ \lim_{m\rightarrow \infty} \Pr(N(m)=1|N(m)\geq 1)=1.\]
 \begin{proof}
  Using (\ref{frc}) and (\ref{lcrttrp}) we have that
  \begin{flalign}\label{sumfrc}
   \Pr(N(m)=1|N(m)\geq 1) \; = \; \frac{(1-s)f''(s)/\mu}{1-f'(s)/\mu} \; = \; \frac{\sum_{k=2}^\infty k(k-1)p_ks^{k-2}}{\sum_{k=2}^\infty kp_k\frac{1-s^{k-1}}{1-s}}.
  \end{flalign}
By monotonicity in $s$ we have that \[\lim_{s\rightarrow 1^-}\sum_{k=2}^\infty k(k-1)p_ks^{k-2} = \sum_{k=2}^\infty k(k-1)p_k\] which is finite since $\sigma^2<\infty$. Each summand in the denominator is increasing in $s$ for $s\in (0,1)$ and by L'Hopital's rule $1-s^{k-1} \sim (k-1)(1-s)$ as $s\rightarrow 1^-$ therefore, by monotone convergence, the denominator in the final term of (\ref{sumfrc})  converges to the same limit.
 \end{proof}
\end{lem}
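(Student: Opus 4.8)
The plan is to work directly from the closed form for the conditional probability that has already been extracted in (\ref{sumfrc}). Combining (\ref{frc}) with the evaluation (\ref{lcrttrp}) at $l=1$ and with $\Pr(N(m)\geq 1)=1-f'(s)/\mu$, one has
\[
\Pr(N(m)=1\mid N(m)\geq 1)=\frac{(1-s)f''(s)/\mu}{1-f'(s)/\mu},
\]
where $s=s_m$. By (\ref{sm}) we have $s_m\to 1^-$ as $m\to\infty$, so it suffices to show that the right-hand side, regarded as a function of $s$, tends to $1$ as $s\uparrow 1$. Since both $(1-s)f''(s)$ and $\mu-f'(s)=f'(1)-f'(s)$ vanish in this limit, this is a $0/0$ situation, and the natural move is to rewrite it as a ratio of power series and take termwise limits.

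Concretely, I would expand $f''(s)=\sum_{k\geq 2}k(k-1)p_ks^{k-2}$ and $\mu-f'(s)=\sum_{k\geq 2}kp_k(1-s^{k-1})$, the $k=1$ contribution to the latter cancelling, giving
\[
\Pr(N(m)=1\mid N(m)\geq 1)=\frac{\sum_{k\geq 2}k(k-1)p_ks^{k-2}}{\sum_{k\geq 2}kp_k\,\frac{1-s^{k-1}}{1-s}}.
\]
For the numerator, $s\mapsto\sum_{k\geq 2}k(k-1)p_ks^{k-2}$ is increasing on $(0,1)$ and dominated by $\sum_{k\geq 2}k(k-1)p_k=f''(1)$, which is finite precisely because $\sigma^2<\infty$; monotone convergence gives the limit $f''(1)$. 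For the denominator, each summand equals $kp_k(1+s+\dots+s^{k-2})$, which is likewise increasing in $s$ and converges pointwise to $k(k-1)p_k$ as $s\uparrow 1$, so monotone convergence again yields the limit $f''(1)\in(0,\infty)$. Dividing the two limits gives $1$, as desired. (An alternative to the finite geometric sum is to note $1-s^{k-1}\sim(k-1)(1-s)$ by L'Hopital, as in the paper's phrasing.)

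The only genuine point to be careful about — and the single place the hypothesis $\sigma^2<\infty$ enters — is ensuring that numerator and denominator converge to the \emph{same finite} value $f''(1)$, so that the quotient of limits is well defined and equals $1$; the interchange of limit and summation is routine once one observes the monotonicity in $s$ of every term. When $\sigma^2=\infty$ this argument fails, both series diverging, which is consistent with the infinite-variance regime treated separately, where the number of deep traps at a backbone vertex has a nondegenerate heavy-tailed limit.
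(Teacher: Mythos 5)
Your proposal follows the paper's proof essentially verbatim: the same closed-form expression, the same use of $\sigma^2<\infty$ to make $f''(1)$ finite, and the same monotone-convergence argument for both numerator and denominator. The only cosmetic difference is writing $\frac{1-s^{k-1}}{1-s}$ as a finite geometric sum rather than invoking L'Hopital, which you yourself note is an equivalent rephrasing.
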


In order to determine the correct threshold for labelling a branch as large we will need to know the asymptotic form of $\Pr(N(m)\geq 1)$. Corollary \ref{hgttl} gives this for the finite variance case.
\begin{cly}\label{hgttl}
 Suppose $\sigma^2<\infty$ then \[\Pr(N(m)\geq 1) \; \sim \; c_\mu \Er[\xi^*-1]\mu^m \; = \; c_\mu\left(\frac{\sigma^2+\mu^2}{\mu}-1\right)\mu^m.\]
 \begin{proof}
 Let $f_*$ denote the p.g.f.\ of $\xi^*$ then $\Pr(N(m)\geq 1)  = 1-s^{-1}f_*(s)$. Since $\sigma^2<\infty$ we have that $f_*'(s)$ exists and is continuous for $s\leq 1$ thus as $s \rightarrow 1^-$ we have that $f_*(1)-f_*(s) \sim (1-s)f_*'(1) = (1-s)\Er[\xi^*]$. It therefore follows that
 \begin{flalign*}
  1-s^{-1}f_*(s) \; = \; f_*(1)-f_*(s) -\frac{f_*(s)(1-s)}{s} \; \sim \; (1-s)(\Er[\xi^*]-1).
 \end{flalign*}
The result then follows by the definitions of $c_\mu$ (\ref{cmu}) and $s$ (\ref{sm}). 
\end{proof}
\end{cly}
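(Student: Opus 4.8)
The plan is to reduce the statement to a first-order Taylor expansion of the size-biased generating function at the boundary point $s=1$. First I would record the exact form of $\Pr(N(m)\geq 1)$: conditioning on $\xi^*=k$ leaves $k-1$ buds, each independently rooting a trap of height at most $m-1$ with probability $s=s_m$, so $\Pr(N(m)=0)=\sum_{k\geq 1}\frac{kp_k}{\mu}s^{k-1}=f'(s)/\mu$ and hence $\Pr(N(m)\geq 1)=1-f'(s)/\mu$, as already noted after (\ref{lcrttrp}). Introducing $f_*$, the p.g.f.\ of $\xi^*$, the identity $f_*(s)=\sum_{k\geq 1}\frac{kp_k}{\mu}s^k=\tfrac{s}{\mu}f'(s)$ rewrites this as
\[
\Pr(N(m)\geq 1)=1-s^{-1}f_*(s)=s^{-1}\big(s-f_*(s)\big).
\]

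Next I would use the finite-variance hypothesis: by (\ref{expind}) applied to the identity function, $\Er[\xi^*]=\Er[\xi^2]/\mu=(\sigma^2+\mu^2)/\mu<\infty$, so $f_*$ is continuously differentiable up to $s=1$ with $f_*'(1)=\Er[\xi^*]$, giving $f_*(1)-f_*(s)\sim(1-s)\Er[\xi^*]$ as $s\to 1^-$. Writing $s-f_*(s)=(s-1)+(f_*(1)-f_*(s))$ and collecting the two linear terms yields $s-f_*(s)\sim(1-s)(\Er[\xi^*]-1)$, hence $\Pr(N(m)\geq 1)\sim(1-s_m)(\Er[\xi^*]-1)$ since $s_m\to 1$.

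To finish I would substitute the asymptotics of $s_m$ from (\ref{sm}) and (\ref{cmu}), namely $1-s_m\sim c_\mu\mu^m$, to obtain $\Pr(N(m)\geq 1)\sim c_\mu(\Er[\xi^*]-1)\mu^m=c_\mu\Er[\xi^*-1]\mu^m$, and then expand $\Er[\xi^*-1]=(\sigma^2+\mu^2)/\mu-1$ directly from the definition of the size-bias. The only step that requires any care is the justification that $f_*$ has a continuous one-sided derivative at $s=1$; this is precisely where $\sigma^2<\infty$ enters, and it is the reason the infinite-variance regime must be treated by a genuinely different argument — there $f_*'(1)=\infty$, so $\Pr(N(m)\geq 1)$ is of strictly smaller, non-$\mu^m$, order governed by the tail index $\alpha-1$ — rather than merely a change of constant.
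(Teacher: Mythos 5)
Your proposal is correct and follows essentially the same route as the paper: rewrite $\Pr(N(m)\geq 1)$ in terms of the size-biased p.g.f.\ $f_*$, use $\sigma^2<\infty$ to justify the first-order expansion $f_*(1)-f_*(s)\sim(1-s)\Er[\xi^*]$ at $s=1^-$, and substitute $1-s_m\sim c_\mu\mu^m$. The only difference is a cosmetic algebraic rearrangement ($s^{-1}(s-f_*(s))$ versus the paper's $f_*(1)-f_*(s)-f_*(s)(1-s)/s$), which leads to the same asymptotic line.
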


\subsection{Infinite variance}
We now consider the case when $\sigma^2=\infty$ but $\xi$ belongs to the domain of attraction of a stable law of index $\alpha \in(1,2)$. The following lemma concerning the form of the probability generating function of the offspring distribution will be fundamental in determining the distribution over the number of large traps rooted at a given backbone vertex. The case $\mu=1$ appears in \cite{bjst}; the proof of Lemma \ref{genform} is a simple extension of this hence the proof is omitted.
\begin{lem}\label{genform}
Suppose the offspring distribution belongs to the domain of attraction of a stable law with index $\alpha \in (1,2)$ and mean $\Er[\xi]=\mu$.
\begin{enumerate}
 \item If $\mu\leq 1$ then as $s\rightarrow 1^-$ \[\Er[s^\xi]-s^\mu\sim \frac{\Gamma(3-\alpha)}{\alpha(\alpha-1)}(1-s)^\alpha L((1-s)^{-1})\]
where $\Gamma(t)=\int_0^\infty x^{t-1}e^{-x}\d x$ is the usual gamma function.
\item If $\mu>1$ then \[1-\Er[s^\xi] = \mu(1-s)+  \frac{\Gamma(3-\alpha)}{\alpha(\alpha-1)}(1-s)^\alpha \overline{L}((1-s)^{-1})\]
where $\overline{L}$ varies slowly at $\infty$.
\end{enumerate}
\end{lem}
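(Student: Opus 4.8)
The plan is to reduce both parts to a single asymptotic for $g(s):=\mu(1-s)-(1-\Er[s^\xi])$ as $s\to1^-$, namely
$g(s)\sim \tfrac{\Gamma(3-\alpha)}{\alpha(\alpha-1)}(1-s)^\alpha L((1-s)^{-1})$, and then to read off (1) and (2) by comparing $1-\Er[s^\xi]$ with its natural first‑order term. The point is that $g$ sees only the tail of $\xi$, not the value of $\mu$, so the critical case of \cite{bjst} and the case $\mu\neq1$ are the same computation up to one elementary Taylor expansion. As a preliminary I would record that, since $\xi$ lies in the domain of attraction of a stable law of index $\alpha\in(1,2)$ with finite mean, $\Pr(\xi>x)=x^{-\alpha}\ell(x)$ for some slowly varying $\ell$, and Karamata's theorem applied to (\ref{trsm}) gives $\ell(x)\sim\tfrac{2-\alpha}{\alpha}L(x)$; carrying this normalisation correctly is what makes the constant come out as stated.

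Next I would obtain the asymptotic by differentiating. Writing $G(i):=\Er[\xi\ind_{\{\xi\ge i+2\}}]$ and telescoping $1-s^{k-1}=(1-s)\sum_{j=0}^{k-2}s^j$ gives the exact identity $\mu-f'(s)=(1-s)\sum_{i\ge0}s^iG(i)$. By Karamata, $G(i)\sim\tfrac{\alpha}{\alpha-1}i^{1-\alpha}\ell(i)$, which is regularly varying of index $1-\alpha\in(-1,0)$, so the Abelian theorem for power series with regularly varying coefficients (e.g.\ \cite{fe}, Chapter XIII) yields $\sum_{i\ge0}s^iG(i)\sim\tfrac{\alpha}{\alpha-1}\Gamma(2-\alpha)(1-s)^{-(2-\alpha)}\ell((1-s)^{-1})$, hence $\mu-f'(s)\sim\tfrac{\alpha}{\alpha-1}\Gamma(2-\alpha)(1-s)^{\alpha-1}\ell((1-s)^{-1})$. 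Since $\mu-f'$ is nonnegative on $(0,1)$ and integrable up to $1$ (as $\alpha>1$), the $\sim$ passes through $g(s)=\int_s^1(\mu-f'(t))\,\d t$, and using $\int_0^u w^{\alpha-1}\ell(1/w)\,\d w\sim\tfrac{1}{\alpha}u^\alpha\ell(1/u)$ together with $(2-\alpha)\Gamma(2-\alpha)=\Gamma(3-\alpha)$ and $\ell\sim\tfrac{2-\alpha}{\alpha}L$ gives exactly $g(s)\sim\tfrac{\Gamma(3-\alpha)}{\alpha(\alpha-1)}(1-s)^\alpha L((1-s)^{-1})$. One could equally avoid differentiating and work from $\tfrac{1-f(s)}{1-s}=\sum_{j\ge0}s^j\Pr(\xi>j)$, estimating $\mu-\sum_{j\ge0}s^j\Pr(\xi>j)=\sum_{j\ge0}(1-s^j)\Pr(\xi>j)$ with the standard integral $\int_0^\infty(1-e^{-v})v^{-\alpha}\,\d v=\tfrac{\Gamma(2-\alpha)}{\alpha-1}$; this is closer to the Laplace‑transform computation underlying \cite{bjst}.

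Finally I would deduce the two statements. For $\mu\le1$, Taylor expansion gives $1-s^\mu=\mu(1-s)-\binom{\mu}{2}(1-s)^2+O((1-s)^3)$, so $\Er[s^\xi]-s^\mu=g(s)+O((1-s)^2)$; since $\alpha<2$ and $L$ is slowly varying the $O((1-s)^2)$ term is negligible against $(1-s)^\alpha L((1-s)^{-1})$, which is (1). For $\mu>1$, the same asymptotic read as an identity defines $\overline{L}((1-s)^{-1}):=\tfrac{\alpha(\alpha-1)}{\Gamma(3-\alpha)}\big((1-f(s))-\mu(1-s)\big)(1-s)^{-\alpha}$, and the asymptotic shows $\overline{L}\sim-L$, which is slowly varying; this is (2). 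The only place requiring genuine care is the Abelian theorem combined with the slowly‑varying bookkeeping — in particular transporting the normalisation of $L$ from the truncated second moment (\ref{trsm}) to the tail of $\xi$ without losing track of the constant; the step from the $\mu=1$ case of \cite{bjst} to general $\mu$ costs only the one‑line comparison of $s^\mu$ with $\mu(1-s)$.
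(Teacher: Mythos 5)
Your proof is correct and is precisely the ``simple extension'' of the $\mu=1$ case of \cite{bjst} that the paper invokes without writing out: you isolate $g(s)=\mu(1-s)-(1-\Er[s^\xi])$, which depends on $\mu$ only through the tail of $\xi$, derive $g(s)\sim\frac{\Gamma(3-\alpha)}{\alpha(\alpha-1)}(1-s)^\alpha L((1-s)^{-1})$ via Karamata and a Tauberian theorem (either through the expansion of $\mu-f'(s)$ or directly from $\sum_{j\geq0}(1-s^j)\Pr(\xi>j)$, both of which you indicate), keeping track of the normalisation $\ell\sim\frac{2-\alpha}{\alpha}L$ and $(2-\alpha)\Gamma(2-\alpha)=\Gamma(3-\alpha)$, and then read off (1) by a one-line comparison with $s^\mu$ and (2) as an identity. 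One detail worth making explicit: convexity of $f$ with $f(1)=1$, $f'(1)=\mu$ gives $1-f(s)\le\mu(1-s)$, so $g\ge0$ on $(0,1)$ and your defining identity for $\mu>1$ yields $\overline L\sim-L$, which is eventually negative; the lemma's phrase ``$\overline L$ varies slowly'' should therefore be read in the weak sense $\overline L(tx)/\overline L(x)\to1$ rather than the usual positive convention — harmless here, since the paper only ever uses $\overline L$ inside $O(\cdot)$ bounds in the supercritical-tree section, but it is cleaner to record the conclusion as $1-\Er[s^\xi]=\mu(1-s)-\frac{\Gamma(3-\alpha)}{\alpha(\alpha-1)}(1-s)^\alpha L((1-s)^{-1})(1+o(1))$.
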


When $\mu<1$ it follows that there exists a function $L_1$ (which varies slowly as $s\rightarrow 1^-$) such that $\Er[s^\xi]-s^\mu= (1-s)^\alpha L_1((1-s)^{-1})$ and \[\lim_{s\rightarrow 1^-}\frac{L_1((1-s)^{-1})}{L((1-s)^{-1})}=\frac{\Gamma(3-\alpha)}{\alpha(\alpha-1)}. \]

Write $g(x)=x^\alpha L_1(x^{-1})$ so that $f(s)=s^\mu+g(1-s)$ and thus \[f^{(l)}(s)=s^{\mu-l}(\mu)_l+(-1)^lg^{(l)}(1-s)\]
when this exists where $(\mu)_l=\prod_{j=0}^{l-1}(\mu-j)$ is the Pochhammer symbol. Write $L_2(x)=L_1(x^{-1})$ which is slowly varying at $0$. Using Theorem 2 of \cite{la}, we see that $xg'(x)\sim \alpha g(x)$ as $x \rightarrow 0$. Moreover, using an inductive argument in the proof of this result, it is straightforward to show that for all $l \in \Nb$ we have that $xg^{(l+1)}(x) \sim (\alpha-l)g^{(l)}(x)$ as $x\rightarrow 0$. Therefore, for any integer $l\geq0$  
\begin{flalign}\label{coeff} 
\lim_{x \rightarrow 0^+}\frac{x^lg^{(l)}(x)}{g(x)} \; = \; (\alpha)_l . 
\end{flalign} 

Define $N_m:=N(m)|N(m)\geq 1$ to be the number of traps of height at least $m$ in a branch of greater than $m$. Proposition \ref{numcritt} is the main result of this section and determines the limiting distribution of $N_m$.
\begin{prp}\label{numcritt}
 In IVIE, for $l\geq 1$ as $m \rightarrow \infty$ \[\Pr(N_m=l)\rightarrow \frac{1}{l!}\prod_{j=1}^l|\alpha-j|.\]
 \begin{proof}
Recall that by (\ref{frc}) and (\ref{lcrttrp}) we want to determine the asymptotics of $1-f'(s)/\mu$ and $(1-s)^lf^{(l+1)}(s)/(l!\mu)$ as $s \rightarrow 1^-$. We have that $1-f'(s)/\mu=1-s^{\mu-1}+g'(1-s)/\mu$ and $g'(1-s) \sim \alpha(1-s)^{\alpha-1}L_2(1-s)$ as $s\rightarrow 1$. Since $\alpha<2$, we have that $\lim_{s \rightarrow 1^-}(1-s^{\mu-1})(1-s)^{1-\alpha} =0$ hence 
\begin{flalign}\label{prbhn}
 1-\frac{f'(s)}{\mu} & \sim \frac{\alpha}{\mu}(1-s)^{\alpha-1}L_2(1-s).
\end{flalign}

For derivatives $l \geq 1$ we have that 
\begin{flalign*}
\frac{(1-s)^lf^{(l+1)}(s)}{l!\mu}=\frac{(1-s)^l}{l!\mu}\left(s^{\mu-(l+1)}(\mu)_l+(-1)^{l+1}g^{(l+1)}(1-s) \right).
\end{flalign*}
By (\ref{coeff}) we have that  $(1-s)^lg^{(l+1)}(1-s)\sim (\alpha)_{l+1}(1-s)^{\alpha-1}L_2(1-s)$. For $l\geq 1$ we have that $l+1-\alpha>0$ hence 
\begin{flalign}\label{hgrprb}
 \frac{(1-s)^lf^{(l+1)}(s)}{l!\mu} & \sim \frac{|(\alpha)_{l+1}|}{l!\mu}(1-s)^{\alpha-1}L_2(1-s).
\end{flalign}

Combining (\ref{frc}) with (\ref{prbhn}) and (\ref{hgrprb}) gives the desired result.
\end{proof}
\end{prp}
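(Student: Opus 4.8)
The plan is to read the answer off the exact formulas already established and then let the trap-height threshold $s = s_m$ tend to $1$. By (\ref{frc}) and (\ref{lcrttrp}),
\[
\Pr(N_m = l) \;=\; \frac{\Pr(N(m)=l)}{\Pr(N(m)\geq 1)} \;=\; \frac{(1-s)^l f^{(l+1)}(s)/(l!\mu)}{1 - f'(s)/\mu},
\]
and by (\ref{sm}), since $\mu<1$, we have $s = s_m = 1 - c_\mu\mu^m(1+o(1)) \to 1^-$ as $m\to\infty$. So the problem reduces to finding the leading-order behaviour of numerator and denominator as $s\to 1^-$ and dividing; the slowly varying factors should cancel, leaving a constant.

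First I would insert the decomposition $f(s) = s^\mu + g(1-s)$ from Lemma \ref{genform}, with $g(x) = x^\alpha L_2(x)$ and $L_2$ slowly varying at $0$, together with the identity $f^{(k)}(s) = (\mu)_k s^{\mu-k} + (-1)^k g^{(k)}(1-s)$. The one nontrivial analytic ingredient is the derivative asymptotic (\ref{coeff}), $x^k g^{(k)}(x) \sim (\alpha)_k g(x)$ as $x\to 0^+$, which is already in hand. For the denominator this gives $1 - f'(s)/\mu = (1 - s^{\mu-1}) + g'(1-s)/\mu$ with $g'(1-s) \sim \alpha(1-s)^{\alpha-1}L_2(1-s)$; since $\alpha - 1 < 1$, the analytic term $1 - s^{\mu-1} = O(1-s)$ is of strictly smaller order, so $1 - f'(s)/\mu \sim \frac{\alpha}{\mu}(1-s)^{\alpha-1}L_2(1-s)$. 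For the numerator with $l\geq 1$, the analytic part contributes $(1-s)^l (\mu)_{l+1}s^{\mu-l-1}/(l!\mu) = O((1-s)^l)$, again negligible since $l\geq 1 > \alpha - 1$, while the $g$-part is $(-1)^{l+1}(1-s)^l g^{(l+1)}(1-s)/(l!\mu) \sim (-1)^{l+1}(\alpha)_{l+1}(1-s)^{\alpha-1}L_2(1-s)/(l!\mu)$ by (\ref{coeff}) at order $l+1$.

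Dividing, the common factor $(1-s)^{\alpha-1}L_2(1-s)$ cancels and the limit is $(-1)^{l+1}(\alpha)_{l+1}/(l!\alpha)$. To put this in the stated form I would write $(\alpha)_{l+1} = \alpha\prod_{j=1}^l(\alpha-j)$ and note that for $\alpha\in(1,2)$ and $l\geq 1$ the product $\prod_{j=1}^l(\alpha-j)$ has exactly $l-1$ negative factors (namely all of $\alpha-2,\dots,\alpha-l$), so its sign is $(-1)^{l-1} = (-1)^{l+1}$; hence $(-1)^{l+1}(\alpha)_{l+1} = \alpha\prod_{j=1}^l|\alpha-j|$ and $\Pr(N_m=l)\to\frac{1}{l!}\prod_{j=1}^l|\alpha-j|$.

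The genuine analytic work sits upstream in (\ref{coeff}) — that differentiating the regularly varying $g$ of index $\alpha$ behaves as expected to all orders, via the Lamperti-type induction — which has already been carried out. Given that, the only real care needed inside the proposition is checking that the power-of-$s$ terms $(\mu)_k s^{\mu-k}$, once weighted by the relevant power of $1-s$, are truly $o((1-s)^{\alpha-1})$. This is precisely where the hypothesis $\alpha<2$ (as opposed to $\alpha=2$, where the analytic term would no longer be dominated) is essential, and it is what makes the limiting law nondegenerate and heavy-tailed; I would expect any slip to occur exactly there.
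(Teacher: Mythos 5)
Your proposal is correct and follows essentially the same route as the paper: reduce via (\ref{frc}) and (\ref{lcrttrp}), decompose $f$ as $s^\mu + g(1-s)$ with $g$ regularly varying of index $\alpha$ at $0^+$, invoke (\ref{coeff}) for the derivative asymptotics of $g$, observe that the analytic $s^{\mu-k}$ contributions are negligible because $\alpha<2$ (for the denominator) and $l\geq 1 > \alpha-1$ (for the numerator), and then cancel the common factor $(1-s)^{\alpha-1}L_2(1-s)$ in the ratio. Your explicit sign bookkeeping $(-1)^{l+1}(\alpha)_{l+1} = \alpha\prod_{j=1}^l|\alpha-j|$ is exactly what makes the paper's $|(\alpha)_{l+1}|$ in (\ref{hgrprb}) correct, and you also (implicitly) repair a harmless typo in the paper's numerator expansion where $(\mu)_l$ should read $(\mu)_{l+1}$.
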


Proposition \ref{numcritt} will be useful for determining the number of large traps in a large branch but equally important is the asymptotic relation (\ref{prbhn}) which gives the tail behaviour of the height of a branch. Recall that $(\Tc^{*-}_{\rho_i})_{i\geq 0}$ are the finite branches rooted at $(\rho_i)_{i\geq 0}$ which are i.i.d.\ under $\Pr$. By the assumption on $\xi$ that (\ref{trsm}) holds we have that 
\begin{flalign}\label{xistar}
\Pr(\xi^*\geq t)\sim \frac{2-\alpha}{\mu(\alpha-1)}t^{-(\alpha-1)}L(t)
\end{flalign}
as $t\rightarrow \infty$. Therefore, using (\ref{sm}), (\ref{lcrttrp}), (\ref{prbhn}) and the relationship between $L$ and $L_1$ we have that
 \begin{flalign}
\Pr(\Hc(\Tc^{*-}_{\rho_0})>m) \; \sim \; \frac{\Gamma(3-\alpha)c_\mu^{\alpha-1}}{\mu(\alpha-1)} \mu^{m(\alpha-1)}L(\mu^{-m}) \; \sim \; \Gamma(2-\alpha)c_\mu^{\alpha-1}\Pr(\xi^* \geq \mu^{-m}). \label{numcrttrp}
\end{flalign}

\section{Large branches are far apart}\label{trpapr}   
In this section we introduce the conditions for a branch to be large. This will differ in each of the cases however, since many of the proofs will generalise to all three cases, we will use the same notation for some aspects.

In IVFE we will have that the slowing is caused by the large number of traps. In particular, we will be able to show that the time spent outside branches with a large number of buds is negligible.
\begin{dfn}\label{ivfedefn}(IVFE large branch)
 For $\varepsilon\in (-1,1)$ write \[l_n^\varepsilon = a_{\lfloor n^{1-\varepsilon}\rfloor}\] then we have that $\Pr(\xi^*\geq l_n^\varepsilon )   \sim  n^{-(1-\varepsilon)}$. We will call a branch large if the number of buds is at least $l_n^\varepsilon$ and write $\Dc^{(n)}:=\{x \in \Yc:d_x>l_n^\varepsilon\}$ to be the collection of backbone vertices which are the roots of large branches. 
\end{dfn}

In FVIE we will have that the slowing is caused by excursions into deep traps.
\begin{dfn}\label{fviedefn}(FVIE large branch)
For $\varepsilon \in (-1,1)$ write \[h_n^\varepsilon  :=\left\lfloor\frac{(1-\varepsilon)\log(n)}{\log(\mu^{-1})}\right\rfloor \]
and $C_\Dc=c_\mu\Er[\xi^*-1]$ then by Corollary \ref{hgttl} we have that   
\begin{flalign}\label{crtfindep}
 \Pr(\Hc(\Tc^{*-}_{\rho})> h_n^\varepsilon) \; \sim \; C_\Dc \mu^{h_n^\varepsilon} \; \approx \; C_\Dc n^{-(1-\varepsilon)}. 
\end{flalign}
We will call a branch large if there exists a trap within it of height at least $h_n^\varepsilon$ and write $\Dc^{(n)}:=\{x \in \Yc:\Hc(\Tc_x^{*-})> h_n^\varepsilon\} $
to be the collection of backbone vertices which are the roots of large branches. By a large trap we mean any trap of height at least $h_n^\varepsilon$. 
\end{dfn}

In IVIE we will have that the slowing is caused by a combination of the slowing effects of the other two cases. The height and number of buds in branches have a strong link which we show more precisely later; this allows us to label branches as large based on height which will be necessary when decomposing the time spent in large branches. 
\begin{dfn}\label{iviedefn}(IVIE large branch)
For $\varepsilon \in (-1,1)$ write \[h_n^\varepsilon  :=\left\lfloor\frac{\log(a_{n^{1-\varepsilon}})}{\log(\mu^{-1})}\right\rfloor \] 
then by (\ref{numcrttrp}), for $C_\Dc=\Gamma(2-\alpha)c_\mu^{\alpha-1}$, we have that 
\begin{flalign}\label{crtstbdep}
 \Pr(\Hc(\Tc^{*-}_\rho)> h_n^\varepsilon) \; \sim \; C_\Dc \Pr(\xi^*\geq \mu^{-h_n^\varepsilon}) \; \approx \; C_\Dc n^{-(1-\varepsilon)}. 
 \end{flalign}
We will call a branch large if there exists a trap of height at least $h_n^\varepsilon$ and write $\Dc^{(n)}:=\{x \in \Yc:\Hc(\Tc_x^{*-})> h_n^\varepsilon\}$ to be the collection of backbone vertices which are the roots of large branches. By a large trap we mean any trap of height at least $h_n^\varepsilon$. 
\end{dfn}

We want to show that, asymptotically, the large branches are sufficiently far apart to ignore any correlation and therefore approximate $\Delta_n$ by the sum of i.i.d.\ random variables representing the time spent in a large branch. Much of this is very similar to \cite{arfrgaha} so we only give brief details.

Write $\Dc_m^{(n)}:=\{x \in \Dc^{(n)}:|x|\leq m\}$ to be the large roots before level $m$ then let $q_n:=\Pr(\rho \in \Dc^{(n)})$ be the probability that a branch is large and write \[A_1(n,T):=\left\{\sup_{t \in [0,T]}\left||\Dc_{\lfloor tn\rfloor}^{(n)}| -\lfloor tnq_n\rfloor \right| < n^{2\varepsilon/3}\right\} \]
to be the event that the number of large branches by level $Tn$ doesn't differ too much from its expected value. Notice that in all three cases we have that $q_n$ is of the order $n^{-(1-\varepsilon)}$ thus we expect to see $nq_n \approx Cn^\varepsilon$ large branches by level $n$. 
\begin{lem}\label{numcrit}
 For any $T>0$
  \begin{flalign*}
\limn\Pr\left(A_1(n,T)^c\right)=0.
\end{flalign*}
\begin{proof}
 For each $n \in \Nb$ write \[M_m^n:=|\Dc_m^{(n)}| -mq_n \ed \sum_{k=1}^m \left(B_k-q_n\right) \]
 where $B_k$ are independent Bernoulli random variables with success probability $q_n$. Then $\Er[M_m^n]=0$ and $Var_{\Pr}(M_m^n)=mq_n(1-q_n)$ therefore by Kolmogorov's maximal inequality
 \[\Pr\left(\max_{1\leq m\leq\left\lfloor nT\right\rfloor}|M_m^n|>n^{2\varepsilon/3}-2\right) \;\leq\; \frac{cnTq_n}{n^{4\varepsilon/3}}\;\leq\; CTn^{-\varepsilon/3}.\]
 Since $\left|\lfloor nt\rfloor q_n-\lfloor ntq_n\rfloor\right| \leq 1$ we have that \[\sup_{t \in [0,T]}\left||\Dc_{\lfloor tn\rfloor}^{(n)}| -\lfloor tnq_n\rfloor \right| \leq \max_{1\leq m\leq\left\lfloor nT\right\rfloor}|M_m^n|+\sup_{0\leq t\leq T}|tnq_n-\lfloor nt\rfloor q_n| \leq \max_{1\leq m\leq\left\lfloor nT\right\rfloor}|M_m^n|+1 \]
which proves the statement.
\end{proof}
\end{lem}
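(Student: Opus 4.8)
\emph{Proof plan.} The plan is to exploit the i.i.d.\ structure of the branches under $\Pr$. By construction the finite branches $(\Tc^{*-}_{\rho_i})_{i\ge 0}$ hanging off the backbone are independent and identically distributed, so for each fixed $n$ the events $\{\rho_i\in\Dc^{(n)}\}$ that a branch is large form an i.i.d.\ sequence of Bernoulli trials with success probability $q_n=\Pr(\rho\in\Dc^{(n)})$. In all three regimes Definitions \ref{ivfedefn}, \ref{fviedefn} and \ref{iviedefn} — the latter two via Corollary \ref{hgttl} and the tail estimate (\ref{numcrttrp}) respectively — give $q_n$ of order $n^{-(1-\varepsilon)}$. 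Since $|\Dc_m^{(n)}|=\sum_{i\le m}\ind_{\{\rho_i\in\Dc^{(n)}\}}$, the centred quantity $M_m^n:=|\Dc_m^{(n)}|-mq_n$ has the law of $\sum_{k=1}^m(B_k-q_n)$ with $B_k$ i.i.d.\ $\mathrm{Bernoulli}(q_n)$; in particular $\Er[M_m^n]=0$ and $\mathrm{Var}_{\Pr}(M_m^n)=mq_n(1-q_n)$.

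First I would reduce the supremum over $t\in[0,T]$ appearing in $A_1(n,T)$ to a maximum over integer levels $m\le\lfloor nT\rfloor$. Since $|\Dc_{\lfloor tn\rfloor}^{(n)}|-\lfloor tnq_n\rfloor=M_{\lfloor tn\rfloor}^n+(\lfloor tn\rfloor q_n-\lfloor tnq_n\rfloor)$ and the final bracket is bounded by $1$ in absolute value, one obtains
\[\sup_{t\in[0,T]}\Big||\Dc_{\lfloor tn\rfloor}^{(n)}|-\lfloor tnq_n\rfloor\Big|\;\le\;\max_{1\le m\le\lfloor nT\rfloor}|M_m^n|+1 .\]
Then I would apply Kolmogorov's maximal inequality to the independent centred variables $B_k-q_n$ to get
\[\Pr\Big(\max_{1\le m\le\lfloor nT\rfloor}|M_m^n|>n^{2\varepsilon/3}-2\Big)\;\le\;\frac{\lfloor nT\rfloor\,q_n(1-q_n)}{(n^{2\varepsilon/3}-2)^2}.\]
Because $q_n$ is of order $n^{-(1-\varepsilon)}$ and $n^{2\varepsilon/3}-2\sim n^{2\varepsilon/3}\to\infty$, the right-hand side is $O(Tn^{\varepsilon-4\varepsilon/3})=O(Tn^{-\varepsilon/3})$, which vanishes as $\nin$; together with the preceding display this gives $\Pr(A_1(n,T)^c)\to0$.

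I do not expect any genuine obstacle here: this is a routine second-moment concentration estimate for a sum of i.i.d.\ indicators, which is precisely why it can be dispatched as briefly as the corresponding step in \cite{arfrgaha}. The only points requiring a little care are (i) assembling the three notions of ``large branch'' so that the single estimate $q_n\asymp n^{-(1-\varepsilon)}$ is visible in each case — this is where Corollary \ref{hgttl} and (\ref{numcrttrp}) are invoked — and (ii) checking that the various floor-function discrepancies ($\lfloor tn\rfloor q_n$ versus $tnq_n$ versus $\lfloor tnq_n\rfloor$) are all $O(1)$ and hence harmlessly absorbed by the diverging window $n^{2\varepsilon/3}$. The substantive work of the paper lies in the later analysis of trap excursion times, not in this bookkeeping.
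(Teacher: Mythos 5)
Your argument is correct and is essentially the paper's proof: the same identification of $|\Dc_m^{(n)}|-mq_n$ with a centred sum of i.i.d.\ Bernoulli indicators, the same application of Kolmogorov's maximal inequality with the $mq_n(1-q_n)$ variance, and the same $O(1)$ absorption of the floor-function discrepancies into the $n^{2\varepsilon/3}$ window, yielding the $O(Tn^{-\varepsilon/3})$ bound. You merely reorder the two steps (reduction to integer levels first, then the maximal inequality), which is immaterial.
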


We want to show that all of the large branches are sufficiently far apart such that the walk doesn't backtrack from one to another. For $t>0$ and $\kappa \in (0,1-2\varepsilon)$ write \[ \Dc(n,t):=\left\{\min_{x \neq y \in \Dc_{\lfloor nt\rfloor}^{(n)}}d(x,y)>n^{\kappa}\right\} \cap\{\rho \notin \Dc^{(n)}\}\] to be the event that all large branches up to level $\lfloor nt\rfloor$ are of distance at least $n^\kappa$ apart and the root of the tree is not the root of a large branch. A union bound shows that $\Pr(\Dc(n,t)^c)\rightarrow 0$ as $\nin$ uniformly over $t$ in compact sets.

We want to show that, with high probability, once the walk reaches a large branch it never backtracks to the previous one. For $t>0$ write \[A^{(0)}_2(n,t):=\bigcap_{i=0}^{\lfloor nt\rfloor}\bigcap_{n \geq \Delta^Y_{\rho_i}}\{|Y_n|>i-\overline{C}\log(n)\}\] to be the event that the walk never backtracks distance $\overline{C}\log(n)$ (where $\Delta_n^Y:=\min\{m\geq 0:Y_m=\rho_n\}$). For $x \in \Tc$ write $\tau_x^+=\inf\{n>0:X_n=x\}$ to be the first return time of $x$. Comparison with a simple random walk on $\Zb$ shows that for $k\geq1$ we have that the escape probability is $\Pt_{\rho_k}\left(\tau_{\rho_{k-1}}<\infty \right)=\beta^{-1}$ hence, using the Strong Markov property, \[\Pt_{\rho_m}\left(\tau_{\rho_0}<\infty\right)\leq C\beta^{-m} \]
 for some constant $C$. Using a union bound we see that 
\begin{flalign}\label{noback}
\Pb(A^{(0)}_2(n,t)^c) \leq Cnt\beta^{-\overline{C}\log(n)} \rightarrow 0
\end{flalign}
for $\overline{C}$ sufficiently large. Combining this with $\Dc(n,t)$ we have that with high probability the walk never backtracks from one large branch to a previous one. 

\section{Time is spent in large branches}\label{spntlrg} 
In this section we show that the time spent up to time $\Delta_n$ outside large branches is negligible. Combined with Section \ref{trpapr} this allows us to approximate $\Delta_n$ by the sum of i.i.d.\ random variables. We begin with some general results concerning the number of excursions into traps and the expected time spent in a trap of height at most $m$.

Recall that $\rho_{i,j}$ are the buds connected to the backbone vertex $\rho_i$. We write $W^{i,j}=|\{m \geq 0: X_{m-1}=\rho_i, \; X_m=\rho_{i,j}\}|$ to be the number of excursions into the $j\th$ trap of the $i\th$ branch where we set $W^{i,j}=0$ if $\rho_{i,j}$ doesn't exist in the tree. Lemma \ref{numexc} shows that, conditional on the number of buds, the number of excursions follows a geometric law.
\begin{lem}\label{numexc}
For any $i,k \in \Nb$ and $A \subset \{1,...,k\}$, when $\beta>1$
 \[ \sum_{j\in A}W^{i,j}\Big| |c(\rho_i)|=k+1 \sim Geo\left(\frac{\beta-1}{(|A|+1)\beta-1}\right) \]
 and in particular for any $j \leq k$ we have that $W^{i,j}\sim Geo(p)$ where $p=(\beta-1)/(2\beta-1)$.
 
 Moreover, conditional on $|c(\rho_i)|=k+1, \; A \subset \{1,...,k\}$, $(W^{i,j})_{j \in A}$ have a negative multinomial distribution with one failure until termination and probabilities 
 \[p_j=\begin{cases} \frac{\beta-1}{(|A|+1)\beta-1} & j=0 \\ \frac{\beta}{(|A|+1)\beta-1} & j\in A \\ \end{cases}\]
that from $\rho_i$ the next excursion will be into the $j\th$ trap (where $j=0$ denotes escaping). 
 \begin{proof}
 From $\rho_{i,j}$ the walk must return to $\rho_i$ before escaping therefore since $\Pt_{\rho_{i,j}}(\tau^+_{\rho_i}<\infty)=1$, any traps not in the set we consider can be ignored so it suffices to assume that $A = \{1,...,k\}$. By comparison with a biased random walk on $\Zb$ we have that $\Pt_{\rho_{i+1}}(\tau^+_{\rho_i}=\infty)=1-\beta^{-1}.$ If $|c(\rho_i)|=k+1$ then $\Pt_{\rho_i}(\tau^+_x=min_{y \in c(\rho_i)}\tau^+_y)=(k+1)^{-1}$ for any $x \in c(\rho_i)$. The probability of never entering a trap in the branch $\Tc^{*-}_{\rho_i}$ is, therefore, 
  \begin{flalign*}
   \Pt_{\rho_i}\left(\bigcap_{j=1}^k\{\tau^+_{\rho_{i,j}}=\infty\}\right) \; = \; \sum_{l=0}^\infty \left(\frac{1}{k+1}\beta^{-1}\right)^l\left(\frac{1-\beta^{-1}}{k+1}\right) \; = \; \frac{\beta-1}{(k+1)\beta-1}.
  \end{flalign*}  
  Each excursion ends with the walker at $\rho_i$ thus the walk takes a geometric number of excursions into traps with escape probability $(\beta-1)/((k+1)\beta-1)$. The second statement then follows from the fact that the walker has equal probability of going into any of the traps. 
 \end{proof}
\end{lem}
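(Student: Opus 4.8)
The plan is to decompose the walk into its excursions away from the backbone vertex $\rho_i$ and to observe that, conditionally on $|c(\rho_i)|=k+1$, the types of these excursions form an i.i.d.\ categorical sequence run until a terminating (``escape'') event; both the geometric marginal and the negative multinomial joint law then drop out by elementary conditioning.

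First I would record the behaviour of one excursion from $\rho_i$. Since every trap $\Tc^*_{\rho_{i,j}}$ is a finite tree, from a bud $\rho_{i,j}$ the walk returns to $\rho_i$ almost surely; and the comparison with a biased walk on $\Zb$ quoted above---the walk observed on the backbone is a homogeneous $\tfrac{\beta}{1+\beta}$-biased nearest-neighbour walk, so gambler's ruin gives $\Pt_{\rho_{i+1}}(\tau^+_{\rho_i}<\infty)=\beta^{-1}$---shows that from $\rho_{i+1}$ the walk returns to $\rho_i$ with probability $\beta^{-1}$ and otherwise escapes to infinity along the backbone. When $i\ge 1$ there are also excursions to the parent $\rho_{i-1}$, but by transience these return to $\rho_i$ almost surely and never enter a trap of $\rho_i$, so they can simply be skipped; it therefore suffices to track only the ``downward'' excursions, those whose first step is to a child of $\rho_i$. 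Being transient, the walk reaches $\rho_i$ and visits it only finitely often, so it makes a finite positive number of downward excursions from $\rho_i$, exactly the last of which escapes.

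Now condition on $|c(\rho_i)|=k+1$ and apply the strong Markov property at the successive visits to $\rho_i$. The downward excursions are i.i.d., and since the first step of such an excursion lands on a uniformly chosen child, each is of type ``into trap $j$'' (probability $\tfrac1{k+1}$ for $j\le k$), ``to $\rho_{i+1}$ and back'' (probability $\tfrac1{k+1}\beta^{-1}$), or ``escape'' (probability $\tfrac1{k+1}(1-\beta^{-1})$), independently of the detailed shapes of the traps; the walk makes these excursions until the first ``escape''. To read off the statement for a given $A\subseteq\{1,\dots,k\}$, restrict to the subsequence of excursions whose type lies in $\{\text{trap }j:j\in A\}\cup\{\text{escape}\}$; deleting a fixed collection of categories from an i.i.d.\ categorical sequence again gives an i.i.d.\ categorical sequence, now with the conditional probabilities $\tfrac{\beta}{(|A|+1)\beta-1}$ for trap $j\in A$ and $\tfrac{\beta-1}{(|A|+1)\beta-1}$ for ``escape''. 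This retained sequence runs until its first ``escape'', so $(W^{i,j})_{j\in A}$ is exactly the vector of counts of each trap type before that first failure---the negative multinomial law in the statement---while $\sum_{j\in A}W^{i,j}$ is the total number of non-escape retained excursions before it, hence $Geo\!\big(\tfrac{\beta-1}{(|A|+1)\beta-1}\big)$; the case $A=\{j\}$ gives $W^{i,j}\sim Geo\!\big(\tfrac{\beta-1}{2\beta-1}\big)$. (For $i=0$ the root has no parent, but the downward excursions and their probabilities are unchanged, so the same conclusion holds.)

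The only genuinely delicate points are the two inputs to this bookkeeping. One is that the downward excursion types are i.i.d.: this is the strong Markov property at $\rho_i$, together with the observation that a single excursion's law depends on the traps only through their finiteness, so that conditioning on $|c(\rho_i)|=k+1$ alone suffices. The other is that the escape probability from $\rho_{i+1}$ is exactly $\beta^{-1}$, which is the biased-random-walk comparison already invoked before the lemma. Everything that remains is summing geometric series and the thinning property of i.i.d.\ categorical sequences.
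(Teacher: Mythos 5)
Your proof is correct and, at bottom, is the same argument as the paper's: decompose into downward excursions from $\rho_i$, use the uniform first-child property and the $\beta^{-1}$ return probability from $\rho_{i+1}$, and observe that trap excursions end at $\rho_i$ so that the escape event has the stated probability. Your explicit framing via an i.i.d.\ categorical sequence of excursion types followed by thinning is a slightly cleaner way to read off the negative multinomial joint law than the paper's geometric-series computation plus the one-line appeal to symmetry, but the underlying mechanism is identical.
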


For a fixed tree $T$ with $n\th$ generation size $Z_n$ where $Z_1>0$ it is classical (e.g. \cite{lype}) that  
\begin{flalign}\label{return}
\Et^\Tc_\rho[\tau^+_\rho]=2\sum_{n\geq 1}\frac{Z_n\beta^{n-1}}{Z_1}.
\end{flalign}
Denoting $\Tc(i,j)=\Tc_{\rho_{i,j}}\cup\{\rho_i\}$ to be the tree formed by the descendent tree from $\rho_{i,j}$ along with the backbone vertex $\rho_i$ and $Z_n^{\Tc_{\rho_{i,j}}}$ the $n\th$ generation size of the tree $\Tc_{\rho_{i,j}}$, it follows that
\begin{flalign*}
 \Et^{\Tc(i,j)}_{\rho_{i,j}}[\tau^+_{\rho_i}] \; = \; \Et^{\Tc(i,j)}_{\rho_{i}}[\tau^+_{\rho_i}]-1 \; = \; 2\sum_{n \geq 0} Z_n^{\Tc_{\rho_{i,j}}}\beta^n-1. 
 \end{flalign*}
$\Pr(\Hc(\Tc)\leq m)\geq p_0$ therefore, for some constant $C$ and any $m\geq 1$,
\begin{flalign}
\Er\left[\Et^{\Tc(i,j)}_{\rho_{i,j}}[\tau^+_{\rho_i}]|\Hc(\Tc_{\rho_{i,j}})\leq m \right] \; \leq \; \frac{\Er\left[2\sum_{n =0}^{m-1} Z_n^{\Tc_{\rho_{i,j}}}\beta^n-1\right] }{\Pr(\Hc(\Tc)\leq m)} \; \leq \;  \begin{cases}  C(\mu\beta)^m & \beta\mu>1 \\ Cm & \beta\mu=1 \\ C & \beta\mu<1. \end{cases} \label{expexc}
\end{flalign}

Recall that $\Delta^Y_n$ is the first hitting time of $\rho_n$ for the underlying walk $Y$ and write $A_3(n):= \{\Delta^Y_n\leq C_1n\}$ to be the event that level $n$ is reached by time $C_1n$ by the walk on the backbone. Then standard large deviation estimates yield that $\limn\Pb(A_3(n)^c)=0$ for $C_1>(\beta+1)/(\beta-1)$.  

For the remainder of this section we mainly consider the case in which $\xi$ belongs to the domain of attraction of a stable law of index $\alpha \in (1,2)$. The case in which the offspring law has finite variance will proceed similarly however since the corresponding estimates are much simpler in this case we omit the proofs.

In IVIE and IVFE, for $t>0$, let the event that there are at most $\log(n)a_n$ buds by level $\lfloor nt\rfloor$ be
\begin{flalign}\label{A4}
A_4(n,t):=\left\{\sum_{k=1}^{\lfloor nt\rfloor}(\xi^*_k-1) \leq \log(n)a_n \right\}.
\end{flalign} 
Since the laws of $a_{\lfloor nt\rfloor}^{-1}\sum_{k=1}^{\lfloor nt\rfloor}(\xi^*_k-1)$ converge to some stable law $G^*$ and $\limn \overline{G}^*(Ct^{\alpha-1}\log(n))=0$ we clearly have that $\limn\Pr(A_4(n,t)^c)=0$. 

In FVIE write \[A_4(n,t):=\left\{\sum_{k=1}^{\lfloor nt\rfloor}(\xi^*_k-1) \leq \log(n)n \right\}\]
then Markov's inequality gives that $\limn\Pr(A_4(n,t)^c)=0$.
 
Write 
\begin{flalign}\label{A5}
A_5(n):=\left\{\max_{i,j}|\{k \leq \Delta_{\lfloor nt\rfloor}:X_{k-1}=\rho_i, \; X_k =\rho_{i,j}\}|\leq C_2\log(n)\right\}
\end{flalign}
be the event that any trap is entered at most $C_2\log(n)$ times. By Lemma \ref{numexc} the number of entrances into $\rho_{i,j}$ has the law of a geometric random variable of parameter $p=(\beta-1)/(2\beta-1)$ hence using a union bound we have that for  $C_2$ sufficiently large
 \begin{flalign*}
  \Pb(A_5(n,t)^c \cap A_4(n,t)) \; \leq \; \log(n)a_n\Pb\left(Geo(p)>C_2\log(n)\right) \; \leq \; L_1(n)n^{\frac{1}{\alpha-1}+C_2\log\left(1-p\right)} 
 \end{flalign*}
where $L_1$ is some slowly varying function hence the final term converges to $0$ for $C_2$ large therefore $\limn\Pb(A_5(n,t)^c)=0$. 

Propositions \ref{crttrpstab} and \ref{crtbnh} show that in IVFE and IVIE any time spent outside large traps is negligible. In FVIE and IVIE we only consider the large traps in large branches and write \[K(n)=\bigcup_{x \in \Dc^{(n)}}\{\Tc_y:y \in c(x)\setminus \{\rho_{|x|+1}\},\;\Hc(\Tc_y)\geq h_n^\varepsilon\} \] to be the vertices in large traps. In IVFE we require the entire large branch and write \[K(n)=\bigcup_{x \in \Dc^{(n)}}\{y \in \Tc^{*-}_x\}\] to be the vertices in large branches. In either case we write $\chi_{t,n}=|\{1\leq i\leq \Delta_{\lfloor nt\rfloor}: \;X_{i-1},X_i \in K(n)\}|$ to be the time spent up to $\Delta_{\lfloor nt \rfloor}$ in large traps.

\begin{prp}\label{crttrpstab}
 In IVIE, fix $\varepsilon>0$ then for any $t,\delta>0$ we have that as $\nin$ \[ \Pb\left(\left|\frac{\Delta_{\lfloor nt\rfloor}-\chi_{t,n}}{a_n^{1/\gamma}}\right|\geq \delta\right) \rightarrow 0.\]
\begin{proof}
On $\Dc(n,t)$ the root $\rho$ is not the root of a large branch and by the argument used to show that the walk never backtracks from one large branch to the previous one we have that with high probability the walk doesn't return to a large branch up to level $\lfloor nt\rfloor$ after time $\Delta_{\lfloor nt\rfloor}$. Therefore, with high probability, the time spent in large branches by time $\Delta_{\lfloor nt\rfloor}$ coincides with $\chi_{t,n}$.

On $A_4(n,t)$ there are at most $a_n\log(n)$ traps by level $\lfloor nt\rfloor$. We can order these traps so write $T^{(l,k)}$ to be the duration of the $k\th$ excursion into the $l\th$ trap and $\rho(l)$ to be the root of this trap (that is, the unique bud of $\Tc$ in the trap). Here we consider an excursion to start from the bud and end at the last hitting time of the bud before returning to the backbone. Using the estimates on $A_3,A_4$ and $A_5$ we have that \[ \Pb\left(\left|\frac{\Delta_{\lfloor nt\rfloor}-\chi_{t,n}}{a_n^{1/\gamma}}\right|\geq \delta\right) \leq o(1)+\Pb\left(C_1n+\sum_{l=0}^{a_n\log(n)}\sum_{k=0}^{C_2\log(n)}T^{(l,k)}\ind_{\{\Hc(\Tc_{\rho(l)})< h_n^\varepsilon\}} \geq \delta a_n^{\frac{1}{\gamma}}\right). \]
Since $a_n^{\frac{1}{\gamma}}\gg n$, for $n$ sufficiently large we have that, using Markov's inequality and (\ref{expexc}) with $m= h_n^\varepsilon$, the second term can be bounded above by
\begin{flalign*}
 2\delta^{-1}a_n^{-\frac{1}{\gamma}}\Eb\left[\sum_{l=0}^{a_n\log(n)}\sum_{k=0}^{C_2\log(n)}T^{(l,k)}\ind_{\{\Hc(\Tc_{\rho(l)})< h_n^\varepsilon\}}\right] \; \leq \; C_\delta\log(n)^2a_n^{1-\frac{1}{\gamma}}a_{n^{1-\varepsilon}}^{\frac{1}{\gamma}-1}. 
 \end{flalign*}
 Combining constants and slowly varying functions into a single function $L_\delta$ such that for any $\epsilon>0$ we have that $L_\delta(n)\leq n^\epsilon$ for $n$ sufficiently large thus  
 \begin{flalign*}
  \Pb\left(\left|\frac{\Delta_n-\chi_{1,n}}{a_n^{1/\gamma}}\right|\geq \delta\right)    \leq o(1) + L_\delta(n)n^{-\varepsilon\frac{\frac{1}{\gamma}-1}{\alpha-1}}
\end{flalign*}
 which converges to $0$ since $\alpha,\frac{1}{\gamma}>1$.
\end{proof}
\end{prp}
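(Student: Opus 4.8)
The plan is to bound the time accrued up to $\Delta_{\lfloor nt\rfloor}$ that is \emph{not} recorded by $\chi_{t,n}$, and to do so by splitting $\Delta_{\lfloor nt\rfloor}-\chi_{t,n}$ into the time spent moving along the backbone and the time spent inside traps of height at most $h_n^\varepsilon$. The first step is to justify exactly this split. On the event $\Dc(n,t)$ together with the no-backtracking estimate (\ref{noback}), with probability tending to $1$ the walk, once it has left a large branch, never returns to it before time $\Delta_{\lfloor nt\rfloor}$; hence the time $\chi_{t,n}$ fails to record is, with high probability, precisely the time spent on the backbone plus the time spent inside small traps (in either large or small branches), since for IVIE the large-trap vertices inside a large branch are exactly the set $K(n)$.

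For the backbone part, on $A_3(n)$ level $\lfloor nt\rfloor$ is reached by the backbone walk in at most $C_1nt$ steps, and since $\beta\mu>1$ forces $\gamma<1$ while $\alpha-1<1$, we have $\gamma(\alpha-1)<1$; writing $a_n=n^{1/(\alpha-1)}\tilde L(n)$ this gives $a_n^{1/\gamma}=n^{1/(\gamma(\alpha-1))}\tilde L(n)^{1/\gamma}\gg n$, so the backbone contribution is $o(a_n^{1/\gamma})$. For the small-trap part one works on $A_4(n,t)\cap A_5(n,t)$: there are at most $a_n\log n$ traps before level $\lfloor nt\rfloor$, each entered at most $C_2\log n$ times, so, writing $T^{(l,k)}$ for the length of the $k\th$ excursion into the $l\th$ trap and $\rho(l)$ for its bud, this time is at most $\sum_{l=0}^{a_n\log n}\sum_{k=0}^{C_2\log n}T^{(l,k)}\ind_{\{\Hc(\Tc_{\rho(l)})<h_n^\varepsilon\}}$. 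Markov's inequality together with the conditional expectation bound (\ref{expexc}) in the case $\beta\mu>1$, applied with $m=h_n^\varepsilon$, bounds the probability in the statement by a quantity of order $\delta^{-1}\log^2 n\,a_n^{\,1-1/\gamma}(\mu\beta)^{h_n^\varepsilon}$.

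It then remains to insert $h_n^\varepsilon=\lfloor\log(a_{n^{1-\varepsilon}})/\log(\mu^{-1})\rfloor$. This makes $\mu^{-h_n^\varepsilon}$ of order $a_{n^{1-\varepsilon}}$, so $\beta^{h_n^\varepsilon}=(\mu^{-h_n^\varepsilon})^{\log\beta/\log(\mu^{-1})}=(\mu^{-h_n^\varepsilon})^{1/\gamma}$ is of order $a_{n^{1-\varepsilon}}^{1/\gamma}$ and $(\mu\beta)^{h_n^\varepsilon}$ is of order $a_{n^{1-\varepsilon}}^{1/\gamma-1}$. Hence the probability is at most of order $\delta^{-1}\log^2 n\,(a_{n^{1-\varepsilon}}/a_n)^{1/\gamma-1}$. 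Since $(a_n)$ is regularly varying of index $1/(\alpha-1)>0$ we have $a_{n^{1-\varepsilon}}/a_n=n^{-\varepsilon/(\alpha-1)}\,n^{o(1)}$, and because $1/\gamma>1$ the power $n^{-\varepsilon(1/\gamma-1)/(\alpha-1)}$ has a strictly negative exponent; absorbing the logarithmic and slowly varying factors, the bound tends to $0$, which proves the claim.

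I expect the delicate point to be the calibration of the threshold $h_n^\varepsilon$: it must be large enough that a branch is large only with probability $q_n$ of order $n^{-(1-\varepsilon)}$ — so that (by Section~\ref{trpapr}) the large branches are order $n^\varepsilon$ apart and the no-backtracking reduction of the first step is legitimate — yet small enough that the roughly $a_n$ small traps, whose expected excursion cost grows like $(\mu\beta)^{h_n^\varepsilon}$ by (\ref{expexc}), together soak up only $o(a_n^{1/\gamma})$ time. Reconciling these two demands is exactly what forces the choice of $h_n^\varepsilon$ in terms of $a_{n^{1-\varepsilon}}$, and the only genuine computation is the exponent bookkeeping above, using regular variation of $(a_n)$. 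The feature that is new relative to the finite-variance argument of \cite{arfrgaha} is that the bud (hence trap) count has infinite mean, so the number of traps must be controlled by $a_n\log n$ via $A_4(n,t)$ rather than by something of order $n$; one must then check that this extra factor $a_n$ is still beaten by the gap between $a_n^{1/\gamma}$ and $a_n$, which is what the condition $1/\gamma>1$ delivers.
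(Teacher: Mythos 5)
Your proof is correct and follows essentially the same route as the paper's: reduce to $\Dc(n,t)$ and no-backtracking so that $\Delta_{\lfloor nt\rfloor}-\chi_{t,n}$ is backbone time plus time in traps of height below $h_n^\varepsilon$, then bound that on $A_3\cap A_4\cap A_5$ via Markov's inequality and (\ref{expexc}), and finally do the regular-variation exponent bookkeeping (your $(\mu\beta)^{h_n^\varepsilon}\approx a_{n^{1-\varepsilon}}^{1/\gamma-1}$ unpacks the factor $a_{n^{1-\varepsilon}}^{1/\gamma-1}$ that appears explicitly in the paper's bound). The final observation that the exponent $-\varepsilon(1/\gamma-1)/(\alpha-1)$ is strictly negative and so dominates the slowly varying factors matches the paper's conclusion.
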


Using $A_3,A_5$ and the form of $A_4$ for FVIE, the technique used to prove Proposition \ref{crttrpstab} extends straightforwardly to prove Proposition \ref{crttrpfin} therefore we omit the proof.
\begin{prp}\label{crttrpfin}
 In FVIE, fix $\varepsilon>0$ then for any $t, \delta>0$ we have that as $\nin$ \[ \Pb\left(\left|\frac{\Delta_{\lfloor nt\rfloor}-\chi_{t,n}}{n^{1/\gamma}}\right|\geq \delta\right) \rightarrow 0.\]
 \end{prp}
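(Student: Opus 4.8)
The plan is to follow the proof of Proposition \ref{crttrpstab} essentially verbatim, with the scaling sequence $a_n$ replaced by $n$ everywhere and the stable-law input for the number of buds replaced by the cruder Markov bound that is built into the FVIE version of $A_4(n,t)$. First I would work on the event $\Dc(n,t)$: since $\rho\notin\Dc^{(n)}$ there, and since by (\ref{noback}) together with $\Dc(n,t)$ the walk with high probability never backtracks from a large branch to a previous one, the time spent in large branches up to $\Delta_{\lfloor nt\rfloor}$ coincides with $\chi_{t,n}$ with probability tending to $1$. Hence it suffices to show that the time accumulated in \emph{non}-large traps, i.e.\ traps of height $<h_n^\varepsilon$ with $h_n^\varepsilon=\lfloor(1-\varepsilon)\log(n)/\log(\mu^{-1})\rfloor$, is $o(n^{1/\gamma})$ in probability.

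Second, on $A_4(n,t)$ there are at most $n\log(n)$ traps below level $\lfloor nt\rfloor$; enumerate them, write $T^{(l,k)}$ for the duration of the $k$th excursion into the $l$th trap (from bud to last visit to the bud) and $\rho(l)$ for its root bud. Working additionally on $A_3(n)$ (backbone reaches level $\lfloor nt\rfloor$ by time $C_1n$) and $A_5(n,t)$ (each trap entered at most $C_2\log(n)$ times) gives
\[
\Pb\!\left(\left|\frac{\Delta_{\lfloor nt\rfloor}-\chi_{t,n}}{n^{1/\gamma}}\right|\ge\delta\right)
\;\le\; o(1)+\Pb\!\left(C_1n+\sum_{l=0}^{n\log n}\sum_{k=0}^{C_2\log n}T^{(l,k)}\ind_{\{\Hc(\Tc_{\rho(l)})<h_n^\varepsilon\}}\ge\delta\,n^{1/\gamma}\right).
\]
Since $\beta\mu>1$ forces $\gamma<1$, we have $n^{1/\gamma}\gg n$, so for large $n$ the linear term is absorbed and Markov's inequality combined with (\ref{expexc}) in the regime $\beta\mu>1$, applied with $m=h_n^\varepsilon$ and $\Eb[T^{(l,k)}\ind_{\{\Hc<h_n^\varepsilon\}}]\le C(\mu\beta)^{h_n^\varepsilon}$, bounds the second probability by a constant multiple of $\log(n)^2\,n\,(\mu\beta)^{h_n^\varepsilon}/n^{1/\gamma}$.

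Third, by the definition of $h_n^\varepsilon$ and of $\gamma$ in (\ref{gamma}) we have $\mu^{h_n^\varepsilon}\approx n^{-(1-\varepsilon)}$ and $\beta^{h_n^\varepsilon}\approx n^{(1-\varepsilon)/\gamma}$, so $(\mu\beta)^{h_n^\varepsilon}\approx n^{(1-\varepsilon)(1/\gamma-1)}$ and the bound above is of order $\log(n)^2\,n^{\,1-1/\gamma+(1-\varepsilon)(1/\gamma-1)}=\log(n)^2\,n^{-\varepsilon(1/\gamma-1)}$, which tends to $0$ because $1/\gamma>1$. Summing the complementary probabilities of $\Dc(n,t)$, $A_3(n)$, $A_4(n,t)$, $A_5(n,t)$, each of which vanishes, completes the argument.

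I do not expect a real obstacle here; as the paper notes, the finite-variance estimates are genuinely simpler. The only points needing care are: checking that the FVIE choice $A_4(n,t)=\{\sum_{k\le\lfloor nt\rfloor}(\xi^*_k-1)\le n\log n\}$ has vanishing complementary probability (immediate from Markov's inequality, since $\Eb[\xi^*]=(\sigma^2+\mu^2)/\mu<\infty$), that one plugs in the correct number-of-traps bound $n\log n$ rather than $a_n\log n$, and the exponent bookkeeping $1-1/\gamma+(1-\varepsilon)(1/\gamma-1)=-\varepsilon(1/\gamma-1)<0$.
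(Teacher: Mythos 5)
Your proposal is correct and follows exactly the approach the paper intends: the paper explicitly omits this proof, stating that "the technique used to prove Proposition~\ref{crttrpstab} extends straightforwardly," and your write-up supplies precisely that straightforward extension with the right substitutions ($a_n\mapsto n$, the Markov-bound form of $A_4$) and the correct exponent bookkeeping $1-1/\gamma+(1-\varepsilon)(1/\gamma-1)=-\varepsilon(1/\gamma-1)<0$.
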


Similarly, we can show a corresponding result for IVFE. 
\begin{prp}\label{crtbnh}
In IVFE, for any $t, \delta>0$, as $\nin$ \[\Pb\left(\left|\frac{\Delta_{\lfloor nt\rfloor}-\chi_{t,n}}{a_n}\right|\geq \delta\right) \rightarrow 0.\]
\begin{proof}
Let $c \in (0,2-\alpha)$ then, by Markov's inequality and the truncated first moment asymptotic:
\begin{flalign}\label{trn1}
 \Er\left[\xi^*\ind_{\{\xi^*\leq x\}}\right] \sim C x^{2-\alpha}L(x)
\end{flalign}
as $x\rightarrow \infty$ for some constant $C$ (see for example \cite{fe} IX.8), for $n$ large 
\begin{flalign*}
 \Pb\left(\sum_{k=0}^{\lfloor nt \rfloor}(\xi^*_{\rho_k}-1)\ind_{\{\xi^*_{\rho_k}-1\leq l_n^\varepsilon\}} \geq n^{\frac{1-c\varepsilon}{\alpha-1}}\right) \; \leq \; \frac{\Er\left[\sum_{k=0}^{\lfloor nt \rfloor}(\xi^*_{\rho_k}-1)\ind_{\{\xi^*_{\rho_k}-1\leq l_n^\varepsilon\}} \right]}{n^{\frac{1-c\varepsilon}{\alpha-1}}} \; \leq \;  n^{-\frac{\varepsilon(2-\alpha-c)}{\alpha-1}}L_1(n) 
\end{flalign*}
where $L_1(n)$ depends on $t$ and varies slowly at $\infty$. This converges to $0$ as $\nin$. We can order the traps in large branches and write $T^{(l,k)}$ to be the duration of the $k\th$ excursion in the $l\th$ large trap where we consider an excursion to start and end at the backbone. Using $A_3$ and $A_5$,
\begin{flalign*}
 \Pb\left(\left|\frac{\Delta_{\lfloor nt\rfloor}-\chi_{t,n}}{a_n}\right|\geq \delta\right) & \leq o(1) +  \Pb\left(\sum_{l=0}^{n^{\frac{1-c\varepsilon}{\alpha-1}}}\sum_{k=0}^{C_2\log(n)}T^{(l,k)} \geq \frac{\delta}{2}a_n\right). 
\end{flalign*}
Using Markov's inequality on the final term yields
\begin{flalign*}
   \Pb\left(\sum_{l=0}^{n^{\frac{1-c\varepsilon}{\alpha-1}}}\sum_{k=0}^{C_2\log(n)}T^{(l,k)} \geq \frac{\delta}{2}a_n\right)   \; \leq \; 2\delta^{-1}a_n^{-1}\Eb\left[\sum_{k=0}^{n^{\frac{1-c\varepsilon}{\alpha-1}}}\sum_{j=0}^{C_2\log(n)}T^{(l,k)}\right] \; \leq \;  n^{\frac{-c\varepsilon}{\alpha-1}} L_\delta(n)
\end{flalign*}
for some $L_\delta$ varying slowly at $\infty$. This converges to $0$ as $\nin$ hence the result holds.
\end{proof}
\end{prp}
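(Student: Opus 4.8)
The plan is to bound, outside an event of vanishing probability, the number of steps $\Delta_{\lfloor nt\rfloor}-\chi_{t,n}$ that the walk makes before time $\Delta_{\lfloor nt\rfloor}$ while sitting outside every large branch, and to show this is $o(a_n)$. Recall that in IVFE a branch is large precisely when it carries more than $l_n^\varepsilon=a_{\lfloor n^{1-\varepsilon}\rfloor}$ buds, and that $K(n)$ is the union of the vertex sets of all large branches; so every step counted in $\Delta_{\lfloor nt\rfloor}-\chi_{t,n}$ is either a step along $\Yc$ or a step of an excursion into a trap rooted at a \emph{small} backbone vertex among $\rho_0,\dots,\rho_{\lfloor nt\rfloor}$. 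The backbone part is at most $\Delta^Y_{\lfloor nt\rfloor}$, which on $A_3(\lfloor nt\rfloor)$ is $O(n)$ and hence $o(a_n)$ since $a_n=n^{1/(\alpha-1)}\tilde L(n)$ with $1/(\alpha-1)>1$. So the real issue is the time spent in traps of small branches, and here the decisive feature of IVFE (unlike FVIE/IVIE) is that $\beta\mu<1$: by (\ref{return}), averaging the generation sizes of an $f$-GW trap, a single excursion into \emph{any} trap has expected duration bounded by a finite constant $\bar c$ of order $(1-\mu\beta)^{-1}$. Thus the time in small traps is controlled by (number of small-branch buds) $\times$ (number of excursions per trap) $\times \bar c$, and neither depth truncation nor the backtracking estimates of Section~\ref{trpapr} are really needed.

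Step one is to control $M_n:=\sum_{k=0}^{\lfloor nt\rfloor}(\xi^*_{\rho_k}-1)\ind_{\{\xi^*_{\rho_k}-1\le l_n^\varepsilon\}}$, the number of buds lying in small branches. By (\ref{xistar}), $\xi^*$ has tail index $\alpha-1\in(0,1)$, so Karamata's theorem (\cite{fe}, IX.8) gives $\Er[\xi^*\ind_{\{\xi^*\le x\}}]\sim C'x^{2-\alpha}L(x)$ as $x\to\infty$; inserting the polynomially growing cutoff $l_n^\varepsilon$ (of exponent $(1-\varepsilon)/(\alpha-1)$) yields $\Er[M_n]= n^{(1-\varepsilon(2-\alpha))/(\alpha-1)}L^\sharp(n)$ for a slowly varying $L^\sharp$. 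Fixing $c\in(0,2-\alpha)$, Markov's inequality then gives
\[\Pb\!\left(M_n\ge n^{\frac{1-c\varepsilon}{\alpha-1}}\right)\ \le\ n^{-\frac{\varepsilon(2-\alpha-c)}{\alpha-1}}L^\sharp(n)\ \longrightarrow\ 0 .\]

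Step two is to assemble everything on the high-probability event $A_3(\lfloor nt\rfloor)\cap A_5(n,t)\cap\{M_n< n^{(1-c\varepsilon)/(\alpha-1)}\}$. Enumerate the traps rooted at small backbone vertices as $\Tc_{\rho(1)},\Tc_{\rho(2)},\dots$ and let $T^{(l,k)}$ be the duration of the $k$th excursion into the $l$th such trap (started and finished at the backbone). By Lemma~\ref{numexc} and the definition of $A_5$, no trap is entered more than $C_2\log n$ times, and on the chosen event there are at most $n^{(1-c\varepsilon)/(\alpha-1)}$ of them, so
\[\Delta_{\lfloor nt\rfloor}-\chi_{t,n}\ \le\ C_1 n+\sum_{l=1}^{\lceil n^{(1-c\varepsilon)/(\alpha-1)}\rceil}\ \sum_{k=1}^{C_2\log n}T^{(l,k)} .\]
Since $C_1 n=o(a_n)$, it remains to bound $\Pb\big(\sum_{l,k}T^{(l,k)}\ge\tfrac{\delta}{2}a_n\big)$; the $T^{(l,k)}$ are i.i.d.\ with mean $\le\bar c$ and independent of $M_n$ (trap shapes being independent of the bud counts), so Markov's inequality bounds this by a constant multiple of $\log(n)\,n^{(1-c\varepsilon)/(\alpha-1)}/a_n=\big(\log n/\tilde L(n)\big)\,n^{-c\varepsilon/(\alpha-1)}$, which tends to $0$.

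The step I expect to be the main obstacle is getting the truncation level right: the Markov bound on $M_n$ wants the cutoff as large as possible, whereas the final estimate needs the number of small traps to remain $\ll a_n$; the parameter $c\in(0,2-\alpha)$ together with the cutoff $n^{(1-c\varepsilon)/(\alpha-1)}$ is exactly what reconciles these, and one has to keep $\varepsilon$ fixed as $n\to\infty$ so that the truncated first-moment asymptotic for $\xi^*$ is valid at the diverging level $l_n^\varepsilon$.
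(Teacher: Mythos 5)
Your proposal is correct and follows essentially the same three-step argument as the paper: bound $M_n$ via Markov and the Karamata truncated-first-moment asymptotic at scale $l_n^\varepsilon$, restrict to $A_3\cap A_5$, and apply Markov to the resulting sum of excursion durations using that $\Er[T^{(l,k)}]$ is bounded because $\beta\mu<1$. The only addition you make — explicitly singling out $\beta\mu<1$ as the reason no height truncation is needed, in contrast to the FVIE/IVIE analogues — is a correct observation that the paper leaves implicit (and you are right that the phrase ``traps in large branches'' in the paper's proof is a slip for the traps lying \emph{outside} the large branches, consistent with the IVFE definition of $K(n)$).
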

 Since $\Delta_{\lfloor nt \rfloor}-\chi_{t,n}$ is non-negative and non-decreasing in $t$ we have that $\sup_{0\leq t\leq T}|\Delta_{\lfloor nt \rfloor}-\chi_{t,n}|=|\Delta_{\lfloor nT \rfloor}-\chi_{T,n}|$ therefore Corollary \ref{uspdiff} follows from Propositions \ref{crttrpstab}, \ref{crttrpfin} and \ref{crtbnh}. 
\begin{cly}\label{uspdiff}
 In each of IVFE, FVIE and IVIE, for any $T>0$ \[\sup_{0\leq t\leq T} \frac{|\Delta_{\lfloor nt \rfloor}-\chi_{t,n}|}{r_n}\]
 converges in $\Pb$-probability to $0$.
\end{cly}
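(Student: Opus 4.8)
The plan is to reduce the uniform-in-$t$ statement to the pointwise statements already established in Propositions \ref{crttrpstab}, \ref{crttrpfin} and \ref{crtbnh}, using a simple monotonicity observation. First I would record that, for each fixed $n$, the quantity $\Delta_{\lfloor nt\rfloor}-\chi_{t,n}$ counts exactly the steps $i\leq \Delta_{\lfloor nt\rfloor}$ of the walk for which the traversed edge $\{X_{i-1},X_i\}$ is \emph{not} contained in $K(n)$: indeed $\chi_{t,n}$ counts those that are, and every step up to $\Delta_{\lfloor nt\rfloor}$ falls into exactly one of the two categories. In particular this difference is non-negative, and since $t\mapsto \Delta_{\lfloor nt\rfloor}$ is non-decreasing, the time window over which we are counting only grows with $t$; hence $t\mapsto \Delta_{\lfloor nt\rfloor}-\chi_{t,n}$ is non-decreasing and
\[
\sup_{0\leq t\leq T}\bigl(\Delta_{\lfloor nt\rfloor}-\chi_{t,n}\bigr)=\Delta_{\lfloor nT\rfloor}-\chi_{T,n}.
\]

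With this identity in hand the three cases are immediate. In IVFE, $r_n=a_n$ and Proposition \ref{crtbnh} applied with $t=T$ gives $(\Delta_{\lfloor nT\rfloor}-\chi_{T,n})/a_n\to 0$ in $\Pb$-probability; in FVIE, $r_n=n^{1/\gamma}$ and Proposition \ref{crttrpfin} with $t=T$ gives $(\Delta_{\lfloor nT\rfloor}-\chi_{T,n})/n^{1/\gamma}\to 0$; in IVIE, $r_n=a_n^{1/\gamma}$ and Proposition \ref{crttrpstab} with $t=T$ gives $(\Delta_{\lfloor nT\rfloor}-\chi_{T,n})/a_n^{1/\gamma}\to 0$. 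Dividing the displayed identity by $r_n$ and invoking the appropriate one of these three limits yields the claim in each regime.

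There is no real obstacle here: the corollary is genuinely a corollary, and the only point that warrants a line of care is the monotonicity of $t\mapsto \Delta_{\lfloor nt\rfloor}-\chi_{t,n}$. This is why I would phrase the difference as a count of "bad" steps over a nested family of time windows, rather than manipulating $\Delta_{\lfloor nt\rfloor}$ and $\chi_{t,n}$ separately — each of those is individually non-decreasing in $t$, but their difference is not obviously monotone without this interpretation. Everything else is a direct substitution $t=T$ into the pointwise propositions proved in Section \ref{spntlrg}.
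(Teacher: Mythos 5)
Your proposal is correct and follows exactly the paper's argument: the paper notes, immediately before the corollary, that $\Delta_{\lfloor nt\rfloor}-\chi_{t,n}$ is non-negative and non-decreasing in $t$, so the supremum over $[0,T]$ equals the value at $t=T$, and then invokes Propositions \ref{crttrpstab}, \ref{crttrpfin}, \ref{crtbnh}. Your "count of bad steps over a nested family of time windows" phrasing is a nice way of justifying the monotonicity, but the substance is identical.
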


Let $\Lambda$ be the set of strictly increasing continuous functions mapping $[0,T]$ onto itself and $I$ the identity map on $[0,T]$ then we consider the Skorohod $J_1$ metric \[d_{J_1}(f,g)=\inf_{\lambda \in \Lambda}\sup_{t \in [0,T]}\left(|f(t)-g(\lambda(t))|+|t-\lambda(t)|\right). \]

Write $\chi^i_n$ to be the total time spent in large traps of the $i\th$ large branch; that is \[\chi^i_n:=\left|\left\{m\geq 0:X_{m-1},X_m \in \left(\Tc^{*-}_{\rho_i^{(n)}}\cap K(n)\right)\right\}\right|\]
where $\rho_i^{(n)}$ is the element of $\Dc^{(n)}$ which is $i\th$ closest to $\rho$. Notice that, whereas $\chi_{n,t}$ only accumulates time up to reaching $\rho_{\lfloor nt\rfloor}$, each $\chi_n^{i}$ may have contributions at arbitrarily large times. Recall that $A_2^{(0)}(n,t)$ is the event that the walk never backtracks distance $\overline{C}\log(n)$ along the backbone from a backbone vertex up to level $\lfloor nt\rfloor$. On $A_2^{(0)}(n,T)$ we therefore have that for all $t\leq T$ \[ \sum_{i=1}^{|\Dc_{\lfloor nt-\overline{C}\log(n)\rfloor}^{(n)}|}\chi^i_n \; \leq \; \chi_{n,t} \; \leq \; \sum_{i=1}^{|\Dc_{\lfloor nt\rfloor}^{(n)}|}\chi^i_n \]
where the $J_1$ distance between the two sums in the above expression can be bounded above by $\overline{C}\log(n)/n$. In particular, using that $A_2^{(0)}(n,T)$ occurs with high probability and the tightness result we prove in Section \ref{tght}, in order to prove Theorems \ref{finexcthm}, \ref{finvarthm} and \ref{infallthm} it will suffice to consider the time spent in large traps up to level $\lfloor nt\rfloor$ under the appropriate scaling. 

Let $(X_n^{(i)})_{i\geq 1}$ be independent walks on the same tree as $X_n$ and $(Y_n^{(i)})_{i\geq 1}$ the corresponding backbone walks. Then for $i\geq 1$ let $\tilde{\chi}_n^i$ be the time spent in the $i\th$ large trap by $X_n^{(i)}$ and \[\tilde{\chi}_{t,n}:=\sum_{i=1}^{\lfloor nt q_n\rfloor }\tilde{\chi}^i_n.\]
$(\tilde{\chi}_n^i)_{i\geq 1}$ are then independent copies (under $\Pb$) of times spent in large branches. Moreover, on $\Dc(n,t)$, the root $\rho$ is not the root of a large branch and therefore $(\tilde{\chi}_n^i)_{i\geq 1}$ are identically distributed. Recalling that we write $r_n$ to be $a_n$ in IVFE, $n^{1/\gamma}$ in FVIE and $a_n^{1/\gamma}$ in IVIE we can now prove the following lemma.
\begin{lem}\label{sumiidstab}
In each of IVFE, FVIE and IVIE, 
\begin{enumerate}
 \item\label{J1dist} as $\nin$ \[d_{J_1}\left(\left(\sum_{i=1}^{|\Dc_{\lfloor nt\rfloor}^{(n)}|}\frac{\chi^i_n}{r_n}\right)_{t \in [0,T]},\left(\sum_{i=1}^{\lfloor tnq_n \rfloor}\frac{\chi^i_n}{r_n}\right)_{t \in [0,T]}\right) \]
 converges to $0$ in probability where $d_{J_1}$ denotes the Skorohod $J_1$ metric;
 \item for any bounded $H:D([0,T],\Rb)\rightarrow \Rb$ continuous with respect to the Skorohod $J_1$ topology we have that as $\nin$
\[\left|\Eb\left[H\left(\left(\sum_{i=1}^{\lfloor tnq_n \rfloor}\frac{\chi^i_n}{r_n}\right)_{t \in [0,T]}\right)\right]-\Eb\left[H\left(\left(\sum_{i=1}^{\lfloor tnq_n \rfloor}\frac{\tilde{\chi}^i_n}{r_n}\right)_{t \in [0,T]}\right)\right] \right|\rightarrow 0.\]
\end{enumerate}
\begin{proof}
By definition of $d_{J_1}$, the distance in statement \ref{J1dist} is equal to
\begin{flalign*}
 \inf_{\lambda \in \Lambda}\sup_{t \in [0,T]} \left(\left|\sum_{i=1}^{|\Dc_{\lfloor nt\rfloor}^{(n)}|}\frac{\chi^i_n}{r_n}-\sum_{i=1}^{\lfloor \lambda(t)nq_n \rfloor}\frac{\chi^i_n}{r_n}\right|+|\lambda(t)-t|\right).
\end{flalign*}
For $m \in \Nb$ let $\lambda_n(m/n):=|\Dc_{m}^{(n)}|(nq_n)^{-1}$ then define $\lambda_n(t)$ by the usual linear interpolation. It follows that $|\Dc_{\lfloor nt\rfloor}^{(n)}|=\lfloor \lambda_n(t)nq_n \rfloor$ and the above expression can be bounded above by 
\[\sup_{t \in [0,T]}\left|t-\frac{|\Dc_{\lfloor nt\rfloor}^{(n)}|}{nq_n}\right|\]
which converges to $0$ by Lemma \ref{numcrit} since $n^{2\varepsilon/3}(nq_n)^{-1} \rightarrow 0$.

For $i\geq 1$ let \[A_2^{(i)}(n,t):=\bigcap_{j=0}^{\lfloor nt\rfloor}\bigcap_{n \geq \Delta^{Y^{(i)}}_{\rho_j}}\{|Y_n^{(i)}|>j-\overline{C}\log(n)\}\] be the analogue of $A^{(0)}_2(n,t)$ for the $i\th$ copy and $\tilde{A}_2(n,t)=\Dc(n,t)\cap\bigcap_{i=0}^{\lfloor ntq_n\rfloor}A_2^{(i)}(n,t)$ be the event that, on each of the first $\lceil ntq_n\rceil$ copies, the walk never backtracks distance $\overline{C}\log(n)$ and that large branches are of distance at least $n^\kappa$ apart. Letting $\Eb$ denote the expectation on the enlarged space we have that
\begin{flalign*}
 \Eb\left[H\left(\left(\sum_{i=1}^{\lfloor tnq_n \rfloor}\frac{\chi^i_n}{r_n}\right)_{t \in [0,T]}\right)\ind_{\tilde{A}_2(n,T)}\right]=\Eb\left[H\left(\left(\sum_{i=1}^{\lfloor tnq_n \rfloor}\frac{\tilde{\chi}^i_n}{r_n}\right)_{t \in [0,T]}\right)\ind_{\tilde{A}_2(n,T)}\right] 
\end{flalign*}
therefore 
\begin{flalign*}
 &\left|\Eb\left[H\left(\left(\sum_{i=1}^{\lfloor tnq_n \rfloor}\frac{\chi^i_n}{r_n}\right)_{t \in [0,T]}\right)\right]-\Eb\left[H\left(\left(\sum_{i=1}^{\lfloor tnq_n \rfloor}\frac{\tilde{\chi}^i_n}{r_n}\right)_{t \in [0,T]}\right)\right] \right| \\
 & \qqqqqqqqquad \qqquad\leq||H||_\infty\left( \lceil nTq_n\rceil \Pb(A^{(0)}_2(n,T)^c)+\Pr(\Dc(n,T)^c)\right)
\end{flalign*}
which converges to $0$ as $\nin$ for $\overline{C}$ large by the same argument as (\ref{noback}) and that $\Pr(\Dc(n,T)^c)\rightarrow 0$.
\end{proof}
\end{lem}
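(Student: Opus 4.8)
The plan is to prove the two items separately: item~\ref{J1dist} is a time-change estimate driven by Lemma~\ref{numcrit}, while the second item is a coupling argument built on the no-backtracking events already introduced in Section~\ref{trpapr}.

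For item~\ref{J1dist} I would match the two (random) index processes by a single time change. Writing $\lambda_n(m/n):=|\Dc_m^{(n)}|/(nq_n)$ on the grid $\{m/n\}$ and extending by linear interpolation (with a negligible adjustment near $t=T$ and on $\{\rho\in\Dc^{(n)}\}$ so that $\lambda_n$ is a strictly increasing bijection of $[0,T]$), one has $|\Dc_{\lfloor nt\rfloor}^{(n)}|=\lfloor\lambda_n(t)nq_n\rfloor$, so that the first partial-sum process is the second composed with $\lambda_n$ and hence the $J_1$ distance is at most $\sup_{t\in[0,T]}|\lambda_n(t)-t|$. Up to an $O(1/(nq_n))$ rounding error this supremum equals $\sup_{t\in[0,T]}\big|t-|\Dc_{\lfloor nt\rfloor}^{(n)}|/(nq_n)\big|$, which on the event $A_1(n,T)$ of Lemma~\ref{numcrit} is at most $(n^{2\varepsilon/3}+1)/(nq_n)$; since $q_n$ is of order $n^{-(1-\varepsilon)}$ in all three regimes, $nq_n$ is of order $n^\varepsilon$ and the bound is $O(n^{-\varepsilon/3})$. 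With $\Pr(A_1(n,T)^c)\to0$ from Lemma~\ref{numcrit}, this gives convergence to $0$ in probability.

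For the second item the crux is a pathwise decoupling: once the large branches met before level $\lfloor nT\rfloor$ are pairwise at backbone-distance at least $n^\kappa$ and the backbone walk never backtracks distance $\overline{C}\log n$ from a visited backbone vertex, the portions of the trajectory of $X$ spent inside distinct large branches are mutually independent, and by the strong Markov property together with the fact that a trap can be entered and left only through its bud on the backbone, each portion is distributed as the time an independent fresh copy of the walk spends in that branch. Accordingly I would introduce independent walks $(X^{(i)})_{i\ge1}$ on the same tree with contributions $\tilde{\chi}^i_n$, the no-backtracking event $A_2^{(i)}(n,t)$ for the $i$-th copy, and the event $\tilde{A}_2(n,t):=\Dc(n,t)\cap\bigcap_{i=0}^{\lfloor ntq_n\rfloor}A_2^{(i)}(n,t)$; on $\tilde{A}_2(n,T)$ one gets the pathwise identity $\sum_{i\le\lfloor tnq_n\rfloor}\chi^i_n=\sum_{i\le\lfloor tnq_n\rfloor}\tilde{\chi}^i_n$ for all $t\le T$, so that $\Eb[H(\cdot)\ind_{\tilde{A}_2(n,T)}]$ agrees for the two processes. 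The difference of the unrestricted expectations is then at most $2\|H\|_\infty\Pb(\tilde{A}_2(n,T)^c)\le\|H\|_\infty\big(\lceil nTq_n\rceil\Pb(A_2^{(0)}(n,T)^c)+\Pr(\Dc(n,T)^c)\big)$ by a union bound over the copies and exchangeability; both terms vanish, the first because $\lceil nTq_n\rceil\Pb(A_2^{(0)}(n,T)^c)$ is bounded by a constant times $n^\varepsilon\cdot nT\beta^{-\overline{C}\log n}\to0$ for $\overline{C}$ large exactly as in~(\ref{noback}), and the second because $\Pr(\Dc(n,T)^c)\to0$ was already established.

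The step I expect to be the main obstacle is the pathwise decoupling underlying the second item: one must write $\tilde{A}_2(n,T)$ down precisely and verify that on this event the whole family of excursions of $X$ into the large branches genuinely coincides with that of the independent copies, keeping in mind that each $\chi^i_n$ — unlike $\chi_{n,t}$, which only accumulates time up to reaching level $\lfloor nt\rfloor$ — may receive contributions at arbitrarily large times, so the matching has to be checked along the entire trajectory rather than merely up to a fixed level. Granting this identity, both items reduce to the union bounds and the time-change estimate above, all of which rest only on results already proved in the paper.
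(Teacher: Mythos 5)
Your proposal follows essentially the same route as the paper's proof: item~\ref{J1dist} is handled by the same time change $\lambda_n(m/n)=|\Dc_m^{(n)}|/(nq_n)$, reducing the $J_1$ distance to $\sup_t|t-|\Dc_{\lfloor nt\rfloor}^{(n)}|/(nq_n)|$ and invoking Lemma~\ref{numcrit}, and the second item uses the same decoupling event $\tilde{A}_2(n,T)=\Dc(n,T)\cap\bigcap_i A_2^{(i)}(n,T)$ with a union bound over copies and the estimate~(\ref{noback}). You correctly flag where the substance lies (verifying the pathwise identity on $\tilde{A}_2$, including contributions after the walk first reaches level $\lfloor nT\rfloor$), which the paper asserts without detail, and you are somewhat more careful than the paper about making $\lambda_n$ a bijection of $[0,T]$; neither point alters the argument.
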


Using Corollary \ref{uspdiff} and Lemma \ref{sumiidstab}, in order to show the convergence of $\Delta_{\lfloor nt\rfloor}/r_n$, it suffices to show the convergence of the scaled sum of independent random variables $\tilde{\chi}_{t,n}/r_n$.

\section{Excursion times in dense branches}\label{siid}
In this section we only consider IVFE. The main tool will be Theorem \ref{eqconv}, which is Theorem 10.2 in \cite{arfrgaha}, and is itself a consequence of Theorem IV.6 in \cite{pe}. 
\begin{thm}\label{eqconv}
Let $n(t):[0,\infty)\rightarrow \Nb$ and for each $t$ let $\{R_k(t)\}_{k=1}^{n(t)}$ be a sequence of i.i.d.\ random variables. Assume that for every $\epsilon>0$ it is true that
\[ \limt\Pb(R_1(t)>\epsilon)=0.\]
Now let $\Lc(x):\Rb\setminus\{0\}\rightarrow \Rb$ be a real, non-decreasing function satisfying $\lim_{x\rightarrow \infty}\Lc(x)=0$ and $\int_0^ax^2\d \Lc(x)<\infty$ for all $a>0$. Suppose $d \in \Rb$ and $\sigma\geq 0$, then the following statements are equivalent:
\begin{enumerate}
 \item As $t\rightarrow \infty$
 \begin{flalign*}
  \sum_{k=1}^{n(t)}R_k(t) \cd R_{d,\sigma,\Lc}
 \end{flalign*}
where $R_{d,\sigma,\Lc}$ has the law $\Ic(d,\sigma,\Lc)$, that is, \[\Eb[e^{itR_{d,\sigma,\Lc}}]=\exp\left(idt+\int_0^\infty \left(e^{itx}-1-\frac{itx}{1+x^2}\right)\d\Lc(x)\right).\]
\item For $\tau>0$ let $\overline{R}_\tau(t):=R_1(t)\ind_{\{|R_1(t)| \leq \tau\}}$ then for every continuity point $x$ of $\Lc$
\begin{flalign*}
d & = \limt n(t) \Eb[\overline{R}_\tau(t)]+\int_{|x|>\tau}\frac{x}{1+x^2}\d\Lc(x)-\int_{0<|x|\leq \tau}\frac{x^3}{1+x^2}\d\Lc(x), \\
\sigma^2 & = \lim_{\tau\rightarrow 0}\limsup_{t\rightarrow \infty}n(t)Var(\overline{R}_\tau(t)), \\
 \Lc(x) & = \begin{cases} \limt n(t)\Pb(R_1(t)\leq x) & x<0 \\ -\limt n(t)\Pb(R_1(t)>x) & x>0 \end{cases} 
\end{flalign*}
\end{enumerate}
\end{thm}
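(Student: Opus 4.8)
The plan is to prove this as an instance of the classical Gnedenko--Kolmogorov limit theory for row-sums of infinitesimal triangular arrays, which is exactly Theorem IV.6 of \cite{pe}; so in outline I would reduce to that result rather than redevelop it, but let me indicate the mechanism. The array here is $\{R_k(t)\}_{k=1}^{n(t)}$ with the $R_k(t)$ i.i.d.\ copies of $R_1(t)$, so the hypothesis $\limt\Pb(R_1(t)>\epsilon)=0$ for all $\epsilon>0$ (in the application the $R_k(t)\geq 0$, so this controls the entire tail) immediately gives $\max_{k\leq n(t)}\Pb(|R_k(t)|>\epsilon)\to 0$, i.e.\ the array is infinitesimal. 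This is the only standing assumption needed to enter the general theory.

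First I would pass to characteristic functions: writing $\phi_t(u)=\Eb[e^{iuR_1(t)}]$, the sum has characteristic function $\phi_t(u)^{n(t)}$, and by infinitesimality $\phi_t(u)^{n(t)}=\exp\!\big(n(t)(\phi_t(u)-1)+o(1)\big)$ uniformly for $u$ in compacts (using $|\log\phi_t-(\phi_t-1)|=O(|\phi_t-1|^2)$ and $\phi_t(u)\to 1$). Then I would rewrite
\[ n(t)\big(\phi_t(u)-1\big) \;=\; n(t)\,\Eb\!\left[e^{iuR_1(t)}-1-\frac{iuR_1(t)}{1+R_1(t)^2}\right]\;+\;iu\,n(t)\,\Eb\!\left[\frac{R_1(t)}{1+R_1(t)^2}\right], \]
and recognise the first term as $\int(e^{iux}-1-iux/(1+x^2))\,\d\Lc_t(x)$ for the spectral-type measure $\Lc_t$ built from $n(t)\Pb(R_1(t)\in\cdot)$, together with the Gaussian contribution coming from the mass near $0$. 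The equivalence in \ref{eqconv} is then the statement that $\phi_t(u)^{n(t)}$ converges (to the Lévy--Khintchine functional with triple $(d,\sigma^2,\Lc)$) if and only if: $n(t)\Pb(R_1(t)\in\cdot)$ converges vaguely on $\Rb\setminus\{0\}$ to $\d\Lc$ (this is the third condition, stated at continuity points of $\Lc$); the truncated variances satisfy $\lim_{\tau\to 0}\limsup_t n(t)\,\mathrm{Var}(\overline R_\tau(t))=\sigma^2$ (this produces the Gaussian part as a $\tau\to0$ limit of the small-jump contribution); and the truncated means converge so as to pin down $d$, where the correction terms $\int_{|x|>\tau}x/(1+x^2)\,\d\Lc-\int_{0<|x|\leq\tau}x^3/(1+x^2)\,\d\Lc$ simply reconcile the $x/(1+x^2)$ truncation in the exponent with the sharp truncation at level $\tau$ used in the moment conditions.

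The hard part will be the careful bookkeeping in the double limit defining $\sigma^2$ — showing that the small-jump variance is asymptotically the only source of a Gaussian component and that it is insensitive to the choice of $\tau$ in the limit — together with the uniqueness of the Lévy triple of the limit law, so that $d,\sigma,\Lc$ in statement (2) are well defined independently of the continuity point $x$ chosen. Both are standard and are precisely where \cite[Thm IV.6]{pe} (equivalently \cite[Thm 10.2]{arfrgaha}) does its work, so in the present paper I would simply invoke that theorem; the only thing to verify on our side — done above — is the infinitesimality of the array, which is immediate from the i.i.d.\ row structure and the hypothesis $\limt\Pb(R_1(t)>\epsilon)=0$.
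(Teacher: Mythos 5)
Your proposal matches the paper's treatment exactly: the paper does not prove Theorem \ref{eqconv} but simply cites it as Theorem 10.2 of \cite{arfrgaha}, itself a restatement of the classical Gnedenko--Kolmogorov limit theory for infinitesimal triangular arrays as given in Theorem IV.6 of \cite{pe}, and your proposal does likewise while correctly flagging the one verification needed on our side --- infinitesimality of the array, which follows from $\limt\Pb(R_1(t)>\epsilon)=0$ together with the nonnegativity of the $R_k(t)$ in every application in the paper. Your sketch of the characteristic-function route (splitting off the $x/(1+x^2)$ compensator, identifying the pre-limit spectral measure $n(t)\Pb(R_1(t)\in\cdot)$, and extracting the Gaussian part via the $\tau\to 0$ limit of the truncated variance) is a correct outline of how that cited theorem is proved.
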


In our case, $n(t)$ will be the number of large branches up to level $\lfloor nt \rfloor$ and $\{R_k\}_{k=1}^{n(t)}$ independent copies of the time spent in a large branch. 

Since we are now working with i.i.d.\ random variables we will simplify notation by considering a dummy branch $\Tc^{*-}$, denote its root $\rho^n$ and the number of traps it contains $N (\ed \xi^*-1|\xi^*>l_n^\varepsilon)$. Each of these traps $\{\Tc_j\}_{j=1}^{N}$ is rooted at a bud of $\rho^n$ which we denote $\{\rho^n_j\}_{j=1}^{N}$. We then write $W^j=|\{m\geq 0: X_m=\rho^n, \; X_{m+1}=\rho^n_j\}|$ for $j\leq N$ to be the number of entrances into the $j\th$ trap and $T^{j,k}$ to be the duration of the $k\th$ excursion into the $j\th$ trap for $k\leq W^j$. Recall we consider the duration of the excursion to be the time between leaving and returning to $\rho^n$ thus $T^{j,k} \ed \tau^+_{\rho^n} | \tau^+_{\rho^n_j}<\tau^+_{\rho^n}<\infty$ for the walk started at $\rho^n$. We then have that, for any $i$, \begin{flalign}\label{chitil}
\tilde{\chi}^i_n \ed \sum_{j=1}^{N}\sum_{k=1}^{W^j}T^{j,k}=:\tilde{\chi}_n.
\end{flalign}

Figure \ref{dummy} shows an example of such a dummy tree $\Tc^{*-}$.
\begin{figure}[H]
\centering
 \includegraphics[scale=0.2]{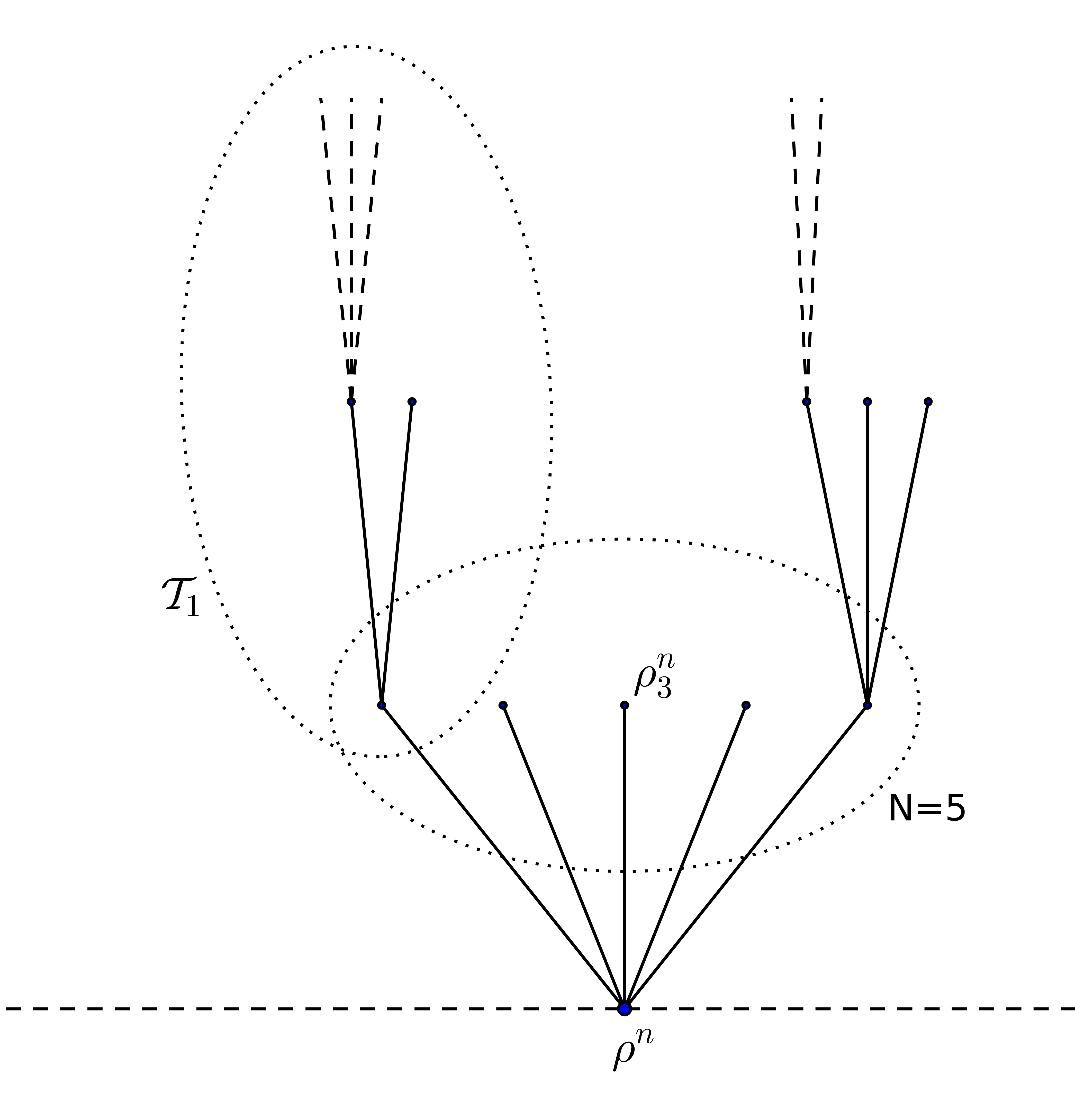} 
\caption{Dummy tree.}\label{dummy}
\end{figure}

For $K \geq l_n^\varepsilon-l_n^0$ write $\overline{L}_K=l_n^0+K$ then denote $\Pb^{K}(\cdot)=\Pb\left(\cdot|N=\overline{L}_K\right)$ and $\Pr^K(\cdot) =\Pr\left(\cdot|N=\overline{L}_K\right)$. We now proceed to show that under $\Pb^K$ \[ \zeta^{(n)}=\frac{1}{N}\sum_{j=1}^{N}\sum_{k=1}^{W^j}T^{j,k}\]
converges in distribution to some random variable $Z_\infty$ whose distribution doesn't depend on $K$.

We start by showing that $T^{j,k}$ don't differ too greatly from $\Et^\Tc[T^{j,k}]$. In order to do this we require moment bounds on $T^{j,k}$ however since $\xi$ has infinite variance it follows that we don't have finite variance of the excursion times and thus we require a more subtle treatment. Using (\ref{return}) we have that the expected excursion time in a trap $\Tc_{\rho_j^n}$ is  
\begin{flalign}\label{qnchexc}
\Et^\Tc[T^{j,k}]=\Et^{\Tc}_{\rho^n}[\tau^+_{\rho^n}|X_1=\rho_j^n]=\sum_{n=0}^\infty Z_n^{\Tc_{\rho_j^n}}\beta^n \leq \Hc(\Tc_{\rho_j^n}) \sup_nZ_n^{\Tc_{\rho_j^n}}\beta^n
\end{flalign}
where $\Tc_{\rho_j^n}$ under $\Pr^K$ has the distribution of an $f$-GW tree. Using that $\Pr(Z_n>0)\sim c_\mu\mu^n$ we see that for $n$ large there are no traps of height greater than $C\log(n)$ for some constant $C$ thus for our purposes it will suffice to study $\sup_nZ_n\beta^n$.

\begin{lem}\label{supmart}
 Let $Z_n$ be a subcritical Galton-Watson process with mean $\mu$ and offspring $\xi$ satisfying $\Er[\xi^{1+\epsilon}]<\infty$ for some $\epsilon>0$. Suppose $1<\beta<\mu^{-1}$, then there exists $\kappa>0$ such that for all $\delta \in (0,\kappa)$ we have that $(Z_n\beta^n)^{1+\delta}$ is a supermartingale.
 \begin{proof}
    Let $\Fc_n=\sigma(Z_k; \; k \leq n)$ denote the natural filtration of $Z_n$.
  \begin{flalign*}
   \Er[(Z_n\beta^n)^{1+\delta}|\Fc_{n-1}]  & = (Z_{n-1}\beta^{n-1})^{1+\delta}\beta^{1+\delta}\Er\left[\left(\sum_{k=1}^{Z_{n-1}}\frac{\xi_k}{Z_{n-1}}\right)^{1+\delta}\Big|Z_{n-1}\right] \\
   & \leq (Z_{n-1}\beta^{n-1})^{1+\delta}\beta^{1+\delta}\Er\left[\sum_{k=1}^{Z_{n-1}}\frac{\xi_k^{1+\delta}}{Z_{n-1}}\Big|Z_{n-1}\right] \\
   & = (Z_{n-1}\beta^{n-1})^{1+\delta}\beta^{1+\delta}\Er[\xi^{1+\delta}] \\
  \end{flalign*}
where the inequality follows by convexity of $f(x)=x^{1+\delta}$. From this it follows that for $\delta\in(0,\alpha-1)$
\begin{flalign*}
 \Er[(Z_n\beta)^{1+\delta}] \; \leq \; \Er[(Z_{n-1}\beta)^{1+\delta}]\Er[(\xi\beta)^{1+\delta}] \; \leq \; \Er[(\xi\beta)^{1+\delta}]^n \; < \; \infty.
\end{flalign*}

Fix $\lambda=(\mu/\beta)^{1/2}$ then $\mu<\lambda$ and for $\delta>0$ sufficiently small $\lambda \beta^{1+\delta}<1$. By dominated convergence $\Er[\xi^{1+\delta}]<\lambda$ for all $\delta$ small. In particular, $\beta^{1+\delta}\Er[\xi^{1+\delta}]<1$ for $\delta$ suitably small and therefore $(Z_n\beta^n)^{1+\delta}$ is a supermartingale.
 \end{proof}
\end{lem}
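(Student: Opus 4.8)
The plan is to verify the supermartingale inequality directly by conditioning on the natural filtration $\Fc_{n-1}=\sigma(Z_0,\dots,Z_{n-1})$. Write $p=1+\delta$. Given $Z_{n-1}=m$ one has $Z_n\ed\sum_{k=1}^m\xi_k$ with $\xi_k$ i.i.d.\ copies of $\xi$, so applying Jensen's inequality to the convex map $x\mapsto x^p$ (which is convex on $[0,\infty)$ for every $p\geq 1$) and the uniform average over $k\leq m$ gives $\big(\sum_{k=1}^m\xi_k\big)^p\leq m^{p-1}\sum_{k=1}^m\xi_k^p$; taking expectations yields $\Er[Z_n^p\mid Z_{n-1}]\leq Z_{n-1}^p\,\Er[\xi^p]$. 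Multiplying through by $\beta^{np}$, the claim reduces to showing that $\beta^{1+\delta}\Er[\xi^{1+\delta}]\leq 1$ for all sufficiently small $\delta>0$.

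For the latter I would use that $\delta\mapsto\Er[\xi^{1+\delta}]$ is continuous at $0$: for $\delta\leq\epsilon$ one has $\xi^{1+\delta}\leq 1+\xi^{1+\epsilon}$, which is integrable by hypothesis, so dominated convergence gives $\Er[\xi^{1+\delta}]\to\Er[\xi]=\mu$ as $\delta\downarrow 0$, while $\beta^{1+\delta}\to\beta$. Since $\beta\mu<1$ by assumption, the product $\beta^{1+\delta}\Er[\xi^{1+\delta}]$ tends to $\beta\mu<1$, so there exists $\kappa\in(0,\epsilon]$ with $\beta^{1+\delta}\Er[\xi^{1+\delta}]<1$ for every $\delta\in(0,\kappa)$. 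For any such $\delta$ the computation above then gives $\Er[(Z_n\beta^n)^p\mid\Fc_{n-1}]\leq(Z_{n-1}\beta^{n-1})^p$, which is the supermartingale property.

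Finally, to make the statement meaningful I would record that each term is integrable: iterating the bound $\Er[Z_n^p]\leq\Er[Z_{n-1}^p]\,\Er[\xi^p]$ from $\Er[Z_0^p]=1$ gives $\Er[(Z_n\beta^n)^p]\leq(\beta^p\Er[\xi^p])^n<\infty$ for every $n$.

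I do not expect a genuine obstacle here, as the lemma is short; the only point needing care is the continuity of $\delta\mapsto\Er[\xi^{1+\delta}]$ at $\delta=0$, which is precisely where the moment hypothesis $\Er[\xi^{1+\epsilon}]<\infty$ enters and why one must take $\kappa\leq\epsilon$. A slightly more conservative route, which decouples the two limits, is to fix an intermediate constant $\lambda=(\mu/\beta)^{1/2}\in(\mu,\beta^{-1})$ and choose $\delta$ small enough that $\Er[\xi^{1+\delta}]<\lambda$ and $\beta^{1+\delta}<\lambda^{-1}$ hold individually, so that their product is below $1$; this is cosmetic but arguably cleaner to present.
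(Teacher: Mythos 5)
Your proof is correct and takes essentially the same route as the paper: the identical one-step Jensen bound $\Er[(Z_n\beta^n)^{1+\delta}\mid\Fc_{n-1}]\le(Z_{n-1}\beta^{n-1})^{1+\delta}\,\beta^{1+\delta}\Er[\xi^{1+\delta}]$ followed by dominated convergence (using $\xi^{1+\delta}\le 1+\xi^{1+\epsilon}$) to force $\beta^{1+\delta}\Er[\xi^{1+\delta}]<1$ for small $\delta$. The paper's only cosmetic deviation is that it runs the small-$\delta$ step through the intermediate $\lambda=(\mu/\beta)^{1/2}$, which is precisely the alternative you flag at the end.
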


\begin{lem}\label{cnv}
 In IVFE, we can choose $\varepsilon>0$ such that for any $t>0$
 \[ \sup_{K\geq-(a_n-l_n^{\varepsilon})}\Pb^K\left(\left|\frac{1}{\overline{L}_K}\sum_{j=1}^{\overline{L}_K}\sum_{k=1}^{W^j}(T^{j,k}-\Et^\Tc[T^{j,1}])\right|>t\right) \leq r(n)n^{-\varepsilon}\]
 for some function $r:\Nb\rightarrow \Rb^+$ such that $r(n)=o(1)$.
 \begin{proof}
Write $E_m:=\bigcap_{j=1}^m \left\{\Hc(\Tc_j) \leq C\log(m)\right\}$ to be the event that none of the first $m$ trees have height greater than $C\log(m)$. Since we have that $\Pb(\Hc(\Tc_j)\geq m)\sim c_\mu\mu^m$ we can choose $c>c_\mu$ such that
  \begin{flalign*}
   \Pr(E_m^c) \; = \; 1-\Pb(\Hc(\Tc_j)\leq C\log(m))^m \;  \leq \;  1-(1-cm^{-C\log(\mu^{-1})})^m. 
  \end{flalign*}
Thus choosing $C>1/\log(\mu^{-1})$ and $c=C\log(\mu^{-1})-1>0$ we have that $\Pr(E^c)\leq \tilde{C}m^{-c}$ for $m$ sufficiently large. By Lemma \ref{supmart} we have that $(Z_k\beta^k)^{1+\delta}$ is a supermartingale for $\delta>0$ sufficiently small thus by Doob's supermartingale inequality 
\begin{flalign*}
 \Pr\left(\sup_{k \leq m}Z_k\beta^k \geq x\right)  = \Pr\left(\sup_{k \leq m}(Z_k\beta^k)^{1+\delta} \geq x^{1+\delta}\right)  \leq \Er[Z_0^{1+\delta}]x^{-(1+\delta)}.
\end{flalign*}
Thus, using (\ref{qnchexc}) it follows that 
\begin{equation*}
\Pr\left(\Et^{\Tc_j}[T^{j,1}]> x|\Hc(\Tc_j)\leq C\log(m)\right) \leq C\log(m)^{1+\delta}x^{-(1+\delta)}.
\end{equation*} 
In particular, for some slowly varying function $\tilde{L}$
\begin{equation}\label{tail}
 \Er\left[\Et^{\Tc_j}[T^{j,1}]^2\ind_{\{\Et^{\Tc_j}[T^{j,1}]\leq m\}}|\Hc(\Tc_j)\leq C\log(m)\right] \leq C\tilde{L}(m)m^{1-\delta}.
\end{equation}

Let $\kappa=\delta/(2(1+\delta))$ then write $\overline{E}_m:=E_m \cap \bigcap_{j=1}^m\{\Et^{\Tc_j}[T^{j,1}]\leq m^{1-\kappa}\}$ to be the event that no trap is of height greater than $C\log(m)$ and the expected time spent on an excursion in any trap is at most $m^{1-\kappa}$.
\begin{flalign*}
 \Pr(\overline{E}_m^c) & \leq \Pr\left(\bigcup_{j=1}^m\{\Et^{\Tc_j}[T^{j,1}]> m^{1-\kappa}\}\Big|\Hc(\Tc_j)\leq C\log(m) \; \forall j\leq m\right)+\Pr(E_m^c) \\
 & \leq mC\log(m)^{1+\delta}m^{-(1-\kappa)(1+\delta)}+o(m^{-c}). 
\end{flalign*}
Since $(1-\kappa)(1+\delta)>1$ we have that $\Pr(\overline{E}_m^c)\leq \tilde{C}\left(\log(m)^{1+\delta}m^{1-(1-\kappa)(1+\delta)}+m^{-c}\right)$ for some constant $\tilde{C}$ and $m$ sufficiently large. Write $\overline{\overline{E}}_m:=\overline{E}_m\cap \bigcap_{j=1}^m\{W^j\leq C'\log(m)\}$ for $C'>(2\beta-1)/(\beta-1)$ to be the event that no trap is of height greater than $C\log(m)$, entered more than $C'\log(n)$ times or has expected excursion time greater than $m^{1-\kappa}$. Then, by a union bound and the geometric distribution of $W^j$ from Lemma \ref{numexc}
\begin{flalign}
\Pb\left(\overline{\overline{E}}_m^c\right) & \leq \Pr(\overline{E}_m^c)+m\Pr(W^1> C'\log(m)) \notag \\
& \leq  \tilde{C}\left(\log(m)^{1+\delta}m^{1-(1-\kappa)(1+\delta)}+m^{-c}+ m^{1-C'\frac{\beta-1}{2\beta-1}}\right) \label{Ebarbar}
\end{flalign}
 for $m$ sufficiently large. Choosing $\varepsilon <\min\left\{(1-\kappa)(1+\delta)-1, \; c, \; C'\frac{\beta-1}{2\beta-1}-1 \right\}$ we have that $\Pb\left(\overline{\overline{E}}_m^c\right) = o(m^{-\varepsilon})$ and
\begin{flalign*}
  & \Pb\left(\left|\frac{1}{m}\sum_{j=1}^m\sum_{k=1}^{W^j}(T^{j,k}-\Et^{\Tc_j}[T_{j,k}])\right|>t\right)  \\
 & \qqqquad \leq \Eb\left[\frac{\sum_{j=1}^mC\log(m)Var_{\Pt^{\Tc_j}}((T^{j,1}-\Et^\Tc[T^{j,1}])\ind_{\overline{\overline{E}}_m})}{(mt)^2}\right] +\Pb\left(\overline{\overline{E}}_m^c\right) \\
 & \qqqquad \leq \frac{C\log(m)}{mt^2}m^{(1-\delta)}\tilde{L}(m)+o(m^{-\varepsilon}) 
\end{flalign*}
for some slowly varying function $\tilde{L}$. Here the first inequality comes from Chebyshev and the second holds due to (\ref{tail}). Since $\delta>0$ we can choose $\varepsilon>0$ such that 
\[\Pb\left(\left|\frac{1}{m}\sum_{j=1}^m\sum_{k=1}^{W^j}(T_{j,k}-\Et^{\Tc_j}[T_{j,k}])\right|>t\right) = o\left(m^{-\varepsilon}\right).\]
In particular this holds for $m=\overline{L}_K\geq a_{n^{1-\varepsilon}}$ thus the result holds for $\varepsilon$ sufficiently small since $\alpha<2$.
 \end{proof}
\end{lem}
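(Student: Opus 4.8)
The plan is to establish a quantitative weak law of large numbers for the centred excursion sums by a truncated second–moment argument, carried out uniformly in $K$. Condition first on the dummy branch $\Tc^{*-}$. Given the tree, the excursion times $(T^{j,k})_{k\le W^j}$ into the $j$-th trap are i.i.d.\ with quenched mean $\Et^{\Tc_j}[T^{j,1}]=2\sum_{i\ge0}Z_i^{\Tc_{\rho^n_j}}\beta^i$ (by (\ref{return})), so the quenched variance of $\sum_{j=1}^{m}\sum_{k=1}^{W^j}(T^{j,k}-\Et^{\Tc_j}[T^{j,1}])$ is $\sum_{j=1}^m W^j\,Var_{\Pt^{\Tc_j}}(T^{j,1})$. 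Because in IVFE the offspring law lies only in the domain of attraction of a stable law of index $\alpha<2$, the unconditioned excursion times have no finite second moment, so a bare Chebyshev bound is hopeless; the remedy is to restrict to a high-probability \emph{good event} on which every trap in the branch is shallow, every quenched excursion mean is small, and every excursion count is small, and to run Chebyshev only there.

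\textbf{The good event.} I would build it in three layers, for the $m=\overline L_K$ traps of the branch. First, using $\Pr(Z_n>0)\sim c_\mu\mu^n$ from (\ref{cmu}) and a union bound, $E_m:=\bigcap_{j\le m}\{\Hc(\Tc_j)\le C\log m\}$ has $\Pr(E_m^c)\le\tilde C\,m^{1-C\log(\mu^{-1})}$, which is $o(m^{-c})$ for $C$ large and some $c>0$. Second, on $\{\Hc(\Tc_j)\le C\log m\}$ one has $\Et^{\Tc_j}[T^{j,1}]\le\Hc(\Tc_j)\,\sup_iZ_i^{\Tc_j}\beta^i\le C\log(m)\,\sup_iZ_i^{\Tc_j}\beta^i$; since $\Er[\xi^{1+\delta}]<\infty$ for $\delta<\alpha-1$, Lemma~\ref{supmart} makes $(Z_i\beta^i)^{1+\delta}$ a supermartingale for small $\delta>0$, and Doob's supermartingale inequality (with $Z_0=1$) gives $\Pr(\sup_iZ_i\beta^i\ge x)\le x^{-(1+\delta)}$. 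Integrating this tail yields the crucial estimate
\[
\Er\!\left[\Et^{\Tc_j}[T^{j,1}]^2\,\ind_{\{\Et^{\Tc_j}[T^{j,1}]\le m\}}\;\big|\;\Hc(\Tc_j)\le C\log m\right]\le C\,\tilde{L}(m)\,m^{1-\delta}
\]
for a slowly varying $\tilde{L}$ absorbing the $\log$ powers. Third, with $\kappa=\delta/(2(1+\delta))$, enlarge to $\overline{\overline{E}}_m:=E_m\cap\bigcap_{j\le m}\{\Et^{\Tc_j}[T^{j,1}]\le m^{1-\kappa}\}\cap\bigcap_{j\le m}\{W^j\le C'\log m\}$, using that $W^j$ is $Geo((\beta-1)/(2\beta-1))$ by Lemma~\ref{numexc}. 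A union bound gives $\Pr(\overline{\overline{E}}_m^c)\le\tilde C\big((\log m)^{1+\delta}m^{1-(1-\kappa)(1+\delta)}+m^{-c}+m^{1-C'(\beta-1)/(2\beta-1)}\big)$, and since $(1-\kappa)(1+\delta)=1+\delta/2>1$ this is $o(m^{-\varepsilon})$ once $\varepsilon$ is chosen below each of $(1-\kappa)(1+\delta)-1$, $c$ and $C'(\beta-1)/(2\beta-1)-1$.

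\textbf{Chebyshev and the passage to $n$.} On $\overline{\overline{E}}_m$ the double sum has at most $m\,C'\log m$ terms, and using the standard bound $Var_{\Pt^{\Tc_j}}(T^{j,1})\le C(\Et^{\Tc_j}[T^{j,1}])^2$ for return times of biased walks on finite trees, Chebyshev conditionally on the tree followed by averaging over the branch gives
\[
\Pb\!\left(\Big|\tfrac1m\sum_{j=1}^m\sum_{k=1}^{W^j}(T^{j,k}-\Et^{\Tc_j}[T^{j,1}])\Big|>t\right)\le\frac{C\log m}{(mt)^2}\,m\,\Er\!\left[\Et^{\Tc_j}[T^{j,1}]^2\ind_{\{\cdot\le m^{1-\kappa}\}}\right]+\Pr(\overline{\overline{E}}_m^c),
\]
which by the displayed moment estimate is $\le C t^{-2}\tilde{L}(m)(\log m)\,m^{-\delta}+o(m^{-\varepsilon})$, hence $o(m^{-\varepsilon})$ provided also $\varepsilon<\delta$. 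Finally, under $\Pb^K$ we have $m=\overline L_K=a_n+K\ge l_n^\varepsilon=a_{\lfloor n^{1-\varepsilon}\rfloor}$, and since $a_n=n^{1/(\alpha-1)}\tilde{L}(n)$ with $1/(\alpha-1)>1$ (as $\alpha<2$) this is of order $n^{(1-\varepsilon)/(\alpha-1)}$ up to slowly varying factors, a power of $n$ strictly larger than $n^{1-\varepsilon}$ once $\varepsilon<2-\alpha$; the supremum over admissible $K$ is attained at the smallest branch, where $m$ is of order $l_n^\varepsilon$, so the uniform bound $o(m^{-\varepsilon})$ becomes $r(n)n^{-\varepsilon}$ with $r(n)=o(1)$, as required.

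\textbf{Main obstacle.} I expect the delicate part to be the infinite-variance calibration in the middle step: one must pick the moment exponent $1+\delta$ furnished by Lemma~\ref{supmart}, the truncation level $m^{1-\kappa}$, and the large-branch parameter $\varepsilon$ simultaneously so that (i) the truncated second moment grows like $m^{1-\delta}$ — strictly slower than $m$ — so that Chebyshev, with the extra $\log m$ from the excursion counts, still decays; and (ii) the three tail terms $\Pr(E_m^c)$, $\Pr(\max_jW^j\text{ large})$ and $\Pr(\max_j\Et^{\Tc_j}[T^{j,1}]\text{ large})$ all sit below $m^{-\varepsilon}$. One must also justify the quenched comparison $Var_{\Pt^{\Tc_j}}(T^{j,1})\lesssim(\Et^{\Tc_j}[T^{j,1}])^2$ uniformly over the shallow traps permitted on the good event — this is the point where finiteness of the variance of $\xi$ would normally be invoked and must here be replaced by the height truncation.
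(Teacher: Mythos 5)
Your proposal reproduces the paper's proof essentially step by step: the same good-event hierarchy (height truncation, then quenched-mean truncation $\Et^{\Tc_j}[T^{j,1}]\leq m^{1-\kappa}$, then excursion-count truncation), the same appeal to Lemma~\ref{supmart} and Doob's supermartingale inequality to obtain (\ref{tail}), the same conditional Chebyshev, and the same observation that $\alpha<2$ forces $\overline{L}_K\geq a_{n^{1-\varepsilon}}\gg n$ so that $o(m^{-\varepsilon})$ becomes $r(n)n^{-\varepsilon}$. The one place you go beyond the paper is in making explicit the quenched second-moment comparison $Var_{\Pt^{\Tc_j}}(T^{j,1})\lesssim\Et^{\Tc_j}[T^{j,1}]^2$, which the paper leaves implicit in passing from (\ref{tail}) to the final Chebyshev display; you are right that this is the point requiring care, and it is the same conductance-type estimate invoked later in the paper (the proof of Lemma~\ref{expsqriv}).
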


Using this we can now show that the average time spent in a trap indeed converges to its expectation.
\begin{lem}\label{cnv2}
In IVFE, we can find $\varepsilon>0$ such that for sufficiently large $n$ we have that
 \[\sup_{K\geq-(a_n-l_n^{\varepsilon})}\Pb^K\left(\left|\frac{1}{\overline{L}_K}\sum_{j=1}^{\overline{L}_K}W^j(\Et^\Tc[T^{j,1}]-\Eb[T^{1,1}])\right|>t\right)\leq r(n)\left(n^{-\varepsilon}+\frac{C}{t}\right)\]
 uniformly over $t\geq0$ where $r(n)=o(1)$.
 \begin{proof}
 We continue using the notation defined in Lemma \ref{cnv} and also write \[E_m^j:=\{\Hc(\Tc_j) \leq \tilde{C}\log(m)\} \cap \{W^j\leq C\log(m)\} \cap \{\Et^{\Tc_j}[T^{j,1}]\leq m^{1-\kappa}\}.\]
 We then have that
  \begin{flalign*}
   & \Pb\left(\left|\frac{1}{m}\sum_{j=1}^mW^j(\Et^{\Tc_j}[T^{j,1}]-\Eb[T^{1,1}])\right|>t\right) \\
   & \qquad \qquad \leq \Eb\left[\Pb\left(\left|\frac{1}{m}\sum_{j=1}^mW^j(\Et^{\Tc_j}[T^{j,1}]\ind_{E_m^j}-\Eb[T^{1,1}]\ind_{E_m^j})\right|>t\Big| (W^j)_{j=1}^m\right) \right] +o(m^{-\varepsilon}). 
  \end{flalign*}
Since $\Eb[\Et^\Tc[T^{j,1}\ind_{E_m^j}]]=\Eb[T^{j,1}\ind_{E_m^j}] \neq \Eb[\Eb[T^{1,1}]\ind_{E_m^j}]$ we have that the summand in the right hand side doesn't have zero mean thus we perform the splitting:
\begin{flalign*}
 & \Eb\left[\Pb\left(\left|\frac{1}{m}\sum_{j=1}^mW^j(\Et^\Tc[T^{j,1}]\ind_{E_m^j}-\Eb[T^{j,1}]\ind_{E_m^j})\right|>t\Big| (W^j)_{j=1}^m\right) \right] \\
 & \qquad \qquad \leq \Eb\left[\Pb\left(\left|\frac{1}{m}\sum_{j=1}^mW^j(\Et^\Tc[T^{j,1}]\ind_{E_m^j}-\Eb[T^{j,1}\ind_{E_m^j}])\right|>t/3\Big| (W^j)_{j=1}^m\right) \right] \\
 & \qquad \qquad \qquad  + \Eb\left[\Pb\left(\left|\frac{1}{m}\sum_{j=1}^mW^j(\Eb[T^{j,1}\ind_{E_m^j}]-\Eb[T^{j,1}\ind_{E_m^j}]\ind_{E_m^j})\right|>t/3\Big| (W^j)_{j=1}^m\right) \right] \\
 & \qquad \qquad \qquad \qquad + \Eb\left[\Pb\left(\left|\frac{1}{m}\sum_{j=1}^mW^j(\Eb[T^{j,1}]\ind_{E_m^j}-\Eb[T^{j,1}\ind_{E_m^j}]\ind_{E_m^j})\right|>t/3\Big| (W^j)_{j=1}^m\right) \right]. 
\end{flalign*}
By Chebyshev's inequality and the tail bound $\Eb[\Et^\Tc[T^{j,1}]^2\ind_{\{E_m^j\}}]\leq Cm^{1-\delta}L(m)$ from (\ref{tail}) we have that the first term is bounded above by
\begin{flalign*}
  \Eb\left[\frac{C\log(m)^2}{(mt/3)^2}\sum_{j=1}^mVar(\Et^\Tc[T^{j,1}]\ind_{E_m^j})\right] \leq C_tm^{-\delta}\tilde{L}(m) 
\end{flalign*}
for some slowly varying function $\tilde{L}$. The second term is equal to
\begin{flalign*}
  \Eb\left[\Pb\left(\left|\frac{1}{m}\sum_{j=1}^mW^j\Eb[T^{1,1}\ind_{E_m^j}]\ind_{(E_m^j)^c}\right|>t/3\Big| (W^j)_{j=1}^m\right) \right]  \leq \Pr\left(\bigcup_{j=1}^m(E_m^j)^c\right) = o(m^{-\varepsilon})
\end{flalign*}
by (\ref{Ebarbar}). Finally, the final term can be written as
\begin{flalign*}
 \Pb\left(\frac{1}{m}\sum_{j=1}^mW^j\Eb[T^{j,1}\ind_{(E_m^j)^c}]\ind_{E_m^j}>t/3\right)  \leq \frac{3}{mt}\sum_{j=1}^m\Eb[W^j]\Eb[T^{j,1}\ind_{(E_m^j)^c}]  = \frac{C}{t}\Eb[T^{1,1}\ind_{(E_m^1)^c}] 
\end{flalign*}
which converges to $0$ as $\minf$ by dominated convergence since, by (\ref{expexc}), $\Eb[T^{1,1}]<\infty$. We therefore have that both statements hold by setting $m=\overline{L}_K$. 
 \end{proof}
\end{lem}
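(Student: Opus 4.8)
The plan is to peel off the heavy-tailed contribution by truncating on a good event and then to treat the resulting (small) truncation bias separately. Write $m=\overline{L}_K$; since $K\geq-(a_n-l_n^\varepsilon)$ we always have $m\geq l_n^\varepsilon=a_{\lfloor n^{1-\varepsilon}\rfloor}$, and as $a_n=n^{1/(\alpha-1)}\tilde L(n)$ with $1/(\alpha-1)>1$ this forces $m$ to be at least a fixed positive power of $n$; hence any bound of the form $o(m^{-\varepsilon})$ is automatically uniform in $K$ and, provided $\varepsilon<2-\alpha$, is dominated by $r(n)n^{-\varepsilon}$ for a suitable $r(n)=o(1)$ -- this is exactly where $\alpha<2$ enters. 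The ingredients to keep in mind are: $\Et^\Tc[T^{j,1}]$ is a function of the trap tree $\Tc_j$ alone, with $\Eb[\Et^\Tc[T^{j,1}]]=\Eb[T^{1,1}]<\infty$ by (\ref{expexc}) (this finiteness uses $\beta\mu<1$, i.e.\ IVFE of Definition \ref{finexc}); by Lemma \ref{numexc} the $W^j$ are, conditionally on the number of traps, $Geo((\beta-1)/(2\beta-1))$ and independent of the trap trees, with $\Eb[W^j]=(2\beta-1)/(\beta-1)$; and on the good event $E_m^j:=\{\Hc(\Tc_j)\leq\tilde C\log m\}\cap\{W^j\leq C\log m\}\cap\{\Et^{\Tc_j}[T^{j,1}]\leq m^{1-\kappa}\}$ of Lemma \ref{cnv} one has $\Eb[\Et^\Tc[T^{j,1}]^2\ind_{E_m^j}]\leq Cm^{1-\delta}L(m)$ from (\ref{tail}) and $\Pr(\bigcup_{j\leq m}(E_m^j)^c)=o(m^{-\varepsilon})$ from (\ref{Ebarbar}).

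First I would insert the indicators $\ind_{E_m^j}$ into the sum at the cost of the error $o(m^{-\varepsilon})$ coming from $\Pr(\bigcup_j(E_m^j)^c)$, reducing the claim to a bound on $\Pb(|\frac1m\sum_{j=1}^m W^j(\Et^\Tc[T^{j,1}]\ind_{E_m^j}-\Eb[T^{1,1}]\ind_{E_m^j})|>t)$. Since $\Eb[\Et^\Tc[T^{j,1}]\ind_{E_m^j}]=\Eb[T^{j,1}\ind_{E_m^j}]\neq\Eb[T^{1,1}]\ind_{E_m^j}$ the summand is not centered, so I would condition on $(W^j)_{j=1}^m$ and, via a triangle inequality splitting the level $t$ into three parts, write $\Et^\Tc[T^{j,1}]\ind_{E_m^j}-\Eb[T^{1,1}]\ind_{E_m^j}$ as the sum of $\bigl(\Et^\Tc[T^{j,1}]\ind_{E_m^j}-\Eb[T^{j,1}\ind_{E_m^j}]\bigr)$, of $\Eb[T^{1,1}]\ind_{(E_m^j)^c}$, and of the deterministic constant $-\Eb[T^{1,1}\ind_{(E_m^1)^c}]$.

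For the first piece, conditionally on $(W^j)_{j=1}^m$ the terms are independent and -- up to an error absorbed into $\Pr(\bigcup_j\{W^j>C\log m\})=o(m^{-\varepsilon})$ -- centered, so Chebyshev's inequality together with $W^j\leq C\log m$ on $E_m^j$ and the bound (\ref{tail}) gives a conditional probability at most $Ct^{-2}(\log m)^2m^{-\delta}L(m)=o(m^{-\varepsilon})$ once $\varepsilon<\delta$. The second piece is a sum of non-negative terms, each nonzero only when $(E_m^j)^c$ occurs, so it exceeds $t/3$ only on $\bigcup_j(E_m^j)^c$, of probability $o(m^{-\varepsilon})$ by (\ref{Ebarbar}). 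For the third piece, $\frac1m\sum_j W^j\,\Eb[T^{1,1}\ind_{(E_m^1)^c}]$ is $\Eb[T^{1,1}\ind_{(E_m^1)^c}]$ times an average of the $W^j$, which are independent of the trees with $\Eb[W^j]=(2\beta-1)/(\beta-1)$, so Markov's inequality bounds its probability by $\frac{C}{t}\Eb[T^{1,1}\ind_{(E_m^1)^c}]$, and $\Eb[T^{1,1}\ind_{(E_m^1)^c}]\to0$ as $\minf$ by dominated convergence since $\Eb[T^{1,1}]<\infty$. Collecting the three contributions and choosing $r(n)=o(1)$ large enough that each is at most $r(n)(n^{-\varepsilon}+C/t)$ uniformly in $K$ gives the statement (with $m=\overline{L}_K$).

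The main obstacle is the infinite variance of $\xi$: $\Et^\Tc[T^{j,1}]$ is itself heavy-tailed, so Chebyshev is unusable without first truncating by $E_m^j$, and that truncation introduces the biased pieces in the decomposition above. It is the third, genuinely non-centered piece -- controllable only because $\Eb[T^{1,1}]<\infty$ in the $\beta\mu<1$ regime -- that forces the weaker $C/t$ term in the statement rather than a clean power of $n$.
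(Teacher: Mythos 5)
Your proposal is correct and follows essentially the same route as the paper's proof: insert the good-event indicators $\ind_{E_m^j}$, observe that the resulting summand is not centered, split it into three pieces (a centered part handled by Chebyshev via (\ref{tail}), a part supported on $\bigcup_j (E_m^j)^c$ handled via (\ref{Ebarbar}), and a bias part handled by Markov using $\Eb[T^{1,1}]<\infty$ and dominated convergence), then set $m=\overline{L}_K$ and use $\alpha<2$ to make the $o(m^{-\varepsilon})$ terms uniform in $K$. The only cosmetic difference is the algebraic form of pieces two and three --- you use $\Eb[T^{1,1}]\ind_{(E_m^j)^c}$ and the constant $-\Eb[T^{1,1}\ind_{(E_m^1)^c}]$ whereas the paper uses $\Eb[T^{j,1}\ind_{E_m^j}]\ind_{(E_m^j)^c}$ and $\Eb[T^{j,1}\ind_{(E_m^j)^c}]\ind_{E_m^j}$ --- but these are equivalent regroupings yielding identical bounds.
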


From Lemmas \ref{cnv} and \ref{cnv2} we have that as $n\rightarrow \infty$ \[ \Pb^K\left(\left|\zeta^{(n)}-\Eb[T^{1,1}]\sum_{j=1}^{\overline{L}_K}\frac{W^j}{\overline{L}_K}\right|>t\right)\rightarrow 0\]
uniformly over $K\geq -(a_n-l_n^\varepsilon)$. A simple computation using (\ref{return}) shows that $\Eb[T^{1,1}]=2/(1-\beta\mu)$. Write $\theta=(\beta-1)(1-\beta\mu)/(2\beta)$ and let $Z^\infty \sim \exp(\theta)$. 
\begin{cly}\label{feconv}
 In IVFE, we can find $\varepsilon>0$ such that for sufficiently large $n$ we have that
 \[\sup_{K\geq-(a_n-l_n^{\varepsilon})}\left|\Pb^K\left(\zeta^{(n)}>t\right)-\Pb\left(Z^\infty>t\right)\right|\leq \tilde{r}(n)\left(n^{-\varepsilon}+\frac{C}{t} \right)\]
  uniformly over $t\geq0$ where $\tilde{r}(n)=o(1)$.
  \begin{proof}
   By Lemma \ref{numexc} the sum of $W^j$ have a geometric law. In particular,
\begin{flalign*}
 \left|\Pb(Z^\infty>t)-\Pb^K\left(\Eb[T^{1,1}]\sum_{j=1}^{\overline{L}_K}\frac{W^j}{\overline{L}_K}>t\right)\right| & =\left|e^{-\theta t}-\Pb^K\left(Geo\left(\frac{\beta-1}{(\overline{L}_K+1)\beta-1}\right)>\overline{L}_Kt\Eb[T^{1,1}]\right)\right| \\
 & = \left|e^{-\theta t}-\left(1-\frac{\beta-1}{(\overline{L}_K+1)\beta-1}\right)^{\lceil \overline{L}_Kt\Eb[T^{1,1}]\rceil} \right| \\
 & = \left|e^{-\theta t}-e^{-\theta t \frac{\overline{L}_K \beta}{\overline{L}_K\beta+\beta-1}} \right| +o(\overline{L}_K^{-1})  \\
 &  \leq Ce^{-\theta t}\overline{L}_K^{-1} +o(\overline{L}_K^{-1}) 
\end{flalign*}
for some constant $C$ independent of $K$. It therefore follows that the laws of $\zeta^{(n)}$ converge under $\Pb^K$ to an exponential law. In particular, using Lemmas \ref{cnv} and \ref{cnv2} with the bound
\begin{flalign*}
 &\left|\Pb^K\left(\zeta^{(n)}>t\right)-\Pb\left(\Eb[T^{1,1}]\sum_{j=1}^{\overline{L}_K}\frac{W^j}{\overline{L}_K}>t\right)\right| \\
 & \qquad \leq \Pb^K\left(\left|\frac{1}{\overline{L}_K}\sum_{j=1}^{\overline{L}_K}W^j(\Et^\Tc[T^{j,1}]-\Eb[T^{1,1}])\right|>\delta\right) + \Pb(Z^\infty \in [t-\delta,t+\delta])+O(\overline{L}_K^{-1})
\end{flalign*}
with $\delta=r(n)^{1/2}t$, we have the result.
  \end{proof}
\end{cly}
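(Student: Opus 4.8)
The plan is to build directly on Lemmas \ref{cnv} and \ref{cnv2}, which together already say that $\zeta^{(n)}$ is close, in $\Pb^K$-probability, to the ``annealed-mean surrogate''
\[ M_K:=\Eb[T^{1,1}]\,\frac{1}{\overline{L}_K}\sum_{j=1}^{\overline{L}_K}W^j; \]
what then remains is to identify the limit law of $M_K$ and to combine the two ingredients in a way uniform in $t\ge0$. Throughout one keeps in mind that $\overline{L}_K\ge l_n^\varepsilon=a_{\lfloor n^{1-\varepsilon}\rfloor}$, which (since $\alpha<2$, so $1/(\alpha-1)>1$) grows polynomially faster than $n^{\varepsilon}$ once $\varepsilon$ is small; consequently every error of order $\overline{L}_K^{-1}$ below is $o(n^{-\varepsilon})$, uniformly in $K$.

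The first step is the reduction. Writing $\zeta^{(n)}-M_K$ as $\overline{L}_K^{-1}\sum_j\sum_{k\le W^j}(T^{j,k}-\Et^\Tc[T^{j,1}])$ plus $\overline{L}_K^{-1}\sum_j W^j(\Et^\Tc[T^{j,1}]-\Eb[T^{1,1}])$ and bounding the two pieces by Lemmas \ref{cnv} and \ref{cnv2} respectively yields, for some $r(n)=o(1)$ and uniformly in $K$ and $\delta>0$,
\[ \Pb^K\big(|\zeta^{(n)}-M_K|>\delta\big)\ \le\ r(n)\big(n^{-\varepsilon}+C\delta^{-1}\big). \]
The second step identifies $M_K$: by Lemma \ref{numexc}, under $\Pb^K$ the sum $\sum_{j=1}^{\overline{L}_K}W^j$ is exactly $Geo(p_K)$ with $p_K=(\beta-1)/((\overline{L}_K+1)\beta-1)$, so $\Pb^K(M_K>t)$ is an explicit power of $1-p_K$; a first-order expansion of $\log(1-p_K)$, together with $\Eb[T^{1,1}]=2/(1-\beta\mu)$ (a short computation from (\ref{return})) and the boundedness of $t\mapsto te^{-\theta t}$, gives
\[ \big|\Pb^K(M_K>t)-e^{-\theta t}\big|\ \le\ C\overline{L}_K^{-1}\ =\ o(n^{-\varepsilon}) \]
uniformly in $t\ge0$ and $K$, where $\theta=(\beta-1)(1-\beta\mu)/(2\beta)$ is exactly the rate defining $Z^\infty\sim\exp(\theta)$.

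To assemble, for any $\delta>0$ I would use
\[ \big|\Pb^K(\zeta^{(n)}>t)-\Pb(Z^\infty>t)\big|\ \le\ \Pb^K\big(|\zeta^{(n)}-M_K|>\delta\big)+\Pb^K\big(M_K\in[t-\delta,t+\delta]\big)+\big|\Pb^K(M_K>t)-e^{-\theta t}\big|, \]
where the first and last terms are handled above, and the middle one is at most $\Pb(Z^\infty\in[t-\delta,t+\delta])$ plus twice the $M_K$-to-$\exp(\theta)$ error. Since the $\exp(\theta)$ density is bounded by $\theta$ and $\delta<t$ for $n$ large, $\Pb(Z^\infty\in[t-\delta,t+\delta])\le2\theta\delta\,e^{-\theta(t-\delta)}$. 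Choosing $\delta=r(n)^{1/2}t$ then turns the $C\delta^{-1}$ contribution into $Cr(n)^{1/2}/t$ and, because $t^2e^{-\theta t/2}$ is bounded (equivalently $te^{-\theta t/2}=O(1/t)$), turns the density term into $Cr(n)^{1/2}/t$ as well; bundling these with the $O(n^{-\varepsilon})$-order contributions into a single $\tilde r(n):=C\big(r(n)^{1/2}+n^{\varepsilon}(l_n^\varepsilon)^{-1}\big)=o(1)$ gives the claimed estimate $\tilde r(n)(n^{-\varepsilon}+C/t)$.

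The hard part is not any single estimate but arranging uniformity over all $t\ge0$. A $t$-independent smoothing scale would only give convergence for each fixed $t$; the point of taking $\delta=r(n)^{1/2}t$, together with the elementary inequality $t^2e^{-\theta t/2}=O(1)$, is precisely that both the smoothing error $\Pb(Z^\infty\in[t-\delta,t+\delta])$ and the $1/\delta$ error coming from Lemma \ref{cnv2} then decay like $r(n)^{1/2}/t$, matching the $C/t$ slack the statement allows. (For very large $t$ one could alternatively invoke Markov's inequality on the quenched mean of $\zeta^{(n)}$, which is finite because $\Eb[T^{1,1}]<\infty$ by (\ref{expexc}) when $\beta\mu<1$, to get $\Pb^K(\zeta^{(n)}>t)\le C/t$ directly, but the choice above already covers this range.) The only other care is choosing $\varepsilon$ small enough that $l_n^\varepsilon=a_{\lfloor n^{1-\varepsilon}\rfloor}$ genuinely dominates $n^{\varepsilon}$ and tracking the slowly varying factor in $a_n=n^{1/(\alpha-1)}\tilde L(n)$.
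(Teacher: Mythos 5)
Your proposal reproduces the paper's own argument: you reduce $\zeta^{(n)}$ to the geometric surrogate $M_K=\Eb[T^{1,1}]\overline{L}_K^{-1}\sum_j W^j$ via Lemmas \ref{cnv} and \ref{cnv2}, identify $\Pb^K(M_K>t)$ as close to $e^{-\theta t}$ using Lemma \ref{numexc}, assemble through the standard two-sided smoothing inequality, and make the same $t$-adaptive choice $\delta=r(n)^{1/2}t$ to trade the $1/\delta$ error against the $Z^\infty$-density window. The additional observations you supply (that $t^2e^{-\theta t/2}$ is bounded, and that $l_n^\varepsilon$ dominates $n^\varepsilon$ for small $\varepsilon$) are exactly the facts the paper is implicitly using to make the bound uniform in $t$ and $K$.
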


\begin{cly}\label{colexp}
 In IVFE, for any $\tau>0$ fixed \[\limn \sup_{C\geq0}\sup_{K\geq -(a_n-l_n^\varepsilon)}(C\lor 1)\left|\Eb\left[Z^\infty\ind_{\{CZ^\infty\leq \tau\}}\right]-\Eb^K\left[\zeta^{(n)}\ind_{\{C\zeta^{(n)}\leq \tau\}}\right]\right|=0.\]
\end{cly}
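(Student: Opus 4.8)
The plan is to rewrite everything through the tails of $Z^\infty$ and $\zeta^{(n)}$ and then split the relevant range of parameters into three pieces. Write $\psi(a):=\Eb[Z^\infty\ind_{\{Z^\infty\le a\}}]$ and $\psi_n^K(a):=\Eb^K[\zeta^{(n)}\ind_{\{\zeta^{(n)}\le a\}}]$ for $a\in(0,\infty]$; substituting $a=\tau/C$ for $C>0$, and reading $a=\infty$, $\psi(\infty)=\Eb[Z^\infty]$, for $C=0$, the assertion becomes
\[\limn\ \sup_{0<a\le\infty}\ \sup_{K\ge-(a_n-l_n^\varepsilon)}\ \Big(\tfrac{\tau}{a}\lor 1\Big)\,\big|\psi(a)-\psi_n^K(a)\big|=0 .\]
I would use the layer-cake identity $\Eb[X\ind_{\{X\le a\}}]=\int_0^a\Pb(X>s)\,\d s-a\Pb(X>a)$ (for $X\ge0$, $a<\infty$) and, given $\epsilon>0$, fix $0<a_0<\tau<a_1<\infty$ and then let $\nin$, treating $a\in(0,a_0]$, $a\in[a_0,a_1]$ and $a\in[a_1,\infty]$ in turn. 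On the middle range the weight $\tfrac{\tau}{a}\lor1$ is at most $\tfrac{\tau}{a_0}$; in the layer-cake representation of $\psi(a)-\psi_n^K(a)$ the part of the integral over $\{s\le a_0\}$ is crudely $\le\tfrac12\theta a_0^2+a_0\Pb^K(\zeta^{(n)}\le a_0)$, while the part over $\{s\ge a_0\}$ and the boundary term are $o(1)$ as $\nin$ for fixed $a_0,a_1$ by Corollary~\ref{feconv}, so this range is controlled once $a_0$ is small and $\nin$.

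For the small range $a\le a_0$ I would use the crude deterministic lower bound $\zeta^{(n)}\ge\tfrac{2}{\overline{L}_K}\sum_{j=1}^{\overline{L}_K}W^j$ (every excursion lasts at least two steps), together with the fact from Lemma~\ref{numexc} that $\sum_jW^j$ is geometric of parameter $(\beta-1)/((\overline{L}_K+1)\beta-1)$; since $\Pb(Geo(p)\le x)\le(x+1)p$ and $\overline{L}_K\ge l_n^\varepsilon\to\infty$ uniformly in $K$, this yields $\sup_K\Pb^K(\zeta^{(n)}\le a_0)\le\tfrac{\beta-1}{2\beta}a_0+o(1)$. Combining this with $\psi_n^K(a)\le a\,\Pb^K(\zeta^{(n)}\le a_0)$ and $\psi(a)=\int_0^a s\theta e^{-\theta s}\,\d s\le\tfrac12\theta a^2$, the weighted difference on this range is at most $\tfrac12\tau\theta a_0+\tau\,\Pb^K(\zeta^{(n)}\le a_0)$, which is below $\epsilon$ for $a_0$ small and $n$ large.

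The large range $a\ge a_1$ (which also absorbs the case $C=0$, $a=\infty$) carries weight $1$, and there $\big|\psi(a)-\psi_n^K(a)\big|\le\Eb[Z^\infty\ind_{\{Z^\infty>a_1\}}]+\Eb^K[\zeta^{(n)}\ind_{\{\zeta^{(n)}>a_1\}}]+\big|\Eb[Z^\infty]-\Eb^K[\zeta^{(n)}]\big|$. The first term is $O(e^{-\theta a_1})$; the convergence $\sup_K\big|\Eb[Z^\infty]-\Eb^K[\zeta^{(n)}]\big|\to0$ follows by the same three-way splitting applied to $\Eb^K[\zeta^{(n)}]-\Eb[Z^\infty]=\int_0^\infty(\Pb^K(\zeta^{(n)}>s)-\Pb(Z^\infty>s))\,\d s$; and both of these, as well as $\sup_{n,K}\Eb^K[\zeta^{(n)}\ind_{\{\zeta^{(n)}>a_1\}}]\le\big(\sup_{n,K}\Eb^K[(\zeta^{(n)})^{1+\delta}]\big)a_1^{-\delta}\to0$, rest on the uniform moment bound $\sup_n\sup_K\Eb^K[(\zeta^{(n)})^{1+\delta}]<\infty$ for some $\delta\in(0,\alpha-1)$. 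To obtain it, write $\zeta^{(n)}=\tfrac1{\overline{L}_K}\sum_{j=1}^{\overline{L}_K}S_j$ with $S_j:=\sum_{k=1}^{W^j}T^{j,k}$; by convexity of $x\mapsto x^{1+\delta}$ and exchangeability of the traps, $\Eb^K[(\zeta^{(n)})^{1+\delta}]\le\Eb^K[S_1^{1+\delta}]$, and by Lemma~\ref{numexc} the law of $S_1$ under $\Pb^K$, namely that of a $Geo(p)$ (with $p=(\beta-1)/(2\beta-1)$) number of i.i.d.\ excursion times into a plain $f$-GW trap, is independent of $n$ and $K$. By Minkowski's inequality $\Eb^K[S_1^{1+\delta}]\le\Eb[W^{1+\delta}]\,\Eb[(T^{1,1})^{1+\delta}]$ with $W\sim Geo(p)$ (all moments finite), so it remains to show $\Eb[(T^{1,1})^{1+\delta}]<\infty$. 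This is where Lemma~\ref{supmart} re-enters: its proof yields $\Eb[(Z_h\beta^h)^{1+\delta}]\le(\beta^{1+\delta}\Eb[\xi^{1+\delta}])^h$ with $\beta^{1+\delta}\Eb[\xi^{1+\delta}]<1$ for $\delta$ small, so Minkowski over $h$ puts the quenched mean $\Et^\Tc[T^{1,1}]=\sum_{h\ge0}Z_h\beta^h$ (see (\ref{return}) and (\ref{qnchexc})) in $L^{1+\delta}$; a standard comparison of return-time moments with the quenched mean on the (finite) trap then gives $\Eb[(T^{1,1})^{1+\delta}]<\infty$. Assembling the three ranges completes the proof.

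The main obstacle is precisely this last point: one needs a genuine $(1+\delta)$-moment of $\zeta^{(n)}$, uniform in $n$ and $K$, rather than the quenched-mean control already used in Corollary~\ref{feconv}; the convexity reduction to a single trap removes the $n$- and $K$-dependence, but one must then revisit the supermartingale estimate of Lemma~\ref{supmart} and handle the moments of excursion (return) times in the random traps. The remaining ingredients --- the tail-integral decomposition, Corollary~\ref{feconv} on the bulk, and the elementary lower bound on $\zeta^{(n)}$ near zero --- are routine.
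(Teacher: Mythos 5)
The paper itself gives no proof of Corollary~\ref{colexp} (the proof environment is left empty), so there is nothing in the source to compare against. Your overall strategy is sound and is the natural one given the surrounding lemmas: pass to $a=\tau/C$, use the layer-cake identity to express the weighted discrepancy through the tails, and handle the three ranges of $a$ by (i) the deterministic lower bound $\zeta^{(n)}\geq\frac{2}{\overline{L}_K}\sum_jW^j$ near $0$, (ii) Corollary~\ref{feconv} on compact intervals (which is exactly what it was built for), and (iii) a uniform $(1+\delta)$-moment bound for the far tail and for the $C=0$ (equivalently $a=\infty$) endpoint. The small-range estimate $\sup_K\Pb^K(\zeta^{(n)}\leq a_0)\leq\frac{\beta-1}{2\beta}a_0+o(1)$ via the geometric law from Lemma~\ref{numexc} is correct, and the reduction of the moment bound to a single trap via Jensen on $x\mapsto x^{1+\delta}$ and exchangeability, followed by Minkowski to separate $W^1$ from the excursion durations, is clean and removes all $n,K$-dependence.

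The one point you should not wave through as ``standard'' is $\Eb[(T^{1,1})^{1+\delta}]<\infty$. Having $\Et^\Tc[T^{1,1}]=\sum_hZ_h\beta^h\in L^{1+\delta}$ does not by itself control the $(1+\delta)$-moment of $T^{1,1}$; you need a quenched tail bound. The right ingredient is an electrical-network/commute-time estimate on the walk restricted to $\{\rho^n\}\cup\Tc_1$: the effective resistance from any vertex $v$ of the trap to $\rho^n$ is at most $\sum_{i\geq1}\beta^{-i}=(\beta-1)^{-1}$, while the total edge conductance is at most $\sum_{h\geq 1}Z_h\beta^h\leq\Et^\Tc[T^{1,1}]$, so by the commute-time identity $\max_v\Et^\Tc_v[\tau_{\rho^n}]\leq \frac{2\beta}{\beta-1}\,\Et^\Tc[T^{1,1}]$. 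Markov's inequality plus the strong Markov property then give a quenched geometric tail for $\tau^+_{\rho^n}$ at scale $c\,\Et^\Tc[T^{1,1}]$, whence $\Et^\Tc[(T^{1,1})^{1+\delta}]\leq C_\delta(\Et^\Tc[T^{1,1}])^{1+\delta}$. Only after this step is your chain (Minkowski over $h$, Lemma~\ref{supmart} to get $\beta^{1+\delta}\Er[\xi^{1+\delta}]<1$) sufficient to conclude $\sup_{n,K}\Eb^K[(\zeta^{(n)})^{1+\delta}]<\infty$, noting that in IVFE $\gamma=\log(\mu^{-1})/\log\beta>1$ is what makes such a $\delta>0$ available. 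With that filled in, the argument is complete.
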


Lemma \ref{expsv} shows that the product of an exponential random variable with a heavy tailed random variable has a similar tail to the heavy tailed variable.
\begin{lem}\label{expsv}
 Let $X \sim exp(\theta)$ and $\xi$ be an independent variable which belongs to the domain of attraction of a stable law of index $\alpha$. Then $\Pr(X\xi>x) \sim \theta^{-\alpha}\Gamma(\alpha+1)\Pr(\xi>x)$ as $x \rightarrow \infty$.
 \begin{proof}
  Fix $0<u<1<v<\infty$ then $\forall y \leq u$ we have that $x/y>x$ thus $\Pr(\xi\geq x/y)\leq \Pr(\xi\geq x)$ it therefore follows that
  \begin{flalign*}
   0  \leq \int_0^u \theta e^{-\theta y}\frac{\Pr(\xi\geq x/y)}{\Pr(\xi\geq x)}\d y \leq \int_0^u \theta e^{-\theta y}\d y  = 1-e^{\theta u}. 
  \end{flalign*}
For $y \in[u,v]$ we have that $\Pr(\xi\geq x/y)/\Pr(\xi\geq x)\rightarrow y^\alpha$ uniformly over $y$ therefore 
    \[ \lim_{x\rightarrow \infty}\int_u^v \theta e^{-\theta y}\frac{\Pr(\xi\geq x/y)}{\Pr(\xi\geq x)}\d y  = \int_u^v\theta e^{-\theta y}y^\alpha \d y. \]
Moreover, since this holds for all $u \geq 0$ and $1-e^{\theta u} \rightarrow 0$ as $u \rightarrow 0$ we have that 
\begin{equation}\label{lower}
\lim_{x \rightarrow \infty}\int_0^v \theta e^{-\theta y}\frac{\Pr(\xi\geq x/y)}{\Pr(\xi\geq x)}\d y = \int_0^v\theta e^{-\theta y}y^\alpha \d y. 
\end{equation}

Since $0<\Pr(\xi\geq x)\leq 1$ for all $x<\infty$ we have that $L$ is bounded away from $0,\infty$ on any compact interval thus satisfies the requirements of Potter's theorem (see for example \cite{bigote} 1.5.4) that if $L$ is slowly varying and bounded away from $0,\infty$ on any compact subset of $[0,\infty)$ then for any $\delta>0$ there exists $A_\delta>1$ such that for $x,y>0$ \[\frac{L(z)}{L(x)}\leq A_\delta \max\left\{ \left(\frac{z}{x}\right)^\delta, \left(\frac{x}{z}\right)^\delta\right\}.\]
 Moreover, $\exists c_1,c_2>0$ such that $c_1t^{-\alpha}L(t) \leq \Pr(\xi\geq t) \leq c_2t^{-\alpha}L(t)$ hence we have that for all $y >v$ $\Pr(\xi\geq x/y)/\Pr(\xi\geq x) \leq Cy^{\alpha+\delta}$. By dominated convergence we therefore have that
\[ \lim_{x \rightarrow \infty}\int_v^\infty \theta e^{-\theta y}\frac{\Pr(\xi\geq x/y)}{\Pr(\xi\geq x)}\d y  =\int_v^\infty \theta e^{-\theta y}y^{\alpha}\d y. \]
 Combining this with (\ref{lower}) we have that 
\begin{flalign*}
\lim_{x \rightarrow \infty}\frac{\Pr(X\xi\geq x)}{\Pr(\xi\geq x)}=\lim_{x \rightarrow \infty}\int_0^\infty \theta e^{-\theta y}\frac{\Pr(\xi\geq x/y)}{\Pr(\xi\geq x)}\d y  =\int_0^\infty \theta e^{-\theta y}y^{\alpha}\d y  = \theta^{-\alpha}\Gamma(\alpha+1).
\end{flalign*}
 \end{proof}
\end{lem}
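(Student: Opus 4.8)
The plan is to compute $\Pr(X\xi>x)$ by conditioning on the value of $X$, and then argue that after dividing by $\Pr(\xi>x)$ one may pass the limit $x\to\infty$ inside the resulting integral. Writing $\overline{F}(t)=\Pr(\xi>t)$, independence gives
\[ \Pr(X\xi>x)=\int_0^\infty \theta e^{-\theta y}\,\overline{F}(x/y)\,\d y, \qquad\text{so}\qquad \frac{\Pr(X\xi>x)}{\overline{F}(x)}=\int_0^\infty \theta e^{-\theta y}\,\frac{\overline{F}(x/y)}{\overline{F}(x)}\,\d y. \]
Since $\xi$ lies in the domain of attraction of a stable law of index $\alpha$, $\overline{F}$ is regularly varying of index $-\alpha$, so for each fixed $y>0$ the integrand converges pointwise to $\theta e^{-\theta y}y^{\alpha}$ as $x\to\infty$; the substitution $u=\theta y$ identifies the target value as $\int_0^\infty\theta e^{-\theta y}y^{\alpha}\,\d y=\theta^{-\alpha}\Gamma(\alpha+1)$.

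The work is in justifying the interchange, which I would do by splitting the range of $y$ at points $0<u<1<v<\infty$. For $y\le u$ we have $x/y\ge x$, hence $\overline{F}(x/y)\le\overline{F}(x)$ and the integrand is bounded by $\theta e^{-\theta y}$; this piece contributes at most $1-e^{-\theta u}$, uniformly in $x$, and vanishes as $u\to0$. On the compact interval $[u,v]\subset(0,\infty)$ the convergence $\overline{F}(x/y)/\overline{F}(x)\to y^{\alpha}$ is uniform by the uniform convergence theorem for regularly varying functions (applied to $\lambda=1/y$), so this piece converges to $\int_u^v\theta e^{-\theta y}y^{\alpha}\,\d y$. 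For $y\ge v$ I would write $\overline{F}(t)=t^{-\alpha}L(t)$ with $L$ slowly varying; since $0<\overline{F}(t)\le1$ for all finite $t$, $L$ is bounded away from $0$ and $\infty$ on compact subsets of $[0,\infty)$, so the global form of Potter's theorem provides, for any $\delta>0$, a constant $A_\delta$ with $\overline{F}(x/y)/\overline{F}(x)=y^{\alpha}L(x/y)/L(x)\le A_\delta y^{\alpha+\delta}$ for all $x$ and all $y\ge v$. As $\theta e^{-\theta y}y^{\alpha+\delta}$ is integrable on $[v,\infty)$, dominated convergence applies and this piece converges to $\int_v^\infty\theta e^{-\theta y}y^{\alpha}\,\d y$. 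Adding the three contributions and letting $u\to0$, $v\to\infty$ gives $\lim_{x\to\infty}\Pr(X\xi>x)/\overline{F}(x)=\theta^{-\alpha}\Gamma(\alpha+1)$.

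The main obstacle is the uniform domination on $y\ge v$: one naively controls $\overline{F}(x/y)/\overline{F}(x)$ only when $x/y$ is large, whereas for large $y$ the argument $x/y$ can be small, so one genuinely needs the version of Potter's bound valid on all of $(0,\infty)$. This is exactly why it matters that $\overline{F}$ is bounded away from $0$ and $\infty$ on compacts of $[0,\infty)$ — the step where the argument would break down if $\xi$ were, for instance, essentially bounded.
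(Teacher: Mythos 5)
Your proof follows exactly the same decomposition and argument as the paper: split the conditioning integral at $0<u<1<v<\infty$, bound the $[0,u]$ piece by $1-e^{-\theta u}$ (the paper has a typo writing $1-e^{\theta u}$), use the uniform convergence theorem on $[u,v]$, and invoke the global Potter bound for dominated convergence on $[v,\infty)$. The argument is correct and matches the paper's route step for step.
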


Recall from (\ref{chitil}) that $\tilde{\chi}^i_n$ are independent copies of $\tilde{\chi}_n$ which is the time spent in a large branch. We write $\tilde{\chi}^\infty_n= NZ^\infty$, fix the sequence $(\lambda_n)_{n\geq 1}$ converging to some $\lambda >0$ and denote $M_n^\lambda:=\lfloor \lambda_nn^\varepsilon\rfloor$.  

\begin{prp}\label{convinffee}
 In IVFE, for any $\lambda>0$, as $\nin$ 
 \begin{flalign*}
  \sum_{i=1}^{M_n^\lambda}\frac{\tilde{\chi}^i_n}{a_n} \cd R_{d_\lambda,0,\Lc_\lambda}
 \end{flalign*}
where
\begin{flalign*}
 d_\lambda & = \int_0^\infty\frac{x}{1+x^2}\d\Lc_\lambda(x) \\
 \Lc_\lambda(x) & = \begin{cases}0 & x <0 \\ -\lambda x^{-(\alpha-1)}\theta^{-(\alpha-1)}\Gamma(\alpha) & x>0. \end{cases}
\end{flalign*}
 \begin{proof}
 Let $\epsilon>0$ then clearly by Markov's inequality
 \begin{flalign*}
   \Pb\left(\frac{\tilde{\chi}_n}{a_n}>\epsilon\right) & \leq \Pb(N\geq a_{n^{1-\varepsilon/2}})+\Pb\left(\sum_{j=1}^{a_{n^{1-\varepsilon/2}}}\sum_{k=1}^{W^j}T^{j,k}\geq \epsilon a_n\right) \\
   & \leq \frac{\Pb(\xi^*-1\geq a_{n^{1-\varepsilon/2}})}{\Pb(\xi^*-1\geq a_{n^{1-\varepsilon}})} + \frac{a_{n^{1-\varepsilon/2}}}{\epsilon a_n}\Eb[W^1]\Eb[T^{1,1}],
  \end{flalign*}
  which converges to $0$ as $\nin$. Thus, by Theorem \ref{eqconv}, it suffices to show that 
  \begin{enumerate}
  \item \[\lim_{\tau \rightarrow 0^+}\limsup_{n \rightarrow \infty} M_n^\lambda Var\left(\frac{\tilde{\chi}_n}{a_n} \ind_{\{\frac{\tilde{\chi}_n}{a_n} \leq \tau\}}\right)=0, \]
   \item \[\Lc_\lambda(x)=\begin{cases} \lim_{n \rightarrow \infty} M_n^\lambda \Pb(\frac{\tilde{\chi}_n}{a_n} \leq x) & x<0, \\ -\lim_{n \rightarrow \infty} M_n^\lambda \Pb(\frac{\tilde{\chi}_n}{a_n} >x) & x>0, \\ \end{cases} \]
    \item \[d_\lambda=\lim_{n \rightarrow \infty} M_n^\lambda \Eb\left[\frac{\tilde{\chi}_n}{a_n} \ind_{\{\frac{\tilde{\chi}_n}{a_n} \leq \tau\}}\right] +\int_{|x|>\tau}\frac{x}{1+x^2} d\Lc_\lambda(x)-\int_{0<|x|\leq \tau}\frac{x^3}{1+x^2}d\Lc_\lambda(x)\]
  \end{enumerate}
  where $d_\lambda$ and $\Lc_\lambda$ are as stated above.

We start with the first condition and since $\lambda_n \rightarrow \lambda$ there exists a constant $C$ such that
\begin{flalign}
 M_n^\lambda Var\left(\frac{\tilde{\chi}_n}{a_n} \ind_{\{\frac{\tilde{\chi}_n}{a_n} \leq \tau\}}\right)  \leq Cn^\varepsilon \Eb\left[\frac{\tilde{\chi}_n}{a_n}^2\ind_{\{\frac{\tilde{\chi}_n}{a_n} \leq \tau\}}\right]   \leq Cn^\varepsilon \left(\tau^2\Pr(N\geq a_n) +\tau\Eb\left[\frac{\tilde{\chi}_n}{a_n} \ind_{\{N<a_n\}}\right]\right). \label{sig}
\end{flalign}
By the definitions of $a_n$ and $N$ we have that
\begin{flalign}
 \Pr(N\geq a_n)  =\frac{\Pr(\xi^*\geq a_n)}{\Pr(\xi^*\geq a_{n^{1-\varepsilon}})}  \sim n^{-\varepsilon}. \label{tausqr}
\end{flalign}
 Conditional on $N$ we have that $W^j$ are independent from $T^{j,k}$ and both have finite means hence
 \begin{flalign*}
  \Eb\left[\frac{\tilde{\chi}_n}{a_n} \ind_{\{N<a_n\}}\right] & =\sum_{r=a_{n^{1-\varepsilon}}}^{a_n-1}\frac{\Pr(\xi^*-1=r)}{\Pr(\xi^*-1\geq a_{n^{1-\varepsilon}})}\Eb\left[\sum_{j=1}^r\sum_{k=1}^{W^j}\frac{T^{j,k}}{a_n}\Big| N=r\right] \\
  & \leq \frac{\Eb[W^1]\Eb[T^{1,1}]}{\Pr(\xi^*-1\geq a_{n^{1-\varepsilon}})} \Er\left[\frac{\xi^*-1}{a_n}\ind_{\{\xi^*-1\leq a_n\}}\right] \\
  & \sim C n^\varepsilon
 \end{flalign*}
where the asymptotic holds as $\nin$ by (\ref{trn1}). In particular, by combining this with (\ref{tausqr}) in (\ref{sig}) we have that $M_n^\lambda Var(\frac{\tilde{\chi}_n}{a_n} \ind_{\{\frac{\tilde{\chi}_n}{a_n} \leq \tau\}}) \leq C(\tau^2+\tau)$
for some constant $C$ depending on $\lambda$ hence, as $\tau \rightarrow 0^+$, we indeed have convergence to $0$ and therefore the first condition holds.

We now move on to the L\'{e}vy spectral function $\Lc_\lambda$. Clearly for $x<0$ we have that $\Lc_\lambda(x)=0$ since $\tilde{\chi}_n$ is a positive random variable. It therefore suffices to consider $x>0$. We have that $\zeta^{(n)}$ converges in distribution to an exponential random variable $Z^\infty$ with parameter $\theta$ (which is independent of $K$) therefore by Lemma \ref{expsv}
\begin{flalign}
M_n^\lambda\Pb\left(\frac{\tilde{\chi}^\infty_n}{a_n}>x\right) & \sim  \lambda n^\varepsilon \Pb\left(NZ^\infty>xa_n\right) \notag \\
 & \sim \frac{\lambda}{\Pr(\xi^*-1\geq a_n)} \sum_{K\geq l_n^\varepsilon-l_n^0}\Pr(\xi^*-1=\overline{L}_K)\Pb(\overline{L}_KZ^\infty>xa_n) \notag \\
  & = \lambda \frac{\Pb((\xi^*-1)Z^\infty\geq xa_n)}{\Pr(\xi^*-1\geq a_n)}-\sum_{j=0}^{l_n^\varepsilon-1}\frac{\lambda\Pr(\xi^*-1=j)\Pb(jZ^\infty>xa_n)}{\Pr(\xi^*-1\geq a_n)}  \notag \\
   & \sim  \lambda \theta^{-(\alpha-1)}\Gamma(\alpha) x^{-(\alpha-1)}. \label{levyconv}
\end{flalign}
Where the final asymptotic holds because  
\begin{flalign*}
 \sum_{j=0}^{l_n^\varepsilon-1}\frac{\lambda\Pr(\xi^*-1=j)\Pb(jZ^\infty>xa_n)}{\Pr(\xi^*-1\geq a_n)}   \leq \lambda \frac{\Pb(Z^\infty>xa_n/a_{n^{1-\varepsilon}})}{\Pr(\xi^*-1\geq a_n)}  = \lambda \frac{e^{-\theta x \frac{a_n}{l_n^\varepsilon}}}{\Pr(\xi^*-1\geq a_n)}
\end{flalign*}
which converges to $0$ as $\nin$. 
It now suffices to show that $n^\varepsilon  \left(\Pb(\frac{\tilde{\chi}^\infty_n}{a_n}>x)-\Pb(\frac{\tilde{\chi}_n}{a_n}>x)\right)$ converges to $0$ as $\nin$.
To do this we condition on the number of buds:
\begin{flalign*}
 \Pb\left(\frac{\tilde{\chi}^\infty_n}{a_n}>x\right)-\Pb\left(\frac{\tilde{\chi}_n}{a_n}>x\right) =  \sum_{K\geq l_n^\varepsilon-l_n^0}\Pr(N=\overline{L}_K)\left(\Pb\left(\frac{\overline{L}_KZ^\infty}{a_n}>x\right)-\Pb^K\left(\frac{\overline{L}_K\zeta^{(n)}}{a_n}>x\right)\right).
\end{flalign*}
 We consider positive and negative $K$ separately. For $K \geq 0$ we have that
 \begin{flalign}
  & \sum_{K=0}^\infty n^\varepsilon\Pr(N=\overline{L}_K)\left|\Pb\left(\frac{\overline{L}_KZ^\infty}{a_n}>x\right)-\Pb^K\left(\frac{\overline{L}_K\zeta^{(n)}}{a_n}>x\right)\right| \notag \\
  & \qqqqquad \leq n^\varepsilon\Pr(N\geq a_n)\sup_{\substack{\scriptscriptstyle{c\leq 1} \\ \scriptscriptstyle{K\geq 0}}} \left|\Pb^K(Z^\infty>cx)-\Pb^K(\zeta^{(n)}>cx)\right|. \label{Kpos}
 \end{flalign}
 By (\ref{tausqr}) $n^\varepsilon\Pr(N\geq a_n)$ converges as $\nin$ hence, using Corollary \ref{feconv}, (\ref{Kpos}) converges to $0$.
 For $K\leq 0$, by Corollary \ref{feconv} we have that 
 \begin{flalign*}
  & \sum_{K=-\infty}^0 \ind_{\{K \geq l_n^\varepsilon-l_n^0\}}n^\varepsilon\Pr(N=\overline{L}_K)\left|\Pb\left(\frac{\overline{L}_KZ^\infty}{a_n}>x\right)-\Pb^K\left(\frac{\overline{L}_K\zeta^{(n)}}{a_n}>x\right)\right| \\
  & \qqqqquad \leq n^\varepsilon\sum_{K=-\infty}^0\ind_{\{K \geq l_n^\varepsilon-l_n^0\}}\Pr(N=\overline{L}_K)\tilde{r}(n)\left(n^{-\varepsilon}+\frac{C_x\overline{L}_K}{a_n}\right) \\
  & \qqqqquad \leq o(1)+\frac{C_x\tilde{r}(n)n^\varepsilon}{a_n}\sum_{K=-\infty}^0\ind_{\{K \geq l_n^\varepsilon-l_n^0\}}\frac{\Pr(\xi^*-1=\overline{L}_K)}{\Pr(\xi^*-1\geq l_n^\varepsilon)}\overline{L}_K. 
 \end{flalign*}
 For some constant $C$ we have that $\Pr(\xi^*-1\geq l_n^\varepsilon)\sim Cn^{-(1-\varepsilon)}$ thus by (\ref{trn1})
 \begin{flalign*}
   \frac{C_x\tilde{r}(n)n^\varepsilon}{a_n}\sum_{K=-\infty}^0\ind_{\{K \geq l_n^\varepsilon-l_n^0\}}\frac{\Pr(\xi^*-1=\overline{L}_K)}{\Pr(\xi^*-1\geq l_n^\varepsilon)}\overline{L}_K   \leq C_x\tilde{r}(n)n \Eb\left[\frac{\xi^*-1}{a_n}\ind_{\{\xi^*-1\leq a_n\}}\right] \sim C_x\tilde{r}(n).
 \end{flalign*}
 In particular, since $\tilde{r}(n)=o(1)$, we indeed have that this converges to zero and thus we have the required convergence for $\Lc_\lambda$.

Finally, we consider the drift term $d_\lambda$. Clearly, since $\int_{0<x\leq \tau}x\d\Lc_\lambda(x)<\infty$ we have that
\begin{flalign*}
 d_\lambda & = \lim_{n \rightarrow \infty} M_n^\lambda \Eb\left[\frac{\tilde{\chi}_n}{a_n} \ind_{\{\frac{\tilde{\chi}_n}{a_n} \leq \tau\}}\right] +\int_0^\infty\frac{x}{1+x^2} d\Lc_\lambda(x)-\int_0^\tau x\d\Lc_\lambda(x). 
\end{flalign*}
We want to show that $d_\lambda=\int_0^\infty\frac{x}{1+x^2}\d\Lc_\lambda(x)$ thus we need to show that the other terms cancel. By definition of $N$ we have that 
 \begin{flalign*}
  \Eb\left[\frac{\tilde{\chi}^\infty_n}{a_n}\ind_{\{\frac{\tilde{\chi}^\infty_n}{a_n}\leq \tau\}}\right]  = \frac{1}{\Pr(\xi^*-1\geq l_n^\varepsilon)}\Eb\left[\frac{(\xi^*-1)Z^\infty}{a_n}\ind_{\{\frac{(\xi^*-1)Z^\infty}{a_n}\leq \tau\}}\right].
 \end{flalign*}
 By Lemma \ref{expsv}, $(\xi^*-1)Z^\infty$ belongs to the domain of attraction of a stable law of index $\alpha-1$ and satisfies the scaling properties of $\xi^*$ (up to a constant factor). Therefore, we have that
 \begin{flalign*}
  M_n^\lambda \Eb\left[\frac{\tilde{\chi}^\infty_n}{a_n}\ind_{\{\frac{\tilde{\chi}^\infty_n}{a_n}\leq \tau\}}\right] & \sim \frac{\lambda n^\varepsilon}{\Pr(\xi^*-1\geq l_n^\varepsilon)}\Eb\left[\frac{(\xi^*-1)Z^\infty}{a_n}\ind_{\{\frac{(\xi^*-1)Z^\infty}{a_n}\leq \tau\}}\right]  \sim \frac{\alpha-1}{2-\alpha}\tau^{2-\alpha}\lambda \theta^{-(\alpha-1)}\Gamma(\alpha).
 \end{flalign*}
 Using the form of the L\'{e}vy spectral function we have that
 \begin{flalign*}
  \int_0^\tau x\d\Lc_\lambda(x) \; = \; \lambda\theta^{-(\alpha-1)}\Gamma(\alpha) \int_{\tau^{-(\alpha-1)}}^\infty x^{-\frac{1}{\alpha-1}}\d x \; = \; \frac{\alpha-1}{2-\alpha}\tau^{2-\alpha}\lambda \theta^{-(\alpha-1)}\Gamma(\alpha)
 \end{flalign*}
 thus it remains to show that \[ n^\varepsilon \left(\Eb\left[\frac{\tilde{\chi}^\infty_n}{a_n}\ind_{\{\frac{\tilde{\chi}^\infty_n}{a_n}\leq \tau\}}\right]-\Eb\left[\frac{\tilde{\chi}_n}{a_n}\ind_{\{\frac{\tilde{\chi}_n}{a_n}\leq \tau\}}\right]\right)\rightarrow 0.\]
Similarly to the previous parts we condition on $N=\overline{L}_K$ and consider the sums over $K$ positive and negative separately. For $K \leq 0$
\begin{flalign*}
  & n^\varepsilon\sum_{K\leq 0}\Pr(N=\overline{L}_K)\left|\Eb\left[\frac{\overline{L}_KZ^\infty}{a_n}\ind_{\{\frac{\overline{L}_KZ^\infty}{a_n}\leq \tau \}}\right]-\Eb^K\left[\frac{\overline{L}_K\zeta^{(n)}}{a_n}\ind_{\{\frac{\overline{L}_K\zeta^{(n)}}{a_n}\leq \tau \}}\right]\right| \\
  & \qquad \leq \frac{n^\varepsilon}{\Pr(\xi^*\geq l_n^\varepsilon)}\Eb\left[\frac{\xi^*-1}{a_n}\ind_{\{\xi^*\leq a_n\}}\right] \; \sup_{K\leq 0} \left|\Eb\left[Z^\infty\ind_{\{Z^\infty\leq \tau\frac{a_n}{\overline{L}_K} \}}\right]-\Eb^K\left[\zeta^{(n)}\ind_{\{\zeta^{(n)}\leq \tau\frac{a_n}{\overline{L}_K} \}}\right]\right|.
\end{flalign*}
By definition of $l_n^\varepsilon$ and properties of stable laws $n^\varepsilon\Eb\left[(\xi^*-1)/a_n\ind_{\{\xi^*\leq a_n\}}\right]/\Pr(\xi^*\geq l_n^\varepsilon)$ converges to some constant as $\nin$. By Corollary \ref{colexp} we therefore have that this converges to $0$. Similarly for $K\geq 0$ we have that 
\begin{flalign*}
  & n^\varepsilon\sum_{K\geq 0}\Pr(N=\overline{L}_K)\left|\Eb\left[\frac{\overline{L}_KZ^\infty}{a_n}\ind_{\{\frac{\overline{L}_KZ^\infty}{a_n}\leq \tau \}}\right]-\Eb^K\left[\frac{\overline{L}_K\zeta^{(n)}}{a_n}\ind_{\{\frac{\overline{L}_K\zeta^{(n)}}{a_n}\leq \tau \}}\right]\right| \\
  & \qquad \leq \frac{n^\varepsilon\Pr(\xi^*\geq l_n^0)}{\Pr(\xi^*\geq l_n^\varepsilon)} \; \sup_{K\geq 0}\frac{\overline{L}_K}{a_n}\left|\Eb\left[Z^\infty\ind_{\{Z^\infty\leq \tau\frac{a_n}{\overline{L}_K} \}}\right]-\Eb^K\left[\zeta^{(n)}\ind_{\{\zeta^{(n)}\leq \tau\frac{a_n}{\overline{L}_K} \}}\right]\right|.
\end{flalign*}
We have that $n^\varepsilon\Pr(\xi^*\geq l_n^0)/\Pr(\xi^*\geq l_n^\varepsilon)$ converges to some constant as $\nin$. The result then follows by Corollary \ref{colexp}.
 \end{proof}
\end{prp}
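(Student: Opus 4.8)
The plan is to apply Theorem~\ref{eqconv} with $n(t)=M_n^\lambda=\lfloor\lambda_n n^\varepsilon\rfloor$ to the triangular array whose $n$-th row is $\tilde{\chi}^1_n/a_n,\dots,\tilde{\chi}^{M_n^\lambda}_n/a_n$; these are, by their construction from independent walks, i.i.d.\ copies of $\tilde{\chi}_n$ as in~(\ref{chitil}) (identically distributed on the event, of asymptotic probability one, that the root is not itself large). First I would check the infinitesimality hypothesis of Theorem~\ref{eqconv}, i.e.\ $\Pb(\tilde{\chi}_n/a_n>\epsilon)\to0$ for each $\epsilon>0$: recalling $\tilde{\chi}_n=\sum_{j=1}^N\sum_{k=1}^{W^j}T^{j,k}$ with $N\ed\xi^*-1\mid\xi^*>l_n^\varepsilon$, discard the event $\{N\ge a_{n^{1-\varepsilon/2}}\}$, whose conditional probability $\Pr(\xi^*-1\ge a_{n^{1-\varepsilon/2}})/\Pr(\xi^*-1\ge l_n^\varepsilon)$ vanishes by regular variation, and on the complement apply Markov's inequality using $\Eb[W^1]\Eb[T^{1,1}]<\infty$ (finiteness of $\Eb[T^{1,1}]$ coming from~(\ref{expexc}) as $\beta\mu<1$). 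It then remains to identify the three characteristics appearing in part~(2) of Theorem~\ref{eqconv}.

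The common device for all three is to replace $\tilde{\chi}_n$ by the idealisation $\tilde{\chi}^\infty_n=NZ^\infty$ with $Z^\infty\sim\exp(\theta)$, $\theta=(\beta-1)(1-\beta\mu)/(2\beta)$. Conditionally on $N=\overline{L}_K$ one has $\tilde{\chi}_n/a_n=(\overline{L}_K/a_n)\,\zeta^{(n)}$ and $\tilde{\chi}^\infty_n/a_n=(\overline{L}_K/a_n)\,Z^\infty$, so the per-$K$ replacement error is governed, uniformly in $K$, by Corollary~\ref{feconv} for tail probabilities and by Corollary~\ref{colexp} for truncated expectations, applied with $C=\overline{L}_K/a_n$; in particular the weight $C\lor1$ in Corollary~\ref{colexp} is exactly what is needed to control $\big|\Eb[\tfrac{\tilde{\chi}^\infty_n}{a_n}\ind_{\{\le\tau\}}\mid N=\overline{L}_K]-\Eb^K[\tfrac{\tilde{\chi}_n}{a_n}\ind_{\{\le\tau\}}]\big|$ when $\overline{L}_K\ge a_n$. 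Multiplying by $M_n^\lambda\sim\lambda n^\varepsilon$ and summing over $K$, I would split at $\overline{L}_K=a_n$: for $K\ge0$ the total mass $\sum_{K\ge0}\Pr(N=\overline{L}_K)=\Pr(\xi^*-1\ge l_n^0)/\Pr(\xi^*-1\ge l_n^\varepsilon)\asymp n^{-\varepsilon}$ cancels $M_n^\lambda$ while the rescaled threshold $\tau a_n/\overline{L}_K$ stays bounded, and for $K\le0$ the $\overline{L}_K/a_n$-weighted errors from Corollaries~\ref{feconv},~\ref{colexp} are summed via the truncated first moment asymptotic~(\ref{trn1}), the normalisation of $a_n$ relative to~(\ref{xistar}) forcing the resulting slowly varying prefactor to a finite limit. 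With the replacement justified, the Lévy spectral function comes from $M_n^\lambda\Pb(NZ^\infty>xa_n)$: since $Z^\infty$ is independent of $\xi^*$, Lemma~\ref{expsv} gives that $(\xi^*-1)Z^\infty$ has the same stable-$(\alpha-1)$ tail as $\xi^*-1$ up to a factor $\theta^{-(\alpha-1)}\Gamma(\alpha)$, and deleting the contribution of $\{\xi^*-1\le l_n^\varepsilon\}$ (exponentially small since $a_n/l_n^\varepsilon\to\infty$) produces the limit $\lambda\theta^{-(\alpha-1)}\Gamma(\alpha)\,x^{-(\alpha-1)}=-\Lc_\lambda(x)$ for $x>0$; the limit is $0$ for $x<0$ as $\tilde{\chi}_n\ge0$, and $\int_0^ax^2\,\d\Lc_\lambda<\infty$ since $\alpha<2$.

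For the truncated variance, bound $M_n^\lambda\mathrm{Var}\big(\tfrac{\tilde{\chi}_n}{a_n}\ind_{\{\tilde{\chi}_n/a_n\le\tau\}}\big)\le Cn^\varepsilon\Eb\big[(\tfrac{\tilde{\chi}_n}{a_n})^2\ind_{\{\le\tau\}}\big]\le Cn^\varepsilon\big(\tau^2\Pr(N\ge a_n)+\tau\,\Eb[\tfrac{\tilde{\chi}_n}{a_n}\ind_{\{N<a_n\}}]\big)$, where $n^\varepsilon\Pr(N\ge a_n)$ converges and, conditioning on $N$ and using $\Eb[W^1]\Eb[T^{1,1}]<\infty$ together with~(\ref{trn1}), $n^\varepsilon\Eb[\tfrac{\tilde{\chi}_n}{a_n}\ind_{\{N<a_n\}}]$ converges; hence this is $\le C(\tau^2+\tau)\to0$ as $\tau\to0$. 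For the drift, first evaluate directly $\int_0^\tau x\,\d\Lc_\lambda(x)=\tfrac{\alpha-1}{2-\alpha}\tau^{2-\alpha}\lambda\theta^{-(\alpha-1)}\Gamma(\alpha)$, and then show $M_n^\lambda\Eb[\tfrac{\tilde{\chi}_n}{a_n}\ind_{\{\le\tau\}}]$ has the same limit: for $\tilde{\chi}^\infty_n$ this is Lemma~\ref{expsv} together with the truncated first moment of $(\xi^*-1)Z^\infty$, and the passage to $\tilde{\chi}_n$ is precisely Corollary~\ref{colexp} treated by the $K\ge0$/$K\le0$ split above. Since $\int_{0<x\le\tau}x\,\d\Lc_\lambda<\infty$, the drift condition of Theorem~\ref{eqconv}(2) then collapses to $d_\lambda=\int_0^\tau x\,\d\Lc_\lambda+\int_\tau^\infty\tfrac{x}{1+x^2}\d\Lc_\lambda-\int_0^\tau\tfrac{x^3}{1+x^2}\d\Lc_\lambda=\int_0^\infty\tfrac{x}{1+x^2}\d\Lc_\lambda$, which is the asserted value of $d_\lambda$.

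The step I expect to be the real obstacle is the idealisation $\tilde{\chi}_n\approx NZ^\infty$ carried out \emph{uniformly enough in the number of traps $N$} that the error survives both multiplication by $M_n^\lambda\sim n^\varepsilon$ and summation against $\Pr(N=\overline{L}_K)$; this is exactly why the $K$-uniform quantitative Corollaries~\ref{feconv} and~\ref{colexp} (which bound the discrepancy between $\zeta^{(n)}$ and $Z^\infty$ with an explicit rate) were established, and why the sums must be broken at $\overline{L}_K=a_n$ so that the $\overline{L}_K/a_n$-type part of the error is only ever summed where it is small. Everything beyond that is a bookkeeping application of Theorem~\ref{eqconv} together with the tail and truncated-moment asymptotics of $\xi^*$ and the elementary integral evaluations for $d_\lambda$.
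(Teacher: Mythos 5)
Your proposal follows essentially the same route as the paper's proof: verification of the hypotheses of Theorem \ref{eqconv} with the idealisation $\tilde{\chi}^\infty_n=NZ^\infty$, the $K$-uniform comparison via Corollaries \ref{feconv} and \ref{colexp} split at $\overline{L}_K=a_n$, Lemma \ref{expsv} for the tail of $(\xi^*-1)Z^\infty$, and the truncated-moment asymptotic (\ref{trn1}), with the same cancellation yielding $d_\lambda=\int_0^\infty\frac{x}{1+x^2}\,\d\Lc_\lambda(x)$. The argument is correct and matches the paper's in all essential steps.
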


This shows the convergence result of Theorem \ref{finexcthm} in the sense of finite dimensional distributions. In Section \ref{tght} we prove a tightness result which concludes the proof.

\section{Excursion times in deep branches}\label{wlkindp}
We now want to decompose the time spent in deep branches. In FVIE this will be very similar to the decomposition used in \cite{arfrgaha} and we won't consider the argument in great detail. However, the decomposition required in IVIE requires greater delicacy. In this section we consider a construction of a GW-tree conditioned on its height by Geiger and Kersting \cite{geke} to show that the time spent in deep traps essentially consists of some geometric number of excursions from the deepest point in the trap to itself. That is, as in \cite{arfrgaha}, excursions which don't reach the deepest point are negligible as is the time taken for the walk to reach the deepest point from the root of the trap and the time taken to return to the root from the deepest point when this happens before returning to the deepest point. 

Following notation of \cite{arfrgaha}, denote $(\phi_{n+1},\psi_{n+1})_{n \geq 0}$ a sequence of i.i.d.\ pairs with joint law 
\begin{flalign}\label{phipsi}
\Pr(\phi_{n+1}=j,\psi_{n+1}=k) = \frac{\Pr(\xi=k)\Pr(\Hc(\Tc)\leq n-1)^{j-1}\Pr(\Hc(\Tc)=n)\Pr(\Hc(\Tc)\leq n)^{k-j}}{\Pr(\Hc(\Tc)=n+1)} 
\end{flalign}
for $k = 1,2,...$ and $j=1,...,k$. Under this law $\psi_{n+1}$ has the law of the degree of the root of a GW tree conditioned to be of height $n+1$ and $\phi_{n+1}$ has the law over the first bud to give rise onto a tree of height exactly $n$. We then construct a sequence of trees recursively as follows: Set $T_0=\{\delta\}$ then
\begin{enumerate}
 \item Let the first generation of $T_{n+1}$ be of size $\psi_{n+1}$.
 \item Let $T_n$ be the subtree rooted at the $\phi_{n+1}\th$ first generation vertex of $T_{n+1}$.
 \item Attach $f$-GW trees conditioned to have height at most $n-1$ to the first $\phi_{n+1}-1$ vertices of the first generation of $T_{n+1}$.
 \item Attach $f$-GW trees conditioned to have height at most $n$ to the remaining $\psi_{n+1}-\phi_{n+1}$ first generation vertices of $T_{n+1}$.
\end{enumerate}

Under this construction $T_{n+1}$ has the distribution of an $f$-GW tree conditioned to have height exactly $n+1$. Write $\delta_0=\delta$ to be the deepest point of the tree and for $n=1,2,...$ write $\delta_n$ to be the ancestor of $\delta$ of distance $n$. The sequence $\delta_0,\delta_1,...$ form a `spine' from the deepest point to the root of the tree. We denote $\Tc^-$ to be the tree asymptotically attained. By a subtrap of $\Tc^-$ we mean some vertex $x$ on the spine together with a descendant $y$ off the spine and all of the descendants of $y$. This is itself a tree with root $x$ and we write $S_x$ to be the collection of subtraps rooted at $x$.  Figure \ref{trapch} shows a construction of $T_4$ where the solid line represents the spine and the dashed lines represent subtraps.
\begin{figure}[H]
\centering
 \includegraphics[scale=0.8]{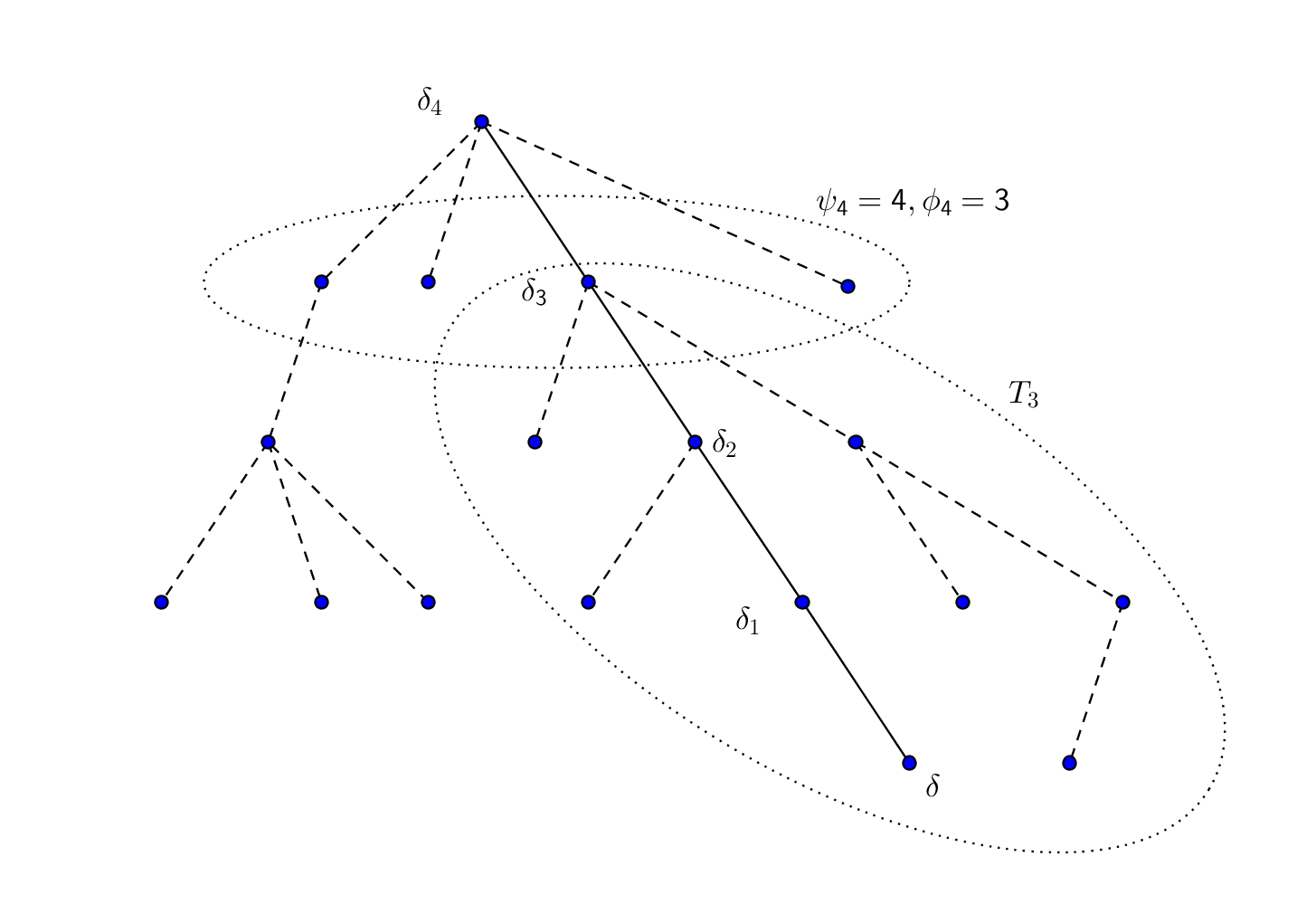} 
\caption{GW-tree conditioned on its height.}\label{trapch}
\end{figure}

We denote $S^{n,j,1}$ to be the $j\th$ subtrap conditioned to have height at most $n-1$ attached to $\delta_n$ and $S^{n,j,2}$ to be the $j\th$ subtrap conditioned to have height at most $n$ attached to $\delta_n$. Recall that $d(x,y)$ denotes the graph distance between $x,y \in \Tc$ then for $k=1,2$ let \[\Pi^{n,j,k}=2\sum_{x \in S^{n,j,k}\setminus\{\delta_n\}}\beta^{d(x,\delta_n)}\] denote the weight of $S^{n,j,k}$ under the invariant measure associated to the conductance model with conductances $\beta^{i+1}$ between levels $i,i+1$ and the roots of $S^{n,j,k}$ (spinal vertices) denoting level $0$. We then write \[\Lambda_n=\sum_{j=1}^{\phi_n-1}\Pi^{n,j,1}+\sum_{j=1}^{\psi_n-\phi_n}\Pi^{n,j,2}\] to denote the total weight of the subtraps of $\delta_n$ then, 
\begin{flalign}\label{Rinfty}
\Et[\Rc_\infty]=2\sum_{n=0}^\infty \beta^{-n}(1+\Lambda_n)
\end{flalign}
is the expected time $\Rc_\infty$ taken for a walk on $\Tc^-$ started from $\delta$ to return to $\delta$. 

\begin{lem}\label{maxexc}
Suppose that $\xi$ belongs to the domain of attraction of a stable law of index $\alpha \in (1,2]$ and $\beta\mu>1$ then  
 \[\Eb[\Rc_\infty]<\infty.\]
 \begin{proof}
  Since $\beta>1$ we have that $2\sum_{n=0}^\infty \beta^{-n}=2/(1-\beta^{-1})$ thus it suffices to find an appropriate bound on $\Er[\Lambda_n]$. 

$\Er[\Pi^{n,j,1}]\leq \Er[\Pi^{n,j,2}]$ since conditioning the height of the trap to be small only reduces the weight; therefore, by independence of $\psi_n$ and $\Pi^{n,j,2}$
\begin{flalign}\label{Lambda}
 \Er[\Lambda_n] \; = \; \Er\left[\sum_{j=1}^{\phi_n-1}\Pi^{n,j,1}+\sum_{j=1}^{\psi_n-\phi_n}\Pi^{n,j,2}\right] \; \leq \; \Er[\Pi^{n,1,2}]\Er[\psi_n].
\end{flalign}
Using that conditioning the height of a GW-tree $\Tc$ to be small only decreases the expected generation sizes and that $\mu\beta>1$, by (\ref{return})
\begin{flalign}\label{PI}
 \Er[\Pi^{n,1,2}] \; = \; 2\sum_{k=1}^n\beta^k\Er[Z_k|\Hc(\Tc)\leq n] \; \leq \; c(\beta\mu)^n
\end{flalign}
for some constant $c$ where $Z_k$ are the generation sizes of $\Tc$. Summing over $j$ in (\ref{phipsi}) shows that $\Pr(\psi_{n+1}=k)  = \Pr(Z_1=k|\Hc(\Tc)=n+1)$. Recalling that $s_n=\Pr(\Hc(\Tc)< n)$, 
\begin{flalign*}
 \Er[\psi_{n+1}]\; = \; \Er[Z_1|\Hc(\Tc)=n+1] \; = \; \sum_{k=1}^\infty kp_k\left(\frac{s_{n+1}^k-s_n^k}{s_{n+2}-s_{n+1}}\right). 
\end{flalign*}
By (\ref{cmu}) $1-s_{n+1} \sim c\mu^n$ for some positive constant $c$ thus $s_{n+2}-s_{n+1} \sim c\mu^n$ for some other positive constant $c$. In particular, when $\sigma^2<\infty$, there exists some constant $c$ such that
\begin{flalign*}
 \sum_{k=1}^\infty kp_k\left(\frac{s_{n+1}^k-s_n^k}{s_{n+2}-s_{n+1}}\right)  \leq c\sum_{k=1}^\infty kp_k\left(\frac{1-s_n^k}{1-s_n}\right) \leq c\sigma^2
\end{flalign*}
where the final inequality comes from that $(1-s^k)(1-s)^{-1}$ is increasing in $s$ and converges to $k$ for each $k\geq1$. It therefore follows that $\Er[\Lambda_n]\leq C(\beta\mu)^n$ so indeed 
\begin{flalign*}
\Eb[\Rc_\infty]  \leq C\sum_{n=0}^\infty \beta^{-n}(\beta\mu)^n  <\infty.
\end{flalign*}

When $\xi$ has infinite variance but belongs to the domain of attraction of a stable law
\begin{flalign*}
 \sum_{k=1}^\infty kp_k(s_{n+1}^k-s_n^k) & = \mu\left(\left(1-\frac{s_nf'(s_n)}{\mu}\right)-\left(1-\frac{s_{n+1}f'(s_{n+1})}{\mu}\right)\right) 
\end{flalign*}
hence by (\ref{prbhn}) as $\nin$ we have that $ \Er[\psi_{n+1}] \sim c\mu^{n(\alpha-2)}L_2(\mu^n)$. Combining this with (\ref{Lambda}) and (\ref{PI}) we have
\begin{flalign}\label{Lambi}
 \Er[\Lambda_n] \; \leq \; C(\beta\mu)^n\mu^{n(\alpha-2)}L_2(\mu^n) \; = \; C(\beta\mu^{\alpha-1})^nL_2(\mu^n)
\end{flalign}
 therefore using (\ref{Rinfty})
 \begin{flalign*}
  \Eb[\Rc_\infty] & \leq C\left(1+\sum_{n=1}^\infty \mu^{n(\alpha-1)}L_2(\mu^n)\right)  <\infty 
 \end{flalign*}
for $C$ chosen sufficiently large.
 \end{proof}
\end{lem}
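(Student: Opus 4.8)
The plan is to bound $\Eb[\Rc_\infty]$ directly through the expression (\ref{Rinfty}). Since $\beta>1$ the part $2\sum_{n\ge0}\beta^{-n}$ coming from the $1$ in each $(1+\Lambda_n)$ is a convergent geometric series, so everything reduces to proving $\sum_{n\ge0}\beta^{-n}\Er[\Lambda_n]<\infty$; in other words the whole estimate hinges on a sufficiently sharp upper bound for the mean weight $\Er[\Lambda_n]$ of the subtraps attached to the $n$-th spinal vertex $\delta_n$.

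To estimate $\Er[\Lambda_n]$ I would first observe that $\Er[\Pi^{n,j,1}]\le\Er[\Pi^{n,j,2}]$, because conditioning a subtrap to smaller height only shrinks every generation size and hence the invariant weight, and that the number of subtraps at $\delta_n$ is at most $\psi_n$, which is independent of the subtraps themselves. This yields $\Er[\Lambda_n]\le\Er[\psi_n]\,\Er[\Pi^{n,1,2}]$. The factor $\Er[\Pi^{n,1,2}]$ is then controlled via (\ref{return}): it equals $2\sum_{k=1}^n\beta^k\Er[Z_k\mid\Hc(\Tc)\le n]$, and since conditioning the height down can only decrease the expected generation sizes below $\Er[Z_k]=\mu^k$, this is at most $2\sum_{k=1}^n(\beta\mu)^k$; as $\beta\mu>1$ the geometric sum is comparable to its last term, giving $\Er[\Pi^{n,1,2}]\le c(\beta\mu)^n$.

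The real work is the second factor $\Er[\psi_n]$, the expected root degree of an $f$-GW tree conditioned to have height exactly $n$. Summing (\ref{phipsi}) over $j$ shows $\psi_{n+1}\ed Z_1\mid\Hc(\Tc)=n+1$, hence $\Er[\psi_{n+1}]=\sum_{k\ge1}kp_k(s_{n+1}^k-s_n^k)/(s_{n+2}-s_{n+1})$. From (\ref{cmu}) and (\ref{sm}), $1-s_n\sim c\mu^n$, so the denominator is of order $\mu^n$. In the finite-variance case one bounds the numerator crudely using that $(1-s^k)/(1-s)$ is increasing in $s$ and bounded by $k$, obtaining $\Er[\psi_{n+1}]\le c\sum_k k^2p_k<\infty$ (finite precisely because $\sigma^2<\infty$), i.e.\ a bounded sequence. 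In the genuinely stable case $\sigma^2=\infty$ this brute-force bound fails, and one must instead write $\sum_k kp_k(s_{n+1}^k-s_n^k)=\mu\bigl[(1-s_nf'(s_n)/\mu)-(1-s_{n+1}f'(s_{n+1})/\mu)\bigr]$ and insert the precise asymptotics (\ref{prbhn}) of $1-f'(s)/\mu$ — which themselves come from Lemma~\ref{genform} — to get $\Er[\psi_{n+1}]\sim c\mu^{n(\alpha-2)}L_2(\mu^n)$. I expect this $\Er[\psi_n]$ estimate to be the main obstacle: unlike the finite-variance case, here the conditioned root degree actually \emph{grows} (like $\mu^{n(\alpha-2)}$ up to a slowly varying factor), and one must be careful pairing this diverging prefactor with the slowly varying correction $L_2(\mu^n)$ and checking the growth is still beaten by $\beta^{-n}(\beta\mu)^n=\mu^n$.

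Finally I would assemble the bounds: $\Er[\Lambda_n]\le C(\beta\mu)^n\mu^{n(\alpha-2)}L_2(\mu^n)=C(\beta\mu^{\alpha-1})^nL_2(\mu^n)$ in the stable case (and $\Er[\Lambda_n]\le C(\beta\mu)^n$ when $\sigma^2<\infty$), so by (\ref{Rinfty})
\[
\Eb[\Rc_\infty]\;\le\;C\sum_{n\ge0}\beta^{-n}(\beta\mu^{\alpha-1})^nL_2(\mu^n)\;=\;C\sum_{n\ge0}\mu^{n(\alpha-1)}L_2(\mu^n)<\infty,
\]
the convergence holding because $\mu<1$ and $\alpha>1$ force $\mu^{\alpha-1}<1$ and a slowly varying factor cannot disturb geometric decay (the case $\alpha=2$ being subsumed, with the summable series $\sum_n\mu^n$).
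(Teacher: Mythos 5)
Your proposal reproduces the paper's argument step for step: reduce to bounding $\Er[\Lambda_n]$, split off $\Er[\psi_n]\,\Er[\Pi^{n,1,2}]$ via the $\Pi^{n,j,1}\preceq\Pi^{n,j,2}$ comparison, control $\Er[\Pi^{n,1,2}]$ by $(\beta\mu)^n$ through (\ref{return}), and estimate $\Er[\psi_{n+1}]$ separately in the finite-variance case and in the stable case via the $f'$ identity and (\ref{prbhn}). This is the same proof; no gaps.
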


We therefore have that the expected time taken for a walk started from the deepest point in a trap (of height $H$) to return to the deepest point is bounded above by $\Eb[\Rc_\infty]<\infty$ independently of its height. The following lemma gives the probabilities of reaching the deepest point in a trap, escaping the trap from the deepest point and the transition probabilities for the walk in the trap conditional on reaching the deepest point before escaping. The proof is straightforward by comparison with the biased walk on $\Zb$ with nearest neighbour edges so we omit it. Recall that $\tau^+_x$ is the first return time to $x$.

\begin{lem}\label{transit}
 For any tree $T$ of height $H+1$ (with $H \geq 1$), root $\rho$ and deepest vertex $\delta$ we have that \[\Pt^T_{\delta_H}(\tau^+_\delta<\tau^+_\rho)=\frac{1-\beta^{-1}}{1-\beta^{-(H+1)}}\]
 is the probability of reaching the deepest point without escaping and  \[\Pt^{T}_{\delta}(\tau^+_{\rho}<\tau^+_{\delta})=\frac{1-\beta^{-1}}{\beta^H-\beta^{-1}}\] 
 is the probability of escaping from the deepest point before returning.
 Moreover, \[ \Pt^T_{\delta_k}(\tau^+_{\delta_{k-1}}<\tau^+_{\delta_{k+1}}|\tau^+_{\delta}<\tau^+_\rho)=\frac{1-\beta^{-(H+2-k)}}{1-\beta^{-(H+1-k)}}\cdot\frac{\beta}{\beta+1} \] is the probability that the walk restricted to the spine conditioned on reaching $\delta$ before returning to $\rho$ moves towards $\delta$.
\end{lem}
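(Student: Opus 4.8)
The plan is to reduce everything to the one-dimensional $\beta$-biased walk by observing that the biased walk on $T$, recorded only at the successive \emph{distinct} spine vertices it visits, is exactly a nearest-neighbour $\beta$-biased walk on the path $\delta=\delta_0,\delta_1,\dots,\delta_{H+1}=\rho$. First note that $\delta$, being at distance $H+1=\Hc(T)$ from $\rho$, is a leaf. For $1\le k\le H$ the vertex $\delta_k$ has parent $\delta_{k+1}$, spinal child $\delta_{k-1}$, and possibly further children, each rooting a finite subtree; since $\delta_k$ separates each such subtree from the rest of $T$, every excursion into one of them returns to $\delta_k$ almost surely. Hence from $\delta_k$ the walk makes a (possibly empty) geometric sequence of such excursions and then moves to $\delta_{k+1}$ with probability proportional to $1$ or to $\delta_{k-1}$ with probability proportional to $\beta$; conditionally on the next distinct spine vertex it is $\delta_{k+1}$ with probability $1/(1+\beta)$ and $\delta_{k-1}$ with probability $\beta/(1+\beta)$, while from the leaf $\delta_0$ it moves to $\delta_1$ with probability one. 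Identifying $\delta_j$ with $j$, the recorded process is the walk on $\{0,\dots,H+1\}$ that steps down with probability $\beta/(1+\beta)$ and up with probability $1/(1+\beta)$, absorbed at $0$ and $H+1$.

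Next I would solve the gambler's-ruin problem for this chain: writing $h(j)$ for the probability of reaching $0$ before $H+1$ from $j$, the function $h$ solves $h(j)=\tfrac{\beta}{1+\beta}h(j-1)+\tfrac{1}{1+\beta}h(j+1)$ with $h(0)=1$, $h(H+1)=0$, whose unique solution is $h(j)=(\beta^{j}-\beta^{H+1})/(1-\beta^{H+1})$; in particular $h(j)>0$ for $0\le j\le H$ since $\beta>1$. Then $\Pt^T_{\delta_H}(\tau^+_\delta<\tau^+_\rho)=h(H)$, and a short rearrangement gives $(1-\beta^{-1})/(1-\beta^{-(H+1)})$, which is the first claim. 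For the second, the walk from $\delta$ steps to $\delta_1$ with probability one and from there must reach $\rho=\delta_{H+1}$ before $\delta=\delta_0$, so $\Pt^T_\delta(\tau^+_\rho<\tau^+_\delta)=1-h(1)=(1-\beta)/(1-\beta^{H+1})$, which rearranges to $(1-\beta^{-1})/(\beta^{H}-\beta^{-1})$.

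For the third claim I would use a Doob $h$-transform. Both events $\{\tau^+_\delta<\tau^+_\rho\}$ and $\{\tau^+_{\delta_{k-1}}<\tau^+_{\delta_{k+1}}\}$ are measurable with respect to the spine-recorded chain (for the latter, because $\delta_{k-1},\delta_{k+1}$ are the two spine-neighbours of $\delta_k$, so hitting one before the other is precisely the first recorded move from $\delta_k$), so conditioning the walk on $T$ coincides with conditioning the recorded chain. By the Markov property applied after the first step, for $1\le k\le H$ one obtains $\Pt^T_{\delta_k}(\tau^+_{\delta_{k-1}}<\tau^+_{\delta_{k+1}}\mid\tau^+_\delta<\tau^+_\rho)=\tfrac{\beta}{1+\beta}\,h(k-1)/h(k)$; inserting the closed form of $h$ and simplifying yields $\tfrac{\beta}{\beta+1}\cdot(1-\beta^{-(H+2-k)})/(1-\beta^{-(H+1-k)})$, as required.

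The only step that is not purely mechanical is the first paragraph --- verifying that the subtrap excursions may be integrated out, i.e.\ that the spine-recorded process is genuinely a nearest-neighbour biased walk on $\{0,\dots,H+1\}$ and that the three events in the statement depend on the walk only through this projection. Granting that, the remainder is the textbook one-dimensional gambler's-ruin identity, its $h$-transform, and elementary algebra.
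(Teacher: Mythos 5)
Your proof is correct and is precisely the argument the paper has in mind; the paper itself only remarks that the claims follow ``by comparison with the biased walk on $\Zb$ with nearest neighbour edges'' and omits the details, and your reduction of the spine-recorded chain to the one-dimensional $\beta$-biased gambler's-ruin problem followed by the Doob $h$-transform is the standard way to fill them in.
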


Since the first two probabilities are independent of the structure of the tree except for the height we write 
\begin{flalign}\label{p1}
p_1(H)=\frac{1-\beta^{-1}}{1-\beta^{-(H+1)}}
\end{flalign}
to be the probability that the walk reaches the deepest vertex in the tree before returning to the root starting from the bud and 
\begin{flalign}\label{p2}
p_2(H)=\frac{1-\beta^{-1}}{\beta^{H}-\beta^{-1}}
\end{flalign}
 to be the probability of escaping from the tree.

For the remainder of the section we will consider only the case that the offspring distribution belongs to the domain of attraction of some stable law of index $\alpha\in(1,2)$. The aim is to prove Proposition \ref{thbck} which shows that the time on excursions in deep traps essentially consists of some geometric number of excursions from the deepest point to itself. We will then conclude with Corollary \ref{thbckst} which is an adaptation for FVIE and of which we omit the proof. 

Write $\rho_i^n$ to be the root of the $i\th$ large branch. This has some number $N^i$ buds which are roots of large traps where, by Proposition \ref{numcritt}, $N^i$ converges to a heavy tailed distribution. Let $\rho_{i,j}^n$ be the bud of the $j\th$ large trap in this branch then $W^{(i,j)}=|\{m\geq 0:X_{m-1}=\rho_i^n, X_m=\rho_{i,j}^n\}| $ is the number of times that the $j\th$ large trap in the $i\th$ large branch is visited. Let $\omega^{(i,j,1)}=\tau^+_{\rho_{i,j}^n}$ then for $k\leq W^{(i,j)}$ write $\omega^{(i,j,k)}=\min\{m > \omega^{(i,j,k-1)}:X_{m-1}=\rho^n_i,X_m=\rho^n_{i,j}\}$ to be the start time of the $k\th$ excursion into $\Tc_{\rho^n_{i,j}}$ and $T^{(i,j,k)}=|\{m \in [\omega^{(i,j,k)}, \omega^{(i,j,k+1)}): X_m \in \Tc^*_{\rho^n_{i,j}}\}|$ its duration. We can then write the time spent in large traps of the $i\th$ large branch as \[\chi^i_n=\sum_{j=1}^{N^i}\sum_{k=1}^{W^{(i,j)}}T^{(i,j,k)}. \]

For $0\leq k\leq \Hc(\Tc_{\rho_{i,j}^n})$ write $\delta^{(i,j)}_k$ to be the spinal vertex of distance $k$ from the deepest point in $\Tc_{\rho_{i,j}^n}$. Let $T^{*(i,j,k)}=0$ if there does not exist $m\in [\omega^{(i,j,k)},\omega^{(i,j,k+1)}]$ such that $X_m=\delta_0^{(i,j)}=:\delta^{(i,j)}$ and 
\begin{flalign*}
T^{*(i,j,k)} & =\sup\{m \in [\omega^{(i,j,k)},\omega^{(i,j,k+1)}]: \; X_m=\delta^{(i,j)}\} \\
& \qquad -\inf\{m \in [\omega^{(i,j,k)},\omega^{(i,j,k+1)}]: \; X_m=\delta^{(i,j)}\}
\end{flalign*}
otherwise to be the duration of the $k\th$ excursion into $\Tc^*_{\rho^n_{i,j}}$ without the first passage to the deepest point and the final passage from the deepest point to the exit. We can then define \begin{flalign}\label{chistar}
\chi^{i*}_n=\sum_{j=1}^{N^i}\sum_{k=1}^{W^{(i,j)}}T^{*(i,j,k)} 
\end{flalign}
to be the time spent in the trap $\Tc_{\rho_{i,j}^n}$ without the first passage to and last passage from $\delta^{(i,j)}$ on each excursion. We want to show that the difference between this and $\chi^i_n$ is negligible. In particular, recalling that $\Dc_n^{(n)}$ is the collection of large branches by level $n$, we will show that for all $t>0$ as $\nin$
\begin{flalign}\label{outback}
\Pb\left(\left|\sum_{i=1}^{|\Dc_n^{(n)}|}\left(\chi^i_n-\chi^{i*}_n\right) \right|\geq ta_n^\frac{1}{\gamma}\right) \rightarrow 0.
\end{flalign}

For $\epsilon>0$ denote 
\begin{flalign}\label{A6}
A_6(n)=  \bigcap_{i=0}^n\{\Hc(\Tc^{*-}_{\rho_i})\leq h_n^{-\epsilon}\}
\end{flalign}
to be the event that there are no $h_n^{-\epsilon}$-branches by level $n$. Using a union bound and (\ref{numcrttrp}) we have that $\Pr(A_6(n)^c)  \leq n\Pr(\Hc(\Tc^{*-}_{\rho_0})>h_n^{-\varepsilon}) \rightarrow 0$ as $\nin$.

Write \[A_7(n)=\bigcap_{i=0}^{|\Dc_n^{(n)}|}\{N^i\leq n^{\frac{2\varepsilon}{\alpha-1}}\}\] to be the event that all large branches up to level $n$ of the backbone have fewer than $n^{\frac{2\varepsilon}{\alpha-1}}$ large traps. Conditional on the number of buds, the number of large traps in the branch follows a binomial distribution therefore 
\[\Pr(N^i\geq Cn^{\frac{2\varepsilon}{\alpha-1}}) \leq \frac{\Pr(\xi^*\geq n^{\frac{1+\varepsilon/2}{\alpha-1}})}{\Pr(\Hc(\Tc^{*-})> h_n^\varepsilon)}+\frac{\Pr\left(Bin\left(n^{\frac{1+\varepsilon/2}{\alpha-1}},\Pr(\Hc(\Tc)\geq h_n^\varepsilon)\right)\geq n^{\frac{2\varepsilon}{\alpha-1}}\right)}{\Pr(\Hc(\Tc^{*-})> h_n^\varepsilon)}.\]
By (\ref{xistar}) and (\ref{numcrttrp}) $\Pr(\Hc(\Tc^{*-})\geq h_n^\varepsilon)\leq Cn^{-(1-\varepsilon)}\overline{L}(n)$ for $n$ large and some slowly varying function $\overline{L}$ hence the first term decays faster than $n^{-\varepsilon}$. Using a Chernoff bound the second term has a stretched exponential decay. Therefore, by (\ref{numcrit}) and a union bound,  $\Pr(A_7(n)^c)  \leq o(1)+Cn^\varepsilon \Pr(N^i\geq n^{\frac{2\varepsilon}{\alpha-1}}) \rightarrow 0$ as $\nin$.
 
For $x \in \Tc$, write $d_x:=|c(x)|$ to be the number of children of $x$ and \[A_8(n)=\bigcap_{i=1}^{|\Dc_n^{(n)}|}\bigcap_{j=1}^{N^i}\left\{\sum_{k=0}^{\Hc(\Tc_{\rho_{i,j}^n})}d_{\delta^{(i,j)}_k} \leq n^{3\varepsilon/(\alpha-1)^2}\right\}\]  to be the event that there are fewer than $n^{3\varepsilon/(\alpha-1)^2}$ subtraps on the spine in any $h_n^\varepsilon$-trap. $\Pr(\xi\geq n|\Hc(\Tc)\geq m)$ is non-decreasing in $m$; therefore, the number of offspring from a vertex on the spine of a trap can be stochastically dominated by the size biased distribution. Using this and (\ref{numcrit}) along with the bounds on $A_6$ and $A_7$ we have that
 \begin{flalign*}
 \Pr(A_8(n)^c) & \leq o(1)+ Cn^\varepsilon n^{\frac{2\varepsilon}{\alpha-1}}\Pr\left(\sum_{k=0}^{h_n^{-\epsilon}}\xi^*_k\geq n^{3\varepsilon/(\alpha-1)^2}\right) \\
  & \leq o(1)+ Cn^\varepsilon n^{\frac{2\varepsilon}{\alpha-1}}h_n^{-\epsilon}\Pr(\xi^* \geq n^{3\varepsilon/(\alpha-1)^2}/h_n^{-\epsilon}) \\
 & \leq o(1)+ n^\varepsilon n^{-\frac{\varepsilon}{\alpha-1}}\overline{L}(n) 
\end{flalign*}
for some slowly varying function $\overline{L}$ thus $\Pr(A_8(n)^c)\rightarrow 0$ as $\nin$.

\begin{prp}\label{thbck}
In IVIE, for any $t>0$ as $\nin$
 \begin{flalign*}
\Pb\left(\left|\sum_{i=1}^{|\Dc_n^{(n)}|}\left(\chi^i_n-\chi^{i*}_n\right) \right|\geq ta_n^\frac{1}{\gamma}\right) \rightarrow 0.
\end{flalign*}
\begin{proof}

Let $A'(n)=\bigcap_{i=1}^8A_i(n)$ then using the bounds on $A_i$ for $i=1,...,8$ it follows that
\begin{flalign}\label{upper}
 \Pb\left(\left|\sum_{i=1}^{|\Dc_n^{(n)}|}\left(\chi^i_n-\chi^{i*}_n\right) \right|\geq ta_n^\frac{1}{\gamma}\right) \leq o(1) + \frac{Cn^{\varepsilon\left(\frac{\alpha+1}{\alpha-1}\right)}}{ta_n^\frac{1}{\gamma}}\Eb\left[\ind_{A'(n)}\sum_{k=1}^{W^{(1,1)}}\left(T_n^{(1,1,k)}-T_n^{*(1,1,k)}\right)\right]. 
\end{flalign}

Since $W^{(i,j)}$ are independent of the excursion times and have marginal distributions of geometric random variables with parameter $(\beta-1)/(2\beta-1)$ 
\begin{flalign*}
\Eb\left[\ind_{A'(n)}\sum_{k=1}^{W^{(1,1)}}\left(T_n^{(1,1,k)}-T_n^{*(1,1,k)}\right)\right] & = \Eb[W^{(1,1)}]\Eb\left[\ind_{A'(n)}\left(T_n^{(1,1,1)}-T_n^{*(1,1,1)}\right)\right]. 
\end{flalign*}
For a given excursion either the walk reaches the deepest point before returning to the root or it doesn't. In the former case the difference $T_n^{(1,1,1)}-T_n^{*(1,1,1)}$ is the time taken to reach $\delta^{(1,1)}$ conditional on the walker reaching $\delta^{(1,1)}$ before exiting the trap added to the time taken to escape the trap from $\delta^{(1,1)}$ conditional on the walk escaping before returning to $\delta^{(1,1)}$. In the latter case the difference is the time taken to return to the root given that the walker returns to the root without reaching $\delta^{(1,1)}$. In particular we have that
\begin{flalign}\label{split}
 \Eb[\ind_{A'(n)}(T_n^{(1,1,1)}-T_n^{*(1,1,1)})] & \leq \Er\left[\ind_{A'(n)}\Et_{\rho_n^{(1,1)}}[\ind_{A'(n)}\tau^+_{\delta^{(1,1)}}|\tau^+_{\delta^{(1,1)}}<\tau^+_{\rho_n^{(1,1)}}]\right] \\
 & \qquad + \Er\left[\ind_{A'(n)}\Et_{\delta^{(1,1)}}[\ind_{A'(n)}\tau^+_{\rho_n^{(1,1)}}|\tau^+_{\rho_n^{(1,1)}}<\tau^+_{\delta^{(1,1)}}]\right] \notag \\
 & \qquad \qquad + \Er\left[\ind_{A'(n)}\Et_{\rho_n^{(1,1)}}[\ind_{A'(n)}\tau^+_{\rho_n^{(1,1)}}|\tau^+_{\rho_n^{(1,1)}}<\tau^+_{\delta^{(1,1)}}]\right]. \notag
\end{flalign}

We want to show that each of the terms in (\ref{split}) can be bounded appropriately. This follows similarly to Lemmas 8.2 and 8.3 of \cite{arfrgaha} so we only sketch the details. Conditional on the event that the walk returns to the root of the trap before reaching the deepest point we have that:
\begin{enumerate}
 \item\label{unchng} the transition probabilities of the walk in subtraps are unchanged,
 \item\label{intotrp} from any vertex on the spine, the walk is more likely to move towards the root than to any vertex in the subtrap,
 \item\label{drfsp} from any vertex on the spine, excluding the root and deepest point, the probability of moving towards the root is at least $\beta$ times that of moving towards the deepest point.
\end{enumerate}
Property \ref{drfsp} above shows that the probability of escaping the trap from any vertex on the spine is at least the probability $p_\infty$ of a regeneration for the $\beta$-biased random walk on $\Zb$. From this we have that the number of visits to any spinal vertex can be stochastically dominated by a geometric random variable with parameter $p_\infty$. Similarly, using property \ref{intotrp} above, we see that the number of visits to any subtrap can be stochastically dominated by a geometric random variable with parameter $p_\infty/2$.

Using a union bound with $A_1,A_7,A_8$ and (\ref{cmu}) we have that with high probability there are no subtraps of height greater than $h_n^\varepsilon$. In particular, by (\ref{expexc}), the expected time in any subtrap can be bounded above by $C(\beta\mu)^{h_n^\varepsilon}$ for some constant $C$ using property \ref{unchng}. From this it follows that
\begin{flalign*}
 \Er\left[\ind_{A'(n)}\Et_{\rho_n^{(1,1)}}[\ind_{A'(n)}\tau^+_{\rho_n^{(1,1)}}|\tau^+_{\rho_n^{(1,1)}}<\tau^+_{\delta^{(1,1)}}]\right] & \leq \Er\left[\ind_{A'(n)}\Et_{\delta^{(1,1)}}[\ind_{A'(n)}\tau^+_{\rho_n^{(1,1)}}|\tau^+_{\rho_n^{(1,1)}}<\tau^+_{\delta^{(1,1)}}]\right] \\
 & \leq o(1) + h_n^{-\varepsilon}\Et[Geo(p_\infty)]+Cn^{\frac{3\varepsilon}{(\alpha-1)^2}}(\beta\mu)^{h_n^\varepsilon} \\
 & \leq o(1) + C\overline{L}(n)n^{\frac{(1-\varepsilon)}{\alpha-1}\frac{\log(\beta\mu)}{\log(\mu^{-1})}+\frac{3\varepsilon}{(\alpha-1)^2}}
\end{flalign*}
for some constant $C$, slowly varying function $\overline{L}$.

A symmetric argument shows that the same bound can be achieved for the first term in (\ref{split}). It then follows that the second term in (\ref{upper}) can be bounded above by $C_tL_1(n)n^{-\frac{1}{\alpha-1}+\tilde{\varepsilon}}$ where $\tilde{\varepsilon}$ can be made arbitrarily small by choosing $\varepsilon$ sufficiently small.
\end{proof}
\end{prp}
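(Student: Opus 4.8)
The plan is to reduce the left-hand side to a single-trap, single-excursion estimate on a high-probability event, and then to bound the expected per-excursion cost $T_n^{(1,1,1)}-T_n^{*(1,1,1)}$ using the Geiger--Kersting spine decomposition.

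First I would work on the good event $A'(n)=\bigcap_{i=1}^{8}A_i(n)$ obtained by intersecting all the events introduced above: $A_1$ (the number of large branches by level $n$ is of order $nq_n$, i.e.\ of order $n^{\varepsilon}$), $A_2$ and $A_3$ (no long backtracking along the backbone and the backbone is traversed at linear speed, so every excursion contributing to $\Delta_n$ occurs before level $C_1n$), $A_5$ (no large trap is entered more than $C_2\log n$ times), $A_6$ (no branch by level $n$ is taller than $h_n^{-\epsilon}$), $A_7$ (no large branch contains more than $n^{2\varepsilon/(\alpha-1)}$ large traps), and $A_8$ (no large trap has more than $n^{3\varepsilon/(\alpha-1)^2}$ subtraps along its spine); each of these was shown to hold with probability $1-o(1)$. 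On $A'(n)$ the number of pairs (large trap, excursion into it) contributing to $\sum_i(\chi^i_n-\chi^{i*}_n)$ is at most $Cn^{\varepsilon(\alpha+1)/(\alpha-1)}\log n$. Since by Lemma \ref{numexc} the visit counts $W^{(i,j)}$ are independent of the excursion durations with $\mathrm{Geo}((\beta-1)/(2\beta-1))$ marginals, Markov's inequality applied to (\ref{upper}) reduces the statement to showing that $a_n^{-1/\gamma}\,n^{\varepsilon(\alpha+1)/(\alpha-1)}\,\Eb\big[\ind_{A'(n)}\big(T_n^{(1,1,1)}-T_n^{*(1,1,1)}\big)\big]\to 0$.

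Next I would decompose the per-excursion cost as in (\ref{split}): on an excursion that reaches the deepest point $\delta^{(1,1)}$ it is the first-passage time to $\delta^{(1,1)}$ conditioned on reaching it before exiting, plus the escape time from $\delta^{(1,1)}$ conditioned on escaping before returning; on an excursion that does not reach $\delta^{(1,1)}$ it is the whole return time to the bud, conditioned on returning before reaching $\delta^{(1,1)}$. For the last term --- and by a symmetric argument for the first two --- I would use the three structural facts listed after (\ref{split}): conditioning on returning to the bud before the deepest point leaves the subtrap dynamics unchanged, makes every spinal vertex more likely to step to its parent than into any of its subtraps, and (away from the bud and deepest point) biases the spinal walk towards the bud by at least a factor $\beta$. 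The first and third facts give that the number of visits to any spinal vertex is stochastically dominated by $\mathrm{Geo}(p_\infty)$ and the second that the number of visits to any subtrap is dominated by $\mathrm{Geo}(p_\infty/2)$, where $p_\infty$ is the regeneration probability of the $\beta$-biased walk on $\Zb$. Combining this with $A_1,A_6,A_7,A_8$ and (\ref{cmu}) to rule out subtraps of height exceeding $h_n^\varepsilon$, and with (\ref{expexc}) at height $h_n^\varepsilon$ to bound the expected time spent in a subtrap by $C(\beta\mu)^{h_n^\varepsilon}$, each term in (\ref{split}) is at most $o(1)+h_n^{-\varepsilon}\Et[\mathrm{Geo}(p_\infty)]+Cn^{3\varepsilon/(\alpha-1)^2}(\beta\mu)^{h_n^\varepsilon}$. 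Since $h_n^\varepsilon\sim\frac{(1-\varepsilon)}{\alpha-1}\frac{\log n}{\log(\mu^{-1})}$ and $a_n^{1/\gamma}=n^{1/(\gamma(\alpha-1))}\tilde L(n)^{1/\gamma}$ with $1/\gamma=\log\beta/\log(\mu^{-1})$, the identity $\log(\beta\mu)-\log\beta=-\log(\mu^{-1})$ collapses the leading exponent to $-\tfrac1{\alpha-1}+\tilde\varepsilon$, where $\tilde\varepsilon\downarrow 0$ as $\varepsilon\downarrow 0$; choosing $\varepsilon$ small enough makes the displayed probability vanish.

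I expect the main obstacle to be the control of the number of subtraps along the spine of a large trap, i.e.\ the event $A_8$, which is the genuinely new ingredient compared with \cite{arfrgaha}. Because the offspring law has infinite variance, the size-biased law $\xi^*$ governing spinal degrees has infinite mean, so the elementary first-moment bound available in FVIE breaks down; instead one uses that $\Pr(\xi\geq n\mid\Hc(\Tc)\geq m)$ is non-decreasing in $m$ to dominate spinal degrees by $\xi^*$, and then feeds in the height bound from $A_6$ together with the tail (\ref{xistar}) of $\xi^*$. A secondary but delicate point is the exponent bookkeeping in the last step: all the $\varepsilon$-losses coming from $A_1$, $A_7$ and $A_8$ must be verified to stay strictly below the $n^{-1/(\alpha-1)}$ gain, which is precisely why $h_n^\varepsilon$ is defined through $a_{n^{1-\varepsilon}}$ rather than through $n^{1-\varepsilon}$.
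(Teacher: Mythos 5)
Your proposal follows the paper's own proof essentially step for step: the same good event $A'(n)=\bigcap_{i=1}^{8}A_i(n)$, the same reduction via Markov and the independence of $W^{(i,j)}$ from the excursion times, the same three-term split (\ref{split}) according to whether the excursion reaches $\delta^{(1,1)}$, the same three structural facts yielding geometric domination of spinal and subtrap visits, and the same exponent bookkeeping collapsing to $-\tfrac{1}{\alpha-1}+\tilde\varepsilon$. Your identification of $A_8$ as the new ingredient relative to the finite-variance case is also exactly the point the paper flags, so there is nothing substantive to add.
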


A straightforward adaptation of Proposition 8.1 of \cite{arfrgaha} (similar to the previous calculation) shows Corollary \ref{thbckst} which is the corresponding result for FVIE.
\begin{cly}\label{thbckst}
In FVIE, for any $t>0$ as $\nin$
 \begin{flalign*}
\Pb\left(\left|\sum_{i=1}^{|\Dc_n^{(n)}|}\left(\chi^i_n-\chi^{i*}_n\right) \right|\geq tn^\frac{1}{\gamma}\right) \rightarrow 0.
\end{flalign*}
\end{cly}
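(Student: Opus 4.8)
The plan is to follow the proof of Proposition \ref{thbck} line by line, replacing the stable-law estimates by the corresponding finite-variance ones and exploiting the simplifications that FVIE affords; this is precisely the ``straightforward adaptation'' referred to above (equivalently, the argument of Proposition 8.1 of \cite{arfrgaha}). First I would introduce the FVIE versions of the control events $A_1(n),\dots,A_8(n)$ of Sections \ref{trpapr}--\ref{wlkindp}. The decisive simplification comes from Lemma \ref{sintrp}: with high probability every large branch carries exactly one large trap, so $N^i=1$ and the inner sums over $j$ in $\chi^i_n$ and in (\ref{chistar}) collapse to a single term. Besides this one needs: the event of Lemma \ref{numcrit} that there are $\asymp nq_n\asymp n^{\varepsilon}$ large branches up to level $n$; the analogue of $A_6(n)$ from (\ref{A6}) that no branch up to level $n$ has height exceeding $h_n^{-\epsilon}$, which now follows from Corollary \ref{hgttl} and a union bound; the event that the spine of each large trap carries $O(\log n)$ subtraps, which holds with high probability since the spinal degrees are stochastically dominated by $\xi^{*}$, whose mean is finite when $\sigma^{2}<\infty$; and the events $A_3(n)$, $A_5(n)$ of (\ref{A5}) and the FVIE form of $A_4(n,t)$. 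As in Section \ref{spntlrg}, each has complement of $\Pb$-probability tending to $0$, the estimates being strictly easier than in the infinite-variance case.

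Writing $A'(n)$ for the intersection of these events, the reduction of (\ref{upper}) goes through as before: by Markov's inequality, and since on $A'(n)$ there are $\lesssim n^{\varepsilon}$ large branches each containing a single large trap, it suffices to show
\[\frac{n^{\varepsilon}}{t\,n^{1/\gamma}}\,\Eb\left[\ind_{A'(n)}\sum_{k=1}^{W^{(1,1)}}\bigl(T_n^{(1,1,k)}-T_n^{*(1,1,k)}\bigr)\right]\longrightarrow 0.\]
By Lemma \ref{numexc} the variable $W^{(1,1)}$ is geometric with parameter $(\beta-1)/(2\beta-1)$ and independent of the excursion durations, so I would factor out $\Eb[W^{(1,1)}]$ and reduce to estimating $\Eb[\ind_{A'(n)}(T_n^{(1,1,1)}-T_n^{*(1,1,1)})]$ through the three-term split (\ref{split}): the time to reach the deepest point $\delta^{(1,1)}$ conditioned on reaching it before exiting the trap; the time to escape the trap from $\delta^{(1,1)}$ conditioned on doing so before returning to it; and the time to return to the trap root conditioned on returning there without ever hitting $\delta^{(1,1)}$.

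Each of these three is bounded exactly as in Proposition \ref{thbck}: conditioning on such a ``failed'' excursion leaves the transition probabilities inside subtraps unchanged (Lemma \ref{transit}), makes the walk at any spine vertex more likely to step toward the root than into any subtrap, and gives it drift at least $\beta$ toward the root along the spine, so the number of visits to any spine vertex, resp.\ to any subtrap, is stochastically dominated by a geometric variable with parameter $p_\infty$, resp.\ $p_\infty/2$, where $p_\infty$ is the regeneration probability of the $\beta$-biased walk on $\Zb$, uniformly in the conditioning. On the $A_6$-type event no subtrap has height exceeding $h_n^{-\epsilon}$, so (\ref{expexc}) bounds the expected duration of a single subtrap excursion by $C(\beta\mu)^{h_n^{-\epsilon}}$; multiplying by the $O(\log n)$ spine vertices, the $O(\log n)$ subtraps and the geometric visit factors, and using that $(\beta\mu)^{h_n^{-\epsilon}}=n^{(1+\epsilon)(1/\gamma-1)+o(1)}$ (since $h_n^{-\epsilon}\sim(1+\epsilon)\log n/\log(\mu^{-1})$ and $\log(\beta\mu)/\log(\mu^{-1})=1/\gamma-1$ by the definition (\ref{gamma}) of $\gamma$), one gets $\Eb[\ind_{A'(n)}(T_n^{(1,1,1)}-T_n^{*(1,1,1)})]\le C_t L_1(n)\,n^{(1+\epsilon)(1/\gamma-1)+\tilde{\varepsilon}}$ for some slowly varying $L_1$ and $\tilde{\varepsilon}$ as small as we please. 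Inserted into the display this leaves an exponent $\varepsilon-1+\epsilon(1/\gamma-1)+\tilde{\varepsilon}$, negative once $\varepsilon,\epsilon,\tilde{\varepsilon}$ are small, which finishes the proof.

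The step that actually requires care is the third term of (\ref{split}): a failed excursion is short along the spine, but it can repeatedly descend into subtraps which, near the root of a large trap, may have height comparable to $h_n^{-\epsilon}\asymp\log n$, so a single subtrap excursion already has expected length a positive power of $n$. Showing that this power, after multiplication by the number of subtraps, the geometric number of visits and the $n^{\varepsilon}$ large branches, stays strictly below the scaling $n^{1/\gamma}$ is what pins down how small $\varepsilon$ and $\epsilon$ must be taken; because $\sigma^{2}<\infty$ supplies second moments directly and forces $N^i=1$, this balancing is markedly less delicate here than in the IVIE case of Proposition \ref{thbck}, which is why only a sketch is needed.
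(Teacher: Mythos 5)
Your proposal is the ``straightforward adaptation'' the paper has in mind and it is correct; I only note one technical point below that you silently handled differently from the IVIE version, and which you should state carefully if you write this out in full.

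In the IVIE version (Proposition \ref{thbck}) the paper uses the event $A_8(n)$ to get a high-probability bound of $n^{3\varepsilon/(\alpha-1)^2}$ on the number of subtraps on the spine of any large trap, and combines this with a union bound ($A_1, A_7, A_8$) to conclude that with high probability no subtrap has height exceeding $h_n^\varepsilon$ — so the expected time per subtrap excursion is bounded by $C(\beta\mu)^{h_n^\varepsilon}$. You instead use the trivial height bound $h_n^{-\epsilon}$ inherited from $A_6$, which gives the worse per-excursion cost $C(\beta\mu)^{h_n^{-\epsilon}}$, and you compensate by taking the number of subtraps only in \emph{expectation}: conditional on the spine length $\lesssim h_n^{-\epsilon}$, the spinal degrees are stochastically dominated by $\xi^*$ which has finite mean in FVIE, so $\Er[\text{number of subtraps}]=O(\log n)$. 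This is fine as a blueprint, but the claim as written — ``the event that the spine of each large trap carries $O(\log n)$ subtraps holds with high probability'' — is not what you actually need nor what you can prove: with only $\Er[\xi^*]<\infty$ a union bound over the $\asymp n^\varepsilon$ large traps only gives a polynomial threshold $n^{c\varepsilon}$, not a polylogarithmic one. Either replace the phrase by the expectation statement (which factorizes correctly because, given the spine degrees, the subtraps are conditionally i.i.d.\ and their excursion-time law is independent of the degree count), or define an $A_8$-analogue with threshold $n^{c\varepsilon}$ for small $c$; in either case the final exponent is $\varepsilon(1+O(1))-1+\epsilon(1/\gamma-1)+\tilde\varepsilon$, still negative for $\varepsilon,\epsilon,\tilde\varepsilon$ small, so the conclusion is unaffected. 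Everything else — the collapse to $N^i=1$ via Lemma \ref{sintrp}, the prefactor $n^\varepsilon/(t\,n^{1/\gamma})$, the geometric factoring of $W^{(1,1)}$, the three-term split, the identity $\log(\beta\mu)/\log(\mu^{-1})=1/\gamma-1$ — matches the intended argument.
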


By Proposition \ref{thbck} and Corollary \ref{thbckst}, in FVIE and IVIE, almost all time up to the walk reaching level $n$ is spent on excursions from the deepest point in deep traps. In the remainder of the section we decompose the time spent on such excursions in a single large branch into an appropriate sum of excursions with finite expected duration. We then use this to show that show that $\chi^{i*}_n$ suitably scaled converges in distribution along the identified subsequences.

Let $\Tc^*$ have the distribution of $\Tc^{*-}$ conditioned on having height greater than $h_n^\varepsilon$ in which we prune the buds which are not roots of trees of height at least $h_n^\varepsilon$. Therefore, $\Tc^*$ has the distribution of a tree where the root $\rho$ has $N$ offspring $(\rho_j)_{j=1}^{N}$, each of which is the root of an independent $f$-GW tree with height $H_j\geq h_n^\varepsilon$. We write $\overline{H}=\Hc(\Tc^*)-1$ to be the height of the largest trap and for $K \in \Zb$ let $\overline{H}_n^{\scriptscriptstyle{K}}= h_n^0+K$ then denote $\Pb^{\scriptscriptstyle{K}}(\cdot) =\Pb(\cdot|\overline{H}=\overline{H}_n^{\scriptscriptstyle{K}})$ and $\Pr^{\scriptscriptstyle{K}}(\cdot) =\Pr(\cdot|\overline{H}=\overline{H}_n^{\scriptscriptstyle{K}})$.

We write $W^j$ to be the total number of excursions into $\Tc^{*}_{\rho_j}$ where by Lemma \ref{numexc}, conditional on $N$, $(W^j)_{j=1}^{N}$ have a joint negative multinomial distribution. We then denote the number of excursions which reach the deepest point $\delta^j$ as $B^j$ which is binomially distributed with $W^j$ trials and success probability $p_1(H_j)$. For each $k\leq B^j$ we define $G^{j,k}$ to be the number of return times to $\delta^j$ on the $k\th$ excursion which reaches $\delta^j$. Then for $l=1,...,G^{j,k}$ let $\Rc^{j,k,l}$ denote the duration of the $l\th$ excursion from $\delta^j$ to itself on the $k\th$ excursion into $\Tc^*_{\rho_j}$ which reaches $\delta^j$. $G^{j,k}$ is geometrically distributed with failure probability $p_2(H_j)$. It then follows that each $\chi^{i*}_n$ is equal in distribution to \[\chi_n^*=\sum_{j=1}^{N}\sum_{k=1}^{B^j}\sum_{l=1}^{G^{j,k}}\Rc^{j,k,l}.\]

Define the scaled excursion time in large traps of a large branch as 
\begin{flalign}\label{zetan}
\zeta^{(n)}=\chi_n^*\beta^{-\overline{H}}=\beta^{-\overline{H}}\sum_{j=1}^{N}\sum_{k=1}^{B^j}\sum_{l=1}^{G^{j,k}}\Rc^{j,k,l} 
\end{flalign}
 then we shall show that $\zeta^{(n)}$ converges in distribution under $\Pb^{\scriptscriptstyle{K}}$ along subsequences $n_k(t)$. 

For $\tilde{\varepsilon}>0$ write \[ A_9(n)=\bigcap_{j=1}^{N}\left\{1 \leq \frac{\beta^{H_j}}{1-\beta^{-1}}\Et[G^{j,1}]^{-1} \leq 1+\tilde{\varepsilon}\right\}. \]

Since $G^{j,k}$ are independent geometric random variables there exist independent exponential random variables $e_{j,k}$ such that \[G^{j,k}=\left\lfloor \frac{e_{j,k}}{-\log(1-p_2(H_j))}\right\rfloor \sim Geo(p_2(H_j)). \]
By (\ref{p2}) we then have that \[\Et[G^{j,1}]=\left(1-\frac{1-\beta^{-1}}{\beta^{H_j}-\beta^{-1}}\right)\left(1-\beta^{-(H_j+1)}\right)\frac{\beta^{H_j}}{1-\beta^{-1}}\] therefore, since $H_j\geq h_n^\varepsilon$, for any $\tilde{\varepsilon}>0$ there exists $n$ large such that $\Pr(A_9(n))=1$.
Write 
\[A_{10}^{(j,k)}(n)=\left\{ (1-\tilde{\varepsilon})G^{j,k}\leq \Et[G^{j,k}]e_{j,k} \leq (1+\tilde{\varepsilon})G^{j,k}\right\}.\]
Then, using convergence of scaled geometric variables to exponential variables (see the proof of part (3) of Proposition 9.1 in \cite{arfrgaha}), we have that there exists a constant $\tilde{C}$ such that for any $\tilde{\varepsilon}>0$ there exists $n$ large such that $\Pt(A_{10}^{(j,k)}(n)^c)\leq \tilde{C}p_2( h_n^\varepsilon)$. Therefore, writing \[A_{10}(n)=\bigcap_{j=1}^{N}\bigcap_{k=1}^{B^{j}} A_{10}^{(j,k)}(n)\] and using that $B^{j}  \leq W^{j} \leq C\log(n)$ and $N\leq \log(n)$ with high probability, a union bound gives us that $\Pt(A_{10}(n)^c) \rightarrow 0$ as $\nin$.

By comparison with the biased random walk on $\Zb$ we have that $p_1(H_j)\geq p_\infty=1-\beta^{-1}$ therefore we can define a random variable $B_\infty^{j}\sim Bin(B^{j},p_\infty/p_1(H_j))$. It then follows that $B^{j}\geq B_\infty^j \sim Bin(W^{j},p_\infty)$. Moreover, for $H_j\geq 1$
\begin{flalign}
 p_1(H_j)-p_\infty  = \frac{1-\beta^{-1}}{1-\beta^{-(H_j+1)}}-(1-\beta^{-1})  \leq  \beta^{-H_j}. \label{p1dif}
\end{flalign}

Write \[A_{11}(n)=\bigcap_{j=1}^{N}\left\{B^{j}= B_\infty^{j}\right\}.\]
Since the marginal distribution of $W^{1}$ doesn't depend on $n$, using (\ref{p1dif}), that $N\leq \log(n)$ with high probability and the coupling between $B^{1}$ and $B_\infty^{1}$ we have that
\begin{flalign}
 \Pb(A_{11}(n)^c) & \leq o(1)+\log(n)\sum_{k=0}^\infty \Pb(W^{1}=k)\Pb(B^{1} \neq B_\infty^{1}|W^{1}=k) \notag\\
 & \leq o(1)+\log(n)\sum_{k=0}^\infty \Pb(W^{1}=k)k(p_1(H_1)-p_\infty) \notag\\
 & \leq o(1) + \log(n)\beta^{- h_n^\varepsilon}\Eb[W^{1}] \label{bnm}
\end{flalign}
which decays to $0$ as $\nin$.
 
By choosing $\varepsilon>0$ sufficiently small we can choose $\kappa$ in the range $\varepsilon/\gamma<\kappa< \min\{2(\alpha-1),\; 1/\gamma\}$ then write  
\[A_{12}(n)=\bigcap_{j=1}^{N}\{\Et[(\Rc_n^{j,1,1})^2]< n^{\frac{\gamma^{-1}-\kappa}{\alpha-1}}\}\] 
to be the event that there are no large traps with expected squared excursion time too large. 

\begin{lem}\label{expsqriv}
 In IVIE, as $\nin$ we have that $\Pb(A_{12}(n)^c)\rightarrow 0$.
 \begin{proof}
  Recall from (\ref{A6}) that, for $\epsilon>0$, $A_6(n)$ is the event that all large branches are shorter than $h_n^{-\epsilon}$ and since $N\leq \log(n)$ with high probability we have that
\begin{flalign*}
 \Pb(A_{12}(n)^c)  \leq o(1)+\log(n)\Pb\left(\ind_{\{A_6(n)\}}\Et[(\Rc_n^{1,1,1})^2]^{1/2}>n^{\frac{\gamma^{-1}-\kappa}{2(\alpha-1)}}\right).
\end{flalign*}
A straightforward argument using conductances (see the proof of Lemma 9.1 in \cite{arfrgaha}) gives \[\Et[(\Rc_n^{1,1,1})^2]^{1/2}\leq C \sum_{y \in \Tc^*_{\rho_1}}\beta^{d(y,\delta^1)/2}\pi(y)\]
where $\pi$ is the invariant measure scaled so that $\pi(\delta^1)=1$ and $d$ denotes the graph distance. We then have that
 \begin{flalign*}
  \Er\left[\ind_{\{A_6(n)\}}\Et[(\Rc_n^{(1,1,1)})^2]^{1/2}\right] & \leq C \Er\left[\ind_{\{A_6(n)\}} \sum_{y \in \Tc^*_{\rho_1}}\beta^{d(y,\delta^1)/2}\pi(y)\right] \\
  & \leq C \Er\left[\ind_{\{A_6(n)\}} \sum_{i\geq 1}\beta^{i/2}\beta^{-i}(1+\Lambda_i)\right] \\
  & \leq C \sum_{i=0}^{ h_n^{-\epsilon}}(\beta^{1/2}\mu^{\alpha-1-\epsilon})^i
 \end{flalign*}
 where the final inequality follows by (\ref{Lambi}). If $\beta^{1/2}\mu^{\alpha-1-\epsilon}\leq 1$ then by Markov's inequality we clearly have that $\Pb(A_{12}(n)^c)\rightarrow 0$ as $\nin$ since $\kappa<\gamma^{-1}$. Otherwise by Markov's inequality 
 \[\Pb(A_{12}(n)^c)\leq o(1)+C\log(n)(\beta^{1/2}\mu^{\alpha-1-\epsilon})^{h_n^{-\epsilon}}n^{\frac{\kappa-\gamma^{-1}}{2(\alpha-1)}}\leq \overline{L}(n)n^{\frac{\kappa}{2(\alpha-1)}-1+\frac{\epsilon}{\alpha-1}\left(\frac{1}{2\gamma}+2-\alpha+\epsilon\right)} \]
for some slowly varying function $\overline{L}$. In particular, since $\kappa <2(\alpha-1)$ we can choose $\epsilon$ sufficiently small such that this converges to $0$ as $\nin$.
 \end{proof}
\end{lem}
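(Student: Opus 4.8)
The plan is to reduce the statement, by a union bound, to a single‑trap estimate and then bound the second moment of one excursion from the deepest point using the electrical‑network (conductance) representation of the walk. Concretely, I would first intersect with the two high‑probability events $A_6(n)$ (no large trap taller than $h_n^{-\epsilon}$, which holds with probability $1-o(1)$ by a union bound and (\ref{numcrttrp})) and $\{N\le\log n\}$ (which holds w.h.p.\ since, by Proposition \ref{numcritt}, $N$ converges in distribution to a proper law). Since each of the at most $\log n$ large traps in the branch has the same marginal law, this leaves one to show
\[
\log(n)\,\Pb\!\left(\ind_{\{A_6(n)\}}\Et[(\Rc_n^{1,1,1})^2]^{1/2}>n^{\frac{\gamma^{-1}-\kappa}{2(\alpha-1)}}\right)\longrightarrow 0 .
\]

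For the single‑trap bound I would use the conductance model with conductances $\beta^{i+1}$ between consecutive levels, measured from the deepest vertex $\delta^1$, exactly as in the proof of Lemma 9.1 in \cite{arfrgaha}, to obtain $\Et[(\Rc_n^{1,1,1})^2]^{1/2}\le C\sum_{y\in\Tc^*_{\rho_1}}\beta^{d(y,\delta^1)/2}\pi(y)$, with $\pi$ the invariant measure normalised so that $\pi(\delta^1)=1$. Splitting the sum according to the distance $i$ from $\delta^1$ and recalling that the total $\pi$‑mass of the subtraps hanging off the spinal vertex at level $i$ equals $\Lambda_i$, this is at most $C\sum_{i\ge0}\beta^{i/2}\beta^{-i}(1+\Lambda_i)$. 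Taking expectations, using the moment bound (\ref{Lambi}) that $\Er[\Lambda_i]\le C(\beta\mu^{\alpha-1})^iL_2(\mu^i)$, and truncating the sum at $i=h_n^{-\epsilon}$ on $A_6(n)$ gives, after absorbing the slowly varying factor into the exponent, $\Er[\ind_{\{A_6(n)\}}\Et[(\Rc_n^{1,1,1})^2]^{1/2}]\le C\sum_{i=0}^{h_n^{-\epsilon}}(\beta^{1/2}\mu^{\alpha-1-\epsilon})^i$. This is the quantitative analogue of Lemma \ref{maxexc}, and is the step I expect to require the most care to set up cleanly.

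It then remains to run Markov's inequality and carry out the exponent bookkeeping, which I would split into two cases according to whether $\beta^{1/2}\mu^{\alpha-1-\epsilon}\le 1$ or not. In the first case the geometric sum is $O(1)$ and, since $\kappa<\gamma^{-1}$ forces the threshold $n^{(\gamma^{-1}-\kappa)/(2(\alpha-1))}$ to diverge, Markov's inequality finishes immediately. In the second case the sum is of order $(\beta^{1/2}\mu^{\alpha-1-\epsilon})^{h_n^{-\epsilon}}$; inserting $h_n^{-\epsilon}\sim\frac{(1+\epsilon)}{\alpha-1}\frac{\log n}{\log(\mu^{-1})}$ and $\gamma=\log(\mu^{-1})/\log\beta$, Markov's inequality bounds $\Pb(A_{12}(n)^c)$ by a slowly varying function times $n$ raised to the power $\frac{\kappa}{2(\alpha-1)}-1+\frac{\epsilon}{\alpha-1}\bigl(\frac{1}{2\gamma}+2-\alpha+\epsilon\bigr)$. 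The decisive observation is that the leading term $\frac{\kappa}{2(\alpha-1)}-1$ is strictly negative precisely because $\kappa<2(\alpha-1)$ (one of the constraints defining the admissible range of $\kappa$), so one fixes $\kappa$ in its allowed range and then chooses $\epsilon>0$ small enough that the entire exponent stays negative, yielding the claimed convergence to $0$.
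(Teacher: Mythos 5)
Your proposal follows exactly the same argument as the paper's proof: the same reduction via $A_6(n)$ and $N\le\log n$ to a single-trap estimate, the same conductance bound $\Et[(\Rc_n^{1,1,1})^2]^{1/2}\le C\sum_y\beta^{d(y,\delta^1)/2}\pi(y)$ imported from Lemma 9.1 of \cite{arfrgaha}, the same use of (\ref{Lambi}) to truncate the geometric sum at $h_n^{-\epsilon}$, and the same two-case Markov argument with the identical exponent $\frac{\kappa}{2(\alpha-1)}-1+\frac{\epsilon}{\alpha-1}(\frac{1}{2\gamma}+2-\alpha+\epsilon)$. The bookkeeping and the role of the constraint $\kappa<2(\alpha-1)$ match; this is essentially the paper's proof.
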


Write 
\begin{flalign*}
 A_{13}(n)=  \bigcap_{j=1}^{N}\bigcap_{k=1}^{B^{j}} \bigg\{ (1-\tilde{\varepsilon})G^{j,k}\Et[\Rc_n^{j,1,1}] \leq \sum_{l=1}^{G^{j,k}}\Rc^{j,k,l} \leq (1+\tilde{\varepsilon})G^{j,k}\Et[\Rc_n^{j,1,1}]\bigg\} 
\end{flalign*}
to be the event that on each excursion that reaches the deepest point of a large trap, the total excursion time before leaving the trap is approximately the product of the number of excursions and the expected excursion time. 

\begin{lem}\label{lln} 
In IVIE, as $\nin$ we have that $\Pb(A_{13}(n)^c)\rightarrow 0$.
 \begin{proof}
  With high probability we have that no trap is visited more than $C\log(n)$ by (\ref{A5}) and also $N\leq \log(n)$. Any excursion is of length at least $2$ hence $\Et[\Rc_n^{1,1,1}]\geq 2$. Therefore, by Lemma \ref{expsqriv} and Chebyshev's inequality
\begin{flalign*}
  \Pb(A_{13}(n)^c) 
 &\leq o(1) +C\log(n)^2\Pb\Bigg(\Bigg|\sum_{l=1}^{G^{1,1}}\frac{\Rc_n^{1,1,l}}{\Et[\Rc_n^{1,1,1}]G^{1,1}}-1\Bigg|> \tilde{\varepsilon},  G^{1,1} >0,  \Et[(\Rc_n^{1,1,1})^2]<n^{\frac{\gamma^{-1}-\kappa}{\alpha-1}}\Bigg) \\
 & \leq o(1)+\frac{C\log(n)^2n^{\frac{\gamma^{-1}-\kappa}{\alpha-1}}}{\tilde{\varepsilon}^2}\Et\left[\frac{\ind_{\{G^{1,1}>0\}}}{G^{1,1}}\right]. 
\end{flalign*}
It then follows that since $G^{1,1} \sim Geo(p_2(H_1))$ (where from (\ref{p2}) $p_2(H)$ is the probability that a walk reaches the deepest point in the trap of height $H$) and $p_2(H_1)\leq c\beta^{-h_n^\varepsilon}=ca_{n^{1-\varepsilon}}^{-\frac{1}{\gamma}}$
\begin{flalign*}
 \Et\left[\frac{\ind_{\{G^{(1,1,1)}>0\}}}{G^{(1,1,1)}}\right] \; \leq \; \Et\left[-\frac{p_2(H_1)}{1-p_2(H_1)}\log\left(p_2(H_1)\right)\right] \; \leq \; \overline{L}(n)n^{-\frac{1-\varepsilon}{\gamma(\alpha-1)}}
\end{flalign*}
for some slowly varying function $\overline{L}$. In particular, $\Pb(A_{13}(n)^c) \leq o(1)+L_{\tilde{\varepsilon}}(n)n^{\frac{\frac{\varepsilon}{\gamma}-\kappa}{\alpha-1}}$ which converges to zero by the choice of $\kappa> \varepsilon/\gamma$.
 \end{proof}
\end{lem}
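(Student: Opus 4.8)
The plan is to reduce the statement to a quenched law of large numbers for the excursions from the deepest point of a single large trap, and then apply Chebyshev's inequality, feeding in the second moment estimate of Lemma~\ref{expsqriv}. Since $A_{13}(n)$ is an intersection over the pairs $(j,k)$ with $j\leq N$ and $k\leq B^j$, and since with high probability $N\leq\log(n)$ and no trap is entered more than $C\log(n)$ times (so that $B^j\leq W^j\leq C\log(n)$ by~(\ref{A5})), the number of such pairs is at most $C\log(n)^2$ with high probability. A union bound therefore reduces matters, up to an $o(1)$ error, to showing that
\[
\log(n)^2\,\Pb\!\left(\left|\frac{1}{G^{1,1}\,\Et[\Rc_n^{1,1,1}]}\sum_{l=1}^{G^{1,1}}\Rc_n^{1,1,l}-1\right|>\tilde\varepsilon,\ G^{1,1}>0,\ \Et[(\Rc_n^{1,1,1})^2]<n^{\frac{\gamma^{-1}-\kappa}{\alpha-1}}\right)\longrightarrow 0 .
\]

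To estimate this probability I would condition on the trap $\Tc^*_{\rho_1}$ — which fixes $H_1$, $\Et[\Rc_n^{1,1,1}]$ and $\Et[(\Rc_n^{1,1,1})^2]$ — and on $G^{1,1}$. Given this data the $\Rc_n^{1,1,l}$, $l=1,\dots,G^{1,1}$, are i.i.d.\ copies of $\Rc_n^{1,1,1}$, so Chebyshev's inequality bounds the conditional probability by $Var(\Rc_n^{1,1,1})/(\tilde\varepsilon^2 G^{1,1}\Et[\Rc_n^{1,1,1}]^2)$; as every excursion from $\delta$ to itself has length at least $2$ one has $\Et[\Rc_n^{1,1,1}]\geq 2$, so on the event in question this is at most $\Et[(\Rc_n^{1,1,1})^2]/(4\tilde\varepsilon^2 G^{1,1})\leq n^{(\gamma^{-1}-\kappa)/(\alpha-1)}/(4\tilde\varepsilon^2 G^{1,1})$. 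Taking expectations, it remains to control $\Et[\ind_{\{G^{1,1}>0\}}/G^{1,1}]$.

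For this last term I would use that $G^{1,1}\sim Geo(p_2(H_1))$, which gives $\Et[\ind_{\{G^{1,1}>0\}}/G^{1,1}]=-\frac{p_2(H_1)}{1-p_2(H_1)}\log p_2(H_1)$, together with the fact that a large trap has $H_1\geq h_n^\varepsilon$, so by~(\ref{p2}) one has $p_2(H_1)\leq c\beta^{-h_n^\varepsilon}=c\,a_{n^{1-\varepsilon}}^{-1/\gamma}\leq\overline L(n)\,n^{-(1-\varepsilon)/(\gamma(\alpha-1))}$ for a slowly varying $\overline L$; the factor $-\log p_2(H_1)$, which is of order $h_n^\varepsilon$ and hence only logarithmic in $n$, is absorbed into $\overline L$. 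Multiplying the three factors $\log(n)^2$, $n^{(\gamma^{-1}-\kappa)/(\alpha-1)}$ and $n^{-(1-\varepsilon)/(\gamma(\alpha-1))}$ yields a bound of the form $\Pb(A_{13}(n)^c)\leq o(1)+L_{\tilde\varepsilon}(n)\,n^{(\varepsilon/\gamma-\kappa)/(\alpha-1)}$, which tends to $0$ precisely because $\kappa$ was chosen with $\kappa>\varepsilon/\gamma$.

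The one point deserving care is the bound on $\Et[1/G^{1,1}]$: the number of returns to the deepest point is random and could a priori be small, and the argument closes only because a deep trap forces the escape probability $p_2(H_1)$ to be polynomially small in $n$, so that $G^{1,1}$ is typically of order $a_{n^{1-\varepsilon}}^{1/\gamma}$ — large enough, once combined with the constraint $\kappa>\varepsilon/\gamma$, to beat the polynomial growth of the second moment supplied by Lemma~\ref{expsqriv}. Everything else is the routine second-moment and union-bound bookkeeping already prepared by the events $A_5$ and $A_{12}$ (and the estimates $N\leq\log(n)$, $W^j\leq C\log(n)$ with high probability).
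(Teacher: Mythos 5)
Your proposal is correct and is essentially the same argument as the paper's: a union bound over the at most $C\log(n)^2$ pairs $(j,k)$ (using $A_5$ and $N\leq\log(n)$), Chebyshev conditional on $(\Tc^*_{\rho_1},G^{1,1})$ with $Var(\Rc_n^{1,1,1})\leq\Et[(\Rc_n^{1,1,1})^2]$ controlled by Lemma~\ref{expsqriv} and $\Et[\Rc_n^{1,1,1}]^2\geq 4$, then $\Et[\ind_{\{G^{1,1}>0\}}/G^{1,1}]=-\tfrac{p_2(H_1)}{1-p_2(H_1)}\log p_2(H_1)\leq\overline L(n)\,n^{-(1-\varepsilon)/(\gamma(\alpha-1))}$, and finally the exponent bookkeeping closing via $\kappa>\varepsilon/\gamma$. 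One small sharpening of your phrasing: since $H_1\geq h_n^\varepsilon$ is random, the logarithmic factor is not exactly $h_n^\varepsilon$ but rather $-\log p_2(H_1)\sim H_1\log\beta$; the reason the bound still holds is that $x\mapsto -x\log x/(1-x)$ is increasing near $0$, so the worst case is $H_1=h_n^\varepsilon$, which is what produces the slowly varying factor absorbed into $\overline L$.
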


Lemma \ref{itriv} illustrates that the expected time spent on an excursion from the deepest point of a trap of height at least $h_n^\varepsilon$ doesn't differ too greatly from the expected excursion time in an infinite version of the trap. Let $\Rc^j_\infty$ be an excursion time from $\delta^j$ to itself in an extension of $\Tc^*_{\rho_j}$ to an infinite trap constructed according to the algorithm at the beginning of the section where $T_{H_j}$ is replaced by $\Tc^*_{\rho_j}$. Write \[A_{14}(n)=\bigcap_{j=1}^{N}\left\{\Et[\Rc_\infty^j]-\Et[\Rc^{j,k,l}] <\tilde{\varepsilon}\right\}.\]

\begin{lem}\label{itriv}
In IVIE, as $\nin$ we have that $\Pb(A_{14}(n)^c)\rightarrow 0$.
 \begin{proof}
 A straightforward computation similar to that in Proposition 9.1 of \cite{arfrgaha} yields that for some constant $c$ and $n$ sufficiently large
\[0\leq\Et[\Rc_\infty^j]-\Et[\Rc^{j,k,l}] \leq c\beta^{- h_n^\varepsilon/2}\sum_{k=0}^{ h_n^\varepsilon/2} \beta^{-k}(1+\Lambda_k)+2\sum_{k= h_n^\varepsilon/2+1}^\infty \beta^{-k}(1+\Lambda_k) \]
for all $j=1,...,N$ where $\Lambda_k$ are the weights of the extension of $\Tc^*_{\rho_j}$. Recall that $N\leq \log(n)$ with high probability, therefore by (\ref{Lambi}) and Markov's inequality
 \begin{flalign*}
   \Pr(A_{14}(n)^c) & \leq \frac{C\log(n)}{\tilde{\varepsilon}}\Er[ \Et[\Rc_\infty^j]-\Et[\Rc_n^{j,1,1}]]  \\
  & \leq C_{\tilde{\varepsilon}}\log(n)\left(\beta^{-\frac{h_n^\varepsilon}{2}}\sum_{k=0}^\infty \left(\beta^{-k}+\mu^{k(\alpha-1-\tilde{\varepsilon})}\right)+\sum_{k=h_n^\varepsilon/2+1}^\infty \mu^{k(\alpha-1-\tilde{\varepsilon})}\right) \\
   & \leq C_{\tilde{\varepsilon}}\log(n)\left(\beta^{-\frac{h_n^\varepsilon}{2}}+\mu^{h_n^\varepsilon\frac{(\alpha-1-\tilde{\varepsilon})}{2}}\right).
 \end{flalign*}
 Since we can choose $\tilde{\varepsilon}<\alpha-1$ we indeed have the desired result.
 \end{proof}
\end{lem}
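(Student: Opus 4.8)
The plan is to reduce the uniform-over-$j$ statement to a single-trap first-moment bound, and to derive that bound from the conductance (weighted-graph) representation of the expected excursion time together with the weight estimate (\ref{Lambi}). The heart of the argument is the quenched inequality that for every $j\le N$ and $n$ large,
\[
0\;\le\;\Et[\Rc_\infty^j]-\Et[\Rc^{j,1,1}]\;\le\;c\,\beta^{-h_n^\varepsilon/2}\sum_{k=0}^{h_n^\varepsilon/2}\beta^{-k}(1+\Lambda_k)\;+\;2\sum_{k>h_n^\varepsilon/2}\beta^{-k}(1+\Lambda_k),
\]
where $\Lambda_k$ is the total weight of the subtraps attached to the $k$-th spinal vertex in the infinite extension of $\Tc^*_{\rho_j}$. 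To obtain this I would express both expectations through the representation used for (\ref{Rinfty}): the bottom of $\Tc^*_{\rho_j}$ coincides with the bottom of its extension, so the difference splits into the contribution of the spinal levels present only in the extension (the tail sum above), and a correction on the lowest $h_n^\varepsilon/2$ levels which is damped by $\beta^{-h_n^\varepsilon/2}$, since a level-$k$ quantity is only affected by the structure above it and $H_j\ge h_n^\varepsilon$ forces the relevant first-passage and escape probabilities to differ from their infinite-trap limits by $O(\beta^{-h_n^\varepsilon})$. This is essentially the computation in the proof of Proposition~9.1 of \cite{arfrgaha}, which I would adapt.

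Given the quenched bound the rest is routine. By Proposition~\ref{numcritt}, $N$ converges in distribution and is therefore tight, so $N\le\log(n)$ with probability $1-o(1)$; a union bound over the at most $\log(n)$ large traps (whose marginal laws do not depend on $N$) together with Markov's inequality gives
\[
\Pb(A_{14}(n)^c)\;\le\;o(1)+\frac{\log(n)}{\tilde{\varepsilon}}\,\Er\big[\Et[\Rc_\infty^1]-\Et[\Rc^{1,1,1}]\big].
\]
Taking expectations in the quenched bound and using (\ref{Lambi}), $\Er[\Lambda_k]\le C(\beta\mu^{\alpha-1})^kL_2(\mu^k)$, so $\Er[\beta^{-k}(1+\Lambda_k)]\le\beta^{-k}+C\mu^{k(\alpha-1)}L_2(\mu^k)\le\beta^{-k}+\mu^{k(\alpha-1-\tilde{\varepsilon})}$ for $k$ large, and since $\mu<1$ both series converge. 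One is then left with a bound of the form $C_{\tilde{\varepsilon}}\big(\beta^{-h_n^\varepsilon/2}+\mu^{h_n^\varepsilon(\alpha-1-\tilde{\varepsilon})/2}\big)$; as $h_n^\varepsilon\to\infty$ and we may take $\tilde{\varepsilon}<\alpha-1$, this is $o(1/\log(n))$ and the lemma follows.

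The step I expect to be most delicate is the quenched estimate of the first paragraph: it requires careful bookkeeping of which subtrap heights are conditioned (and how severely) in the Geiger--Kersting-type construction, and of the precise order of the error in the spinal transition probabilities of a trap of height at least $h_n^\varepsilon$. Everything after that — tightness of $N$, the union bound, Markov's inequality, and the summation of $\Er[\Lambda_k]$ via (\ref{Lambi}) — is standard, so I would rely on the corresponding computation in \cite{arfrgaha} for the first step rather than reprove it.
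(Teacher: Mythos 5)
Your proposal follows the paper's proof essentially step for step: the same quenched bound on $\Et[\Rc_\infty^j]-\Et[\Rc^{j,1,1}]$ imported from Proposition~9.1 of \cite{arfrgaha}, the same reduction via $N\le\log(n)$, a union bound and Markov, and the same use of (\ref{Lambi}) to sum $\Er[\Lambda_k]$ with $\tilde{\varepsilon}<\alpha-1$. Your added commentary on where the quenched estimate comes from is consistent with the cited computation, so the argument is correct and matches the paper's.
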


The height of the branch and the total number of traps in the branch have a strong relationship. Lemma \ref{hgtnum} shows the exact form of this relationship in the limit as $\nin$. Recall that $\overline{H}_n^{\scriptscriptstyle{K}}=h_n^0+K$ where $h_n^0$ is given in Definition \ref{iviedefn} and $\Pr^{\scriptscriptstyle{K}}$ denotes the law $\Pr$ conditioned on the height of the branch equalling $\overline{H}_n^{\scriptscriptstyle{K}}$. Write $b_n^{\scriptscriptstyle{K}}=\mu^{-\overline{H}_n^{\scriptscriptstyle{K}}}/c_\mu$ and recall from (\ref{cmu}) that $c_\mu$ is the positive constant such that $\Pr(\Hc(\Tc)\geq n) \sim c_\mu \mu^n$ as $\nin$. 

\begin{lem}\label{hgtnum}
 In IVIE, under $\Pr^{\scriptscriptstyle{K}}$ we have that the sequence of random variables $(\xi^*-1)/b_n^{\scriptscriptstyle{K}}$ converge in distribution to some random variable $\overline{\xi}$ satisfying \[\Pr(\overline{\xi}\geq t)=\frac{\alpha-1}{\Gamma(2-\alpha)(1-\mu^{\alpha-1})}\int_t^\infty y^{-\alpha}(e^{-\mu y}-e^{-y})\d y.\]
 \begin{proof}
  We prove this by showing the convergence of 
  \begin{equation}\label{baynumhgt}
  \Pr\left(\xi^*-1\geq tb_n^{\scriptscriptstyle{K}}|\overline{H}=\overline{H}_n^{\scriptscriptstyle{K}}\right)=\Pr\left(\overline{H}=\overline{H}_n^{\scriptscriptstyle{K}}|\xi^*-1\geq tb_n^{\scriptscriptstyle{K}}\right)\frac{\Pr(\xi^*-1\geq tb_n^{\scriptscriptstyle{K}})}{\Pr(\overline{H}=\overline{H}_n^{\scriptscriptstyle{K}})} 
  \end{equation}
for all $t>0$. To begin we consider $\Pr\left(\overline{H}=\overline{H}_n^{\scriptscriptstyle{K}}|\xi^*-1\geq tb_n^{\scriptscriptstyle{K}}\right)$. 

The heights of individual traps are independent under this conditioning hence
\begin{flalign*}
 & \Pr(\overline{H}\leq\overline{H}_n^{\scriptscriptstyle{K}}|\xi^*-1\geq tb_n^{\scriptscriptstyle{K}}) = \Er[\Pr(\Hc(\Tc)\leq\overline{H}_n^{\scriptscriptstyle{K}})^{\xi^*-1}|\xi^*-1\geq tb_n^{\scriptscriptstyle{K}}].
\end{flalign*}
We know the asymptotic form of $\Pr(\Hc(\Tc)\leq\overline{H}_n^{\scriptscriptstyle{K}})$ from (\ref{cmu}) thus we need to consider the distribution of $\xi^*-1$ conditioned on $\xi^*-1\geq tb_n^{\scriptscriptstyle{K}}$. By the tail formula for $\xi^*-1$ following Definition \ref{infall} we have that for $r \geq 1$ as $\nin$
\begin{flalign*}
 \Pr\left(\frac{\xi^*-1}{tb_n^{\scriptscriptstyle{K}}} \geq r\Big| \xi^*-1 \geq tb_n^{\scriptscriptstyle{K}}\right) \; = \; \frac{\Pr(\xi^*-1 \geq rtb_n^{\scriptscriptstyle{K}})}{\Pr(\xi^*-1 \geq tb_n^{\scriptscriptstyle{K}})} \; \sim \; r^{-(\alpha-1)}.
\end{flalign*}

We therefore have that, conditional on $\xi^*-1\geq tb_n^{\scriptscriptstyle{K}}$, $(\xi^*-1)/tb_n^{\scriptscriptstyle{K}}$ converges in distribution to some variable $Y$ with tail $\Pr(Y\geq r)=r^{-(\alpha-1)} \land 1$.  Using the form of $b_n^{\scriptscriptstyle{K}}$ we then have that  \[\Pr(\Hc(\Tc)\leq \overline{H}_n^{\scriptscriptstyle{K}})^{tb_n^{\scriptscriptstyle{K}}} =e^{-t\mu(1+o(1))}.\]
It therefore follows that \[ \limn \Pr(\overline{H}\leq\overline{H}_n^{\scriptscriptstyle{K}}|\xi^*-1\geq tb_n^{\scriptscriptstyle{K}}) = \Er[e^{-t\mu Y}]. \] Repeating with $\overline{H}_n^{\scriptscriptstyle{K}}$ replaced by $\overline{H}_n^{\scriptscriptstyle{K}}-1$ we have that 
$\Pr(\overline{H}=\overline{H}_n^{\scriptscriptstyle{K}}|\xi^*-1\geq tb_n^{\scriptscriptstyle{K}}) \rightarrow \Er[e^{-t\mu Y}] - \Er[e^{-tY}]$ as $\nin$. For $\theta>0$
\begin{flalign*}
\Er[e^{-\theta tY}] =  (\alpha-1)t^{\alpha-1}\int_t^\infty e^{-\theta y}y^{-\alpha}\d y 
\end{flalign*}
therefore
\begin{equation}\label{hgteqcon}
\limn \Pr(\overline{H}=\overline{H}_n^{\scriptscriptstyle{K}}|\xi^*-1\geq tb_n^{\scriptscriptstyle{K}}) = (\alpha-1)t^{\alpha-1}\int_t^\infty y^{-\alpha}(e^{-\mu y}-e^{-y})\d y.  
\end{equation}

By (\ref{numcrttrp}) we have that as $\nin$
\begin{flalign*}
 \frac{\Pr(\xi^*-1\geq tb_n^{\scriptscriptstyle{K}})}{\Pr(\overline{H}=\overline{H}_n^{\scriptscriptstyle{K}})} \; \sim \; \frac{\Pr(\xi^*-1\geq tb_n^{\scriptscriptstyle{K}})}{\Gamma(2-\alpha)(1-\mu^{\alpha-1})c_\mu^{\alpha-1}\Pr(\xi^*-1\geq c_\mu b_n^{\scriptscriptstyle{K}})} \; \sim \; \frac{t^{-(\alpha-1)}}{\Gamma(2-\alpha)(1-\mu^{\alpha-1})}. 
\end{flalign*}
Combining this with (\ref{hgteqcon}) in (\ref{baynumhgt}) we have that \[ \limn\Pr(\xi^*-1\geq tb_n^{\scriptscriptstyle{K}}|\overline{H}=\overline{H}_n^{\scriptscriptstyle{K}})=\frac{\alpha-1}{\Gamma(2-\alpha)(1-\mu^{\alpha-1})}\int_t^\infty y^{-\alpha}(e^{-\mu y}-e^{-y})\d y.\]
\end{proof}
\end{lem}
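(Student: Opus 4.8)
The plan is to invert the conditioning via Bayes' formula: for each fixed $t>0$ write
\[\Pr^{\scriptscriptstyle{K}}\left(\xi^*-1\geq tb_n^{\scriptscriptstyle{K}}\right)=\Pr\left(\overline{H}=\overline{H}_n^{\scriptscriptstyle{K}}\,\middle|\,\xi^*-1\geq tb_n^{\scriptscriptstyle{K}}\right)\cdot\frac{\Pr\left(\xi^*-1\geq tb_n^{\scriptscriptstyle{K}}\right)}{\Pr\left(\overline{H}=\overline{H}_n^{\scriptscriptstyle{K}}\right)},\]
and compute the limit of the two factors on the right separately. For the first factor I would condition further on the number of buds $\xi^*-1$: given $\xi^*-1$ the trap heights are i.i.d.\ copies of $\Hc(\Tc)$, so $\Pr(\overline{H}\leq m\mid\xi^*-1)=\Pr(\Hc(\Tc)\leq m)^{\xi^*-1}$. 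Since $b_n^{\scriptscriptstyle{K}}=\mu^{-\overline{H}_n^{\scriptscriptstyle{K}}}/c_\mu\to\infty$ and, by \eqref{sm} and \eqref{cmu}, $\Pr(\Hc(\Tc)\leq\overline{H}_n^{\scriptscriptstyle{K}})=1-c_\mu\mu^{\overline{H}_n^{\scriptscriptstyle{K}}+1}(1+o(1))$, one gets $\Pr(\Hc(\Tc)\leq\overline{H}_n^{\scriptscriptstyle{K}})^{rtb_n^{\scriptscriptstyle{K}}}\to e^{-\mu tr}$ uniformly for $r$ in compact sets, and the same statement with $\overline{H}_n^{\scriptscriptstyle{K}}$ replaced by $\overline{H}_n^{\scriptscriptstyle{K}}-1$ gives the limit $e^{-tr}$. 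On the other hand, by the regular variation of the tail of $\xi^*$ with index $-(\alpha-1)$ recorded in \eqref{xistar}, the conditional law of $(\xi^*-1)/(tb_n^{\scriptscriptstyle{K}})$ given $\xi^*-1\geq tb_n^{\scriptscriptstyle{K}}$ converges weakly to a Pareto variable $Y$ with $\Pr(Y\geq r)=r^{-(\alpha-1)}\wedge 1$. Pairing the uniformly convergent, bounded, continuous functionals $r\mapsto\Pr(\Hc(\Tc)\leq\overline{H}_n^{\scriptscriptstyle{K}})^{rtb_n^{\scriptscriptstyle{K}}}$ against this weak convergence, and subtracting the two versions, yields $\Pr(\overline{H}=\overline{H}_n^{\scriptscriptstyle{K}}\mid\xi^*-1\geq tb_n^{\scriptscriptstyle{K}})\to\Er[e^{-\mu tY}]-\Er[e^{-tY}]$; the elementary identity $\Er[e^{-\theta tY}]=(\alpha-1)t^{\alpha-1}\int_t^\infty e^{-\theta y}y^{-\alpha}\,\d y$ then converts this into $(\alpha-1)t^{\alpha-1}\int_t^\infty y^{-\alpha}(e^{-\mu y}-e^{-y})\,\d y$.

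For the second factor I would use \eqref{numcrttrp}. Writing $\Pr(\overline{H}=\overline{H}_n^{\scriptscriptstyle{K}})=\Pr(\overline{H}\geq\overline{H}_n^{\scriptscriptstyle{K}})-\Pr(\overline{H}\geq\overline{H}_n^{\scriptscriptstyle{K}}+1)$, using the asymptotic $\Pr(\Hc(\Tc^{*-}_\rho)>m)\sim\Gamma(2-\alpha)c_\mu^{\alpha-1}\Pr(\xi^*\geq\mu^{-m})$, the identity $\mu^{-\overline{H}_n^{\scriptscriptstyle{K}}}=c_\mu b_n^{\scriptscriptstyle{K}}$, and regular variation of $\Pr(\xi^*\geq\cdot)$ (in particular $\Pr(\xi^*\geq\mu^{-1}x)\sim\mu^{\alpha-1}\Pr(\xi^*\geq x)$), gives $\Pr(\overline{H}=\overline{H}_n^{\scriptscriptstyle{K}})\sim\Gamma(2-\alpha)c_\mu^{\alpha-1}(1-\mu^{\alpha-1})\Pr(\xi^*\geq c_\mu b_n^{\scriptscriptstyle{K}})$, while $\Pr(\xi^*-1\geq tb_n^{\scriptscriptstyle{K}})\sim t^{-(\alpha-1)}c_\mu^{\alpha-1}\Pr(\xi^*\geq c_\mu b_n^{\scriptscriptstyle{K}})$. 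Hence the ratio tends to $t^{-(\alpha-1)}/(\Gamma(2-\alpha)(1-\mu^{\alpha-1}))$, and multiplying the two limits produces exactly the claimed expression for $\Pr(\overline{\xi}\geq t)$. Since that expression is continuous and strictly decreasing in $t$ (its $t$-derivative is $-t^{-\alpha}(e^{-\mu t}-e^{-t})<0$ because $\mu<1$), with $\Pr(\overline{\xi}\geq t)\to 0$ as $t\to\infty$ and $\Pr(\overline{\xi}\geq t)\to 1$ as $t\to 0^+$ — a Frullani-integral check using $\Gamma(2-\alpha)=(1-\alpha)\Gamma(1-\alpha)$ — pointwise convergence of the tails upgrades to convergence in distribution.

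The step I expect to be the main obstacle is the limit interchange in the first factor: the functional $\Pr(\Hc(\Tc)\leq\overline{H}_n^{\scriptscriptstyle{K}})^{\xi^*-1}$ depends on $n$ both through the exponent and through $\overline{H}_n^{\scriptscriptstyle{K}}$, so one cannot simply quote the continuous mapping theorem. The resolution is the observation above that, after rescaling $r=(\xi^*-1)/(tb_n^{\scriptscriptstyle{K}})$, it becomes a uniformly convergent sequence of bounded continuous functions of $r$ tested against a weakly convergent sequence of laws, which suffices; one must also check that the $o(1)$ error in \eqref{sm} does not spoil the exponential limit, and that the conditioning event $\{\xi^*-1\geq tb_n^{\scriptscriptstyle{K}}\}$ is handled consistently — its probability is, up to the constants already tracked, of the same order as $\Pr(\overline{H}=\overline{H}_n^{\scriptscriptstyle{K}})$. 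The remaining ingredients, namely the Laplace transform of $Y$ and the regularly-varying tail estimates, are routine.
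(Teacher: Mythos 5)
Your proposal is correct and follows essentially the same route as the paper's proof: the Bayes' inversion, the identity $\Pr(\overline{H}\leq m\mid\xi^*-1)=\Pr(\Hc(\Tc)\leq m)^{\xi^*-1}$, the Pareto limit for $(\xi^*-1)/(tb_n^{\scriptscriptstyle{K}})$ under the conditional law, the Laplace-transform computation for $Y$, and the use of (\ref{numcrttrp}) together with regular variation to evaluate the second factor are all exactly the paper's steps. Your added remarks on the uniform-convergence/weak-convergence pairing needed to justify the limit interchange, and the closing check that the limiting tail is a genuine distribution function, are correct elaborations of points the paper leaves implicit rather than a different argument.
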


Define  \[Z_\infty^n=\frac{1}{1-\beta^{-1}}\sum_{j=1}^{N}\beta^{H_j-\overline{H}}\Et[\Rc_\infty^j]\sum_{k=1}^{B_\infty^j}e_{j,k} \]
whose distribution depends on $n$ only through $N$ and $(H_j-\overline{H})_{j= 1}^{N}$. Recalling the definition of $\zeta^{(n)}$ in (\ref{zetan}), since $e_{j,k}$ are the exponential random variables defining $G^{j,k}$, $B_\infty^j \sim Bin(B^j,p_\infty/p_1(H_1))$ and the random variable $N$ is the same in both equations, we have that $\zeta^{(n)}$ and $Z_\infty^n$ are defined on the same probability space.  

\begin{prp}\label{conver}
 In IVIE, for any $K \in \Zb$ and $\delta>0$ \[\limn\Pb^{\scriptscriptstyle{K}}\left(|\zeta^{(n)}-Z_\infty^n|>\delta\right)=0.\]
 \begin{proof}
 Using the bounds on $A_{11}, A_{13}$ and $A_{14}$ from (\ref{bnm}) and Lemmas \ref{lln} and \ref{itriv} respectively there exists some function $g:\Rb\rightarrow \Rb$ such that $\lim_{\tilde{\varepsilon} \rightarrow 0^+}g(\tilde{\varepsilon})=0$ and for sufficiently large $n$ (independently of $K$)
\begin{flalign*}
\Pb^{\scriptscriptstyle{K}}\left(|\zeta^{(n)}-Z_\infty^n|>\delta\right) \leq o(1) + 2\Pb^{\scriptscriptstyle{K}}\left(g(\tilde{\varepsilon})Z_\infty^n>\delta\right). 
\end{flalign*}

It therefore suffices to show that $(Z_\infty^n)_{n\geq 0}$ are tight under $\Pb^K$. Write \[S_j=\frac{1}{1-\beta^{-1}}\Et[\Rc_\infty^j]\sum_{k=1}^{B_\infty^{j}}e_{j,k}.\]
$\Et[\Rc^j_\infty]$, $B_\infty^j$ and $e_{j,k}$ are independent, don't depend on $K$ and have finite expected value (by Lemma \ref{maxexc}, the geometric distribution of $W^j$ and exponential distribution of $e^{j,k}$) therefore $\Eb^{\scriptscriptstyle{K}}[S_j]\leq C<\infty$ uniformly over $K$. We can then write \[Z_\infty^n=\sum_{j=1}^{N}\beta^{H_j-\overline{H}_n^{\scriptscriptstyle{K}}}S_j.\]

Clearly, $N$ is dominated by the total number of traps in the branch thus by Lemma \ref{hgtnum} with high probability $Z_\infty^n$ can be stochastically dominated by \[\sum_{j=1}^{b_n^{\scriptscriptstyle{K}}(\overline{\xi}+1)}\beta^{H_j-\overline{H}_n^{\scriptscriptstyle{K}}}S_j.\]

Conditional on trap $j$ being the first in the branch which attains the maximum height we have that the heights of the remaining traps are independent and either at most the height of the largest or strictly shorter. Furthermore, the distribution of $S_j$ is independent of the height of the trap. Write $\Phi=\inf\{r\geq 1:H_r=\overline{H}_n^{\scriptscriptstyle{K}}\}$ then we have that 
 
\begin{flalign*}
 \Pb^{\scriptscriptstyle{K}}(Z_\infty^n\geq t)  &  \leq \Pb^{\scriptscriptstyle{K}}\left(\sum_{j=1}^{b_n^{\scriptscriptstyle{K}}(\overline{\xi}+1)}\beta^{H_j-\overline{H}_n^{\scriptscriptstyle{K}}}S_j \geq t\Big|\Phi=1\right) +o(1) \\
 & \leq \Pb(S_1\geq \log(t))+o(1) + \Pb\left(\sum_{j=2}^{b_n^{\scriptscriptstyle{K}}(\overline{\xi}+1)}\beta^{H_j-\overline{H}_n^{\scriptscriptstyle{K}}}S_j \geq t-\log(t)\Big| H_j\leq \overline{H}_n^{\scriptscriptstyle{K}} \; \forall j\geq 2 \right). 
\end{flalign*}

The distributions of $S_1, \overline{\xi}$ are independent of $n$ therefore $\lim_{t\rightarrow \infty}\Pb(S_1\geq \log(t))= 0$ and $\lim_{t\rightarrow \infty}\Pb(\overline{\xi}+1\geq \log(t))=0$ thus we can consider only the events in which $\overline{\xi}+1\leq \log(t)$. By Markov's inequality and independence we have that 

 \begin{flalign*}
 \Pb\left(\sum_{j=2}^{b_n^{\scriptscriptstyle{K}}(\overline{\xi}+1)}\beta^{H_j-\overline{H}_n^{\scriptscriptstyle{K}}}S_j \geq t-\log(t)\Big|\bigcap_{j\geq2} H_j\leq \overline{H}_n^{\scriptscriptstyle{K}} \right)& \leq \frac{b_n^{\scriptscriptstyle{K}}\log(t)\Eb[S_1]\Eb[\beta^{H_1}|H_1\leq \overline{H}_n^{\scriptscriptstyle{K}}]}{\beta^{\overline{H}_n^{\scriptscriptstyle{K}}}(t-\log(t))}+o(1).
  \end{flalign*}
  
  We have that $\Pb(H_1=l|H_1\leq \overline{H}_n^{\scriptscriptstyle{K}}) \leq \Pb(H_1\geq l|H_1\leq \overline{H}_n^{\scriptscriptstyle{K}}) \leq \Pb(H_1\geq l) \leq C\mu^l$ for some constant $C$ therefore the result follows from
  \begin{flalign*}
  \Eb[\beta^{H_1}|H_1\leq \overline{H}_n^{\scriptscriptstyle{K}}] \; = \; \sum_{l=0}^{\overline{H}_n^{\scriptscriptstyle{K}}}\beta^l \Pb(H_1=l|H_1\leq \overline{H}_n^{\scriptscriptstyle{K}}) \; \leq \; C (\beta\mu)^{\overline{H}_n^{\scriptscriptstyle{K}}}.
 \end{flalign*}
 \end{proof}
\end{prp}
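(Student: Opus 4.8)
The plan is to reduce the statement to a single tightness estimate for the family $(Z_\infty^n)_n$. On the intersection of the high‑probability events $A_9(n),\dots,A_{14}(n)$ constructed above, the one‑sided comparisons recorded there let one replace $\zeta^{(n)}$ by $Z_\infty^n$ up to a purely \emph{multiplicative} error that vanishes as the slack parameter $\tilde{\varepsilon}$ tends to $0$: quantitatively, one produces a function $g$ with $g(\tilde{\varepsilon})\to0$ as $\tilde{\varepsilon}\to0^+$ such that, outside an event of $\Pb^{\scriptscriptstyle{K}}$‑probability $o(1)$ uniform in $K$, $|\zeta^{(n)}-Z_\infty^n|\le g(\tilde{\varepsilon})Z_\infty^n$. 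Granting this, the probability in question is at most $o(1)+\Pb^{\scriptscriptstyle{K}}(g(\tilde{\varepsilon})Z_\infty^n>\delta)$, which can be made arbitrarily small by first choosing $\tilde{\varepsilon}$ small (using tightness of $(Z_\infty^n)_n$) and then sending $\nin$.

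To obtain the multiplicative comparison I would chain the following, each valid on its corresponding good event: on $A_{13}(n)$ the inner sum $\sum_{l=1}^{G^{j,k}}\Rc^{j,k,l}$ lies within a factor $1\pm\tilde{\varepsilon}$ of $G^{j,k}\Et[\Rc_n^{j,1,1}]$; on $A_{14}(n)$ one has $0\le\Et[\Rc_\infty^j]-\Et[\Rc_n^{j,1,1}]\le\tilde{\varepsilon}$, and since $\Et[\Rc_\infty^j]\le\Eb[\Rc_\infty]<\infty$ by Lemma \ref{maxexc} this is again a multiplicative error; on $A_{10}(n)$ and $A_9(n)$, $G^{j,k}$ is within $1\pm\tilde{\varepsilon}$ of $\Et[G^{j,1}]e_{j,k}$ and $\Et[G^{j,1}]$ is within $1+\tilde{\varepsilon}$ of $\beta^{H_j}/(1-\beta^{-1})$; and on $A_{11}(n)$ the successful‑excursion counts agree, $B^j=B_\infty^j$. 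Multiplying through by $\beta^{-\overline{H}}$ and summing over $j\le N$ and $k\le B^j=B_\infty^j$ converts $\zeta^{(n)}$ into $\tfrac{1}{1-\beta^{-1}}\sum_{j=1}^{N}\beta^{H_j-\overline{H}}\Et[\Rc_\infty^j]\sum_{k=1}^{B_\infty^j}e_{j,k}=Z_\infty^n$, with accumulated error $\le g(\tilde{\varepsilon})Z_\infty^n$; the $o(1)$ failure probabilities of the $A_i(n)$ come from (\ref{bnm}) and Lemmas \ref{lln} and \ref{itriv}.

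For the tightness of $Z_\infty^n$ under $\Pb^{\scriptscriptstyle{K}}$ I would write $Z_\infty^n=\sum_{j=1}^{N}\beta^{H_j-\overline{H}_n^{\scriptscriptstyle{K}}}S_j$ with $S_j:=\tfrac{1}{1-\beta^{-1}}\Et[\Rc_\infty^j]\sum_{k=1}^{B_\infty^j}e_{j,k}$, noting that $\Et[\Rc_\infty^j]$, $B_\infty^j$ and the $e_{j,k}$ are independent, have $K$‑free laws, and possess finite mean (Lemma \ref{maxexc}, the geometric marginal of $W^j$ from Lemma \ref{numexc}, the exponential law of $e_{j,k}$), so $\Eb^{\scriptscriptstyle{K}}[S_j]\le C$ uniformly in $K$ and $n$. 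Since $N$ is dominated by the total number of traps in the branch, Lemma \ref{hgtnum} lets me replace the upper limit $N$ by $b_n^{\scriptscriptstyle{K}}(\overline{\xi}+1)$ up to probability $o(1)$, where $\overline{\xi}$ has a fixed law; I would then condition on $\Phi:=\inf\{r\ge1:H_r=\overline{H}_n^{\scriptscriptstyle{K}}\}$, peel off the single summand $S_\Phi$ (controlled since $\Eb[\Rc_\infty]<\infty$), restrict to $\{\overline{\xi}+1\le\log t\}$, and bound the remaining sum over traps of height $\le\overline{H}_n^{\scriptscriptstyle{K}}$ by Markov's inequality, using $\Eb[\beta^{H_1}\mid H_1\le\overline{H}_n^{\scriptscriptstyle{K}}]\le C(\beta\mu)^{\overline{H}_n^{\scriptscriptstyle{K}}}$ (from $\Pb(H_1\ge l)\le C\mu^l$) together with $b_n^{\scriptscriptstyle{K}}=\mu^{-\overline{H}_n^{\scriptscriptstyle{K}}}/c_\mu$. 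The exact cancellation $b_n^{\scriptscriptstyle{K}}\,\beta^{-\overline{H}_n^{\scriptscriptstyle{K}}}(\beta\mu)^{\overline{H}_n^{\scriptscriptstyle{K}}}=c_\mu^{-1}$, free of both $n$ and $K$, makes the tail bound uniform, yielding tightness.

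The main obstacle is controlling $Z_\infty^n$ despite the number of traps in the branch being both large — of order $b_n^{\scriptscriptstyle{K}}\to\infty$ under the conditioning — and, through the conditioning on the branch height, strongly dependent on the individual heights $H_j$ (Proposition \ref{numcritt}, Lemma \ref{hgtnum}); one cannot treat $Z_\infty^n$ as a short sum of independent terms and take expectations naively. The device that circumvents this is the conditioning on the index $\Phi$ of the first trap attaining the maximal height, which isolates the single large contribution $S_\Phi$ and restores conditional independence of the remaining weakly‑ or strictly‑shorter traps, whose $\asymp b_n^{\scriptscriptstyle{K}}$ contributions are then damped by $\beta^{H_j-\overline{H}_n^{\scriptscriptstyle{K}}}$ and, after Markov, balanced by $b_n^{\scriptscriptstyle{K}}\propto\mu^{-\overline{H}_n^{\scriptscriptstyle{K}}}$. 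A secondary but pervasive point is keeping every estimate uniform in $K$, which is needed because $K$ ranges over all of $\Zb$ in the subsequent arguments; this works only because the laws of $S_j$, $\overline{\xi}$ and $W^j$ do not depend on $K$.
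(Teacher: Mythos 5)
Your proposal follows essentially the same route as the paper: reduce to a tightness bound for $(Z_\infty^n)_n$ under $\Pb^{\scriptscriptstyle K}$ via the multiplicative-error comparison on the good events $A_9,\dots,A_{14}$, then establish tightness by dominating $N$ via Lemma \ref{hgtnum}, conditioning on the first maximal-height trap $\Phi$, peeling off $S_\Phi$, and applying Markov's inequality using the cancellation $b_n^{\scriptscriptstyle K}\beta^{-\overline{H}_n^{\scriptscriptstyle K}}(\beta\mu)^{\overline{H}_n^{\scriptscriptstyle K}}=c_\mu^{-1}$. One small slip in your chaining: you claim the additive error from $A_{14}(n)$ becomes multiplicative because $\Et[\Rc_\infty^j]\le\Eb[\Rc_\infty]$, but a quenched expectation is not bounded by its annealed mean; the correct reason is the deterministic lower bound $\Et[\Rc_n^{j,1,1}]\ge 2$ (every excursion has length at least $2$), which is exactly the fact used in Lemma \ref{lln}. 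With that correction the argument matches the paper's.
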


The next proposition shows that, under $\Pb^{\scriptscriptstyle{K}}$, we have that $\zeta^{(n)}$ converge in distribution along certain subsequences. 

\begin{prp}\label{nkconv}
 In IVIE, under $\Pb^{\scriptscriptstyle{K}}$ we have that $Z_\infty^{n_k}$ converges in distribution (as $k \rightarrow \infty$) to some random variable $Z_\infty$.
  \begin{proof}
We begin by showing that it suffices to replace $N$ with $\xi^*-1$ (i.e. the total number of traps in the branch). This will simplify matters by removing the condition that the traps we consider are of at least some height which varies with $n$ and also allows us to use $\xi^*-1$ under $\Pr^{\scriptscriptstyle{K}}$ which we understand by Lemma \ref{hgtnum}. Fix $\tilde{\varepsilon}>0$ and let $H_j$ be ordered such that $H_j\geq H_{j+1}$ for all $j$ then we want to show that \[\Pb^{\scriptscriptstyle{K}}\left(\sum_{j=N+1}^{\xi^*-1}\beta^{H_j-\overline{H}_n^{\scriptscriptstyle{K}}}S_j>\tilde{\varepsilon} \right) \rightarrow 0\]
  as $\nin$. By Lemma \ref{hgtnum}, for any $\delta>0$, we have that $\Pb^{\scriptscriptstyle{K}}(\xi^*-1\geq a_{n^{1+\delta}})\rightarrow 0$ as $\nin$. We therefore have that 
  \begin{flalign*}
   \Pb^{\scriptscriptstyle{K}}\left(\sum_{j=N+1}^{\xi^*-1}\beta^{H_j-\overline{H}_n^{\scriptscriptstyle{K}}}S_j>\tilde{\varepsilon} \right) & \leq \Pb\left(\sum_{j=1}^{a_{n^{1+\delta}}}\beta^{H_j-\overline{H}_n^{\scriptscriptstyle{K}}}S_j>\tilde{\varepsilon}\Big|\bigcap_{j=1}^{a_{n^{1+\delta}}}\left\{H_j\leq  h_n^\varepsilon\right\} \right)+o(1).
  \end{flalign*}
  By Markov's inequality we then have that 
  \begin{flalign*}
 \Pb\left(\sum_{j=1}^{a_{n^{1+\delta}}}\beta^{H_j-\overline{H}_n^{\scriptscriptstyle{K}}}S_j>\tilde{\varepsilon}\Big|\bigcap_{j=1}^{a_{n^{1+\delta}}}\left\{H_j\leq  h_n^\varepsilon\right\} \right) \; \leq \; \frac{a_{n^{1+\delta}}\Eb[S_j]\Eb[\beta^H|H\leq  h_n^\varepsilon]}{\beta^{\overline{H}_n^{\scriptscriptstyle{K}}}} \; \leq \; \frac{C_{\scriptscriptstyle{K}}a_{n^{1+\delta}}(\beta\mu)^{ h_n^\varepsilon}}{a_n^{1/\gamma}} 
  \end{flalign*}
 Rearranging the terms in the final expression, we see that choosing $\delta<\varepsilon\frac{\log(\beta\mu)}{\log(\mu^{-1})}$ ensures that this indeed converges to $0$ for any $K \in \Zb$. 
  
  It now suffices to show that \[\sum_{j=1}^{\xi^*-1}\beta^{H_j-\overline{H}_n^{\scriptscriptstyle{K}}}S_j\] converges in distribution under $\Pb^{\scriptscriptstyle{K}}$ along the given subsequences. We do this by considering a generating function approach. Recall that $\Phi=\inf\{r\geq 1:H_r=\overline{H}_n^{\scriptscriptstyle{K}}\}$ is the index of the first trap of the maximum height. Let $H\ed \Hc(\Tc)$ have the distribution of the height of an $f$-GW tree and $S\ed S_1$. Writing
  \begin{flalign*}
 \psi_i(h,\lambda) \;&  = \; \Eb[e^{-\lambda S\beta^{H-h}}|H\leq h+1-i] \; = \; \frac{\Eb[e^{-\lambda S\beta^{H-h}}\ind_{\{H\leq h+1-i\}}]}{\Pb(H\leq h+1-i)}, \\
  \phi_i(h,\lambda) \;&  = \; \Eb[e^{-\lambda S\beta^{H-h}}\ind_{\{H\leq h+1-i\}}]
\end{flalign*}
for $i=1,2$, gives us that by independence of the height of the traps conditioned on $\Phi$
  \begin{flalign}
   \varphi_{\scriptscriptstyle{K}}(\lambda) & :=\Eb^{\scriptscriptstyle{K}}\left[e^{-\lambda\sum_{j=1}^{\xi^*-1}\beta^{H_j-\overline{H}_n^{\scriptscriptstyle{K}}}S_j}\right] \notag \\
   & =\Eb^{\scriptscriptstyle{K}}\left[\sum_{k=1}^{\xi^*-1}\Pr^{\scriptscriptstyle{K}}(\Phi=k|\xi^*)\Eb^{\scriptscriptstyle{K}}\left[e^{-\lambda\sum_{j=1}^{\xi^*-1}\beta^{H_j-\overline{H}_n^{\scriptscriptstyle{K}}}S_j}\Big| \Phi=k,\xi^*\right]\right] \notag \\
   & = \Eb^{\scriptscriptstyle{K}}\left[\Eb[e^{-\lambda S}]\sum_{k=1}^{\xi^*-1}\Pr^{\scriptscriptstyle{K}}(\Phi=k|\xi^*) \psi_2(\overline{H}_n^{\scriptscriptstyle{K}},\lambda)^{k-1}\psi_1(\overline{H}_n^{\scriptscriptstyle{K}},\lambda)^{\xi^*-1-k}\right]. \label{crft}
  \end{flalign}
Using that
\begin{flalign*}
 \Pr^{\scriptscriptstyle{K}}(\Phi=k|\xi^*) &=  \frac{\Pr(H=\overline{H}_n^{\scriptscriptstyle{K}})}{\Pr(\overline{H}=\overline{H}_n^{\scriptscriptstyle{K}}|\xi^*)}\Pr(H\leq\overline{H}_n^{\scriptscriptstyle{K}}-1)^{k-1}\Pr(H\leq\overline{H}_n^{\scriptscriptstyle{K}})^{\xi^*-1-k}, \\
 \Pr(H \leq h+1-i)\psi_i(h,\lambda) & =\phi_i(h,\lambda)
 \end{flalign*}
we can write (\ref{crft}) as
\begin{flalign*}
&  \Eb^{\scriptscriptstyle{K}}\left[\frac{\Eb[e^{-\lambda S}]\Pr(H=\overline{H}_n^{\scriptscriptstyle{K}})}{\Pr(\overline{H}=\overline{H}_n^{\scriptscriptstyle{K}}|\xi^*)}\phi_1(\overline{H}_n^{\scriptscriptstyle{K}},\lambda)^{\xi^*-2}\sum_{k=1}^{\xi^*-1} \left(\frac{\phi_2(\overline{H}_n^{\scriptscriptstyle{K}},\lambda)}{\phi_1(\overline{H}_n^{\scriptscriptstyle{K}},\lambda)}\right)^{k-1}\right] \\
& \qquad = \Eb^{\scriptscriptstyle{K}}\left[
\frac{\Eb[e^{-\lambda S}]\Pr(H=\overline{H}_n^{\scriptscriptstyle{K}})}{\Pr(\overline{H}=\overline{H}_n^{\scriptscriptstyle{K}}|\xi^*)}
\left(\frac{\phi_1(\overline{H}_n^{\scriptscriptstyle{K}},\lambda)^{\xi^*-1}-\phi_2(\overline{H}_n^{\scriptscriptstyle{K}},\lambda)^{\xi^*-1}}
{\phi_1(\overline{H}_n^{\scriptscriptstyle{K}},\lambda)-\phi_2(\overline{H}_n^{\scriptscriptstyle{K}},\lambda)}\right)
\right] \\
& \qquad  =\Eb^{\scriptscriptstyle{K}}\left[\frac{\Eb[e^{-\lambda S\beta^{H-\overline{H}_n^{\scriptscriptstyle{K}}}}\ind_{\{H\leq \overline{H}_n^{\scriptscriptstyle{K}}\}}]^{\xi^*-1}-\Eb[e^{-\lambda S\beta^{H-\overline{H}_n^{\scriptscriptstyle{K}}}}\ind_{\{H\leq \overline{H}_n^{\scriptscriptstyle{K}}-1\}}]^{\xi^*-1}}{\Pr(H\leq \overline{H}_n^{\scriptscriptstyle{K}})^{\xi^*-1}-\Pr(H\leq \overline{H}_n^{\scriptscriptstyle{K}}-1)^{\xi^*-1}}\right]
\end{flalign*}
where the final equality comes from 
 \begin{flalign*}
 \phi_1(\overline{H}_n^{\scriptscriptstyle{K}},\lambda)-\phi_2(\overline{H}_n^{\scriptscriptstyle{K}},\lambda) & = \Eb[e^{-\lambda S}]\Pr(H=\overline{H}_n^{\scriptscriptstyle{K}}), \\
 \Pr(\overline{H}=\overline{H}_n^{\scriptscriptstyle{K}}|\xi^*) & = \Pr(H\leq \overline{H}_n^{\scriptscriptstyle{K}})^{\xi^*-1}-\Pr(H\leq \overline{H}_n^{\scriptscriptstyle{K}}-1)^{\xi^*-1}.
\end{flalign*}
We want to study
\begin{flalign*}
 \Eb\left[e^{-\lambda S\beta^{H-\overline{H}_n^{\scriptscriptstyle{K}}}}\ind_{\{H \leq \overline{H}_n^{\scriptscriptstyle{K}}\}}\right]^{\xi^*-1} & =  \Eb\left[e^{-\lambda S\beta^{H-\overline{H}_n^{\scriptscriptstyle{K}}}}\right]^{\xi^*-1}\left(1-\frac{ \Eb\left[e^{-\lambda S\beta^{H-\overline{H}_n^{\scriptscriptstyle{K}}}}\ind_{\{H > \overline{H}_n^{\scriptscriptstyle{K}}\}}\right]}{ \Eb\left[e^{-\lambda S\beta^{H-\overline{H}_n^{\scriptscriptstyle{K}}}}\right]}\right)^{\xi^*-1}.
\end{flalign*}
Using that for any $\tilde{\varepsilon}>0$ then we can find $N$ sufficiently large such that for all $h\geq N$ 
\begin{flalign}\label{hgtbnd}
1-(1+\tilde{\varepsilon})c_\mu \mu^h \leq \Pr(H\leq h) \leq 1-(1-\tilde{\varepsilon})c_\mu \mu^h
\end{flalign}
we obtain that $\Pr(H\leq \overline{H}_n^{\scriptscriptstyle{K}})^{\xi^*-1}-\Pr(H\leq \overline{H}_n^{\scriptscriptstyle{K}}-1)^{\xi^*-1}\rightarrow e^{-c_\mu \overline{\xi}}-e^{-c_\mu  \mu^{-1}\overline{\xi}}$ as $\nin$ under $\Pb^{\scriptscriptstyle{K}}$. Let $\delta_{\tilde{\varepsilon}}=1-\mu+\tilde{\varepsilon}(1+\mu)$ then using (\ref{hgtbnd}) we have that $c_\mu \mu^h \delta_{-\tilde{\varepsilon}}  \leq \Pb(H=h) \leq c_\mu \mu^h \delta_{\tilde{\varepsilon}}$ for sufficiently large $h \in \Nb$ and $\delta_{\tilde{\varepsilon}}-\delta_{-\tilde{\varepsilon}}=2\tilde{\varepsilon}(1+\mu)$ can be chosen arbitrarily small. In particular,
\begin{flalign*}
 \Eb\left[e^{-\lambda S\beta^{H-h}}\ind_{\{H > h\}}\right]  \; \leq \; \Eb\left[\frac{\delta_{\tilde{\varepsilon}}c_\mu}{1-\mu} \mu^{h+1} \sum_{k=0}^\infty e^{-(\lambda \beta) S\beta^k}\mu^k(1-\mu)\right] \; = \; \frac{\delta_{\tilde{\varepsilon}}c_\mu}{1-\mu} \mu^{h+1} \Eb\left[e^{-(\lambda \beta) S\beta^G}\right]
\end{flalign*}
where $G\sim Geo(\mu)$ independently of everything else. A similar lower bound yields that for $i=0,1$
  \[ \Eb\left[e^{-\lambda S\beta^{H-\overline{H}_n^{\scriptscriptstyle{K}}}}\ind_{\{H > \overline{H}_n^{\scriptscriptstyle{K}}-i\}}\right] \sim c_\mu\mu^{\overline{H}_n^{\scriptscriptstyle{K}}+1-i} \Eb\left[e^{-(\lambda \beta) S\beta^{G-i}}\right].\]
$\Eb\left[e^{-\lambda S\beta^{H-\overline{H}_n^{\scriptscriptstyle{K}}}}\right]$ converges to $1$ $\Pb^{\scriptscriptstyle{K}}$-a.s.\ therefore have that under $\Pb^{\scriptscriptstyle{K}}$, for $i=0,1$,
\begin{flalign*}
 \left(1-\frac{ \Eb\left[e^{-\lambda S\beta^{H-\overline{H}_n^{\scriptscriptstyle{K}}}}\ind_{\{H > \overline{H}_n^{\scriptscriptstyle{K}}-i\}}\right]}{ \Eb\left[e^{-\lambda S\beta^{H-\overline{H}_n^{\scriptscriptstyle{K}}}}\right]}\right)^{\xi^*-1} & \cd e^{-c_\mu \mu^{1-i} \varphi^{SG}(\lambda \beta)\overline{\xi}} 
\end{flalign*}
where $\varphi^{SG}$ is the moment generating function of $S\beta^G$ thus by Lemma \ref{hgtnum} we have that \[ \varphi_{\scriptscriptstyle{K}}(\lambda) \sim \Eb\left[\frac{\Eb[e^{-\lambda S\beta^{H-\overline{H}_n^{\scriptscriptstyle{K}}}}]^{b_n^{\scriptscriptstyle{K}} \overline{\xi}}\left(e^{-c_\mu\mu\varphi^{SG}(\lambda\beta)\overline{\xi}}-e^{-c_\mu\varphi^{SG}(\lambda)\overline{\xi}}\right)}{e^{-c_\mu \overline{\xi}}-e^{-c_\mu \mu^{-1} \overline{\xi}}}\right].\]

The only part of this equation which depends on $n$ is $\Eb[e^{-\lambda S\beta^{H-\overline{H}_n^{\scriptscriptstyle{K}}}}]^{b_n^{\scriptscriptstyle{K}}}$ thus it remains to determine how this behaves asymptotically. We start by showing that it suffices to replace $H$ with the geometric random variable $G$. Since $\beta^{\overline{H}_n^{\scriptscriptstyle{K}}}=(b_n^{\scriptscriptstyle{K}} c_\mu)^\frac{1}{\gamma}$, letting $\theta = \lambda c_\mu^{-\frac{1}{\gamma}}$, we have $\Eb[e^{-\lambda S\beta^{H-\overline{H}_n^{\scriptscriptstyle{K}}}}]^{b_n^{\scriptscriptstyle{K}}}  =\Eb[e^{-\theta (b_n^{\scriptscriptstyle{K}})^{-\frac{1}{\gamma}}S\beta^H}]^{b_n^{\scriptscriptstyle{K}}}$. We then have that
\begin{flalign*}
 \Eb[e^{-\theta b^{-\frac{1}{\gamma}}S\beta^H}] & = 1-\int_0^\infty e^{-x} \Pb(S\beta^H\geq xb^\frac{1}{\gamma}/\theta) \d x.
\end{flalign*}
Since $\gamma<1$ we have $\Eb\left[S^\gamma\right] <\infty$ therefore by independence of $S$ and $G$
\begin{flalign}\label{upSbetG}
  \Pb(S\beta^G\geq xb^\frac{1}{\gamma}/\theta) \; = \Eb\left[\Pb\left(G\geq \frac{\log(xb^{\frac{1}{\gamma}}(S\theta)^{-1})}{\log(\beta)}\Big|S\right)\right] \; \leq \; \left(\frac{xb^\frac{1}{\gamma}}{\theta}\right)^{-\gamma} \Eb\left[S^\gamma\right]  \; = \; C_\theta x^{-\gamma}b^{-1}.
\end{flalign}
Similarly since $c_1 \Pb(G\geq y)\leq \Pb(H\geq y) \leq c_2\Pb(G\geq y)$ for some positive constants $c_1,c_2$ we can choose $C_\theta$ large enough such that $\Pb(S\beta^H\geq xb^\frac{1}{\gamma}/\theta) \leq C_\theta x^{-\gamma}b^{-1}$.
Let $\tilde{\varepsilon}>0$ then choose $\delta>0$ such that 
\begin{flalign*}
\max\{1,c_\mu\} \int_0^\delta e^{-x}\Pb(S\beta^H\geq xb^\frac{1}{\gamma}/\theta) \d x \; \leq \; C b^{-1}\int_0^\delta e^{-x}x^{-\gamma}\d x \;\leq\; \tilde{\varepsilon}b^{-1}
\end{flalign*}
then, since these integrals are positive, we have that
\begin{flalign}\label{intdel}
\left|\int_0^\delta e^{-x}c_\mu\Pb(S\beta^G\geq xb^\frac{1}{\gamma}/\theta) \d x-\int_0^\delta e^{-x}\Pb(S\beta^H\geq xb^\frac{1}{\gamma}/\theta) \d x\right|\leq \tilde{\varepsilon}b^{-1}.
\end{flalign}
Let $M:\Rb\rightarrow \Rb$ such that $M(b) \rightarrow \infty$ as $b \rightarrow \infty$ and $M(b) \ll b^\frac{1}{\gamma}$. Using (\ref{hgtbnd}) and that $M(b)\ll b^{\frac{1}{\gamma}}$ we can choose $b$ sufficiently large such that for all $x>\delta, \; y<M(b)$ we have that 
\[\left|\frac{c_\mu\Pb\left(\beta^G \geq \frac{xb^\frac{1}{\gamma}}{\theta y}\right)}{\Pb\left(\beta^H \geq \frac{xb^\frac{1}{\gamma}}{\theta y}\right)}-1\right| \leq \tilde{\varepsilon}\]
which therefore gives us that
\begin{flalign}\label{GHrat}
\left|\frac{c_\mu\Pb\left(S\beta^G \geq \frac{xb^\frac{1}{\gamma}}{\theta}\Big| S<M(b)\right)}{\Pb\left(S\beta^H \geq \frac{xb^\frac{1}{\gamma}}{\theta}\Big| S<M(b)\right)}-1\right| \leq \tilde{\varepsilon}.
\end{flalign}
Then, for $x>\delta$, using the tail of $H$ we have
\begin{flalign*}
 \Pb(S\beta^H\geq xb^\frac{1}{\gamma}/\theta|S \geq M(b)) \; \leq \; C \Eb\left[\left(\frac{xb^\frac{1}{\gamma}}{S\theta}\right)^\frac{\log(\mu)}{\log(\beta)}\Big|S\geq M(b)\right] \; \leq \; \frac{C_{\delta,\theta}}{b} \Eb[S^{\gamma}|S\geq M(b)].
\end{flalign*}
Writing $\eta(b)=\Eb[S^{\gamma}\ind_{\{S\geq M(b)\}}]$ we have that $\eta(b) \rightarrow 0$ as $b \rightarrow \infty$ since $\Eb[S^{\gamma}]<\infty$ therefore repeating the argument for $G$ we have that both $\Pb(S\beta^H\geq xb^\frac{1}{\gamma}\theta^{-1}, \; S \geq M(b))$ and $\Pb(S\beta^G\geq xb^\frac{1}{\gamma}\theta^{-1}, \;S \geq M(b))$ can be bounded above by $C_{\delta,\theta}b^{-1}\eta(b)$. Combining this with (\ref{upSbetG}), (\ref{intdel}) and (\ref{GHrat}) it is straightforward to see that as $b \rightarrow \infty$  
\begin{flalign*}
 \Eb[e^{-\theta b^{-\frac{1}{\gamma}}S\beta^H}]^b \sim \left(1-c_\mu \int_0^\infty e^{-y} \Pb(S\beta^G \geq yb^\frac{1}{\gamma} /\theta )\d y\right)^b.
\end{flalign*}

Writing $f(x)=\left\lceil \frac{\log(x)}{\log(\beta)}\right\rceil -\frac{\log(x)}{\log(\beta)}$  we have that for any $m \in \Zb$ that $f(x)=f(xm^{\log(\beta)})$ for all $x \in \Rb$ and 
\begin{flalign*}
 \Pb(S\beta^G\geq x) & = \mu^\frac{\log(x)}{\log(\beta)} \Eb\left[S^{\gamma}\mu^{-\left\lfloor \frac{\log(S)}{\log(\beta)} +f(x)\right\rfloor + \frac{\log(S)}{\log(\beta)} +f(x) }\right].
\end{flalign*}
Define \[I(x)=\Eb\left[S^{\gamma}\mu^{-\left\lfloor \frac{\log(S)}{\log(\beta)} +f(x)\right\rfloor + \frac{\log(S)}{\log(\beta)} +f(x) }\right]\]
then $\Pb(S\beta^G\geq x)=x^{-\gamma}I(x)$ and $I(x)=I(xm^{\log(\beta)})$ for all $x \in \Rb$ and $m \in \Zb$. We then have that 
\begin{flalign*}
 \Eb[e^{-\lambda b^{-\frac{1}{\gamma}}S\beta^H}]^b \; \sim \; \left(1-\lambda^{\gamma}b^{-1} \int_0^\infty e^{-y} y^{-\gamma}I(yb^\frac{1}{\gamma}/\theta)\d y\right)^b \;  \sim \; e^{-\lambda^{\gamma}\int_0^\infty e^{-y} y^{-\gamma}I(yb^\frac{1}{\gamma}/\theta) \d y}. 
\end{flalign*}
This expression is constant along the given subsequences which proves the proposition.
  \end{proof}
\end{prp}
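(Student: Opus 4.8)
The plan is to argue at the level of Laplace transforms. By Proposition \ref{conver} it is enough to show that $Z_\infty^{n_k}$ converges in distribution under $\Pb^{\scriptscriptstyle{K}}$; since $Z_\infty^n=\sum_{j=1}^{N}\beta^{H_j-\overline{H}_n^{\scriptscriptstyle{K}}}S_j$ with the $S_j$ i.i.d., of finite mean, and independent of the heights $(H_j)$, this amounts to understanding the joint law of the (ordered) trap heights under $\Pr^{\scriptscriptstyle{K}}$. First I would drop the requirement that only traps of height at least $h_n^\varepsilon$ are counted: the tail contribution $\sum_{j=N+1}^{\xi^*-1}\beta^{H_j-\overline{H}_n^{\scriptscriptstyle{K}}}S_j$ is negligible under $\Pb^{\scriptscriptstyle{K}}$ by Markov's inequality, using $\Eb[\beta^H\ind_{\{H\le h_n^\varepsilon\}}]\le C(\beta\mu)^{h_n^\varepsilon}$ together with $\Pb^{\scriptscriptstyle{K}}(\xi^*-1\ge a_{n^{1+\delta}})\to0$ from Lemma \ref{hgtnum}, provided $\delta$ is chosen small relative to $\varepsilon$. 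So it suffices to treat $\sum_{j=1}^{\xi^*-1}\beta^{H_j-\overline{H}_n^{\scriptscriptstyle{K}}}S_j$.

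Next I would compute $\varphi_{\scriptscriptstyle{K}}(\lambda):=\Eb^{\scriptscriptstyle{K}}[\exp(-\lambda\sum_{j=1}^{\xi^*-1}\beta^{H_j-\overline{H}_n^{\scriptscriptstyle{K}}}S_j)]$ by conditioning on $\xi^*$ and on $\Phi=\inf\{r\ge1:H_r=\overline{H}_n^{\scriptscriptstyle{K}}\}$, the index of the first trap attaining the maximal height. Given $\Phi=k$ the remaining heights are independent, $k-1$ of them conditioned to be strictly below $\overline{H}_n^{\scriptscriptstyle{K}}$ and $\xi^*-1-k$ conditioned to be at most $\overline{H}_n^{\scriptscriptstyle{K}}$; summing the resulting geometric series over $k$ telescopes to an expression of the form $(\phi_1^{\xi^*-1}-\phi_2^{\xi^*-1})/(\phi_1-\phi_2)$, where $\phi_i$ is a truncated Laplace transform of $S\beta^{H-\overline{H}_n^{\scriptscriptstyle{K}}}$. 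The point is that the denominator equals $\Pr(H\le\overline{H}_n^{\scriptscriptstyle{K}})^{\xi^*-1}-\Pr(H\le\overline{H}_n^{\scriptscriptstyle{K}}-1)^{\xi^*-1}=\Pr(\overline{H}=\overline{H}_n^{\scriptscriptstyle{K}}|\xi^*)$, which is exactly what makes the conditioning cancel. Passing to the limit then uses only the sharp tail $\Pr(H\le h)=1-c_\mu\mu^h(1+o(1))$ from (\ref{cmu}) — forcing $\Pr(H\le\overline{H}_n^{\scriptscriptstyle{K}})^{\xi^*-1}\to e^{-c_\mu\overline{\xi}}$ and its analogues — and the convergence $(\xi^*-1)/b_n^{\scriptscriptstyle{K}}\cd\overline{\xi}$ from Lemma \ref{hgtnum}, after which the only remaining $n$-dependence sits in the factor $\Eb[e^{-\lambda S\beta^{H-\overline{H}_n^{\scriptscriptstyle{K}}}}]^{b_n^{\scriptscriptstyle{K}}}$.

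The core step is the asymptotics of $\Eb[e^{-\theta b^{-1/\gamma}S\beta^{H}}]^{b}$ as $b=b_n^{\scriptscriptstyle{K}}\to\infty$, with $\theta=\lambda c_\mu^{-1/\gamma}$. Since $\gamma<1$ we have $\Eb[S^\gamma]<\infty$, which gives $\Pb(S\beta^H\ge x)\le Cx^{-\gamma}b^{-1}$ in the relevant regime; combining this with the comparability of $\Pr(H\ge y)$ and $\Pr(G\ge y)$ for $G\sim Geo(\mu)$ (a Potter-type estimate) lets me replace $H$ by $G$ up to the constant $c_\mu$, and the same moment bound controls the tail $S\ge M(b)$. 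This leaves $(1-c_\mu\int_0^\infty e^{-y}\Pb(S\beta^G\ge yb^{1/\gamma}/\theta)\,\d y)^b$; writing $\Pb(S\beta^G\ge x)=x^{-\gamma}I(x)$ one verifies that $I(\beta x)=I(x)$ — the lattice effect coming from $\beta^G$ living on a multiplicative lattice. Finally, along $n_k(t)$ the integer $\overline{H}_{n_k}^{\scriptscriptstyle{K}}$ increases by exactly one at each step (this is precisely what $a_{n_k(t)}\sim t\mu^{-k}$ buys us), so $b_{n_k}^{1/\gamma}=\beta^{\overline{H}_{n_k}^{\scriptscriptstyle{K}}}$ changes by a factor $\beta$ and hence $I(yb_{n_k}^{1/\gamma}/\theta)$ does not depend on $k$; therefore $\Eb[e^{-\lambda b_{n_k}^{-1/\gamma}S\beta^H}]^{b_{n_k}}\to\exp(-\lambda^\gamma\int_0^\infty e^{-y}y^{-\gamma}I(yb^{1/\gamma}/\theta)\,\d y)$ with the right-hand side constant along the subsequence, which yields a genuine limiting Laplace transform and hence the claimed convergence.

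The main obstacle — and the reason one only gets convergence along these subsequences — is exactly this lattice term: $\Eb[e^{-\lambda S\beta^{H-\overline{H}_n^{\scriptscriptstyle{K}}}}]^{b_n^{\scriptscriptstyle{K}}}$ genuinely oscillates with $n$ through $I$ and stabilises only when $\overline{H}_n^{\scriptscriptstyle{K}}$ advances by whole units. The delicate part of the execution is making the three reductions $N\rightsquigarrow\xi^*-1$, $H\rightsquigarrow G$ and $\Pb^{\scriptscriptstyle{K}}\rightsquigarrow$ its limit simultaneously rigorous while keeping the error estimates uniform enough that $o(b_n^{-1})$ terms really vanish after being raised to the power $b_n^{\scriptscriptstyle{K}}\to\infty$; this is where $\Eb[S^\gamma]<\infty$ and Lemma \ref{hgtnum} get invoked repeatedly.
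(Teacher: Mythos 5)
Your proposal reproduces the paper's argument essentially step for step: the reduction from $N$ to $\xi^*-1$ via Markov and Lemma \ref{hgtnum}, the Laplace-transform computation by conditioning on $\Phi$ and telescoping to the ratio $(\phi_1^{\xi^*-1}-\phi_2^{\xi^*-1})/(\phi_1-\phi_2)$ whose denominator is $\Pr(\overline{H}=\overline{H}_n^{\scriptscriptstyle{K}}|\xi^*)$, the isolation of the $n$-dependence in $\Eb[e^{-\lambda S\beta^{H-\overline{H}_n^{\scriptscriptstyle{K}}}}]^{b_n^{\scriptscriptstyle{K}}}$, the replacement $H\rightsquigarrow G\sim Geo(\mu)$ using $\Eb[S^\gamma]<\infty$, and the identification of the log-$\beta$-periodic factor $I$ that stabilises only along $n_k(t)$. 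This is the paper's proof, and your identification of the lattice effect as the reason convergence holds only along subsequences is exactly the right emphasis.
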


In order to prove the convergence result for sums of i.i.d.\ variables we shall require that $\zeta^{(n)}$ can be dominated (independently of $K\geq  h_n^\varepsilon- h_n^0$) by some random variable $Z_{sup}$ such that $\Eb[Z_{sup}^{(\alpha-1)\gamma+\epsilon}]<\infty$ for $\epsilon$ sufficiently small. Lemma \ref{dominate} shows that we indeed have the domination required for the sums of i.i.d.\ variables result.

\begin{lem}\label{dominate}
In IVIE, there exists a random variable $Z_{sup}$ such that under $\Pb^{\scriptscriptstyle{K}}$ for any $K \in \Zb$ we have that $Z_{sup} \succeq \zeta^{(n)}$ for all $n$ sufficiently large and $\Eb[Z_{sup}^{1-\epsilon}]<\infty$ for any $\epsilon>0$.
 \begin{proof}
 $N$ is dominated by the number of traps in the branch. Similarly to Lemma \ref{hgtnum} we consider 
   \begin{equation*}
  \Pr(\xi^*-1\geq tb_n^{\scriptscriptstyle{K}}|\overline{H}=\overline{H}_n^{\scriptscriptstyle{K}})=\Pr(\overline{H}=\overline{H}_n^{\scriptscriptstyle{K}}|\xi^*-1\geq tb_n^{\scriptscriptstyle{K}})\frac{\Pr(\xi^*-1\geq tb_n^{\scriptscriptstyle{K}})}{\Pr(\overline{H}=\overline{H}_n^{\scriptscriptstyle{K}})}. 
  \end{equation*}
Using the tail of $H$ from (\ref{cmu}), for large $n$ (independently of $t\geq 0$) and some constant $c$, we can bound $\Pr(\overline{H}=\overline{H}_n^{\scriptscriptstyle{K}}|\xi^*-1\geq tb_n^{\scriptscriptstyle{K}})$ above by 
\begin{flalign*}
 \Er\left[\Pr(H\leq\overline{H}_n^{\scriptscriptstyle{K}})^{\xi^*-1} \Big|\xi^*-1\geq tb_n^{\scriptscriptstyle{K}}\right] \; \leq \; \Er\left[e^{-c\left(\frac{\xi^*-1}{b_n^{\scriptscriptstyle{K}}}\right)} \Big|\xi^*-1\geq tb_n^{\scriptscriptstyle{K}}\right] \; \leq \; e^{-ct}. 
\end{flalign*}

For each $t\geq 0$ we have that $\Pr(\xi^*-1\geq tb_n^{\scriptscriptstyle{K}})\sim  Ct^{-(\alpha-1)}\Pr(\overline{H}=\overline{H}_n^{\scriptscriptstyle{K}})$ as $\nin$. Since $\Pr(\overline{H}=\overline{H}_n^{\scriptscriptstyle{K}}) $ doesn't depend on $t$ we can choose a constant $c$ such that for $n$ sufficiently large we have that $\Pr(\overline{H}=\overline{H}_n^{\scriptscriptstyle{K}}) \leq c\Pr(\xi^*-1\geq b_n^{\scriptscriptstyle{K}})$ thus for $t\geq 1$
\begin{flalign*}
 \frac{\Pr(\xi^*-1\geq tb_n^{\scriptscriptstyle{K}})}{\Pr(\overline{H}=\overline{H}_n^{\scriptscriptstyle{K}})}  \leq \frac{\Pr(\xi^*-1\geq tb_n^{\scriptscriptstyle{K}})}{c\Pr(\xi^*-1\geq b_n^{\scriptscriptstyle{K}})}  \leq c^{-1}.
\end{flalign*}
In particular, for $t\geq 1$ we have that $\Pr(\xi^*-1\geq tb_n^{\scriptscriptstyle{K}}|\overline{H}=\overline{H}_n^{\scriptscriptstyle{K}}) \leq c_1e^{-c_2t}$ for some constants $c_1,c_2$. It therefore follows that there exists some random variable $\overline{\overline{\xi}}$ which is independent of $\overline{H}$, has an exponential tail and $\overline{\overline{\xi}}b_n^{\scriptscriptstyle{K}}\geq \xi^*-1$ on $\overline{H}=\overline{H}_n^{\scriptscriptstyle{K}}$ for $n$ suitably large (independently of $K$). Let $\Rc_\infty^{j,k,l}$ be distributed as excursions from the deepest points of the infinite trap $\Tc^-$ then using that $W^j\geq B^j$ we then have that for $n$ suitably large, under $\Pb^{\scriptscriptstyle{K}}$ 
\[ \zeta^{(n)} \preceq \sum_{j=1}^{\overline{\overline{\xi}}b_n^{\scriptscriptstyle{K}}}\sum_{k=1}^{W^{j}}\sum_{l=1}^{G^{j,k}}\frac{\Rc_\infty^{j,k,l}}{\beta^{\overline{H}_n^{\scriptscriptstyle{K}}}}.  \]

Since $ \Et[G^{j,k}] \leq \beta^{H_j+1}/(\beta+1)$ there is some constant $c$ such that, writing \[Y_j^{(n)}= c\sum_{k=1}^{W^j}\sum_{l=1}^{G^{j,k}}\frac{\Rc_\infty^{j,k,l}}{\Et[G^{j,k}]} \] 
which are identically distributed under $\Pb$, we have that under $\Pb^{\scriptscriptstyle{K}}$, \[\zeta^{(n)} \preceq \frac{1}{\beta^{\overline{H}_n^{\scriptscriptstyle{K}}}}\sum_{j=1}^{\overline{\overline{\xi}}b_n^{\scriptscriptstyle{K}}}\beta^{H_j}Y_j^{(n)}.\]

For $m\geq 1$ write $X^n(m)=\frac{1}{m}\sum_{j=1}^m \beta^{H_j}Y_j^{(n)}\ind_{\{j\neq \Phi\}}$ (where we recall that $\Phi$ is the first index $j$ such that $H_j=\overline{H}_n^{\scriptscriptstyle{K}}$) then by Markov's inequality
\begin{flalign*}
 \Pb^{\scriptscriptstyle{K}}(X^n(m)\geq t) \; \leq \; \frac{1}{m}\sum_{j=1}^m\frac{\Eb^{\scriptscriptstyle{K}}\left[\beta^{H_j}Y_j^{(n)}\ind_{\{j\neq\Phi\}}\right]}{t}\; = \; \frac{1}{m}\sum_{j=1}^m\frac{\Eb^{\scriptscriptstyle{K}}\left[\beta^{H_j}\ind_{\{j\neq\Phi\}}\right]\Eb^{\scriptscriptstyle{K}}\left[Y_j^{(n)}\right]}{t} 
\end{flalign*}
since $\Eb[Y_j^{(n)}|H_j,  \Phi]$ is independent of $H_j$ and $\Phi$. Since $W^1$ has a geometric distribution (independently of $n$) we have that $\Eb[W^1]<\infty$ and by Lemma \ref{maxexc} we have that $\Eb[\Rc_\infty]<\infty$ therefore $\Eb^{\scriptscriptstyle{K}}[Y_j^{(n)}]\leq \Eb[W^1]\Eb[\Rc_\infty]\leq C<\infty$ for all $n$. Using geometric bounds on the tail of $H$ from (\ref{cmu}) and that $\Pb(H\geq j|H \leq \overline{H}_n^{\scriptscriptstyle{K}})\leq \Pb(H\geq j)$ we have that
\begin{flalign*}
 \Eb^{\scriptscriptstyle{K}}[\beta^{H}] \; \leq \; \sum_{j=0}^{\infty} \beta^j\Pb(H\geq j|H \leq \overline{H}_n^{\scriptscriptstyle{K}}) \; \leq \; C(\beta\mu)^{\overline{H}_n^{\scriptscriptstyle{K}}}.
\end{flalign*}
We therefore have that $\Pb^{\scriptscriptstyle{K}}(X^n(m)\geq t) \leq C(\beta\mu)^{\overline{H}_n^{\scriptscriptstyle{K}}}/t$  thus there exists some sequence of random variables $X_{sup}^n \succeq X^n(m)$ for any $m$ such that $\Pb^{\scriptscriptstyle{K}}(X_{sup}^n\geq t)=1 \land  C(\beta\mu)^{\overline{H}_n^{\scriptscriptstyle{K}}}t^{-1}$. In particular, $X_{sup}^n \succeq X^n(\overline{\overline{\xi}}b_n^{\scriptscriptstyle{K}})$. Therefore, 
\[\frac{1}{\beta^{\overline{H}_n^{\scriptscriptstyle{K}}}}\sum_{j=1}^{\overline{\overline{\xi}}b_n^{\scriptscriptstyle{K}}}\beta^{H_j}Y_j^{(n)}= \frac{\overline{\overline{\xi}}b_n^{\scriptscriptstyle{K}}}{\beta^{\overline{H}_n^{\scriptscriptstyle{K}}}}X^n(\overline{\overline{\xi}}b_n^{\scriptscriptstyle{K}})+Y_\Phi^{(n)} \preceq \frac{\overline{\overline{\xi}}X_{sup}^n}{c_\mu(\beta\mu)^{\overline{H}_n^{\scriptscriptstyle{K}}}}+Y_\Phi^{(n)} \]
under $\Pb^{\scriptscriptstyle{K}}$. We then have that 
\begin{flalign*}
 \Pb^{\scriptscriptstyle{K}}\left(\frac{\overline{\overline{\xi}}X_{sup}^n}{c_\mu(\beta\mu)^{\overline{H}_n^{\scriptscriptstyle{K}}}}\geq t\right) & =  \Eb^{\scriptscriptstyle{K}}\left[\Pb^{\scriptscriptstyle{K}}\left(X_{sup}^n\geq \frac{tc_\mu(\beta\mu)^{\overline{H}_n^{\scriptscriptstyle{K}}}}{\overline{\overline{\xi}}}\Big| \overline{\overline{\xi}}\right)\right]=1 \land  C\frac{\Eb^{\scriptscriptstyle{K}}\left[\overline{\overline{\xi}}\right]}{t}
\end{flalign*}
where $\overline{\overline{\xi}}$ has finite first moment since $\Pr(\overline{\overline{\xi}}\geq t)=c_1e^{-c_2t} \land 1$.

It follows that there exists $X_{sup}\succeq X_{sup}^n$ for any $n$ such that $\Pb(X_{sup}\geq t)=1\land Ct^{-1}$. Since $\Eb_{\scriptscriptstyle{K}}[Y_\Phi^n]$ is bounded independently of $K$ and $n$, by Markov's inequality we have that there exists $Y_{sup}\succeq Y_\Phi^n$ for all $n$ such that $\Pb(Y_{sup}\geq t)=1\land Ct^{-1}$. It therefore follows that $\zeta^{(n)}$ under $\Pb^{\scriptscriptstyle{K}}$ is stochastically dominated by $X_{sup}+Y_{sup}$ under $\Pb$ where
\begin{flalign*}
 \Pb(X_{sup}+Y_{sup}\geq t) \; \leq \; \Pb(X_{sup}\geq t/2)+ \Pb(Y_{sup}\geq t/2) \; \leq \; Ct^{-1}
\end{flalign*}
hence $X_{sup}+Y_{sup}$ has finite moments up to $1-\epsilon$ for all $\epsilon>0$.
 \end{proof}
\end{lem}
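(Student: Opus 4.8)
The plan is to replace each ingredient of $\zeta^{(n)}=\beta^{-\overline{H}}\sum_{j=1}^{N}\sum_{k=1}^{B^j}\sum_{l=1}^{G^{j,k}}\Rc^{j,k,l}$, working under $\Pb^{\scriptscriptstyle{K}}$, by a stochastically larger quantity with a more tractable law, and to arrange that the final bound has a tail of order $t^{-1}$ uniformly in $n$ and $K$; a variable with such a tail has finite moments of every order strictly below $1$, which is stronger than the $(\alpha-1)\gamma+\epsilon$ moment needed downstream.

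First I would control the number of large traps. Since $N$ is dominated by the total number $\xi^*-1$ of traps in the branch, and under $\Pb^{\scriptscriptstyle{K}}$ we condition on $\overline{H}=\overline{H}_n^{\scriptscriptstyle{K}}$, I would run the Bayes inversion of Lemma~\ref{hgtnum}: with $b_n^{\scriptscriptstyle{K}}=\mu^{-\overline{H}_n^{\scriptscriptstyle{K}}}/c_\mu$,
\[\Pr(\xi^*-1\geq tb_n^{\scriptscriptstyle{K}}\,|\,\overline{H}=\overline{H}_n^{\scriptscriptstyle{K}})=\Pr(\overline{H}=\overline{H}_n^{\scriptscriptstyle{K}}\,|\,\xi^*-1\geq tb_n^{\scriptscriptstyle{K}})\,\frac{\Pr(\xi^*-1\geq tb_n^{\scriptscriptstyle{K}})}{\Pr(\overline{H}=\overline{H}_n^{\scriptscriptstyle{K}})}.\]
Bounding the first factor by $\Er[\Pr(\Hc(\Tc)\leq\overline{H}_n^{\scriptscriptstyle{K}})^{\xi^*-1}\,|\,\xi^*-1\geq tb_n^{\scriptscriptstyle{K}}]\leq\Er[e^{-c(\xi^*-1)/b_n^{\scriptscriptstyle{K}}}\,|\,\xi^*-1\geq tb_n^{\scriptscriptstyle{K}}]\leq e^{-ct}$ via (\ref{cmu}) and independence of trap heights, and the ratio by a constant for $t\geq1$ via (\ref{xistar}) and (\ref{numcrttrp}), gives $\Pr(\xi^*-1\geq tb_n^{\scriptscriptstyle{K}}\,|\,\overline{H}=\overline{H}_n^{\scriptscriptstyle{K}})\leq c_1e^{-c_2t}$ for all large $n$, uniformly in $K$. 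Hence there is a variable $\overline{\overline{\xi}}$ with exponential tail, independent of $\overline{H}$, with $\overline{\overline{\xi}}\,b_n^{\scriptscriptstyle{K}}\geq\xi^*-1$ on $\{\overline{H}=\overline{H}_n^{\scriptscriptstyle{K}}\}$ for $n$ large.

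Next I would replace $\Rc^{j,k,l}$ by the excursion times $\Rc_\infty^{j,k,l}$ in the infinite extension of the trap (only enlarging them) and the $B^j$-sum by the $W^j$-sum using $W^j\geq B^j$, so $\zeta^{(n)}\preceq\beta^{-\overline{H}_n^{\scriptscriptstyle{K}}}\sum_{j=1}^{\overline{\overline{\xi}}b_n^{\scriptscriptstyle{K}}}\sum_{k=1}^{W^j}\sum_{l=1}^{G^{j,k}}\Rc_\infty^{j,k,l}$. Since $\Et[G^{j,k}]\leq\beta^{H_j+1}/(\beta+1)$ by (\ref{p2}), setting $Y_j^{(n)}=c\sum_{k=1}^{W^j}\sum_{l=1}^{G^{j,k}}\Rc_\infty^{j,k,l}/\Et[G^{j,k}]$ — whose conditional expectation given $(H_j,\Phi)$ is a constant $\leq\Eb[W^1]\Eb[\Rc_\infty]<\infty$ free of $n$, by the geometric law of $W^1$, Lemma~\ref{maxexc}, and the fact that the infinite extension has the law of $\Tc^-$ irrespective of $H_j$ — yields $\zeta^{(n)}\preceq\beta^{-\overline{H}_n^{\scriptscriptstyle{K}}}\sum_{j=1}^{\overline{\overline{\xi}}b_n^{\scriptscriptstyle{K}}}\beta^{H_j}Y_j^{(n)}$. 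I would then peel off the single index $j=\Phi$ (the first trap of maximal height, for which $\beta^{H_\Phi}=\beta^{\overline{H}_n^{\scriptscriptstyle{K}}}$) and apply Markov's inequality to $X^n(m)=\frac{1}{m}\sum_{j=1}^m\beta^{H_j}Y_j^{(n)}\ind_{\{j\neq\Phi\}}$: using the factorisation of the mean and $\Eb^{\scriptscriptstyle{K}}[\beta^{H}]\leq C(\beta\mu)^{\overline{H}_n^{\scriptscriptstyle{K}}}$ (the geometric tail of $H$, the truncation $H\leq\overline{H}_n^{\scriptscriptstyle{K}}$, and $\beta\mu>1$) one gets $\Pb^{\scriptscriptstyle{K}}(X^n(m)\geq t)\leq C(\beta\mu)^{\overline{H}_n^{\scriptscriptstyle{K}}}t^{-1}$ uniformly in $m$, hence a dominating $X_{sup}^n$ with tail $1\land C(\beta\mu)^{\overline{H}_n^{\scriptscriptstyle{K}}}t^{-1}$. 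Because $b_n^{\scriptscriptstyle{K}}/\beta^{\overline{H}_n^{\scriptscriptstyle{K}}}=1/(c_\mu(\beta\mu)^{\overline{H}_n^{\scriptscriptstyle{K}}})$, the troublesome factor $(\beta\mu)^{\overline{H}_n^{\scriptscriptstyle{K}}}$ cancels: $\Pb^{\scriptscriptstyle{K}}\bigl(\overline{\overline{\xi}}X_{sup}^n/(c_\mu(\beta\mu)^{\overline{H}_n^{\scriptscriptstyle{K}}})\geq t\bigr)=\Eb^{\scriptscriptstyle{K}}[1\land C\overline{\overline{\xi}}/t]\leq C\Eb[\overline{\overline{\xi}}]/t$, and a Markov bound on $Y_\Phi^{(n)}$ (its mean being uniformly bounded) gives a further term with tail $\leq Ct^{-1}$. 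Adding two dominating variables with these tails produces $Z_{sup}$ with $\Pb(Z_{sup}\geq t)\leq Ct^{-1}$, so $\Eb[Z_{sup}^{1-\epsilon}]<\infty$ for every $\epsilon>0$.

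The step I expect to be the main obstacle is the first one: establishing that the conditional law of the number of traps $\xi^*-1$ given $\overline{H}=\overline{H}_n^{\scriptscriptstyle{K}}$ has an exponential tail with constants uniform over $K\in\Zb$ and over all sufficiently large $n$. Since $\overline{H}_n^{\scriptscriptstyle{K}}=h_n^0+K$ varies with both $n$ and $K$, the error terms in the asymptotics (\ref{cmu}), (\ref{xistar}) and (\ref{numcrttrp}) must be tracked uniformly rather than absorbed, and the Bayes inversion must be carried through with care; everything after that is bookkeeping with Markov's inequality plus the exact cancellation $b_n^{\scriptscriptstyle{K}}/\beta^{\overline{H}_n^{\scriptscriptstyle{K}}}=1/(c_\mu(\beta\mu)^{\overline{H}_n^{\scriptscriptstyle{K}}})$.
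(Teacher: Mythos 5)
Your proposal is correct and follows essentially the same route as the paper's own proof: the same Bayes inversion to get a uniform exponential tail for $\xi^*-1$ under $\Pb^{\scriptscriptstyle{K}}$, the same replacements ($N\preceq\overline{\overline{\xi}}\,b_n^{\scriptscriptstyle{K}}$, $B^j\preceq W^j$, $\Rc^{j,k,l}\preceq\Rc_\infty^{j,k,l}$), the same extraction of $Y_j^{(n)}$ via $\Et[G^{j,k}]\leq\beta^{H_j+1}/(\beta+1)$, the same peeling-off of $\Phi$ with Markov's inequality and the bound $\Eb^{\scriptscriptstyle{K}}[\beta^H]\leq C(\beta\mu)^{\overline{H}_n^{\scriptscriptstyle{K}}}$, and the identical cancellation $b_n^{\scriptscriptstyle{K}}/\beta^{\overline{H}_n^{\scriptscriptstyle{K}}}=1/(c_\mu(\beta\mu)^{\overline{H}_n^{\scriptscriptstyle{K}}})$ yielding a tail of order $t^{-1}$.
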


\section{Convergence along subsequence}\label{convsub}
In this section we prove the main theorems concerning convergence to infinitely divisible laws in FVIE and IVIE. Both cases follow the proof from \cite{arfrgaha}; in FVIE the result follows directly whereas in IVIE adjustments need to be made to deal with slowly varying functions. 

\subsection{Proof of Theorem \ref{finvarthm} (FVIE)}

Recall that in FVIE $\gamma=\log(\mu^{-1})/\log(\beta)<1$, $n_l(t)=\lfloor t\mu^{-l}\rfloor$ and by Corollary \ref{hgttl} we have that $\Pr(\Hc(\Tc^{*-}_\rho)\geq n) \sim C_\Dc\mu^n = C_\Dc\beta^{-n\gamma}$ where $C_\Dc=c_\mu\Er[\xi^*-1]$. For $i,l\geq 1$ let $\zeta_i^l=\tilde{\chi}_{n_l}^{i*}\beta^{-\Hc(\Tc_{\rho_i})}$ under $\Hc(\Tc_{\rho_i}) \geq h_{n_l}^\varepsilon$ where $(\tilde{\chi}_{n_l}^{i*})_{i\geq 1}$ are i.i.d.\ with the law of $\chi_{n_l}^{1*}$ from (\ref{chistar}) and $(\Tc_{\rho_i})_{i\geq 1}$ are the associated trees. Then for $K \geq -(l-h_{n_l}^\varepsilon)$ let $\zeta_i^{l,K}$ to be $\zeta_i^l$ under $\Hc(\Tc_{\rho_i})=l+K$ when this makes sense and $0$ otherwise. For $K \in \Zb$ and $l\geq 0$ define $F_{\scriptscriptstyle{K}}^l(x)=\Pr(\zeta_i^{l,K}>x)$. By a simple adaptation of Corollary \ref{conver} and Lemma \ref{dominate}
\begin{enumerate}
 \item\label{Zinfty} $\exists Z_i^\infty$ random variables such that for all $K \in \Zb$ we have that $\zeta_i^{l,K} \cd Z_i^\infty$ as $l \rightarrow \infty$;
 \item\label{Zsup} $\exists Z_{sup}$ random variable such that for all $l \geq 0$ and $K \geq -(l-h_{n_l}^\varepsilon)$ we have that $\zeta_i^{l,K}\preceq Z_{sup}$ and $\Er[Z_{sup}^{\gamma+\delta}]<\infty$ for some $\delta>0$.
\end{enumerate}
More specifically, since $N=1$ with high probability in FVIE \[Z_\infty^n=\frac{1}{1-\beta^{-1}}\Et[\Rc_\infty]\sum_{k=1}^{B_\infty}e_k\]
for some binomial variable $B_\infty$ and independent exponential variables $e_k$. These are independent of $n$, hence an adaptation of Proposition \ref{conver} shows that $\zeta^{(n)}$ converge in distribution.

Set  \[ S_N^l =  \sum_{i=1}^N \tilde{\chi}_{n_l}^{i*} \quad \Big| \Hc(\Tc_{\rho_i})\geq h_{n_l}^\varepsilon \quad \forall i=1,...,N.\]
For $(\lambda_l)_{l\geq 0}$ converging to $\lambda>0$ define $M_l^\lambda=\lfloor \lambda_l^\gamma \beta^{\gamma(l-h_{n_l}^\varepsilon)}\rfloor $ and $K_l^\lambda=\lambda \beta^l$ then denote $\overline{F}_\infty(x)=\Pr(Z_1^\infty>x)$. Theorem \ref{10.1} is Theorem 10.1 of \cite{arfrgaha}.
\begin{thm}\label{10.1}
 Suppose $\gamma<1$ and properties \ref{Zinfty} and \ref{Zsup} hold then
\[S_{M_l^\lambda}^l/K_l^\lambda \cd R_{d_\lambda,0,\Lc_\lambda}\] where $R_{d_\lambda,0,\Lc_\lambda}$ has an infinitely divisible law with drift \[d_\lambda =\lambda^{1+\gamma}(1-\beta^{-\gamma})\sum_{K \in \Zb}\beta^{(1+\gamma)K}\Er\left[\frac{Z_1^\infty}{(\lambda\beta^K)^2+(Z_1^\infty)^2}\right],\] $0$ variance and L\'{e}vy spectral function $\Lc_\lambda$ satisfying $\Lc_\lambda(x)=\lambda^\gamma \Lc_1(\lambda x)$ for all $\lambda>0, x \in \Rb$ with $\Lc_1(x)=0$ for $x<0$ and \[\Lc_1(x)=-(1-\beta^{-\gamma})\sum_{K \in \Zb} \beta^{K\gamma} \overline{F}_\infty(x\beta^{K})\] for $x\geq 0$.
\end{thm}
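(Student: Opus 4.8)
The plan is to apply Theorem~\ref{eqconv} to the triangular array $R_k(l):=\tilde\chi_{n_l}^{k*}/K_l^\lambda$, $k=1,\dots,M_l^\lambda$, which under the conditioning defining $S_{M_l^\lambda}^l$ are i.i.d.\ (the number of summands $M_l^\lambda$ being deterministic, no Anscombe-type step is needed here — the fluctuating count of large branches was absorbed in Lemma~\ref{sumiidstab}). The structural input is that, conditioned on its height exceeding $h_{n_l}^\varepsilon$, the trap height $\Hc(\Tc_{\rho_i})$ is asymptotically geometric with ratio $\mu=\beta^{-\gamma}$: by the tail asymptotic (\ref{cmu}) and $\gamma=\log(\mu^{-1})/\log(\beta)$, for each fixed $K\in\Zb$ the ratio $\Pr(\Hc(\Tc_{\rho_i})=l+K\mid\Hc(\Tc_{\rho_i})\geq h_{n_l}^\varepsilon)/\big((1-\mu)\mu^{l+K-h_{n_l}^\varepsilon}\big)\to1$ as $l\to\infty$. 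Since $\tilde\chi_{n_l}^{i*}=\zeta_i^{l,K}\beta^{l+K}$ on $\{\Hc(\Tc_{\rho_i})=l+K\}$, and $M_l^\lambda\sim\lambda^\gamma\beta^{\gamma(l-h_{n_l}^\varepsilon)}$, $K_l^\lambda=\lambda\beta^l$, one gets $R_1(l)=\zeta_1^{l,K}\beta^K/\lambda$ on $\{\Hc=l+K\}$ and $M_l^\lambda\Pr(\Hc=l+K\mid\Hc\geq h_{n_l}^\varepsilon)\to\lambda^\gamma(1-\beta^{-\gamma})\beta^{-\gamma K}$.

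First I would check the infinitesimality condition $\lim_l\Pb(R_1(l)>\epsilon)=0$: since $\zeta_1^l\preceq Z_{sup}$ by property~\ref{Zsup} and $\beta^{\Hc-h_{n_l}^\varepsilon}$ has a power-$\gamma$ tail not depending on $l$, while $\beta^{h_{n_l}^\varepsilon}/K_l^\lambda=\beta^{h_{n_l}^\varepsilon-l}/\lambda\to0$ because $h_{n_l}^\varepsilon-l\sim-\varepsilon l\to-\infty$, the variable $R_1(l)$ is a stochastically bounded quantity times a deterministic null sequence. Next, for the L\'evy spectral function: $\Lc_\lambda(x)=0$ for $x<0$ as the $\tilde\chi$'s are nonnegative, and for $x>0$ one decomposes on the height,
\[M_l^\lambda\Pb(R_1(l)>x)=M_l^\lambda\sum_{K\geq h_{n_l}^\varepsilon-l}\Pr(\Hc=l+K\mid\Hc\geq h_{n_l}^\varepsilon)\,\Pr\big(\zeta_1^{l,K}>x\lambda\beta^{-K}\big).\]
For each fixed $K$ the two factors converge, by property~\ref{Zinfty} and the height asymptotics above, to $\lambda^\gamma(1-\beta^{-\gamma})\beta^{-\gamma K}$ and $\overline F_\infty(x\lambda\beta^{-K})$. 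The crucial uniform estimate is that $\zeta_1^{l,K}\preceq Z_{sup}$ with $\Er[Z_{sup}^{\gamma+\delta}]<\infty$ gives $\Pr(\zeta_1^{l,K}>y)\leq\min\{1,Cy^{-(\gamma+\delta)}\}$ uniformly in $l$, so the general term of the sum is dominated uniformly in $l$ by $C\lambda^\gamma\beta^{-\gamma K}\min\{1,C(x\lambda\beta^{-K})^{-(\gamma+\delta)}\}$, summable over $K\in\Zb$ (geometric decay as $K\to+\infty$ from $\beta^{-\gamma K}$, and as $K\to-\infty$ from $\beta^{-\gamma K}\beta^{K(\gamma+\delta)}=\beta^{K\delta}$). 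Dominated convergence then yields $M_l^\lambda\Pb(R_1(l)>x)\to\lambda^\gamma(1-\beta^{-\gamma})\sum_{K\in\Zb}\beta^{-\gamma K}\overline F_\infty(x\lambda\beta^{-K})$, which after $K\mapsto-K$ is exactly $-\Lc_\lambda(x)$, and $\Lc_\lambda(x)=\lambda^\gamma\Lc_1(\lambda x)$ is read off the formula.

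The two remaining conditions of Theorem~\ref{eqconv} follow by the same decomposition and domination. For the variance, $M_l^\lambda\Er[R_1(l)^2\ind_{\{R_1(l)\leq\tau\}}]\to\lambda^\gamma(1-\beta^{-\gamma})\sum_K\mu^{-K}\Er[(Z_1^\infty\beta^{-K}/\lambda)^2\ind_{\{Z_1^\infty\beta^{-K}/\lambda\leq\tau\}}]$, and the $(\gamma+\delta)$-moment bound on $Z_{sup}$ shows the limiting L\'evy measure behaves like $x^{-\gamma-1}\,dx$ near $0$, so this quantity is $O(\tau^{2-\gamma})\to0$, giving $\sigma^2=0$. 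For the drift, the analogous limit of $M_l^\lambda\Er[R_1(l)\ind_{\{R_1(l)\leq\tau\}}]$ equals $\int_{0<x\leq\tau}x\,d\Lc_\lambda(x)$; substituting this into the prescription of Theorem~\ref{eqconv} makes the $x$- and $x^3$-pieces recombine, so $d_\lambda=\int_0^\infty\frac{x}{1+x^2}\,d\Lc_\lambda(x)$, and a change of variables $y=\lambda x\beta^K$ inside each summand produces the stated closed form $d_\lambda=\lambda^{1+\gamma}(1-\beta^{-\gamma})\sum_{K\in\Zb}\beta^{(1+\gamma)K}\Er\big[Z_1^\infty/((\lambda\beta^K)^2+(Z_1^\infty)^2)\big]$. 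The hard part will be the uniform-in-$l$ control needed to interchange $l\to\infty$ with the sum over the height offsets $K$ — the moving lower cutoff $K\geq h_{n_l}^\varepsilon-l$ and the fact that $\zeta_1^{l,K}$ converges only for fixed $K$ — and this is precisely what properties~\ref{Zinfty} and~\ref{Zsup} are engineered to supply; once the geometric tail of the height and the self-similar scaling of $M_l^\lambda$ and $K_l^\lambda$ are in hand, the rest is bookkeeping.
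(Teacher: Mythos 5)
Your proposal is correct and follows essentially the same route the paper uses: the paper does not prove Theorem~\ref{10.1} itself (it is imported verbatim from \cite{arfrgaha}), but its own proof of the IVIE analogue, Proposition~\ref{iveeinfdiv}, runs exactly along your lines — verify infinitesimality, decompose the tail/drift/variance of $\tilde{\chi}_{n_l}^{1*}/K_l^\lambda$ over the height offset $K$ via $\Pr^{\scriptscriptstyle K}$, pass to the limit for each fixed $K$ using property~\ref{Zinfty}, and justify the interchange with the sum over $K$ by dominated convergence with the uniform bound $\zeta_1^{l,K}\preceq Z_{sup}$ and $\Er[Z_{sup}^{\gamma+\delta}]<\infty$ from property~\ref{Zsup}. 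Your identification of the limiting L\'evy measure, the $\sigma^2=0$ estimate $O(\tau^{2-\gamma+o(1)})$, and the recombination of the drift terms into $\int_0^\infty \frac{x}{1+x^2}\,\mathrm{d}\Lc_\lambda(x)$ all match the computations in that proof.
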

Combining this with Corollary \ref{sumiidstab} with $\lambda=(tC_\Dc)^{1/\gamma}=(tc_\mu\Er[\xi^*-1])^{1/\gamma}$ and $l=h_{n_l}^0$ we have that 
\[\frac{\Delta_{n_l(t)}}{(C_\Dc n_l(t))^\frac{1}{\gamma}} \cd R_{d_{(tC_\Dc)^{1/\gamma}},0,\Lc_{(tC_\Dc)^{1/\gamma}}}\] which proves Theorem \ref{finvarthm}.

\subsection{Proof of Theorem \ref{infallthm} (IVIE)}
In IVIE write $\gamma_\alpha =(\alpha-1)\log(\mu^{-1})/\log(\beta)=(\alpha-1)\gamma$. By (\ref{numcrttrp}) we have that \[\Pr(\Hc(\Tc^{*-})> n) \sim c_\mu^{\alpha-1}\Gamma(2-\alpha)\Pr(\xi^* \geq \mu^{-n}) \sim C_{\mu,\alpha} \beta^{-\gamma_\alpha n} L\left(\beta^{\gamma_\alpha n}\right) \] for a known constant $C_{\mu,\alpha}$. Due to the slowly varying term, we cannot apply Theorem \ref{10.1} directly however Theorem \ref{10.1} is proved using Theorem \ref{eqconv}. Using this, it will suffice to show convergence of the drift, variance and L\'{e}vy spectral function in this case. 

Recall that we consider subsequences $n_l(t)$ such that $a_{n_l(t)} \sim t\mu^{-l}$. For $i,l\geq 1$ let $\zeta_i^l=\tilde{\chi}_{n_l}^{i*}\beta^{-\Hc(\Tc_{\rho_i})}$ under $\Hc(\Tc_{\rho_i}) \geq h_{n_l}^\varepsilon$ where $(\tilde{\chi}_{n_l}^{i*})_{i\geq 1}$ are i.i.d.\ with the law of $\chi_{n_l}^{1*}$ and $(\Tc_{\rho_i})_{i\geq 1}$ are the associated trees. For $K \geq -(l-h_{n_l}^\varepsilon)$ let $\zeta_i^{l,K}$ to be $\zeta_i^l$ under $\Hc(\Tc_{\rho_i})=l+K$ when this makes sense and $0$ otherwise. Denote $\overline{F}_{\scriptscriptstyle{K}}^l(x)=\Pr^{\scriptscriptstyle{K}}(\zeta_1^l>x)$. From Propositions \ref{conver} and \ref{nkconv} we then have that for any $K \in \Zb$ the laws of $\zeta_i^{l,K}$ converge to the laws of $Z_\infty$ as $l \rightarrow \infty$. Let $(Z_\infty^i)_{i\geq 1}$ be an independent sequence of variables with this law and denote $\overline{F}_\infty(x)=\Pr(Z_\infty>x)$. By Lemma \ref{dominate}, $\exists Z_{sup}$ such that $\zeta_i^{l,K} \preceq Z_{sup}$ for all $l \in \Nb, K \geq -(l-h_{n_l}^\varepsilon)$ and $\Er[Z_{sup}^{\gamma_\alpha +\delta}]<\infty$ for some $\delta>0$; we denote $\overline{F}_{sup}(x)=\Pr(Z_{sup}>x)$. For $(\lambda_l)_{l\geq 0}$ converging to $\lambda>0$ define $K_l^\lambda=\lambda \beta^{l}$ and for $C_{\alpha,\mu}=\mu^{-1}(2-\alpha)/(\alpha-1)$  \[M_l^\lambda=\left\lfloor \lambda_l^{\gamma_\alpha}  \beta^{\gamma_\alpha l} \frac{\Pr(\xi^*>\mu^{-h_{n_l}^\varepsilon})}{C_{\alpha,\mu}L(\mu^{-h_{n_l}^0})}\right\rfloor. \] 

\begin{prp}\label{iveeinfdiv} In IVIE, for any $\lambda>0$, as $l \rightarrow \infty$ 
 \begin{flalign*}
  \sum_{i=1}^{M_l^\lambda}\frac{\tilde{\chi}_{n_l}^{i*}}{K_l^\lambda} \cd R_{d_\lambda,0,\Lc_\lambda}
 \end{flalign*}
where
\begin{flalign*}
 d_\lambda & = \lambda^{1+\gamma_\alpha}(1-\beta^{-\gamma_\alpha})\sum_{K \in \Zb}\beta^{(1+\gamma_\alpha)K}\Eb\left[\frac{Z_\infty}{(\lambda\beta^{K})^2+Z_\infty^2}\right], \\
 \Lc_\lambda(x) & = \begin{cases} 0 & x\leq 0; \\ -\lambda^{\gamma_\alpha}(1-\beta^{-\gamma_\alpha}) \sum_{K \in \Zb}\beta^{K\gamma_\alpha}\overline{F}_\infty(\lambda x \beta^{K\gamma_\alpha}) & x>0.  \end{cases}
\end{flalign*}
 \begin{proof}
By Theorem \ref{eqconv} it suffices to show the following:
\begin{enumerate}
 \item\label{decay} for all $\varepsilon>0$ \[\lim_{l\rightarrow \infty} \Pr\left(\frac{\tilde{\chi}_{n_l}^{1*}}{K_l^\lambda}>\varepsilon \right)=0;\]
 \item\label{lvyspc} for all $x$ continuity points \[ \Lc_\lambda(x)=\begin{cases} 0 & x\leq 0, \\ -\lim_{l\rightarrow \infty} M_l^\lambda \Pr\left(\frac{\tilde{\chi}_{n_l}^{1*}}{K_l^\lambda}>x \right) & x>0;  \end{cases}\]
 \item\label{drft} for all $\tau>0$ continuity points of $\Lc$ \[d_\lambda  = \lim_{l \rightarrow \infty}M_l^\lambda\Er\left[\frac{\tilde{\chi}_{n_l}^{1*}}{K_l^\lambda}\ind_{\{\tilde{\chi}_{n_l}^{1*}\leq \tau K_l^\lambda\}}\right]+\int_{|x|\geq \tau}\frac{x}{1+x^2}\d\Lc_\lambda(x) -\int_{\tau\geq |x|>0}\frac{x^3}{1+x^2}\d\Lc_\lambda(x);\] 
 \item\label{sigsqr} \[\lim_{\tau\rightarrow 0}\limsup_{l\rightarrow \infty} M_l^\lambda Var\left(\frac{\tilde{\chi}_{n_l}^{1*}}{K_l^\lambda}\ind_{\{\tilde{\chi}_{n_l}^{1*}\leq \tau K_l^\lambda\}}\right)=0.\]
\end{enumerate}

We prove each of these in turn but we start by introducing a relation which will be fundamental to proving the final parts. For $K \in \Zb$ let $c_l^{\scriptscriptstyle{K}}=\Pr(\Hc(\Tc^{*-})> l+K| \Hc(\Tc^{*-})> h_{n_l}^\varepsilon)$ denote the probability that a deep branch is of height at least $l+K$. Then by the asymptotic (\ref{numcrttrp}) we have that, for $K$ such that $l+K\geq h_{n_l}^\varepsilon$, as $l \rightarrow \infty$
\begin{flalign*}
 c_l^{\scriptscriptstyle{K}} \; = \; \frac{\Pr(\Hc(\Tc^{*-})>l+K)}{\Pr(\Hc(\Tc^{*-})>h_{n_l}^\varepsilon)} \; \sim \; \mu^{(\alpha-1)K}\frac{\Pr(\Hc(\Tc^{*-})>l)}{\Pr(\Hc(\Tc^{*-})>h_{n_l}^\varepsilon)}.
\end{flalign*}
 In particular, using (\ref{numcrttrp}) and that $\beta^{\gamma_\alpha}=\mu^{-(\alpha-1)}$
 \begin{flalign*}
  M_l^\lambda c_l^{\scriptscriptstyle{K}} \sim \lambda^{\gamma_\alpha} \left(\frac{\Pr(\xi^*>\mu^{h_{n_l}^\varepsilon})}{\Pr(\Hc(\Tc^{*-})>h_{n_l}^\varepsilon)}\right)\left(\frac{\Pr(\Hc(\Tc^{*-})>h_{n_l}^0)}{\Pr(\xi^*>\mu^{h_{n_l}^0})}\right) \frac{\beta^{-\gamma_\alpha  l}}{\mu^{-(\alpha-1)l}}\mu^{(\alpha-1)K} \sim \lambda^{\gamma_\alpha}\beta^{-\gamma_\alpha K}
 \end{flalign*}
 thus $M_l^\lambda (c_l^{\scriptscriptstyle{K}}-c_l^{\scriptscriptstyle{K}+1})\rightarrow \lambda^{\gamma_\alpha}\beta^{-\gamma_\alpha  K}(1-\beta^{-\gamma_\alpha })$ and for any $\epsilon>0$ and large enough $l$ 
 \begin{flalign}\label{imbnd}
 M_l^\lambda c_l^{\scriptscriptstyle{K}} \leq C_\epsilon\lambda^{\gamma_\alpha}  \beta^{-\gamma_\alpha  K}\beta^{\epsilon|K|}. 
 \end{flalign}
 
To prove (\ref{decay}), notice that 
 \begin{flalign*}
 \Pr\left(\frac{\tilde{\chi}_{n_l}^{1*}}{K_l^\lambda}>\varepsilon \right)  \leq \Pr(\Hc(\Tc^{*-})\geq h_n^{\varepsilon/2}|\Hc(\Tc^{*-})\geq h_n^{\varepsilon}) + \Pr\left(\beta^{h_n^{\varepsilon/2}-l}Z_{sup}>\lambda\varepsilon \right). 
 \end{flalign*}
Both terms converge to $0$ as $l \rightarrow \infty$ by the tail formula of a branch (\ref{numcrttrp}), the fact that $Z_{sup}$ has no atom at $\infty$ and that $\beta^{h_{n_l}^{\varepsilon/2}-l}\rightarrow 0$ which follows from $l\sim h_n^0$.

For (\ref{lvyspc}) we have that
\begin{flalign*}
M_l^\lambda \Pb\left(\frac{\tilde{\chi}_{n_l}^{1*}}{K_l^\lambda}>x\right) & = \sum_{K \in \Zb}\ind_{\{K\geq -(l-h_{n_l}^\varepsilon\}}M_l^\lambda \Pr(\Hc(\Tc^{*-})=l+K)\Pb^{\scriptscriptstyle{K}}\left(\frac{\tilde{\chi}_{n_l}^{1*}}{K_l^\lambda}>x\right) \\
& =  \sum_{K \in \Zb}\ind_{\{K\geq -(l-h_{n_l}^\varepsilon\}}M_l^\lambda (c_l^{\scriptscriptstyle{K}}-c_l^{\scriptscriptstyle{K}+1})\overline{F}_{\scriptscriptstyle{K}}^l(\lambda \beta^{-K}x).
\end{flalign*}
If $x>0$ is a continuity point of $\Lc_\lambda$ then $\lambda x\beta^{-K}$ is a continuity point of $\overline{F}_\infty$ hence for any $K\in \Zb$ as $l \rightarrow \infty$ \[ \ind_{\{K \geq -(l-h_{n_l}^\varepsilon)\}}M_l^\lambda (c_l^{\scriptscriptstyle{K}}-c_l^{\scriptscriptstyle{K}+1})\overline{F}_{\scriptscriptstyle{K}}^l(\lambda\beta^{-K}x) \rightarrow  \lambda^{\gamma_\alpha}\beta^{-\gamma_\alpha  K}(1-\beta^{-\gamma_\alpha })\overline{F}_\infty(\lambda\beta^{-K}x).\]
We need to exchange the sum and the limit; we do this using dominated convergence. Since $\gamma_\alpha<1$ we can choose $\delta>0$ such that $\gamma_\alpha+\delta<1$ and $\delta<\gamma_\alpha$. By (\ref{imbnd}), for $l$ sufficiently large $M_l^\lambda c_l^{\scriptscriptstyle{K}} \leq C_{\delta,\lambda}\beta^{-\gamma_\alpha  K}\beta^{\frac{\delta}{2}|K|}$ hence 
\[ \sum_{K\geq -(l-h_{n_l}^\varepsilon)}M_l^\lambda(c_l^{\scriptscriptstyle{K}}-c_l^{\scriptscriptstyle{K}+1})\overline{F}_{\scriptscriptstyle{K}}^l(\lambda\beta^{-K}x)  \leq C\sum_{K\in \Zb}\overline{F}_{sup}(\lambda x\beta^{-K})\beta^{-\gamma_\alpha  K}\beta^{\frac{\delta}{2}|K|}. \]
Since $Z_{sup}$ has moments up to $\gamma_\alpha +\delta$ we have that for $y=\lambda x$
\begin{flalign*}
 \sum_{K< 0}\overline{F}_{sup}(\lambda x\beta^{-K})\beta^{-\gamma_\alpha  K}\beta^{\frac{\delta}{2}|K|} \; = \; \Er\left[\sum_{K=0}^{\left\lfloor \frac{\log(Z_{sup}/y)}{\log(\beta)}\right\rfloor} \beta^{K(\gamma_\alpha  + \delta/2)}\right] \; \leq \; C_y\Er\left[Z_{sup}^{\gamma_\alpha +\frac{\delta}{2}}\right]
\end{flalign*}
which is finite. Clearly
\[ \sum_{K\geq 0}\overline{F}_{sup}(\lambda x\beta^{-K})\beta^{-\gamma_\alpha  K}\beta^{\frac{\delta}{2}|K|}  \leq \sum_{K\geq 0}\beta^{\left(\frac{\delta}{2}-\gamma_\alpha \right) K}   <\infty\]
by choice of $\delta$. It therefore follows that for $x>0$ \[ -\lim_{l\rightarrow \infty} M_l^\lambda \Pr\left(\frac{\tilde{\chi}_{n_l}^{1*}}{K_l^\lambda}>x \right)=-\lambda^{\gamma_\alpha} (1-\beta^{-\gamma_\alpha })\sum_{K\in \Zb}\overline{F}_\infty(\lambda x \beta^{\gamma_\alpha  K})\beta^{\gamma_\alpha K}.\] 
Moreover, for $x<0$ we have that $ \Pr\left(\tilde{\chi}_{n_l}^{1*}/K_l^\lambda<x \right)=0$ which gives (\ref{lvyspc}). 

For (\ref{drft}) we have that $\int_0^\tau x \d\Lc_\lambda$ is well defined therefore
\begin{flalign*}
 \int_\tau^\infty \frac{x}{1+x^2} \d\Lc_\lambda - \int_0^\tau \frac{x^3}{1+x^2} \d\Lc_\lambda \; = \; \int_0^\infty \frac{x}{1+x^2} \d\Lc_\lambda -\int_0^\tau x \d\Lc_\lambda.
\end{flalign*}
We therefore want to show that \[ \lim_{l \rightarrow \infty}\frac{M_l^\lambda}{K_l^\lambda} \Er[\tilde{\chi}_{n_l}^{1*}\ind_{\{\tilde{\chi}_{n_l}^{1*}\leq \tau K_l^\lambda\}}] = \int_0^\tau x \d\Lc_\lambda. \]
Write $G_{\scriptscriptstyle{K}}^l(u)=\Er^{\scriptscriptstyle{K}}\left[Z_n\ind_{\{Z_n\leq u\}} \right]$ and $G_\infty(u)=\Er[Z_\infty\ind_{\{Z_\infty\leq u\}}]$. Then we have that 
\begin{flalign*}
 \frac{M_l^\lambda}{K_l^\lambda} \Er[\tilde{\chi}_{n_l}^{1*}\ind_{\{\tilde{\chi}_{n_l}^{1*}\leq \tau K_l^\lambda\}}]  & = \lambda^{-1}\sum_{K\geq -(l-h_{n_l}^\varepsilon)}M_l^\lambda(c_l^{\scriptscriptstyle{K}}-c_l^{\scriptscriptstyle{K}+1})\beta^{K} G_{\scriptscriptstyle{K}}^l(\tau \lambda \beta^{-K}). 
\end{flalign*}
For each $K\in \Zb$ as $l\rightarrow \infty$ \[M_l^\lambda(c_l^{\scriptscriptstyle{K}}-c_l^{\scriptscriptstyle{K}+1})\beta^{K} G_{\scriptscriptstyle{K}}^l(\tau \lambda \beta^{-K}) \rightarrow \lambda^{\gamma_\alpha}(1-\beta^{-\gamma_\alpha })\beta^{(1-\gamma_\alpha )K}G_\infty(\tau\lambda \beta^{-K}).\] 
We want to exchange the limit and the sum which we do by dominated convergence. For any $\kappa \in [0,1]$ and random variable $Y$ we have that $\Er[Y\ind_{\{Y \leq u\}}]\leq u^\kappa\Er[Y^{1-\kappa}\ind_{\{Y\leq u\}}]$. Using this with $u=\tau\lambda \beta^{-K}$ where $\kappa=1-\gamma_\alpha-2\delta/3$ for $K\geq 0$ and $\kappa=1$ for $K<0$, alongside (\ref{imbnd}) we have that
\begin{flalign*}
 & \sum_{K\in \Zb}\ind_{\{K \geq -(l-h_{n_l}^\varepsilon)\}}M_l^\lambda(c_l^{\scriptscriptstyle{K}}-c_l^{\scriptscriptstyle{K}+1})\beta^K G_{\scriptscriptstyle{K}}^l(\tau \lambda \beta^{-K}) \\
  & \qquad \qquad \leq \sum_{K\geq 0}M_l^\lambda(c_l^{\scriptscriptstyle{K}}-c_l^{\scriptscriptstyle{K}+1})\beta^K\tau \lambda \beta^{-K} \\
  & \qquad \qquad \qquad + \sum_{K<0} M_l^\lambda(c_l^{\scriptscriptstyle{K}}-c_l^{\scriptscriptstyle{K}+1})\beta^K \left(\beta^{\frac{2\delta}{3} K}(\tau \lambda)^{1-\gamma_\alpha -\frac{2\delta}{3}}\Er[Z_{sup}^{\gamma_\alpha +\frac{2\delta}{3}}]\beta^{(\gamma_\alpha -1)K}\right) \\
  & \qquad \qquad \leq C_\lambda \tau \sum_{K\geq 0} \beta^{-(\gamma_\alpha-\delta/2)  K} + C_\lambda\tau^{1-\gamma_\alpha -\frac{2\delta}{3}}\Er[Z_{sup}^{\gamma_\alpha +\frac{2\delta}{3}}]\sum_{K<0}\beta^{\frac{\delta}{6} K}
\end{flalign*}
 which is finite since $\gamma_\alpha>\delta/2$ and $Z_{sup}$ has moments up to $\gamma_\alpha+\delta$. We therefore have that \[\lim_{l\rightarrow \infty} \frac{M_l^\lambda}{K_l^\lambda} \Er[\tilde{\chi}_{n_l}^{1*}\ind_{\{\tilde{\chi}_{n_l}^{1*}\leq \tau K_l^\lambda\}}] = \lambda^{\gamma_\alpha -1}(1-\beta^{-\gamma_\alpha })\sum_{K\in \Zb}\beta^{K(\gamma_\alpha -1)}G_\infty(\tau\lambda\beta^K). \]
 By definition we have that 
 \begin{flalign*}
  \int_0^\tau x \d\Lc_\lambda & = \lambda^{\gamma_\alpha} (1-\beta^{-\gamma_\alpha })\int_0^\tau x\sum_{K\in \Zb}\beta^{\gamma_\alpha  K}\d(-\overline{F}_\infty)(\lambda x \beta^K) \\
  & = \lambda^{\gamma_\alpha-1} (1-\beta^{-\gamma_\alpha })\sum_{K\in \Zb}\beta^{(\gamma_\alpha-1) K}\int_{\lambda x\beta^K\leq \lambda \tau\beta^K} \lambda x\beta^K\d(-\overline{F}_\infty)(\lambda x \beta^K) \\
  & = \lambda^{\gamma_\alpha -1}(1-\beta^{-\gamma_\alpha })\sum_{K\in \Zb}\beta^{K(\gamma_\alpha -1)}G_\infty(\tau\lambda\beta^K).
 \end{flalign*}
It therefore remains to calculate $\int_0^\infty \frac{x}{1+x^2}\d\Lc_\lambda$.
\begin{flalign*}
 \int_0^\infty \frac{x}{1+x^2}\d\Lc_\lambda & = \lambda^{\gamma_\alpha}(1-\beta^{-\gamma_\alpha})\int_0^\infty \frac{x}{1+x^2} \sum_{K \in \Zb}\beta^{\gamma_\alpha K}\d(-\overline{F}_\infty)(\lambda x\beta^K) \\
  & = \lambda^{\gamma_\alpha+1}(1-\beta^{-\gamma_\alpha})\sum_{K \in \Zb}\beta^{(\gamma_\alpha+1)K} \Eb\left[\frac{Z_\infty}{(\lambda\beta^K)^2+Z_\infty^2}\right].
\end{flalign*}
 The final sum is finite since for $K<0$
 \[\beta^{(\gamma_\alpha+1)K} \Eb\left[\frac{Z_\infty}{(\lambda\beta^K)^2+Z_\infty^2}\right] = \lambda^{-1}\beta^{\gamma_\alpha K} \Eb\left[\frac{\lambda \beta^KZ_\infty}{(\lambda\beta^K)^2+Z_\infty^2}\right]\leq \lambda^{-1}\beta^{\gamma_\alpha K} \]
Which is summable and for $K\geq 0$
\begin{flalign*}
 \Eb\left[\frac{Z_\infty}{(\lambda\beta^K)^2+Z_\infty^2}\right]   \leq \Eb\left[\frac{Z_\infty}{(\lambda\beta^K)^2}\ind_{\{Z_\infty \leq \lambda\beta^K\}}+ Z_\infty^{-1}\ind_{\{Z_\infty \geq \lambda\beta^K\}}\right]  \leq C_\lambda\Eb\left[Z_{sup}^{\gamma_\alpha+\delta/2}\right]\beta^{-K(1+\gamma_\alpha+\delta/2)}
\end{flalign*}
which, multiplied by $\beta^{(\gamma_\alpha+1)K}$, is summable.

It now remains to prove (\ref{sigsqr}). It suffices to show that 
\begin{flalign}\label{sigdec}
\lim_{\tau\rightarrow 0^+}\lim_{l\rightarrow \infty} \frac{M_l^\lambda}{(K_l^\lambda)^2}\Er\left[(\tilde{\chi}_{n_l}^{1*})^2\ind_{\{\tilde{\chi}_{n_l}^{1*}\leq \tau K_l^\lambda\}}\right] =0.
\end{flalign}
Write $H_{\scriptscriptstyle{K}}^l(u)=\Er^{\scriptscriptstyle{K}}\left[(\zeta_1^l)^2\ind_{\{\zeta_1^l\leq u\}} \right]$ then 
 \begin{flalign*}
  \frac{M_l^\lambda}{(K_l^\lambda)^2}\Er\left[(\tilde{\chi}_{n_l}^{1*})^2\ind_{\{\tilde{\chi}_{n_l}^{1*}\leq \tau K_l^\lambda\}}\right] & = \frac{M_l^\lambda}{(K_l^\lambda)^2}\sum_{K\in \Zb}(c_l^{\scriptscriptstyle{K}}-c_l^{\scriptscriptstyle{K}+1})\beta^{2(l+K)}H_{\scriptscriptstyle{K}}^l(\tau\lambda\beta^{-K}) \\
  & \leq C_\lambda \sum_{K\in \Zb}\beta^{(2-\gamma_\alpha )K}\beta^{\frac{\delta}{2}|K|}H_{\scriptscriptstyle{K}}^{l}(\tau\lambda\beta^{-K}). 
 \end{flalign*}
Using that for any random variable $Y$ we have $\Eb[Y^2\ind_{Y\leq u}]\leq u^\kappa\Eb[Y^{2-\kappa}\ind_{Y\leq u}]$ with $u=\tau\lambda\beta^{-K}$ and $\kappa =2$ it follows that
\begin{flalign*}
 \sum_{K \geq 0}\beta^{(2-\gamma_\alpha )K}\beta^{\frac{\delta}{2}|K|}H_{\scriptscriptstyle{K}}^l(\tau\lambda\beta^{-K}) \; \leq \; C\tau^2\sum_{K\geq 0}\beta^{-(\gamma_\alpha-\delta/2)K} \; \leq \; C\tau^2
\end{flalign*}
where the constant $C$ depends on $\beta, \gamma_\alpha$ and $\delta$. Then, with $u=\tau\lambda\beta^{-K}, \; \kappa =2-\gamma_\alpha-2\delta/3$ we have that \[H_{\scriptscriptstyle{K}}^l(\tau\lambda\beta^{-K})\beta^{(2-\gamma_\alpha )K}\leq \beta^{\frac{2\delta}{3}K}(\tau\lambda)^{2-\gamma_\alpha -\frac{2\delta}{3}}\Er[Z_{sup}^{\gamma_\alpha +\frac{2\delta}{3}}]\] and therefore 
\begin{flalign*}
 \sum_{K \leq 0}\beta^{(2-\gamma_\alpha )K}\beta^{\frac{\delta}{2}|K|} H_{\scriptscriptstyle{K}}^l(\tau\lambda\beta^{-K}) \; \leq \; C\tau^{2-\gamma_\alpha -\frac{2\delta}{3}}\Er[Z_{sup}^{\gamma_\alpha +\frac{2\delta}{3}}]\sum_{K\geq 0}\beta^{\frac{\delta}{6}K} \; \leq \; C\tau^{2-\gamma_\alpha-\frac{2\delta}{3}}.
 \end{flalign*}
Since $\gamma_\alpha+\frac{2\delta}{3}<2$ we have that (\ref{sigdec}) holds.
 \end{proof}
\end{prp}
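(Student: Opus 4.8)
Proof proposal for Proposition \ref{iveeinfdiv}.

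The plan is to apply the triangular‑array limit theorem, Theorem \ref{eqconv}, to the row sums $\sum_{i=1}^{M_l^\lambda}\tilde{\chi}_{n_l}^{i*}/K_l^\lambda$. Because the normalising sequence carries the slowly varying factor $L$ from (\ref{trsm}), the scaled statement of Theorem \ref{10.1} cannot be quoted verbatim; however, that theorem is itself proved via Theorem \ref{eqconv}, so it suffices to verify its four hypotheses directly: (i) $\Pr(\tilde{\chi}_{n_l}^{1*}/K_l^\lambda>\varepsilon)\to 0$ for every $\varepsilon>0$; (ii) convergence of the L\'evy spectral function to the claimed $\Lc_\lambda$; (iii) convergence of the truncated first moment plus the correction integrals to the claimed drift $d_\lambda$; and (iv) vanishing of the truncated variance as $\tau\to 0^+$. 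For (i) I would split on whether the branch height exceeds $h_{n_l}^{\varepsilon/2}$, bounding the first piece by the tail estimate (\ref{numcrttrp}) and the second by the absence of an atom at $\infty$ for the dominating variable $Z_{sup}$ of Lemma \ref{dominate}, using that $\beta^{h_{n_l}^{\varepsilon/2}-l}\to 0$ since $l\sim h_{n_l}^0$.

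The structural step underpinning (ii)--(iv) is to condition on the height $l+K$ of the branch and write $M_l^\lambda\Pb(\tilde{\chi}_{n_l}^{1*}/K_l^\lambda>x)=\sum_K\ind_{\{K\ge -(l-h_{n_l}^\varepsilon)\}}M_l^\lambda(c_l^{\scriptscriptstyle{K}}-c_l^{\scriptscriptstyle{K}+1})\overline{F}_{\scriptscriptstyle{K}}^l(\lambda\beta^{-K}x)$, and analogously for the truncated moments $G_{\scriptscriptstyle{K}}^l$ and $H_{\scriptscriptstyle{K}}^l$, where $c_l^{\scriptscriptstyle{K}}=\Pr(\Hc(\Tc^{*-})>l+K\mid\Hc(\Tc^{*-})>h_{n_l}^\varepsilon)$. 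From the regularly varying tail (\ref{numcrttrp}) and $\beta^{\gamma_\alpha}=\mu^{-(\alpha-1)}$ one extracts the fundamental asymptotic $M_l^\lambda(c_l^{\scriptscriptstyle{K}}-c_l^{\scriptscriptstyle{K}+1})\to\lambda^{\gamma_\alpha}\beta^{-\gamma_\alpha K}(1-\beta^{-\gamma_\alpha})$, together with the uniform bound $M_l^\lambda c_l^{\scriptscriptstyle{K}}\le C_\epsilon\lambda^{\gamma_\alpha}\beta^{-\gamma_\alpha K}\beta^{\epsilon|K|}$ for large $l$, with $\epsilon>0$ free. Combined with the distributional convergence $\overline{F}_{\scriptscriptstyle{K}}^l\to\overline{F}_\infty$, valid for each fixed $K$ along the subsequence $n_l$ (using $a_{n_l}\sim t\mu^{-l}$) by Propositions \ref{conver} and \ref{nkconv}, this yields the termwise limits; the summand limits integrate to $\Lc_\lambda$, and the drift and truncated‑moment sums are identified with $\int_0^\infty x(1+x^2)^{-1}\,\d\Lc_\lambda$ and $\int_0^\tau x\,\d\Lc_\lambda$ by direct computation (substituting $u=\tau\lambda\beta^{-K}$ in the Stieltjes integrals against $\overline{F}_\infty$).

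The main obstacle is justifying the exchange of sum and limit. I would fix $\delta>0$ with $\gamma_\alpha+\delta<1$ and $\delta<\gamma_\alpha$, so that $M_l^\lambda c_l^{\scriptscriptstyle{K}}\le C_{\delta,\lambda}\beta^{-\gamma_\alpha K}\beta^{(\delta/2)|K|}$ for large $l$. For $K\ge 0$ the residual factor $\beta^{(\delta/2-\gamma_\alpha)K}$ is summable on its own (for the moment sums one first applies $\Eb[Y^m\ind_{\{Y\le u\}}]\le u^\kappa\Eb[Y^{m-\kappa}\ind_{\{Y\le u\}}]$ with $\kappa$ tuned to $m-\gamma_\alpha-2\delta/3$); for $K<0$ one rewrites $\sum_{K<0}\overline{F}_{sup}(\lambda x\beta^{-K})\beta^{-\gamma_\alpha K}\beta^{(\delta/2)|K|}$ as an expectation and bounds it by $C_{\lambda,x}\Er[Z_{sup}^{\gamma_\alpha+\delta/2}]$, which is finite precisely because Lemma \ref{dominate} provides moments of $Z_{sup}$ up to $\gamma_\alpha+\delta$. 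Running the same bookkeeping for $\sum_K\beta^{(1-\gamma_\alpha)K}G_\infty(\tau\lambda\beta^{-K})$ and for $\sum_K\beta^{(2-\gamma_\alpha)K}H_{\scriptscriptstyle{K}}^l(\tau\lambda\beta^{-K})$ delivers (iii) and, since the latter is $O(\tau^2)+O(\tau^{2-\gamma_\alpha-2\delta/3})$, also (iv). This moment‑balancing against a slowly varying, heavy tail is the genuinely new difficulty relative to the finite‑variance argument of \cite{arfrgaha}; the remaining computations are routine.
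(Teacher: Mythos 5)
Your proposal is correct and follows essentially the same route as the paper: you invoke Theorem \ref{eqconv} directly rather than Theorem \ref{10.1} (for precisely the reason the paper gives, namely the slowly varying factor in the normalisation), you introduce the same conditional tail probabilities $c_l^{\scriptscriptstyle K}$, derive the same limit $M_l^\lambda(c_l^{\scriptscriptstyle K}-c_l^{\scriptscriptstyle{K}+1})\to\lambda^{\gamma_\alpha}\beta^{-\gamma_\alpha K}(1-\beta^{-\gamma_\alpha})$ with the same $\beta^{\epsilon|K|}$ uniform bound, and justify the sum--limit interchange by dominated convergence using the $Z_{sup}$ moment bound of Lemma \ref{dominate} and the same $\kappa$-tuning trick $\Eb[Y^m\ind_{\{Y\le u\}}]\le u^\kappa\Eb[Y^{m-\kappa}\ind_{\{Y\le u\}}]$ for the truncated first and second moments. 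The paper proof carries out exactly this plan with the same handling of $K\ge 0$ and $K<0$, so there is nothing of substance to add.
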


Combining Proposition \ref{iveeinfdiv} with Corollary \ref{uspdiff} and Lemma \ref{sumiidstab} with \[\lambda=\Gamma(2-\alpha)^{\frac{1}{\gamma_\alpha}}c_\mu^{\frac{1}{\gamma}}\beta^{\frac{\log(t)}{\log(\mu^{-1})}-\left\lfloor\frac{\log(t)}{\log(\mu^{-1})}\right\rfloor}\] proves Theorem \ref{infallthm}.

\section{Tightness}\label{tght}
We conclude the results for the walk on the subcritical tree with Theorem \ref{tghtthm} which is a tightness result for the process and a convergence result for the scaling exponent. We only prove the result in IVIE since the proof is standard (similar to that of Theorem 1.1 of \cite{arfrgaha}) and the other cases follow by the same method; however, we state the proof more generally. Recall that $r_n$ is $a_n$ in IVFE, $n^{1/\gamma}$ in FVIE, $a_n^{1/\gamma}$ in IVIE and $b_n:=\max\{m\geq 0:r_m\leq n\}$.

\begin{proof}[Proof of Theorem \ref{tghtthm}]
 As stated previously, we only prove the results in IVIE since the others follow by a similar calculation.
 
 For statement \ref{deltght} we want to show that $\lim_{t\rightarrow \infty}\limsup_{n\rightarrow \infty}\Pb\left(\Delta_n/r_n \notin [t^{-1},t]\right)=0$. Let $k$ be such that $a_{n_k(1)} \leq a_n < a_{n_{k+1}(1)}$ then
 \begin{flalign*}
  \Pb\left(\frac{\Delta_n}{a_n^{1/\gamma}} \notin[t^{-1},t]\right) & \leq \Pb\left(\frac{\Delta_{n_k(1)}}{a_{n_{k+1}(1)}^{1/\gamma}}<t^{-1}\right) + \Pb\left(\frac{\Delta_{n_{k+1}(1)}}{a_{n_k(1)}^{1/\gamma}}>t\right).
 \end{flalign*}
Since, for large enough $n$ we have that $(a_{n_{k+1}(1)}/a_{n_k(1)})^{1/\gamma}$ can be bounded above by some constant $c$, by continuity of the distribution of $R_1$ (which follows from $\lim_{x\rightarrow 0}\Lc(x)=-\infty$ and Theorem III.2 of \cite{pe}) \[\lim_{t\rightarrow \infty}\limsup_{n\rightarrow \infty}\Pb\left(\Delta_n/r_n \notin [t^{-1},t]\right) \leq \lim_{t\rightarrow \infty} \Pb\left(R_1 \notin [(tc)^{-1},tc]\right)=0.\]

For statement \ref{xtght} we want to show that $\lim_{t\rightarrow \infty}\limsup_{n\rightarrow \infty}\Pb\left(|X_n|/b_n \notin [t^{-1},t]\right)=0$. We want to compare $|X_n|$ with $\Delta_n$. In order to deal with the depth $X_n$ reaches into the traps we use a bound for the height of a trap; for any $\delta>0$ we have \[\Pb\left(\frac{|X_n|}{b_n}\geq t\right) \leq \Pb\left(\Delta_{\lfloor tb_n-b_n^\delta\rfloor}\leq n\right)+(tb_n-b_n^\delta)\Pb\left(\Hc(\Tc^{*-})\geq b_n^\delta\right).\]
By (\ref{numcrttrp}) we have that $(tb_n-b_n^\delta)\Pb\left(\Hc(\Tc^{*-})\geq b_n^\delta\right)\rightarrow 0$ as $\nin$. Using the definition of $b_n$ we have that
\[\Pb\left(\Delta_{\lfloor tb_n-b_n^\delta\rfloor}\leq n\right) \leq \Pb\left(\frac{\Delta_{\lfloor tb_n-b_n^\delta\rfloor}}{a_{tb_n-b_n^\delta}^{1/\gamma}}\leq \frac{a_{b_n+1}^{1/\gamma}}{a_{tb_n-b_n^\delta}^{1/\gamma}}\right).\]
Since $a_{b_n+1}^{1/\gamma}/a_{tb_n-b_n^\delta}^{1/\gamma}$ converges to $t^{-1/\gamma_\alpha}$ as $\nin$, by continuity of the distribution of $R_1$ and statement \ref{deltght} we have that $\lim_{t\rightarrow \infty}\limsup_{n\rightarrow \infty}\Pb\left(|X_n|/b_n >t\right)=0$. 

It remains to show that $\lim_{t\rightarrow \infty}\limsup_{n\rightarrow \infty}\Pb\left(|X_n|/b_n <t^{-1}\right)=0$. In this case we need to bound how far the walker backtracks after reaching a new furthest point in order to compare $|X_n|$ with $\Delta_n$. We have that \[\max_{i<j\leq n} (|X_i|-|X_j|) \leq \tau_1 \lor \max_{2\leq i\leq n} (\tau_i-\tau_{i-1}) + \max_{0\leq i \leq n}\Hc(\Tc^{*-}_{\rho_i})\] where $\tau_i$ are the regeneration times for $Y$. In particular, $(\tau_i-\tau_{i-1}), \tau_1$ and $\Hc(\Tc^{*-}_{\rho_i})$ have exponential moments for all $i$ therefore for any $\delta>0$
\[\limn\Pb\left(\max_{i<j\leq n} (|X_i|-|X_j|)>b_n^\delta\right)=0.\]
We then have that
\begin{flalign*}
 \Pb\left(|X_n|/b_n <t^{-1}\right) & \leq \Pb\left(\max_{i<j\leq n} |X_i|-|X_j|>b_n^\delta\right) + \Pb\left(\Delta_{\lfloor t^{-1}b_n+b_n^\delta\rfloor}>n\right) \\
 & \leq o(1) + \Pb\left(\frac{\Delta_{\lfloor 2t^{-1}b_n\rfloor}}{a_{2t^{-1}b_n}^{1/\gamma}}>\frac{a_{b_n}^{1/\gamma}}{a_{2t^{-1}b_n}^{1/\gamma}}\right).
\end{flalign*}
Then, since $a_{b_n}^{1/\gamma}/a_{2t^{-1}b_n}^{1/\gamma} \rightarrow (t/2)^{1/\gamma_\alpha}$ as $\nin$, by continuity of the distribution of $R_1$ and statement \ref{deltght} we indeed have that $\lim_{t\rightarrow \infty}\limsup_{n\rightarrow \infty}\Pb\left(|X_n|/b_n <t^{-1}\right)=0$.

For the final statement notice that \[\Pb\left(\limn \frac{\log|X_n|}{\log(n)}\neq \gamma(\alpha-1)\right)=\Pb\left(\limn \frac{\log|X_n|}{\log(b_n)}\frac{\log(b_n)}{\log(n)}\neq \gamma(\alpha-1)\right)\]
and since $b_n=n^{\gamma(\alpha-1)}\tilde{L}(n)$ for some slowly varying function $\tilde{L}$ we have that as $\nin$ $\log(b_n)/\log(n) \rightarrow \gamma(\alpha-1)$ thus it suffices to show that the following is equal to $0$
\begin{flalign*}
\Pb\left(\limn \frac{\log|X_n|}{\log(b_n)}\neq 1\right)  \leq \Pb\left(\limsup_{\nin} \frac{\log|X_n|}{\log(b_n)}>1\right) + \lim_{t\rightarrow \infty}\Pb\left(\liminf_{\nin} \frac{|X_n|}{b_n} \leq t^{-1}\right).
\end{flalign*}

By Fatou we can bound the second term above by $\lim\limits_{t \rightarrow \infty} \liminf\limits_{\nin} \; \Pb\left(|X_n|/b_n \leq t^{-1}\right)$ which is equal to $0$ by tightness of $(|X_n|/b_n)_{n\geq 0}$.

For the first term we have
\begin{flalign*}
 \Pb\left(\limsup_{\nin} \frac{\log|X_n|}{\log(b_n)}>1\right) & = \lim_{\varepsilon \rightarrow 0^+}\Pb\left(\limsup_{\nin}\frac{\log|X_n|}{\log(b_n)}\geq 1+\varepsilon\right) \\
 & \leq \lim_{\varepsilon \rightarrow 0^+}\Pb\left(\limn\frac{\sup_{k\leq n}|X_n|}{b_n^{1+\varepsilon}}\geq 1\right).
\end{flalign*}

Writing $D'(n):=\{\max\limits_{i=0,...,n}\Hc(\Tc^{*-}_{\rho_i})\leq 4\log(a_n)/\log(\mu^{-1})\}$ we have that $\Pb(D'(n)^c)=o(n^{-2})$ by (\ref{numcrttrp}) thus $\Pr(D'(n)^c \; i.o.)=0$. On $D'(n)$
\[\sup_{k\leq n}|X_k| \leq |X_{\kappa_n}|+ \kappa_{n+1}-\kappa_n+\frac{4\log(a_n)}{\log(\mu^{-1})}\]
where $\kappa_n$ is the last regeneration time of $Y$ before time $n$. Therefore, since $\kappa_{n+1}-\kappa_n$ have exponential moments we have that $\Pb(\limsup_{\nin}(\kappa_{n+1}-\kappa_n)\geq b_n) =0$; hence,
\[\Pb\left(\limn\frac{\sup_{k\leq n}|X_n|}{b_n^{1+\varepsilon}}\geq 1\right) \leq \Pb\left(\liminf_{\nin}\frac{|X_{\kappa_n}|}{b_n^{1+\varepsilon}}\geq 1-o(1)\right) \leq \lim_{t\rightarrow \infty}\liminf_{\nin}\Pb\left(\frac{|X_n|}{b_n}\geq t\right)\]
where the second inequality follows by Fatou's lemma. The result follows by tightness of $(|X_n|/b_n)_{n\geq 0}$.
\end{proof}

Theorem \ref{finexcthm} follows from Theorem \ref{tghtthm}, Proposition \ref{convinffee} and Corollary \ref{sumiidstab} with $\lambda =t$ since $nq_n\sim n^\varepsilon$. More specifically, since $R_{d_t,0,\Lc_t}$ is the infinitely divisible law with characteristic exponent
\begin{flalign*}
 id_1t + \int_0^\infty e^{itx}-1-\frac{itx}{1+x^2}\d\Lc_1(x)  = \int_0^\infty e^{itx}-1 d\Lc_1(x)  = t^{-(\alpha-1)}\int_0^\infty e^{ix}-1 d\Lc_1(x)
\end{flalign*}
by a simple change of variables calculation we have that the laws of the process $\left(\Delta_{nt}/a_n\right)_{t\geq 0}$ converge weakly as $\nin$ under $\Pb$ with respect to the Skorohod $J_1$ topology on $D([0,\infty),\Rb)$ to the law of the stable subordinator with characteristic function $\varphi(t)=e^{-C_\alpha t^{\alpha-1}}$ where $C_{\alpha,\beta,\mu}=-\int_0^\infty e^{ix}-1 d\Lc_1(x)$. A straightforward calculation then shows that the Laplace transform is of the form 
\begin{flalign*}
 \varphi_t(s)  = \Eb[e^{-sX_{d_t,0,\Lc_t}}] = e^{-ts^{\alpha-1} C_{\alpha,\beta,\mu}}
\end{flalign*}
where 
\begin{flalign}\label{Cval}C_{\alpha,\beta,\mu}=\frac{\pi(\alpha-1)}{\sin\left(\pi(\alpha-1)\right)}\cdot\left(\frac{\beta(1-\beta\mu)}{2(\beta-1)}\right)^{\alpha-1}.\end{flalign}

\section{Supercritical tree}\label{suptree}
As discussed in the introduction, the structures of the supercritical and subcritical trees are very similar and consist of some backbone structure $\Yc$ with subcritical GW-trees as leaves.
\begin{itemize}
 \item On the subcritical tree the backbone was a single infinite line of descent, represented by the solid line in Figure \ref{treediag} of Section \ref{numtrp}. On the supercritical tree the backbone is itself a random tree, represented by the solid line in Figure \ref{suptreediag}. In particular, it is a GW-tree without deaths whose law is determined by the generating function $g(s)=\left(f\left((1-q)s+q\right)-q\right)/(1-q)$ where $f$ is the generating function of the original offspring law and $q$ is the extinction probability. 
 \item Each backbone vertex has additional children (which we call buds) which are roots of subcritical GW-trees. On the subcritical tree, the number of buds had a size-biased law independent of the position on the backbone. On the supercritical tree, the distribution over the number of buds is more complicated since it depends on the backbone. Importantly, the expected number of buds can be bounded above by $\mu(1-q)^{-1}$ independently of higher moments of the offspring law which isn't the case for the subcritical tree. 
 \item In the subcritical case, the GW-trees forming the traps have the law of the original (unconditioned) offspring law. In the supercritical case, the law is defined by the p.g.f.\ $h(s)=f(qs)/q$ which has mean $f'(q)$.
\end{itemize}
 In Figure \ref{suptreediag}, the dashed lines represent the finite structures comprised of the buds and leaves. It will be convenient to refer to the traps at a site so for $x \in \Yc$ let $L_x$ denote the collection of traps adjacent to $x$, for example in Figure \ref{suptreediag} $L_\rho$ consists of the two tree rotted at $y,z$. We then write $\Tc^{*-}_{x}$ to be the branch at $x$, that is, the sub-tree formed by $x$, its buds and the associated traps. Let $\Tc$ denote the supercritical tree, $Z_n$ the size of the $n\th$ generation and $Z_n^*$ the number of vertices in the $n\th$ generation of the backbone. We then write $\Pr^*(\cdot):=\Pr(\cdot|Z_0^*>0)$ to be the GW law conditioned on survival. 

\begin{figure}[H]
\centering
 \includegraphics[scale=0.7]{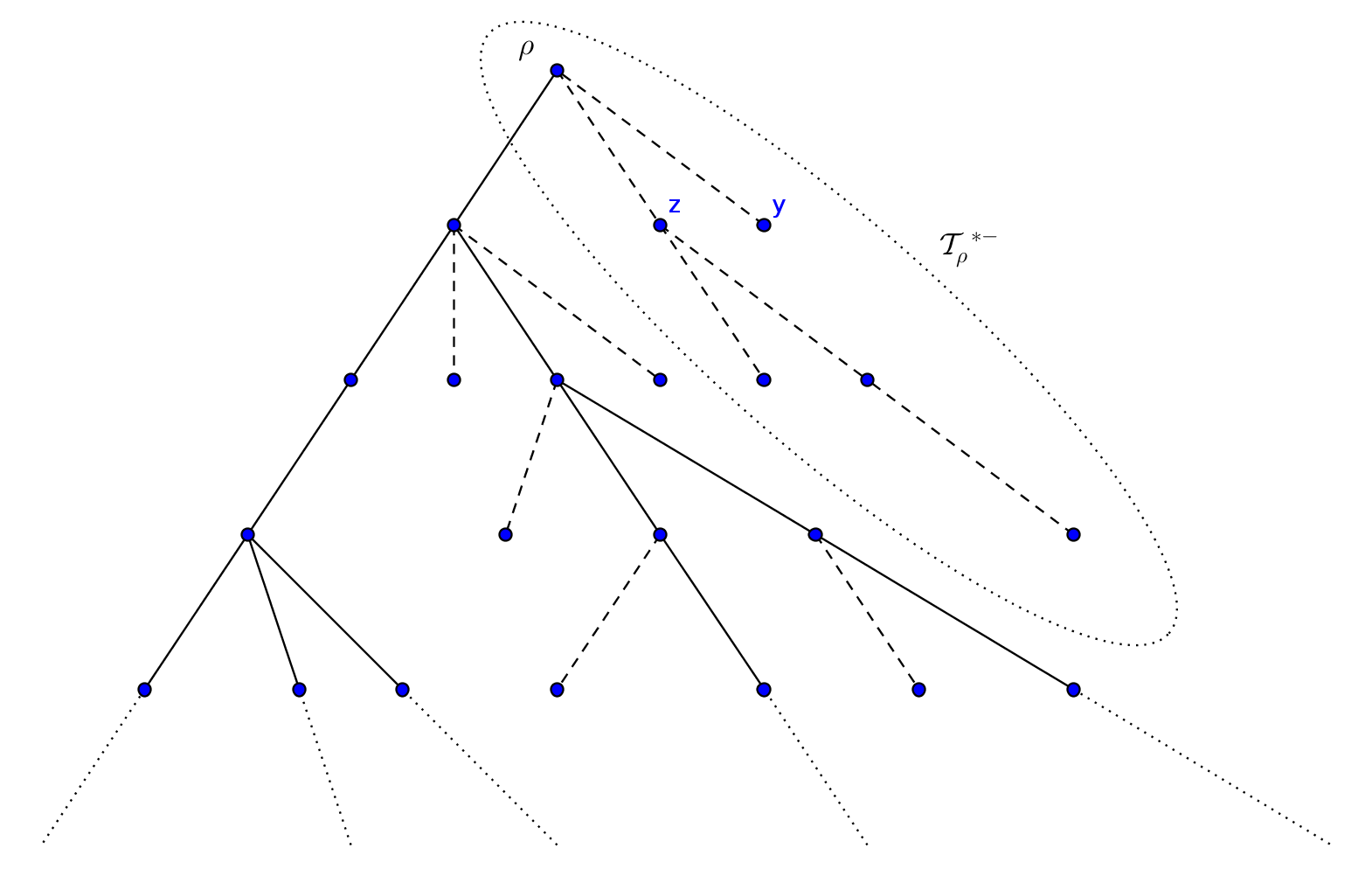} 
\caption{A sample supercritical tree.}\label{suptreediag}
\end{figure}

Let $n_k(t)=\lfloor tf'(q)^{-k}\rfloor$ and $\gamma$ be given as in (\ref{gamma}). In \cite{arfrgaha} it is shown that when $\mu>1, \Er[Z_1^2]<\infty$ and $\beta>f'(q)^{-1}$ we have that $\Delta_{n_k(t)}n_k(t)^{-\frac{1}{\gamma}}$ converges in distribution to an infinitely divisible law. In order to extend this result to prove Theorem \ref{suptreethm} it will suffice to prove Lemmas \ref{6.1}, \ref{7.4} and \ref{maxhgt} which we defer to the end of the section. 

In Lemma \ref{6.1} we show that $\Pr^*(\Hc(\Tc^{*-})>n) \sim C^*f'(q)^n$ for some constant $C^*$. This is the same as when $\Er[\xi^2]<\infty$ for the supercritical tree however, for the subcritical tree, the exponent is different. This is because the first moment of the bud distribution has a fundamental role and the change from finite to infinite variance changes this for the subcritical tree but not for the supercritical tree. Lemma \ref{6.1} is an extension of Lemma 6.1 of \cite{arfrgaha} which is proved using a Taylor expansion of the $f$ around $1$ up to second moments. We cannot take this approach because $f''(1)=\infty$; instead we use the form of the generating function determined in Lemma \ref{genform}. The expression is important because, as in FVIE, the expected time spent in a large branch is approximately $c(\mu\beta)^{\Hc(\Tc^{*-})}$ for some constant $c$. 

Lemma \ref{7.4} shows that, with high probability, no large branch contains more than one large trap. This is important because the number of large traps would affect the escape probability. That is, if there are many large traps in a branch then it is likely that the root has many offspring on the backbone since some geometric number of the offspring lie on the backbone. The analogue of this in \cite{arfrgaha} is proved using the bound $f'(1)-f'(1-\eta)\leq C\eta$ which follows because $f''(1)<\infty$. Similarly to Lemma \ref{6.1}, we use a more precise form of $f$ in order to obtain a similar bound.

Lemma \ref{maxhgt} shows that no branch visited by level $n$ is too large. This is important for the tightness result since we need to bound the deviation of $X$ from the furthest point reached along the backbone. The proof of this follows quite straightforwardly from Lemma \ref{6.1}.

To explain why these are needed, we recall the argument which follows a similar structure to the proof of Theorem \ref{finvarthm}. As was the case for the walk on the subcritical tree, the first part of the argument involves showing that, asymptotically, the time spent outside large branches is negligible. This follows by the same techniques as for the subcritical tree. 

One of the major difficulties with the walk on the supercritical tree is determining the distribution over the number of entrances into a large branch. The height of the branch from a backbone vertex $x$ will be correlated with the number of children $x$ has on the backbone. This affects the escape probability and therefore the number of excursions into the branch. It can be shown that the number of excursions into the first large trap converges in distribution to some non-trivial random variable $W_\infty$. In particular, it is shown in \cite{arfrgaha} that $W_\infty$ can be stochastically dominated by a geometric random variable and that there is some constant $c_W>0$ such that $\Pb(W_\infty>0)\geq c_W$. 

Similarly to Section \ref{trpapr}, it can be shown that asymptotically the large branches are independent in the sense that with high probability the walk won't reach one large branch and then return to a previously visited large branch. Using Lemmas \ref{6.1} and \ref{7.4} (among other results) it can then be shown that $\Delta_n$ can be approximated by the sum of i.i.d.\ random variables.

The remainder of the proof of the first part of Theorem \ref{suptreethm} involves decomposing the time spent in large branches, showing that the suitably scaled excursion times converge in distribution, proving the convergence results for sums of i.i.d.\ variables and concluding with standard tightness results similar to Section \ref{tght}. Since $\Pr(Z_1=k|Z_0^*=0)=p_kq^{k-1}$, the subcritical GW law over the traps has exponential moments. This means that these final parts of the proof follow by the results proven in \cite{arfrgaha} since, by Lemma \ref{6.1}, the scaling is the same as when $\Er[\xi^2]<\infty$.

Tightness of $(\Delta_nn^{-1/\gamma})_{n\geq 0}$ and $(X_nn^{-\gamma})_{n\geq 0}$ and almost sure convergence of $\log(|X_n|)/\log(n)$ then follow by the proof of Theorem 1.1 of \cite{arfrgaha} (with one slight adjustment) which is similar to the proof of Theorem \ref{tghtthm}. In order to bound the maximum distance between the walker's current position and the last regeneration point we used a bound on the maximum height of a trap seen up to $\Delta_n^Y$. In \cite{arfrgaha} it is shown that the probability a trap of height at least $4\log(n)/\log(f'(q)^{-1})$ is seen is at most order $n^{-2}$ by using finite variance of the offspring distribution to bound the variance of the number of traps in a branch. In Lemma \ref{maxhgt} we prove this using Lemma \ref{6.1}.

\begin{lem}\label{6.1}
 Under the assumptions of Theorem \ref{suptreethm} \[\Pr^*(\Hc(\Tc^{*-})> n) \sim C^* f'(q)^n\] where $C^*=q(\mu-f'(q))c_\mu/(1-q)$ and $c_\mu$ is such that $\Pr(\Hc(\Tc)\geq n|\Hc(\Tc)<\infty) \sim c_\mu f'(q)^n$.
 \begin{proof}
 Let $Z=Z_1, Z^*=Z^*_1$ and $s_n=\Pr(\Hc(\Tc)< n|\Hc(\Tc)<\infty)$, then
  \begin{flalign*}
  \Pr(\Hc(\Tc^{*-})>n|Z^*>0)  = 1-\frac{\Er\left[\Pr(\Hc(\Tc^{*-})\leq n, Z^*>0|Z,Z^*)\right]}{ \Pr(Z^*>0)} = 1-\frac{\Er\left[s_n^{Z-Z^*}\ind_{\{Z^*>0\}}\right]}{1-q}. 
  \end{flalign*}
 We then have that for any $t,s>0$
  \begin{flalign*}
   \Er\left[s^{Z-Z^*}t^{Z^*}\right] & = \Er\left[s^Z\Er[(t/s)^{Z^*}|Z\right] \\
   & = \Er\left[s^Z\sum_{k=0}^Z(t/s)^k\binom{Z}{k}q^{Z-k}(1-q)^k\right] \\
   & = \Er\left[(qs)^Z\left(1+\frac{t(1-q)}{qs}\right)^Z\right] \\
   & = f(sq+t(1-q)).
  \end{flalign*}
Furthermore, \[\Er\left[s^Z\ind_{\{Z^*=0\}}\right]=\Er\left[s^Z\Pr(Z^*=0|Z)\right]=\Er\left[(sq)^Z\right].\]
Therefore writing $t_n:= s_nq+1-q$ we have that $1-t_n=q(1-s_n)$ and \[\Pr^*(\Hc(\Tc^{*-})> n)=1-\frac{f(s_nq+1-q)}{1-q}+\frac{f(s_nq)}{1-q}=\frac{(1-f(t_n))-(q-f(s_nq))}{1-q}.\]
By Taylor we have that $\exists z \in [s_nq,q]$ such that $f(s_nq)=q+qf'(q)(s_n-1)+f''(z)q^2(s_n-1)^2/2$ and since $q<1$ we have that $f''(z)$ exists for all $z\leq q$ and is bounded above by $f''(q)<\infty$.

By Lemma \ref{genform} \[1-f(t_n)= \mu(1-t_n)+\frac{\Gamma(3-\alpha)}{\alpha(\alpha-1)}(1-t_n)^\alpha \overline{L}((1-t_n)^{-1})\] for a slowly varying function $\overline{L}$.
In particular, 
\begin{flalign}
 \frac{\Pr^*(\Hc(\Tc^{*-})> n)}{(1-s_n)} & = \frac{q}{1-q}\left(\frac{1-f(t_n)}{1-t_n}-f'(q)+f''(z)q(s_n-1)/2\right) \notag \\
 & = \frac{q}{1-q}\left(\mu-f'(q)+O((1-s_n)^{\alpha-1}\overline{L}((1-s_n)^{-1}))\right) \label{brnlrg}\\
 & \sim \frac{q(\mu-f'(q))}{1-q} \notag
\end{flalign}
which is the desired result.
 \end{proof}
\end{lem}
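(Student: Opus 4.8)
The plan is to compute $\Pr^*(\Hc(\Tc^{*-})>n)$ directly in terms of the extinction probability $q$, the generating function $f$, and the tail probability $s_n=\Pr(\Hc(\Tc)<n\mid\Hc(\Tc)<\infty)$ of the conditioned trap height, exploiting the fact that at a backbone vertex of the supercritical tree the total offspring count $Z$ is unconditioned $f$-distributed, the backbone offspring count $Z^*$ is the number of those children with infinite line of descent, and conditionally on $Z$ the pair $(Z-Z^*,Z^*)$ is binomial with success probability $1-q$. First I would express
\[
\Pr^*(\Hc(\Tc^{*-})>n)=1-\frac{\Er\big[s_n^{Z-Z^*}\ind_{\{Z^*>0\}}\big]}{1-q},
\]
since a branch at a backbone vertex has height at most $n$ exactly when all of its $Z-Z^*$ trap-children have conditioned height $<n$.

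Next I would evaluate the relevant joint generating function: for $s,t>0$,
\[
\Er\big[s^{Z-Z^*}t^{Z^*}\big]=\Er\Big[(qs)^Z\big(1+\tfrac{t(1-q)}{qs}\big)^Z\Big]=f\big(sq+t(1-q)\big),
\]
using the binomial conditional law of $Z^*$ given $Z$, together with $\Er[s^Z\ind_{\{Z^*=0\}}]=\Er[(sq)^Z]=f(sq)$. Combining these with $t_n:=s_nq+1-q$ (so $1-t_n=q(1-s_n)$) gives the clean identity
\[
\Pr^*(\Hc(\Tc^{*-})>n)=\frac{(1-f(t_n))-(q-f(s_nq))}{1-q}.
\]
Then I would handle the two pieces. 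Since $s_nq\le q<1$ and $f$ is smooth away from $1$, a second-order Taylor expansion of $f$ about $q$ gives $q-f(s_nq)=qf'(q)(1-s_n)+O((1-s_n)^2)$, with the error uniformly controlled because $f''$ is bounded on $[0,q]$. For the term $1-f(t_n)$, note $t_n\to1$, so I would invoke Lemma \ref{genform}(2): $1-f(t_n)=\mu(1-t_n)+\frac{\Gamma(3-\alpha)}{\alpha(\alpha-1)}(1-t_n)^\alpha\overline{L}((1-t_n)^{-1})$, whence $(1-f(t_n))/(1-t_n)=\mu+O((1-s_n)^{\alpha-1}\overline{L}((1-s_n)^{-1}))$.

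Dividing through by $1-s_n$ yields
\[
\frac{\Pr^*(\Hc(\Tc^{*-})>n)}{1-s_n}=\frac{q}{1-q}\Big(\mu-f'(q)+O\big((1-s_n)^{\alpha-1}\overline{L}((1-s_n)^{-1})\big)+O(1-s_n)\Big)\longrightarrow\frac{q(\mu-f'(q))}{1-q},
\]
and since $1-s_n\sim c_\mu f'(q)^n$ by the definition of $c_\mu$, the result follows with $C^*=q(\mu-f'(q))c_\mu/(1-q)$. The main thing to be careful about — rather than a genuine obstacle — is justifying the uniformity of the Taylor remainder for $f$ near $q$ (straightforward since $q<1$ and all derivatives of $f$ are finite and monotone there) and checking that the $\alpha$-stable correction term from Lemma \ref{genform} is genuinely lower order, i.e. that $(1-s_n)^{\alpha-1}\overline{L}((1-s_n)^{-1})\to0$, which holds because $\alpha>1$ and slowly varying functions are dominated by any positive power. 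Everything else is bookkeeping, so I would keep the write-up short, essentially just displaying the identity for $\Pr^*(\Hc(\Tc^{*-})>n)$, the computation of $\Er[s^{Z-Z^*}t^{Z^*}]$, and the two expansions.
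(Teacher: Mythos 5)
Your proposal follows exactly the same route as the paper's proof: the same conditioning identity $\Pr^*(\Hc(\Tc^{*-})>n)=1-\frac{1}{1-q}\Er[s_n^{Z-Z^*}\ind_{\{Z^*>0\}}]$, the same joint generating function computation giving $f(sq+t(1-q))$, the same reduction to $\frac{(1-f(t_n))-(q-f(s_nq))}{1-q}$, and then the same pairing of a second-order Taylor expansion of $f$ about $q$ with Lemma \ref{genform}(2) for the behaviour near $1$. The argument is correct and matches the paper's in every essential step.
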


Let $B(n):=\bigcap_{i=0}^{\Delta_n^Y}\left\{|\{\Tc \in L_{Y_i}: \Hc(\Tc)\geq h_n\}|\leq 1\right\}$ be the event that any backbone vertex seen up to $\Delta_n$ has at most one $h_n$-trap (which is $C_3(n)$ of \cite{arfrgaha}).
\begin{lem}\label{7.4}
 $\Pb^*(B(n)^c)=o(1)$.
 \begin{proof}
  Using $C_1(n)$ we have that for some constant $C$
 \[   \Pb^*(B(n)^c)  \leq o(1) +Cn\left(\Pr^*(\Hc(\Tc^{*-})\geq h_n)-\Pr^*(|\{\Tc \in L_\rho: \Hc(\Tc)\geq h_n\}|= 1)\right).\]
 Recall $s_{h_n}=\Pr(\Hc(\Tc)< h_n|\Hc(\Tc)<\infty)$ and from (\ref{brnlrg}) we have that \[\Pr^*(\Hc(\Tc^{*-})> h_n)=(1-s_{h_n})\frac{q(\mu-f'(q))}{1-q}+O((1-s_{h_n})^\alpha \overline{L}((1-s_{h_n})^{-1})\]
 for some slowly varying function $\overline{L}$. 
 
 Similarly to the method used in Lemma \ref{6.1} we have that
 \begin{flalign*}
 \Pr^*(|\{\Tc \in L_\rho: \Hc(\Tc)\geq h_n\}|= 1) & = \sum_{k=1}^\infty\sum_{j=1}^{k-1}\Pr^*\left(Z_1=k,Z_1^*=k-j,|\{\Tc \in L_\rho: \Hc(\Tc)\geq h_n\}|= 1\right) \\
 & = \sum_{k=1}^\infty \Pr^*(Z_1=k)\sum_{j=1}^{k-1}\Pr^*(Z_1^*=k-j|Z_1=k)j(1-s_{h_n})s_{h_n}^{j-1} \\
 & =  \sum_{k=1}^\infty \frac{(1-q^k)p_k}{1-q}\sum_{j=1}^{k-1}\binom{k}{j}\frac{q^j(1-q)^{k-j}}{1-q^k}j(1-s_{h_n})s_{h_n}^{j-1} \\
 & =  \frac{q(1-s_{h_n})}{1-q}\sum_{k=1}^\infty kp_k\sum_{j=0}^{k-2}\frac{(k-1)!(qs_{h_n})^j(1-q)^{k-1-j}}{j!(k-1-j)!} \\
 & =  \frac{q(1-s_{h_n})}{1-q}\sum_{k=1}^\infty kp_k\left((qs_{h_n}+1-q)^{k-1}-(qs_{h_n})^{k-1}\right) \\
 & = \frac{q(1-s_{h_n})}{1-q}\left(f'(t_{h_n})-f'(qs_{h_n})\right)
  \end{flalign*}
where $t_{h_n}=qs_{h_n}+1-q$. By Taylor $f'(qs_n)=f'(q)+O(1-s_{h_n})$ as $\nin$. From Lemma \ref{genform} we have that $1-f(t_{h_n})=\mu(1-t_{h_n})+(1-t_{h_n})^\alpha \overline{L}((1-t_{h_n})^{-1})$ for some slowly varying function $\overline{L}$. Applying Theorem 2 of \cite{la} we have that $f'(t_{h_n})=\mu+O((1-t_{h_n})^{\alpha-1} \overline{L}((1-t_{h_n})^{-1}))$. In particular, 
\begin{flalign*}
\Pr^*(|\{\Tc \in L_\rho: \Hc(\Tc)\geq h_n\}|= 1)=\frac{q(1-s_{h_n})}{1-q}\left(\mu-f'(q)+O\left((1-t_{h_n})^{\alpha-1} \overline{L}\left((1-t_{h_n})^{-1}\right)\right)\right)
\end{flalign*}
since $\alpha<2$, thus
\[   \Pb^*(B(n)^c)  \leq o(1) +O(n(1-t_{h_n})^{\alpha} \overline{L}((1-t_{h_n})^{-1})).\]
There exists some constant $c$ such that $1-t_{h_n}\sim qc_\mu f'(q)^{h_n}\leq c n^{-(1-\varepsilon)}$ therefore since $\alpha>1$ we can choose $\varepsilon>0$ small enough (depending on $\alpha$) such that $\Pb^*(B(n)^c)=o(1)$.
 \end{proof}
\end{lem}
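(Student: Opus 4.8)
The statement to prove is Lemma \ref{7.4}: $\Pb^*(B(n)^c)=o(1)$, where $B(n)$ is the event that every backbone vertex visited up to $\Delta_n$ supports at most one large trap (a trap of height at least $h_n$).

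The plan is to mimic the structure of Lemma \ref{6.1}, using a union bound over the at most $Cn$ backbone vertices visited by time $\Delta_n$ (this count is controlled by the event $C_1(n)$ from \cite{arfrgaha}, analogous to $A_3(n)$ here), so that it suffices to show
\[
n\left(\Pr^*(\Hc(\Tc^{*-})\geq h_n)-\Pr^*(|\{\Tc\in L_\rho:\Hc(\Tc)\geq h_n\}|=1)\right)\to 0.
\]
That is, conditioned on the branch at $\rho$ being large, with high probability there is exactly one large trap, and the discrepancy is summably small. First I would write $\Pr^*(\Hc(\Tc^{*-})>h_n)$ in the form already obtained in (\ref{brnlrg}) during the proof of Lemma \ref{6.1}, namely $(1-s_{h_n})\tfrac{q(\mu-f'(q))}{1-q}+O((1-s_{h_n})^\alpha\overline{L}((1-s_{h_n})^{-1}))$.

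The main computational step is to obtain a comparable expansion for $\Pr^*(|\{\Tc\in L_\rho:\Hc(\Tc)\geq h_n\}|=1)$. I would condition on $Z_1=k$ and $Z_1^*=k-j$ (so there are $j$ buds), note that under $\Pr^*$ the number of backbone children among the $k$ offspring is Binomial$(k,1-q)$ (reweighted by $1/(1-q)$ from the $\{Z_1=k\}$-sizebias through $1-q^k$), and that each of the $j$ buds independently roots a trap of height $\geq h_n$ with probability $1-s_{h_n}$. Summing the resulting binomial-type series collapses — exactly as in the display in the excerpt — to
\[
\Pr^*(|\{\Tc\in L_\rho:\Hc(\Tc)\geq h_n\}|=1)=\frac{q(1-s_{h_n})}{1-q}\bigl(f'(t_{h_n})-f'(qs_{h_n})\bigr),
\]
with $t_{h_n}=qs_{h_n}+1-q$. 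Then I would estimate the two derivative terms: $f'(qs_{h_n})=f'(q)+O(1-s_{h_n})$ by an ordinary Taylor expansion (legitimate since $f''$ is bounded near $q<1$), and $f'(t_{h_n})=\mu+O((1-t_{h_n})^{\alpha-1}\overline{L}((1-t_{h_n})^{-1}))$ by differentiating the infinite-variance generating-function expansion of Lemma \ref{genform}(ii) and invoking Theorem 2 of \cite{la} (the same monotone-density-type argument used repeatedly, e.g.\ to get $xg'(x)\sim\alpha g(x)$). Subtracting from the expansion of $\Pr^*(\Hc(\Tc^{*-})>h_n)$, the leading $\tfrac{q(\mu-f'(q))}{1-q}(1-s_{h_n})$ terms cancel and one is left with a bound of order $n(1-t_{h_n})^\alpha\overline{L}((1-t_{h_n})^{-1})$.

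Finally, since $1-t_{h_n}=q(1-s_{h_n})\sim qc_\mu f'(q)^{h_n}$ and $h_n$ is chosen so that $f'(q)^{h_n}\asymp n^{-(1-\varepsilon)}$ (the supercritical analogue of the IVFE/IVIE definitions of $h_n^\varepsilon$), this error is $O(n\cdot n^{-\alpha(1-\varepsilon)}\overline{L}(n))=O(n^{1-\alpha(1-\varepsilon)}\overline{L}(n))$, which tends to $0$ once $\varepsilon>0$ is chosen small enough depending on $\alpha>1$, absorbing the slowly varying factor. The anticipated main obstacle is purely the bookkeeping in the second step: carefully justifying the reweighting under $\Pr^*$ when passing from the unconditioned offspring law to the backbone-conditioned law, and making sure the error terms from differentiating the Lemma \ref{genform} expansion are genuinely of the stated order uniformly — there is a minor subtlety in that $f'(t_{h_n})-f'(qs_{h_n})$ is a difference of two quantities each close to $\mu$ up to $O(1-s_{h_n})$ errors, so one must check the $O(1-s_{h_n})$ (rather than $O((1-s_{h_n})^{\alpha-1})$) pieces also cancel against the corresponding terms, which they do because both expansions share the same linear coefficient $\mu-f'(q)$ after the algebra; the genuinely non-cancelling remainder is the infinite-variance correction, which is $o((1-s_{h_n}))$ since $\alpha>1$.
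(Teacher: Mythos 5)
Your proposal follows essentially the same approach as the paper: union bound over $\leq Cn$ backbone vertices via $C_1(n)$, the binomial computation collapsing $\Pr^*(|\{\Tc\in L_\rho:\Hc(\Tc)\geq h_n\}|=1)$ to $\tfrac{q(1-s_{h_n})}{1-q}(f'(t_{h_n})-f'(qs_{h_n}))$, the Taylor/Lemma~\ref{genform}/Theorem~2 estimates on the two derivatives, cancellation of the leading $\tfrac{q(\mu-f'(q))}{1-q}(1-s_{h_n})$ term against (\ref{brnlrg}), and choosing $\varepsilon<1-1/\alpha$. One small misstatement in your closing remark: $f'(qs_{h_n})$ is close to $f'(q)$, not to $\mu$, and the $O(1-s_{h_n})$ Taylor remainder from that term does not need to cancel against anything — it just contributes $O((1-s_{h_n})^2)$ after multiplication by the prefactor, which is dominated by the $O((1-s_{h_n})^\alpha\overline{L})$ error since $\alpha<2$; this is how the paper treats it and it does not change the conclusion.
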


Let $D(n)=\left\{\max_{j\leq \Delta_n^Y}\Hc(\Tc^{*-}_{Y_j})\leq 4\log(n)/\log(f'(q)^{-1})\right\}$ be the event that all branches seen before reaching level $n$ are of height at most $4\log(n)/\log(f'(q)^{-1})$.
\begin{lem}\label{maxhgt}
 Under the assumptions of Theorem \ref{suptreethm} \[\Pb^*\left(D(n)^c \right)=O(n^{-2}).\]
 \begin{proof}
  By comparison with a biased random walk on $\Zb$, standard large deviations estimates yield that for $C$ sufficiently large $\Pb(\Delta_n^Y>C_1n)=O(n^{-2})$. Using Lemma \ref{6.1} we have that for independent $\Tc^{*-}_j$
  \begin{flalign*}
   \Pr^*\left(\bigcup_{j=1}^{C_1n}\Hc(\Tc^{*-}_j)>\frac{4\log(n)}{\log(f'(q)^{-1})}\right)  \leq C_1n \Pr^*\left(\Hc(\Tc^{*-})>\frac{4\log(n)}{\log(f'(q)^{-1})}\right)  \leq Cnf'(q)^{\frac{4\log(n)}{\log(f'(q)^{-1})}}  
    = Cn^{-3}.
  \end{flalign*}
 \end{proof}
\end{lem}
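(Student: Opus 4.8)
The plan is to split the bad event according to how fast the backbone walk reaches level $n$, and then control tall branches by a union bound fed by Lemma \ref{6.1}. Write $h_n = 4\log(n)/\log(f'(q)^{-1})$, so that by Lemma \ref{6.1} we have $\Pr^*(\Hc(\Tc^{*-}) > h_n) \sim C^* f'(q)^{h_n} = C^* n^{-4}$ (the non-integrality of $h_n$ costs only a constant factor).

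First I would show that $\Delta_n^Y$ is of order $n$ with polynomially small error, namely $\Pb^*(\Delta_n^Y > C_1 n) = O(n^{-2})$ for $C_1$ taken sufficiently large (in fact any $C_1 > (\beta+1)/(\beta-1)$ suffices). This follows from standard large-deviation estimates for the backbone walk: the level process $|Y_k|$ has a positive drift because the escape probabilities towards the root decay geometrically, as recalled before (\ref{noback}) and used throughout Section \ref{trpapr}; decomposing the backbone walk into regeneration blocks with exponentially decaying durations shows that the time to reach level $n$ is linear with exponentially small deviations, which is far more than the claimed $O(n^{-2})$.

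On the complementary event $\{\Delta_n^Y \leq C_1 n\}$ the walk visits at most $C_1 n + 1$ distinct backbone vertices before time $\Delta_n^Y$; enumerating these in order of first visit as $x_1^*, x_2^*, \dots$, the buds and traps making up the branch $\Tc^{*-}_{x_k^*}$ are unexplored until the walk first reaches $x_k^*$, so conditionally on everything seen before then the trap heights are fresh and the tail of $\Hc(\Tc^{*-}_{x_k^*})$ is controlled (up to a constant) by Lemma \ref{6.1}. A union bound then gives
\begin{flalign*}
\Pb^*\!\left(D(n)^c,\ \Delta_n^Y \leq C_1 n\right) \;\leq\; (C_1 n + 1)\,\Pr^*\!\left(\Hc(\Tc^{*-}) > h_n\right) \;=\; O(n^{-3}),
\end{flalign*}
and combining with the previous paragraph yields $\Pb^*(D(n)^c) = O(n^{-2})$.

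The only step needing any care is the large-deviation bound on $\Delta_n^Y$ on the supercritical backbone, which is a tree rather than a line; but this is entirely routine given the geometric escape estimates already in play, and the union-bound step is an immediate consequence of Lemma \ref{6.1}. So I do not expect a genuine obstacle: Lemma \ref{maxhgt} is essentially a corollary of Lemma \ref{6.1} together with the linear-growth estimate for $\Delta_n^Y$.
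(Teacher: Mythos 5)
Your proposal matches the paper's proof: both split on $\{\Delta_n^Y > C_1 n\}$, controlled by standard large-deviation estimates for the backbone walk, and handle the complement by a union bound over at most $O(n)$ branches using the tail bound $\Pr^*(\Hc(\Tc^{*-}) > h_n) = O(f'(q)^{h_n}) = O(n^{-4})$ from Lemma \ref{6.1}. The extra remark about branches being unexplored at first visit is just making explicit the independence that the paper invokes when writing "for independent $\Tc^{*-}_j$"; the argument is the same.
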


\section*{Acknowledgements}
I would like to thank my supervisor David Croydon for suggesting the problem, his support and many useful discussions. This work is supported by EPSRC as part of the MASDOC DTC at the University of Warwick. Grant No.\ EP/H023364/1.

\bibliographystyle{plainnat}
\bibliography{/home/masdoc/Dropbox/Work/Bibliography/ref}           
\end{document}